\newcounter{long}
\newtheorem{thm}{Theorem}[section]
\newtheorem{lemma}[thm]{Lemma}
\newtheorem{corollary}[thm]{Corollary}
\newtheorem{proposition}[thm]{Proposition}
\newtheorem*{thm*}{Theorem}
\theoremstyle{definition}
\newtheorem{definition}[thm]{Definition}
\newtheorem{remark}[thm]{Remark}
\newtheorem{notation}[thm]{Notation}
\let\c@equation\c@thm
\renewcommand{\@secnumfont}{\bfseries}
\let\c@subsection\c@thm
\def\thesubsection{\thethm}
\let\c@figure\c@thm
\newcommand{\ph}{\varphi}
\newcommand{\w}{\widetilde}
\newcommand{\ma}{\mathcal}
\newcommand{\la}{\longrightarrow}
\newcommand{\ol}{\mathcal{O}}
\newcommand{\pr}{\mathbb{P}}
\newcommand{\Q}{\mathbb{Q}}
\newcommand{\C}{\mathbb{C}}
\newcommand{\R}{\mathbb{R}}
\newcommand{\Z}{\mathbb{Z}}
\newcommand{\N}{\mathcal{N}_1}
\newcommand{\Nu}{\mathcal{N}^1}
\newcommand{\Sing}{\operatorname{Sing}}
\newcommand{\dP}{\operatorname{dP}}
\newcommand{\Pic}{\operatorname{Pic}}
\newcommand{\im}{\operatorname{Im}}
\newcommand{\NE}{\operatorname{NE}}
\newcommand{\Exc}{\operatorname{Exc}}
\newcommand{\Ext}{\operatorname{Ext}}
\newcommand{\codim}{\operatorname{codim}}
\newcommand{\Eff}{\operatorname{Eff}}
\newcommand{\Nef}{\operatorname{Nef}}
\newcommand{\Mov}{\operatorname{Mov}}
\newcommand{\MCD}{\operatorname{MCD}}
\newcommand{\ST}{\operatorname{ST}}
\newcommand{\Aut}{\operatorname{Aut}}
\newcommand{\Bl}{\operatorname{Bl}}
\newcommand{\Bs}{\operatorname{Bs}}
\newcommand{\rk}{\operatorname{rk}}
\newcommand{\Id}{\operatorname{Id}}
\newcounter{a}
\subjclass[2010]{14J60,14J35,14J45,14E30}
 \title[$\Bl_8\pr^4$, its Fano model, and vector bundles on a del Pezzo surface]{The blow-up of $\pr^4$ at $8$ points and its Fano model, via vector bundles on a del Pezzo surface}
\author[C.\ Casagrande]{Cinzia Casagrande}
\address{\sc Cinzia Casagrande\\
Universit\`a di Torino\\
Dipartimento di Matematica\\
via Carlo Alberto 10, 10123 Torino\\ Italy}
\email{cinzia.casagrande@unito.it}
\author[G.\ Codogni]{Giulio Codogni}
\address{\sc Giulio Codogni\\ EPFL\\ SB MATHGEOM CAG\\
MA B3 444 (Batiment MA)\\ Station 8\\
CH-1015 Lausanne\\ Switzerland}
\email{giulio.codogni@epfl.ch}
\author[A.\ Fanelli]{Andrea Fanelli}
\address{\sc Andrea Fanelli\\ Heinrich-Heine-Universit\"{a}t\\
Mathematisches Institut\\
D-40204 D\"{u}sseldorf, Germany }
\email{fanelli@hhu.de}
\begin{document}
\maketitle
{\footnotesize\tableofcontents}
\section{Introduction}
\noindent Let $S=\Bl_{q_1,\dotsc,q_8}\pr^2$ and $X=\Bl_{p_1,\dotsc,p_8}\pr^4$ be the blow-ups respectively of $\pr^2$ and $\pr^4$ at $8$ general points.
There is  a classical connection between these two varieties due to projective association, or Gale duality, which gives a bijection between sets of $8$ general points in $\pr^2$ and 
in $\pr^4$, up to projective equivalence (see \ref{introassociation}).
 In this framework, 
a beautiful relation among $S$ and $X$ has been established by Mukai, using moduli of sheaves on $S$, as follows.
\begin{thm}[\cite{mukaiADE}, \S 2]\label{Xintro}
If   $\{q_1,\dotsc,q_8\}\subset\pr^2$ 
and $\{p_1,\dotsc,p_8\}\subset\pr^4$ are associated sets of points, 
then
$X$ is isomorphic to the moduli space  of rank 2 torsion free sheaves $F$ on $S$, with $c_1(F)=-K_S$ and $c_2(F)=2$, semistable 
 with respect to $-K_S+2h$, where $h\in\Pic(S)$ is the pull-back of $\ol_{\pr^2}(1)$ under 
the blow-up map $S\to\pr^2$. 
\end{thm}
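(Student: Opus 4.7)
The plan is to build an explicit isomorphism $\Phi\colon X \xrightarrow{\sim} M$, where $M$ denotes the moduli space in the statement, by first understanding the geometry of $M$ and then identifying a natural birational morphism $M\to\pr^4$ that turns out to be the blow-down $X\to\pr^4$ of the eight exceptional divisors.

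The first step is to show that $M$ is smooth and irreducible of dimension four. By Hirzebruch--Riemann--Roch on the degree one del Pezzo surface $S$, any sheaf $F$ with the prescribed invariants satisfies
\[
\chi(F) = \tfrac{1}{2}c_1(F)\bigl(c_1(F)-K_S\bigr) - c_2(F) + 2\chi(\ol_S) = K_S^2 = 1,
\]
and a direct computation gives $\chi(F,F) = 4\chi(\ol_S)-4c_2(F)+c_1(F)^2 = -3$. For $F$ that is $(-K_S+2h)$-stable, one has $\Ext^2(F,F)\cong\Hom(F,F\otimes\ol_S(K_S))^\vee = 0$ by a slope argument (since $-K_S$ is ample and $F$ is simple), so $\dim\Ext^1(F,F)=4$ and $M$ is smooth of the expected dimension.

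The second step is to produce a morphism $\Psi\colon M\to\pr^4$ from a Poincar\'e sheaf $\mathcal{U}$ on $M\times S$. Since $h^2(F)=0$ by Serre duality and $h^0(F)\geq 1$ because $\chi(F)=1$, one argues that a carefully chosen twist $\mathcal{U}\otimes\pi_S^*L$ (for instance $L=\ol_S(h)$, adjusted so that higher cohomology vanishes uniformly and $\chi(F\otimes L)=5$) has pushforward along $\pi_M$ equal to a rank $5$ vector bundle, whose associated linear system defines $\Psi$. Comparing Picard lattices---both $\Pic M$ and $\Pic X$ have rank nine and sit inside a common lattice via Gale duality, with distinguished $E_8$-symmetry---one identifies the eight divisors contracted by $\Psi$ with the exceptional divisors $E_i\subset X$ over the points $p_i$, so $\Psi$ coincides with the blow-down $X\to\pr^4$ and $\Phi$ is its inverse.

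The hardest part, and the reason for the specific choice of polarization $-K_S+2h$, is twofold. First, one must exclude strictly semistable objects, so that $M$ is a fine moduli space with a genuine universal sheaf $\mathcal{U}$: the class $-K_S+2h$ must be shown to lie in the interior of an appropriate chamber of the ample cone, where every potentially destabilizing subsheaf of slope $\tfrac{1}{2}(-K_S)\cdot(-K_S+2h)$ is ruled out by a direct numerical check against the $(-1)$-curves of $S$. Second, one must verify that the eight points contracted by $\Psi$ agree with the tuple $\{p_1,\dots,p_8\}$ produced by projective association, rather than some other projectively equivalent configuration; this is where Gale duality enters essentially, and it is the step that genuinely encodes the content of the theorem.
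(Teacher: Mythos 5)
Your deformation-theoretic computations in the first step are fine ($\chi(F)=1$, $\chi(F,F)=-3$, $\Ext^2(F,F)=0$ for stable $F$), but the core of your argument has two genuine gaps. First, the construction of $\Psi\colon M\to\pr^4$ does not work as stated: pushing forward a twist $\ma{U}\otimes\pi_S^*L$ with $\chi(F\otimes L)=5$ produces a rank~$5$ vector bundle on $M$, and a vector bundle has no ``associated linear system'' mapping $M$ to a fixed $\pr^4$ --- the fibers $H^0(S,F\otimes L)$ vary with $[F]$ and there is no canonical $5$-dimensional target. The mechanism that actually produces the map (this is Mukai's route, recounted in the paper) is the extension description: for the polarization in the outer chamber $\ma{C}_h$ every semistable sheaf is a non-split extension of $\ol_S(h)$ by $\ol_S(-K_S-h)$, unique up to scalar, so $M_{\ma{C}_h}\cong\pr\bigl(\Ext^1(\ol_S(h),\ol_S(-K_S-h))\bigr)\cong\pr^4$ (Prop.~\ref{P^4}); the class $-K_S+2h$ lies in the adjacent chamber $\ma{B}_h$, and passing from $\ma{B}_h$ to $\ma{C}_h$ crosses exactly the eight walls $(2C_i+K_S)^{\perp}$, $C_i\sim h-e_i$ a conic (Lemma \ref{L_t}), each crossing being the contraction of a divisor $E_{C_i}\cong\pr^3$ with normal bundle $\ol_{\pr^3}(-1)$ to a smooth point (\ref{blowup}). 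Your ``comparison of Picard lattices'' cannot replace this: knowing that some morphism to $\pr^4$ contracts eight divisors and that $b_2(M)=9$ does not show the morphism is the blow-up of eight distinct points; one needs the fiber and normal-bundle analysis that the wall-crossing (equivalently, the study of the sheaves destabilized in $\ma{C}_h$) provides. Relatedly, your check for strictly semistable sheaves against the $(-1)$-curves alone is insufficient: by Prop.~\ref{walls} and Cor.~\ref{walls2} the walls also come from conics and cubics, and the walls adjacent to $-K_S+2h$ are in fact conic walls. You also assert irreducibility of $M$ without argument; in the paper this comes out of the same chain of explicit birational modifications starting from the $\pr^4$ chamber (Cor.~\ref{connected}).

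Second, you explicitly leave unproven the identification of the eight blown-down points with the Gale-associated points $p_1,\dotsc,p_8$, while correctly observing that this ``genuinely encodes the content of the theorem.'' Flagging the key step is not the same as proving it; as it stands the proposal shows (modulo the first gap) only that $M_{S,-K_S+2h}$ is a blow-up of $\pr^4$ at eight points in some configuration. The identification with the associated point set is precisely where Mukai's argument (and, in the paper, the related geometry of Prop.~\ref{H_Y} and Lemma \ref{scroll}, where elliptic normal quintics through the $p_i$ and the pencil of cubics through the $q_i$ are matched under association) does real work. Without carrying out this step, or at least reducing it to a precise statement about the eight special sheaves $F_{C_i}$ and their extension classes, the theorem as stated is not proved.
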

This result is the starting point of this paper, which has three main subjects: 
\begin{enumerate}[A.]
\item the  moduli spaces $M_{S,L}$ of rank 2 torsion free sheaves $F$ on a (smooth) degree one del Pezzo surface $S$, with $c_1(F)=-K_S$ and $c_2(F)=2$, semistable 
(in the sense of Gieseker-Maruyama) with respect to $L\in\Pic(S)$ ample; 
\item the smooth Fano $4$-fold $Y:=M_{S,-K_S}$;
\item the geometry of $X=\Bl_{p_1,\dotsc,p_8}\pr^4$.
\end{enumerate}
Mukai's proof of Th.~\ref{Xintro} is based on the study of the birational geometry of $M_{S,L}$ in terms of the variation of the stability condition given by $L$. In this paper we resume and expand Mukai's study of these moduli spaces, 
proving that the birational geometry of  $M_{S,L}$ is completely governed by the variation of stability conditions.
Then we apply this to study the Fano $4$-fold $Y$ and the blow-up $X$ of $\pr^4$ at $8$ general points.
Let us describe in more detail these three points, and present our main results.
\renewcommand{\thesubsection}{\Alph{a}}
\setcounter{subsection}{0}
\subsection{Moduli of vector bundles on a degree $1$ del Pezzo surface}
To describe the moduli spaces $M_{S,L}$,
we introduce two convex rational polyhedral cones 
$$\Pi\subset\ma{E}\subset \Nef(S)\subset H^2(S,\R).$$
See \S \ref{delpezzo}, in particular \ref{secE} and \ref{secPi}, for the explicit definitions; the cone $\Pi$ has been introduced in \cite[p.~8]{mukaiADE}.

Let us first state some general properties of  $M_{S,L}$. In the following proposition some statements are standard; the new part is the characterization of the polarizations $L$ for which the moduli space is non-empty. See Cor.~\ref{smoothness}, \ref{nonempty2},  \ref{connected}, and \ref{locallyfree}, and Rem.~\ref{stable}.
\begin{proposition}\label{propertiesintro}
Let $L\in\Pic(S)$ ample.
The moduli space $M_{S,L}$ is non-empty  if and only if $L\in\ma{E}$, and in this case $M_{S,L}$ is a smooth, projective, rational $4$-fold. Every sheaf parametrized by $M_{S,L}$ is locally free and stable.
\end{proposition}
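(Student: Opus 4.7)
The plan is to separate the standard assertions from the new characterization of non-emptiness. Projectivity is immediate from the Gieseker-Maruyama construction. For \emph{stability} (Rem.~\ref{stable}), I would argue that since $L$ lies in the interior of $\ma{E}$, no destabilizing sub--line bundle $M \hookrightarrow F$ can satisfy $\mu_L(M) = \mu_L(F) = -\tfrac{1}{2}K_S\cdot L$: the finitely many classes $c_1(M)$ compatible with $c_1(F/M) = -K_S - c_1(M)$ and the numerical bound $c_2 = 2$ have their perpendicular hyperplanes precisely along the walls cut out by the definition of $\ma{E}$. Granted stability, smoothness and the dimension count follow from deformation theory: Serre duality gives $\Ext^2(F,F) \cong \Hom(F,F\otimes K_S)^\vee$, and since $-K_S$ is ample we have $\mu_L(F\otimes K_S) < \mu_L(F)$, so stability forces this Hom group to vanish. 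The tangent space at $[F]$ then has the expected dimension
\[
4c_2(F) - c_1(F)^2 - 3\chi(\ol_S) = 8 - 1 - 3 = 4.
\]

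For local freeness (Cor.~\ref{locallyfree}), I would use the double-dual short exact sequence $0 \to F \to F^{\vee\vee} \to T \to 0$. Bogomolov's inequality applied to $F^{\vee\vee}$ forces $c_2(F^{\vee\vee}) \geq \tfrac{1}{4}c_1^2 = \tfrac{1}{4}$, so $c_2(F^{\vee\vee}) \in \{1,2\}$; the case $c_2(F^{\vee\vee}) = 2$ gives $T = 0$, while $c_2(F^{\vee\vee}) = 1$ would produce an $L$-semistable locally free rank-$2$ sheaf with $c_1 = -K_S$ and $c_2 = 1$, which the same wall analysis excludes for any $L \in \ma{E}$. Non-emptiness requires both implications. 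The forward direction ``$M_{S,L} \neq \emptyset \Rightarrow L \in \ma{E}$'' reduces to the same slope computation: the existence of a semistable $F$ forces $L$ to lie on the correct side of each wall bounding $\ma{E}$. For the converse, I would use Serre's construction: for a general length-$2$ subscheme $Z \subset S$, form an extension
\[
0 \to \ol_S \to F \to I_Z \otimes (-K_S) \to 0,
\]
whose existence as a locally free sheaf is guaranteed by a straightforward Cayley-Bacharach verification (length-$2$ subschemes against the trivial line bundle $(-K_S)\otimes K_S = \ol_S$), and then check $L$-semistability of $F$ for every $L \in \ma{E}$ by comparing slopes against every candidate sub--line bundle.

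Rationality follows from Mukai's variation-of-stability analysis, which is precisely what the authors resume and expand in the paper: all $M_{S,L}$ with $L$ in the interior of $\ma{E}$ are birational via wall-crossings, so it suffices to exhibit rationality for a single $L$ --- for instance by identifying $M_{S,L}$ at a suitable point with a projective bundle over the Hilbert scheme of length-$2$ subschemes on $S$ coming from a Serre/forgetful morphism. \emph{The main obstacle} is the precise matching between the stability walls and the cone $\ma{E}$: one must enumerate every line-bundle class on the degree-$1$ del Pezzo $S$ that can destabilize a sheaf with $(c_1, c_2) = (-K_S, 2)$, and check that the resulting arrangement of walls cuts out exactly the complement of $\ma{E}$ in the ample cone. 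This combinatorial analysis, controlled by the $E_8$ root system acting on $\Pic(S)$, is the genuinely new content of the proposition.
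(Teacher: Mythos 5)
Your stability step contains the key error. You claim that the only hyperplanes on which a sub--line bundle $M\subset F$ can satisfy $\mu_L(M)=\mu_L(F)$ are the walls cutting out $\ma{E}$, so that any $L$ in the interior of $\ma{E}$ admits no strictly $\mu$-semistable sheaf. That is false: the walls are $(2\ell+K_S)^{\perp}$, $(2C+K_S)^{\perp}$ \emph{and} $(2h+K_S)^{\perp}$ for $(-1)$-curves, conics and cubics (Prop.~\ref{walls}, Cor.~\ref{walls2}), and the first two families cut through the interior of $\ma{E}$ --- see Lemma \ref{L_t}, where $L_{\frac{1}{20}},L_{\frac{1}{8}},L_{\frac{1}{4}}$, etc.\ lie on such interior walls; crossing them is what produces the flips and blow-ups that drive the whole paper. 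Moreover the Proposition allows ample $L$ lying on a wall (even on $\partial\ma{E}$), where strictly $\mu$-semistable sheaves certainly exist. The reason every semistable sheaf is in fact stable is much simpler and completely different: $\chi(S,F)=1$ by Riemann--Roch, so no strictly Gieseker-semistable sheaf can exist (Rem.~\ref{stable}); your $\Ext^2$-vanishing and dimension count are fine once this is in place.

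The remaining steps defer exactly the content that constitutes the paper's proof. For local freeness, excluding a $\mu$-semistable locally free $F^{\vee\vee}$ with $c_2=1$ is \emph{not} given by ``the same wall analysis'' (Prop.~\ref{walls} and Lemma \ref{int} are specific to $c_2=2$); it would need a separate argument, whereas the paper deduces local freeness from the identification $M_{\ma{C}_h}\cong\pr^4$ (Prop.~\ref{P^4}) combined with the explicit wall-crossing descriptions in \ref{flip}--\ref{blowup} and Lemma \ref{specialL}$(b)$. For non-emptiness, the implication $M_{S,L}\neq\emptyset\Rightarrow L\in\ma{E}$ is Mukai's Lemma \ref{nonempty}, which rests on $F$ having a global section and an analysis of the resulting effective divisor --- it is not a formal ``correct side of the wall'' statement; and in the converse direction your Serre-construction sheaf would have to be checked (Gieseker-)semistable for \emph{every} ample $L\in\ma{E}$, including wall polarizations, which is precisely the enumeration you postpone. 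The paper instead obtains non-emptiness, irreducibility and rationality for all $L\in\ma{E}$ by transporting the outer-chamber model $\pr^4$ across all chambers via Cor.~\ref{birational} and Lemma \ref{specialL}. So as written, the proposal relies on a false version of the wall structure where it is used (stability), and leaves unproved the wall/cone matching everywhere else it is needed.
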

Let us consider now the birational geometry of the moduli spaces  $M_{S,L}$. 
The relation between variation of polarization via wall-crossings, and birational geometry of moduli spaces of sheaves on surfaces, is 
classical and has been
intensively studied, see  for instance \cite{FriedmanQin,ellingsrudgottsche,matsukiwentworth}, and for the case of del Pezzo surfaces, \cite{costamiroroig,gomez} and the more recent \cite{BMW}. 
In our setting, this relation can be made 
 completely explicit. 
Generalising  \cite[Lemma 3]{mukaiADE}, we determine all the (finitely many) walls for slope semistability (Cor.~\ref{walls2}), and introduce the stability fan $\ST(S)$ in $H^2(S,\R)$, supported on the cone $\ma{E}$, determined by these walls (see Def.~\ref{wallsfan}). When the polarization $L$ varies in the interior of a cone of maximal dimension of the stability fan, the stability condition determined by $L$ is constant, and so is $M_{S,L}$.
When $L$ moves to a different cone in the stability fan, the moduli space $M_{S,L}$ undergoes a simple birational transformation. 
These results are presented in \S\ref{secmoduli}, and are mostly based on  Mukai's work \cite{mukaiADE}; see \ref{introsecmoduli} for  a more detailed overview.

The moduli spaces $M_{S,L}$ are Mori dream spaces (see \ref{sec_cones} and references therein for the notions of Mori dream space and of Mori chamber decomposition).
  This follows from Th.~\ref{Xintro} and  Castravet and Tevelev's result \cite[Th.~1.3]{CoxCT}, and  also 
from the log Fano property of $M_{S,L}$, see Cor.~\ref{MDS}. Thus the stability fan $\ST(S)$ in $H^2(S,\R)$ has a counterpart in $H^2(M_{S,L},\R)$, the fan $\MCD(M_{S,L})$ given by the Mori chamber decomposition,  defined via birational geometry.

In \S \ref{sec_determinant},
using the classical construction of determinant line bundles on the moduli space $M_{S,L}$, we define a group homomorphism
$$\rho\colon \Pic(S)\la\Pic(M_{S,L})$$
and study its properties, see \ref{introsec_determinant} for  a more detailed overview. 
The determinant map $\rho$
 provides the bridge between stability chambers in $H^2(S,\R)$ and cones of divisors in $H^2(M_{S,L},\R)$: in \S\ref{sec_blowup} we show the following.
\begin{thm}[see \ref{sec_cones}]\label{isointro}
Let  $L\in\Pic(S)$ ample, $L\in\Pi$. 
The  map $\rho\colon H^2(S,\R)\to H^2(M_{S,L},\R)$ is an isomorphism,
$\rho(\ma{E})$ is the cone of effective divisors $\Eff(M_{S,L})$,  and $\rho({\Pi})$ is the cone of movable divisors $\Mov(M_{S,L})$. 
Moreover,  $\rho$ yields an isomorphism between the stability fan $\ST(S)$ in $H^2(S,\R)$, and the fan $\MCD(M_{S,L})$ in  $H^2(M_{S,L},\R)$ given by the Mori chamber decomposition.
\end{thm}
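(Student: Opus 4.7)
The plan is to combine the wall-crossing description of the moduli spaces $M_{S,L}$ established in \S\ref{secmoduli} with the determinant map $\rho$ of \S\ref{sec_determinant}, in order to transport the chamber structure of $\ST(S)$ onto the Mori chamber decomposition of $M_{S,L}$. The argument splits naturally into three parts: linear algebra (the isomorphism), construction of effective and movable divisors (cone inclusions), and matching of the chamber combinatorics.

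First I would check that $\rho$ is an isomorphism of rank-$9$ real vector spaces. The source has dimension $9$ since $S=\Bl_8\pr^2$. For the target, Proposition \ref{propertiesintro} gives that $M_{S,L}$ is a smooth projective rational $4$-fold, and the wall-crossing chain of \S\ref{secmoduli} connecting $L\in\Pi$ to the distinguished polarization $-K_S+2h$ realizes $M_{S,L}$ as a small birational modification of $X=\Bl_8\pr^4$ via Theorem \ref{Xintro}, so $\rk\Pic(M_{S,L})=9$. Injectivity of $\rho$ is then obtained by intersecting with explicit families of curves in $M_{S,L}$ constructed from the universal sheaf on $M_{S,L}\times S$, via the projection formula.

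For the cone inclusions: given $L'\in\ma{E}$, the non-emptiness of $M_{S,L'}$ and the standard construction of effective divisors on moduli of sheaves (Brill--Noether loci of jumping sections, or theta-divisors attached to chosen transversal curves on $S$) yield $\rho(\ma{E})\subseteq\Eff(M_{S,L})$. For $L'\in\Pi$, the wall-crossing analysis realizes $M_{S,L'}$ as a small $\Q$-factorial modification of $M_{S,L}$ on which $\rho(L')$ is ample by variation of GIT, so $\rho(\Pi)\subseteq\Mov(M_{S,L})$. The reverse inclusions, together with the identification of fans, rest on the MDS structure of $M_{S,L}$: each open top-dimensional chamber $\sigma\subset\Pi$ of $\ST(S)$ yields a model $M_\sigma$, and $\rho(\sigma^\circ)$ is contained in (and, being full-dimensional by the first step, spans) the pullback to $H^2(M_{S,L},\R)$ of the ample cone of $M_\sigma$, hence coincides with the corresponding Mori chamber.

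The main obstacle is proving completeness, i.e.\ that every Mori chamber of $M_{S,L}$ arises from a stability chamber in $\ST(S)$, so that $\rho$ induces a genuine bijection of chambers and therefore an isomorphism of fans. This relies on the finiteness of slope walls from Corollary \ref{walls2}, on the MDS property, and on the fact that two adjacent chambers of $\MCD(M_{S,L})$ meeting along a common wall correspond to small $\Q$-factorial modifications, matching precisely the birational transformations produced by crossing a wall interior to $\Pi$ in \S\ref{secmoduli}; the boundary walls of $\Pi$ inside $\ma{E}$ correspond instead to the divisorial-contraction case. Once the top-dimensional chambers are in bijection and $\rho$ is a linear isomorphism, the equalities $\rho(\ma{E})=\Eff(M_{S,L})$, $\rho(\Pi)=\Mov(M_{S,L})$, and $\rho(\ST(S))=\MCD(M_{S,L})$ follow simultaneously.
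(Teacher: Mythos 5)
Your outline parallels the architecture of the paper's Th.~\ref{iso}, but there is a genuine gap at exactly the step you flag as ``the main obstacle'': nothing in your argument shows that $\Eff(M_{S,L})$ is not strictly larger than $\rho(\ma{E})$, i.e.\ that no Mori chamber lies beyond the images of the stability chambers. Finiteness of the slope walls (Cor.~\ref{walls2}), the Mori dream space property, and the matching of adjacent chambers across interior walls only propagate the identification \emph{inside} the cone $\rho(\ma{E})$; they cannot show that the facets of $\rho(\ma{E})$, which lie on the hyperplanes $\rho\bigl((2h+K_S)^{\perp}\bigr)$ for cubics $h$, are actually facets of $\Eff(M_{S,L})$. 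A priori there could be effective divisors, hence further Mori chambers and birational contractions, on the far side of these walls, invisible to variation of the stability condition, and then $\rho$ would not map $\ST(S)$ \emph{onto} $\MCD(M_{S,L})$. Your Brill--Noether/theta-divisor construction only yields the easy inclusion $\rho(\ma{E})\subseteq\Eff(M_{S,L})$ (in the paper this is Lemma~\ref{E_C=}: $\rho(C)=2E_C$ with $E_C$ a fixed prime divisor); the hard inclusion $\Eff(M_{S,L})\subseteq\rho(\ma{E})$ requires an independent input that your proposal never supplies.

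The paper obtains this input by reducing, via the pseudo-isomorphisms of Cor.~\ref{birational} and the compatibility Lemma~\ref{cod1}, to the chamber $\ma{B}_h$, where Mukai's Th.~\ref{X} identifies the moduli space with $X=\Bl_{p_1,\dotsc,p_8}\pr^4$ and Prop.~\ref{explicit} makes $\rho$ explicit; then Castravet--Tevelev's theorem \cite[Th.~2.7]{CoxCT}, asserting that $\Eff(X)$ is generated by the $W_X$-orbit of $E_1$, combined with the Weyl-equivariance of $\rho$ (Lemma~\ref{isometry}) and $\rho(C_1)=2E_1$, gives $\rho(\ma{E})=\Eff(X)$. (An alternative substitute would be to exhibit dominating families of curves whose classes under $\zeta$ generate $\ma{E}^{\vee}$ --- the transforms of general lines in $\pr^4$ under $\eta_h$ and the fibers of Mukai's $\pr^1$-bundle models, as in \ref{moving} --- but some such external ingredient must be made explicit.) Two smaller omissions: for chambers $\ma{C}'\subset\ma{E}$ not contained in $\Pi$ you must verify that in $\rho(L')=(\ph')^*\rho_{\ma{C}'}(L')+\sum_i a_iE_i$ the exceptional divisors appear with strictly positive coefficients $a_i$ (the paper computes these via Cor.~\ref{facet_C} and the projection formula along a factorization into flips and blow-downs), and you must prove that distinct stability chambers yield distinct Mori chambers (the paper does this by comparing universal families); your proposal asserts the bijection of chambers without either verification.
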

This result relies on 
 the classical positivity properties of the determinant line bundle and on Th.~\ref{Xintro}.

We recall that a pseudo-isomorphism is a birational map which is an isomorphism in codimension one, and similarly we define a pseudo-automorphism.
When the polarization $L$ varies in the cone $\Pi$, we get finitely many pseudo-isomorphic moduli spaces $M_{S,L}$, related by sequences of flips. We show that in fact, when $L\in\Pi$, the moduli space $M_{S,L}$ determines the surface $S$; we addressed this question in analogy with  Bayer and Macr\`i's result \cite[Cor.~1.3]{bayermacri}
  on moduli of sheaves on K3 surfaces.
\begin{thm}[see \ref{sectorelli}]\label{pseudo}
Let $S_1$ and $S_2$ be del Pezzo surfaces of degree one, and $L_i\in\Pic(S_i)$ ample line bundles with $L_i\in\Pi_i\subset H^2(S_i,\R
)$, for $i=1,2$. Then $S_1\cong S_2$ if and only if $M_{S_1,L_1}$ and $M_{S_2,L_2}$ are pseudo-isomorphic.
\end{thm}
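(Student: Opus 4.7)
The forward (only if) direction follows from functoriality and the wall-crossing picture inside $\Pi$. An isomorphism $\sigma\colon S_1\to S_2$ induces, by pullback of sheaves, a canonical isomorphism $M_{S_2,L_2}\cong M_{S_1,\sigma^*L_2}$; since $\Pi$ is defined intrinsically from the surface, $\sigma^*L_2\in\Pi_1$. Both $L_1$ and $\sigma^*L_2$ lie in $\Pi_1$, and by Theorem~\ref{isointro} we have $\rho(\Pi_1)=\Mov(M_{S_1,L_1})$, so the two moduli spaces $M_{S_1,L_1}$ and $M_{S_1,\sigma^*L_2}$ are related only by wall-crossings inside the movable cone, i.e.\ by a sequence of flips, and are in particular pseudo-isomorphic.

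For the converse, a pseudo-isomorphism $\phi\colon M_{S_1,L_1}\dashrightarrow M_{S_2,L_2}$ induces by pullback of divisor classes an isomorphism $\phi^*\colon \Pic(M_{S_2,L_2})\to \Pic(M_{S_1,L_1})$ preserving the effective cone, the movable cone, the Mori chamber decomposition, and the canonical class $K_M$. By Theorem~\ref{isointro} the maps $\rho_i$ are group isomorphisms -- the integrality being read off from the determinant construction of \S\ref{sec_determinant} -- and composing yields a lattice isomorphism
$$\psi\;=\;\rho_1^{-1}\circ\phi^*\circ\rho_2\colon \Pic(S_2)\longrightarrow\Pic(S_1)$$
that sends $\ma{E}_2\to\ma{E}_1$, $\Pi_2\to\Pi_1$, and $\ST(S_2)\to\ST(S_1)$. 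Using the explicit formula for $K_{M_{S,L}}$ coming from the determinant construction, $\rho^{-1}(K_M)$ is expressible in terms of $K_S$, and preservation of $K_M$ under $\phi^*$ then forces $\psi(K_{S_2})=K_{S_1}$.

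The heart of the argument, and its main obstacle, is the ensuing Torelli-type step: to show that $\psi$, a priori only a lattice isomorphism preserving $-K$ and the fan structure $\ST$, is induced by an honest isomorphism $S_2\to S_1$. My plan is first to identify the $(-1)$-curves of $S$ intrinsically inside the wall structure of $\ST(S)$ (for instance as the classes defining extremal rays of $\ma{E}$, or as walls separating specific chambers inside $\Pi$), so that $\psi$ automatically sends $(-1)$-curves to $(-1)$-curves; combined with $\psi(K_{S_2})=K_{S_1}$, this pins $\psi$ down to an element of the Weyl group of the degree-one del Pezzo lattice. The remaining Weyl-group ambiguity is then removed by exploiting the extra rigidity carried by $\Pi$ itself: the chamber structure of $\Pi$ given by $\ST(S)$ is strictly finer than any decomposition invariant under the full Weyl group, so preservation of this finer data forces $\psi$ to come from a geometric marking, and hence from an isomorphism $S_2\to S_1$. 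Making this last rigidity step precise -- i.e.\ bookkeeping which walls of $\ST$ inside $\Pi$ are Weyl-invariant and which are not, and exhibiting walls whose preservation pins down the marking -- is where the real work lies.
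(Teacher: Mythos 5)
Your forward direction is fine and is exactly what the paper gets from Cor.~\ref{birational}. The converse, however, has a genuine gap, and it is not just the missing ``real work'' you flag at the end: the Torelli-type step you propose cannot be carried out with the data you have retained. After passing from the pseudo-isomorphism $\phi$ to the lattice isomorphism $\psi=\rho_1^{-1}\circ\phi^*\circ\rho_2\colon\Pic(S_2)\to\Pic(S_1)$ preserving $K_S$, $\ma{E}$, $\Pi$ and $\ST$, you have thrown away all the geometric content of $\phi$ and kept only combinatorial data, and this data does not determine the surface. The walls of $\ST(S)$ are the hyperplanes $(2\ell+K_S)^{\perp}$, $(2C+K_S)^{\perp}$, $(2h+K_S)^{\perp}$ indexed by $(-1)$-curves, conics and cubics, and each of these three sets of classes is a single orbit of the Weyl group $W_S\cong W(E_8)$; hence $\ma{E}$, $\Pi$ and the whole stability fan are $W_S$-invariant, and your claim that the chamber structure of $\Pi$ is ``strictly finer than any decomposition invariant under the full Weyl group'' is false. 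Worse, the pair $(\Pic(S),K_S)$ together with all this fan data is the same abstract object for \emph{every} degree one del Pezzo surface (choose blow-up presentations of both and match standard bases), while such surfaces move in an $8$-dimensional family; so no lattice-theoretic argument can conclude $S_1\cong S_2$, and a generic element of $W(E_8)$ preserving everything you list is not induced by any isomorphism of surfaces (for $S$ general, $\Aut(S)=\{\Id,\iota_S\}$).

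The paper's route keeps the map itself in play. Since $L_i\in\Pi_i$, Cor.~\ref{birational} gives pseudo-isomorphisms $M_{S_i,L_i}\dasharrow Y_i:=M_{S_i,-K_{S_i}}$, so $\phi$ induces a pseudo-isomorphism $f\colon Y_1\dasharrow Y_2$ between Fano $4$-folds, which is automatically an isomorphism because $f^*(-K_{Y_2})=-K_{Y_1}$ is ample. Then Th.~\ref{moduli} recovers $S$ from $Y$ geometrically: an extremal ray of $\Mov(Y)$ lying in the interior of $\Eff(Y)$ is generated by some $H_{Y,h}$ (Lemma \ref{bigray}), whose linear system defines the birational map $\eta_h\colon Y\dasharrow\pr^4$ contracting eight divisors to eight points of $\pr^4$ that are Gale-dual to the points blown up in $\pr^2$ (Prop.~\ref{H_Y}). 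The isomorphism $f$ matches two such rays and hence fits into a commutative square over a projective transformation of $\pr^4$, giving a projective equivalence of the two point configurations, hence of the associated configurations in $\pr^2$, hence $S_1\cong S_2$. If you want to salvage your outline, you must similarly extract from $\phi$ an actual projective equivalence of point sets (or some other moduli-level datum), not merely its action on $\Pic$ and on the cone/fan structure.
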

\noindent Note that the assumption that $L_i\in\Pi_i$ here is essential, because every degree one del Pezzo surface $S$ has a polarization $L_0$ such that $M_{S,L_0}\cong\pr^4$ (see Prop.~\ref{P^4}). 

We also describe the group of pseudo-automorphisms of $M_{S,L}$, when  $L\in\Pi$.
\begin{thm}[see \ref{secauto}]\label{psautM}
Let $L\in\Pic(S)$ ample, $L\in\Pi$. Then the group of pseudo-automorphisms of $M_{S,L}$ is isomorphic to the automorphism group $\Aut(S)$ of $S$, where $f\in\Aut(S)$ acts on $M_{S,L}$ as   $[F]\mapsto[(f^{-1})^*F]$. 
\end{thm}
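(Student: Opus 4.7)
The plan is to show that the explicit map $\Phi\colon\Aut(S)\to\text{PsAut}(M_{S,L})$ sending $f$ to the rational map $[F]\mapsto[(f^{-1})^*F]$ is a group isomorphism, using Theorem~\ref{isointro} as the bridge between both sides on the level of Picard lattices.

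\textbf{Construction.} For $f\in\Aut(S)$, the pullback $(f^{-1})^*$ preserves rank, and the $\Aut(S)$-invariance of $K_S$ and of the intersection form gives $c_1((f^{-1})^*F)=-K_S$ and $c_2((f^{-1})^*F)=2$. A change-of-variables computation yields $\mu_L((f^{-1})^*F)=\mu_{f^*L}(F)$, so $(f^{-1})^*$ induces an isomorphism $M_{S,L}\to M_{S,(f^{-1})^*L}$. The cone $\Pi$ is defined intrinsically from $S$ (see~\ref{secPi}), hence $\Aut(S)$-invariant, so $(f^{-1})^*L\in\Pi$. By the results summarised after Theorem~\ref{isointro}, the moduli $M_{S,L'}$ with $L'\in\Pi$ are all canonically pseudo-isomorphic via sequences of flips across walls of $\ST(S)$; composing with the canonical pseudo-isomorphism $M_{S,(f^{-1})^*L}\dashrightarrow M_{S,L}$ defines $\Phi(f)\in\text{PsAut}(M_{S,L})$. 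Functoriality of pullback, together with $((fg)^{-1})^*=(f^{-1})^*\circ(g^{-1})^*$, makes $\Phi$ a group homomorphism.

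\textbf{Injectivity.} If $\Phi(f)=\Id$ as a pseudo-automorphism, then $\Phi(f)^*=\Id$ on $\Pic(M_{S,L})$, and by Theorem~\ref{isointro} (via $\rho$) also $(f^{-1})^*=\Id_{\Pic(S)}$. For a smooth del Pezzo of degree one, the natural representation $\Aut(S)\to\mathrm{O}(\Pic(S))$ is faithful (the $240$ exceptional curves span $\Pic(S)\otimes\Q$ and cannot all be individually fixed by a non-trivial biregular automorphism), so $f=\Id$.

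\textbf{Surjectivity and main obstacle.} Let $\phi\in\text{PsAut}(M_{S,L})$. Its pullback $\phi^*$ on $\Pic(M_{S,L})$ preserves $\Eff$, $\Mov$, and $\MCD$, and via Theorem~\ref{isointro} corresponds to an automorphism $\alpha$ of $\Pic(S)$ preserving $\ma{E}$, $\Pi$, and $\ST(S)$. The task is to produce $f\in\Aut(S)$ with $(f^{-1})^*=\alpha$; the strategy is to track how the proof of the Torelli-type Theorem~\ref{pseudo} reconstructs $S$ from the data $(M_{S,L},\MCD(M_{S,L}))$ --- presumably via a canonical geometric structure such as a distinguished subvariety of $M_{S,L}$, a universal family, or the degeneracy locus of a natural sheaf on $S\times M_{S,L}$ --- and to transfer $\phi$ to an automorphism $f$ of this reconstructed $S$. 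One then verifies that $\Phi(f)$ and $\phi$ induce the same map on $\Pic(M_{S,L})$; since $M_{S,L}$ is a log Fano Mori dream space (Cor.~\ref{MDS}), its pseudo-automorphism group is discrete and its elements are determined by their Picard action, hence $\phi=\Phi(f)$. The principal obstacle lies precisely in this recovery of $f$ from $\alpha$: merely knowing that $\alpha$ is a lattice isometry preserving $\Nef(S)$ is \emph{not} enough, since the full Weyl group $W(E_8)$ preserves $\Nef(S)$ by the numerical characterisation of $(-1)$-classes. The argument must therefore exploit the finer geometric content (not just the lattice combinatorics) encoded in the stability fan and carry over the constructions used in the proof of Theorem~\ref{pseudo}.
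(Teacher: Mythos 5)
Your construction of $\Phi$ and the injectivity argument are essentially fine, but the surjectivity step, which is the whole content of the theorem, is not actually carried out. You correctly diagnose the difficulty: the induced automorphism $\alpha$ of $\Pic(S)$ preserving $\ma{E}$, $\Pi$ and $\ST(S)$ cannot by itself yield an element of $\Aut(S)$, since the entire Weyl group $W_S$ permutes the $(-1)$-curves, conics and cubics and hence preserves all these cones and fans, while $\Aut(S)$ is tiny. But having named the obstacle you only gesture at a remedy (``presumably via a canonical geometric structure such as a distinguished subvariety, a universal family, or a degeneracy locus'') without producing one, so the proof stops exactly where the real work begins. The paper closes this gap differently: by Cor.~\ref{birational} the pseudo-isomorphism $M_{S,L}\dasharrow Y=M_{S,-K_S}$ identifies the pseudo-automorphism groups of $M_{S,L}$ and of $Y$, and since $Y$ is Fano every pseudo-automorphism of $Y$ preserves the ample class $-K_Y$ and is therefore biregular; this reduces the statement to Th.~\ref{autY}. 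Surjectivity there is proved geometrically, not lattice-theoretically: by Lemma \ref{bigray} the extremal rays of $\Mov(Y)$ lying in the interior of $\Eff(Y)$ are exactly the rays spanned by the classes $H_{Y,h}$, which by Prop.~\ref{H_Y} define the birational maps $\eta_h\colon Y\dasharrow\pr^4$ whose exceptional divisors map to $8$ points associated to the points blown up by $h$. An automorphism $g$ of $Y$ sends $H_{Y,h}$ to a multiple of some $H_{Y,h'}$, hence induces a projective transformation of $\pr^4$ carrying one $8$-point set to the other; via association and uniqueness of the blow-up this produces $f\in\Aut(S)$, and one checks $\psi(f)^*=g^*$ on the basis $H_{Y,h},E_{C_1},\dotsc,E_{C_8}$ of $H^2(Y,\R)$, concluding $\psi(f)=g$ from the injectivity of $\Aut(Y)\to\Aut(H^2(Y,\R))$.

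That last injectivity also exposes a second flaw in your write-up: you assert that because $M_{S,L}$ is a log Fano Mori dream space its pseudo-automorphisms are ``determined by their Picard action''. This is false as a general principle --- $\pr^4$ is a log Fano Mori dream space whose large automorphism group acts trivially on $\Pic$ --- so the faithfulness you need cannot be obtained from Cor.~\ref{MDS}; in the paper it is proved inside Th.~\ref{autY} by the concrete observation that a $g$ acting trivially on $H^2(Y,\R)$ induces a projective transformation of $\pr^4$ fixing $8$ points in general linear position, hence the identity. In short: your homomorphism and injectivity agree in spirit with the paper, but both the recovery of $f$ from a given pseudo-automorphism and the rigidity statement used to conclude $\phi=\Phi(f)$ are missing and require the geometric input (reduction to the Fano model $Y$, the maps $\eta_h$, and association) that the paper supplies.
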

\stepcounter{a}
\setcounter{subsection}{4}
\subsection{Geometry of the Fano model $Y$}
The anticanonical class $-K_S$ in $H^2(S,\R)$ belongs to the cone $\Pi$, and lies in the interior of a cone of the stability fan.
It follows again from the classical properties of the determinant line bundle that for the polarization $L=-K_S$, the moduli space $M_{S,L}$ is Fano. More precisely, we have the following.
\begin{proposition}[Prop.~\ref{fano} and \ref{numinv}, Lemma \ref{deform}]\label{numinvintro}
The moduli space $Y:=M_{S,-K_S}$ is a smooth, rational Fano $4$-fold with index one and  $b_2(Y)=9$, $b_3(Y)=0$, $h^{2,2}(Y)=b_4(Y)=45$, $(-K_Y)^4=13$,  $h^0(Y,-K_Y)=6$,  $h^0(Y,T_Y)=0$, and $h^1(Y,T_Y)=8$.
\end{proposition}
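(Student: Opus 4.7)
The plan is to combine Theorem~\ref{isointro}, the standard theory of determinant line bundles on moduli of sheaves, Riemann--Roch, and the chamber structure of $\ST(S)$ relating $Y$ to $X=\Bl_{p_1,\dotsc,p_8}\pr^4$, together with deformation theory for the last two invariants.

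First, apply Theorem~\ref{isointro} with $L=-K_S\in\Pi$: the map $\rho\colon\Pic(S)\to\Pic(Y)$ is an isomorphism, so $b_2(Y)=b_2(S)=9$, and $\rho(-K_S)$ lies in the interior of a Mori chamber, hence is ample. Invoking the standard formula expressing the canonical class of a smooth moduli space of rank~$2$ torsion-free sheaves with fixed $(c_1,c_2)$ on a surface as a determinant line bundle $\rho(\alpha)$ for an explicit $\alpha\in\Pic(S)$, one checks that $\alpha$ lies in the same stability chamber as $-K_S$ (so $-K_Y$ is ample, giving the Fano property) and that $\alpha$ is primitive in $\Pic(S)$, giving Fano index one.

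With $-K_Y=\rho(\alpha)$ explicit, $(-K_Y)^4$ becomes an intersection number on the moduli space: via the universal sheaf $\mathcal{F}$ on $Y\times S$, monomials in $\mu$-classes push down to polynomials in the Chern classes of $\mathcal{F}$ integrated over $S$, and the standard Donaldson-type calculus yields $13$. Since $Y$ is Fano, Kodaira vanishing gives $h^i(Y,-K_Y)=0$ for $i>0$, so $h^0(Y,-K_Y)=\chi(Y,-K_Y)$; Riemann--Roch on a $4$-fold in terms of $(-K_Y)^4$ and the Chern numbers of $Y$ (computable by the same universal-family method) then yields $6$.

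For the Betti and Hodge numbers $b_3=0$ and $h^{2,2}=b_4=45$ I would exploit the chamber structure. By Theorem~\ref{Xintro}, $X=M_{S,-K_S+2h}$ lies in a different chamber of $\ST(S)$, and has $b_3(X)=0$, $b_4(X)=9$, computed directly from its description as an iterated blow-up of $\pr^4$. The results of \S\ref{secmoduli} describe each wall-crossing between adjacent chambers as a flip with smooth, explicit centre, so the finite sequence of flips from $X$ to $Y$ is tracked cohomologically step by step, accumulating to $b_3(Y)=0$ and $b_4(Y)=45$. Rationality of $Y$ gives $h^{k,0}=0$ for $k>0$, and the same flip-tracking yields $h^{3,1}=0$, so $h^{2,2}=b_4$. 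This chamber-by-chamber bookkeeping is the main obstacle, since it requires identifying the geometry of each flipping locus precisely enough to compute its cohomological contribution.

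Finally, Theorem~\ref{psautM} identifies the pseudo-automorphism group of $Y$ with $\Aut(S)$; ampleness of $-K_Y$ promotes every pseudo-automorphism to an automorphism, so $\Aut(Y)\cong\Aut(S)$, which is finite for any smooth del Pezzo surface of degree one. Hence $h^0(Y,T_Y)=0$. For $h^1(Y,T_Y)=8$, the moduli construction yields a natural Kodaira--Spencer map $H^1(S,T_S)\to H^1(Y,T_Y)$ arising from a relative universal family over the deformation space of $S$; injectivity follows from the Torelli-type statement in Theorem~\ref{pseudo}, and surjectivity from a dimension count once one verifies unobstructedness of deformations of $Y$. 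Since $h^1(S,T_S)=8$ is the dimension of the moduli of del Pezzo surfaces of degree one, the claim follows.
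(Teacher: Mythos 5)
Your overall strategy for the Betti and Hodge numbers (compute on $X$ and track the $36$ explicit flips to $Y$) is exactly the paper's route via Lemma \ref{sequence}, but several of your other steps have genuine gaps. First, the index-one argument does not work as stated: $\rho$ is an isomorphism of \emph{real} vector spaces, not of integral Picard lattices, and it distorts the integral structure (e.g.\ $\rho(C)=2E_C$ for a conic $C$, Lemma \ref{E_C=}, and Rem.~\ref{integral} shows $\tilde\rho=\frac12\rho$ is the map matching integral points, only on $K^{\perp}$). So primitivity of $-K_S$ in $\Pic(S)$ does not transfer to primitivity of $-K_Y=\rho(-K_S)$ in $\Pic(Y)$; the paper instead exhibits a curve of anticanonical degree one (a line in one of the flipped planes $P_\ell$, whose normal bundle is $\ol_{\pr^2}(-1)^{\oplus 2}$). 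Second, the two key numbers $(-K_Y)^4=13$ and $h^0(Y,-K_Y)=6$ are only asserted in your proposal: the ``Donaldson-type calculus'' with $\mu$-classes would require knowing the Chern classes of a universal sheaf, and the Riemann--Roch route needs the Chern numbers of $Y$, neither of which you compute. The paper avoids this entirely by working on $X=\Bl_8\pr^4$, where $(-K_X)^4=625-8\cdot 81=-23$ and $h^0(X,-K_X)=h^0(\pr^4,\ol(5))-8\cdot 15=6$ are elementary, and then observing that each of the $36$ flips raises $(-K)^4$ by one while $h^0(-K)$ is a birational invariant of the pseudo-isomorphism $\xi\colon X\dasharrow Y$.

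The most serious gap is $h^1(Y,T_Y)=8$. Your Kodaira--Spencer argument is circular: injectivity of $H^1(S,T_S)\to H^1(Y,T_Y)$ (even granting that it follows from the Torelli-type Theorem \ref{pseudo}, which itself needs an argument at the level of tangent spaces rather than isomorphism classes) only gives $h^1(Y,T_Y)\geq 8$, and ``surjectivity from a dimension count'' presupposes knowledge of $\dim H^1(Y,T_Y)$, which is precisely what must be proved; unobstructedness does not supply the missing upper bound. The paper instead proves a Riemann--Roch identity for smooth Fano $4$-folds (Lemma \ref{RR}), which together with the invariants of Prop.~\ref{numinv} gives $h^0(T_Y)-h^1(T_Y)=-8$, and then uses finiteness of $\Aut(Y)$ (Th.~\ref{autY}) to get $h^0(T_Y)=0$, hence $h^1(T_Y)=8$. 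Your identification $h^0(T_Y)=0$ via $\Aut(Y)\cong\Aut(S)$ finite agrees with the paper, but without the Riemann--Roch step the value $8$ is not established.
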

Let us notice that, except products of del Pezzo surfaces, there are very few known examples of Fano $4$-folds with $b_2\geq 7$. In particular, to the authors' knowledge, the family of Fano $4$-folds $Y$ is the only known example of Fano $4$-fold with $b_2\geq 9$ which is not a product of surfaces. 
It is a very interesting family, whose construction and 
study  was one of the motivations for this work.

By Th.~\ref{isointro}, the determinant map $\rho\colon H^2(S,\R)\to H^2(Y,\R)$ is an isomorphism and yields a   completely explicitly description of the relevant cones of divisors $\Eff(Y)$, $\Mov(Y)$, and $\Nef(Y)$, and more generally of the Mori chamber decomposition of $\Eff(Y)$. We give here a statement on the cone of effective curves, and refer the reader to \S \ref{fanosection} for the descriptions of the other relevant cones.
\begin{proposition}[see \ref{nefcone}]\label{NEintro}
The cone of effective curves $\NE(Y)$ is isomorphic to the cone of effective curves $\NE(S)$ of $S$, and it has $240$ extremal rays. Each extremal ray yields a small contraction\footnote{A \emph{contraction} is a surjective map with connected fibers $\ph\colon Y\to Z$, where $Z$ is normal and projective.} 
with exceptional locus a smooth rational surface. 

More generally, every contraction $\ph\colon Y\to Z$ with $\dim Z>0$ is birational with $\codim\Exc(\ph)\geq 2$, and  $\Nef(Y)\cap\partial\Mov(Y)=\{0\}$.
\end{proposition}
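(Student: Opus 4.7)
The plan is to use Theorem~\ref{isointro} to translate the Mori-theoretic data of $Y$ into cone geometry on the del Pezzo side, and then conclude via duality and the wall-crossing analysis of \S\ref{secmoduli}.

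Let $C\subset H^2(S,\R)$ be the maximal cone of $\ST(S)$ containing $-K_S$. Since $Y=M_{S,-K_S}$, Theorem~\ref{isointro} gives $\rho(C)=\Nef(Y)$, so the linear isomorphism $\rho$ (after dualizing) identifies $\NE(Y)=\Nef(Y)^\vee$ with $C^\vee\subset H_2(S,\R)$. The key geometric input, to be extracted from the explicit wall analysis of \S\ref{secmoduli}, is that $C$ coincides with $\Nef(S)$: its facets, being the walls of $\ST(S)$ adjacent to $-K_S$, should correspond bijectively to the hyperplanes orthogonal to the $(-1)$-curves of $S$, i.e.\ to the facets of $\Nef(S)$. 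Granting this, $C^\vee=\NE(S)$, and since a degree one del Pezzo surface carries exactly $240$ $(-1)$-curves, each spanning a distinct extremal ray of $\NE(S)$, we obtain the claimed isomorphism $\NE(Y)\cong\NE(S)$ with $240$ extremal rays.

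For each extremal ray $R_i$, the Cone Theorem applied to the Fano $4$-fold $Y$ yields a contraction $\ph_i:Y\to Z_i$. To prove $\ph_i$ is small with smooth rational surface as exceptional locus, I would first establish $\Nef(Y)\cap\partial\Mov(Y)=\{0\}$: via $\rho^{-1}$ this amounts to $\Nef(S)\cap\partial\Pi=\{0\}$, a direct check from the defining inequalities of $\Pi$ and $\Nef(S)$ recalled in \S\ref{delpezzo}. Once this is known, every codimension-one face of $\Nef(Y)$ (other than through the origin) lies in the interior of $\Mov(Y)$, so each $\ph_i$ is a flipping contraction. The exceptional locus of $\ph_i$ is the locus of sheaves becoming strictly semistable along the corresponding wall of $\ST(S)$; an $\Ext$-computation presents such sheaves as non-trivial extensions of a fixed pair of line bundles on $S$, endowing this locus with the structure of a projective bundle, hence a smooth rational surface.

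For the final assertion, $\Mov(Y)\subset\Eff(Y)$ implies $\partial\Eff(Y)\cap\Mov(Y)\subset\partial\Mov(Y)$, so $\Nef(Y)\cap\partial\Mov(Y)=\{0\}$ also excludes any face of $\Nef(Y)$ on $\partial\Eff(Y)$. This rules out both non-trivial fibrations and divisorial contractions, whence every contraction $\ph:Y\to Z$ with $\dim Z>0$ is birational with $\codim\Exc(\ph)\geq 2$. The main obstacle is the identification $C=\Nef(S)$, which requires matching the explicit $(-1)$-curve wall structure of $\Nef(S)$ with the stability walls around $-K_S$ in $\ST(S)$, together with the closely related computation describing the exceptional locus of each of the $240$ resulting flips as a smooth rational surface.
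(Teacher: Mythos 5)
Your overall architecture (transport everything through $\rho$, identify $\NE(Y)$ with the dual of the chamber containing $-K_S$, read off the $240$ flipping contractions from the adjacent walls) is the paper's approach, but it contains a genuine error at its central step: the maximal cone $C$ of $\ST(S)$ containing $-K_S$ is \emph{not} $\Nef(S)$. By Rem.~\ref{fanochamber} and \ref{sec_N} it is the cone $\ma{N}$, whose facets lie on the hyperplanes $(2\ell+K_S)^{\perp}$, not on the hyperplanes $\ell^{\perp}$; indeed $\ma{N}\subsetneq\Pi\subsetneq\ma{E}\subsetneq\Nef(S)$ (the walls of $\ST(S)$ are the $(2\ell+K_S)^{\perp}$, $(2C+K_S)^{\perp}$, $(2h+K_S)^{\perp}$, and no wall is of the form $\ell^{\perp}$). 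The conclusion $\NE(Y)\cong\NE(S)$ with $240$ extremal rays survives, but only because $\ma{N}^{\vee}=\langle 2\ell+K_S\rangle$ is carried onto $\NE(S)$ by the explicit linear automorphism $\gamma\mapsto\gamma+(\gamma\cdot K_S)K_S$ of $H^2(S,\R)$ (see \ref{sec_N}); this extra identification is missing from your argument, and without it the count of extremal rays and their matching with $(-1)$-curves is unjustified.

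The misidentification becomes fatal in your verification of $\Nef(Y)\cap\partial\Mov(Y)=\{0\}$: you translate it via $\rho^{-1}$ into ``$\Nef(S)\cap\partial\Pi=\{0\}$'', which is false — since $\Pi\subset\Nef(S)$ and $\Nef(S)$ is closed, that intersection is all of $\partial\Pi$. The correct translation is $\ma{N}\cap\partial\Pi=\{0\}$, which does hold and is checked by pairing the generators $-2K_S+C$ and $-3K_S+h$ of $\ma{N}$ (see \eqref{N}) against the generators $\ell$ and $2C'+K_S$ of $\Pi^{\vee}$ (see \ref{secPi}): all these pairings are strictly positive, so every nonzero class of $\ma{N}$ lies in the interior of $\Pi$ (and likewise of $\ma{E}$, which gives bigness and rules out fiber-type contractions). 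With this fixed, your deduction that every extremal contraction is small, and that every contraction onto a positive-dimensional target is birational with exceptional locus of codimension at least $2$, goes through as in \ref{nefcone}. Finally, your description of the exceptional loci should be made precise as in the paper: the extremal ray attached to a $(-1)$-curve $\ell$ is generated by the class of a line in $P_\ell\cong\pr^2\subset Y$ (Cor.~\ref{facet_ell}), and the associated small contraction sends $P_\ell$ to a point (Lemma \ref{specialL}$(d)$ together with Cor.~\ref{determinant} and Lemma \ref{slope}), rather than a general ``projective bundle'' heuristic.
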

We also show that $S$ and $Y$ determine each other, and we determine $\Aut(Y)$.
\begin{thm}[see \ref{sectorelli}]\label{moduli}
 Let $S_1$ and $S_2$ be del Pezzo surfaces of degree $1$, and set $Y_i:=M_{S_i,-K_{S_i}}$ for $i=1,2$. Then $S_1\cong S_2$ if and only if $Y_1\cong Y_2$.
\end{thm}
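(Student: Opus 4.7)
The plan is to deduce Theorem \ref{moduli} as an immediate consequence of the stronger Theorem \ref{pseudo}, exploiting the fact that $-K_{S_i}$ sits in the distinguished cone $\Pi_i$.

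For the forward direction, suppose $f\colon S_1\to S_2$ is an isomorphism of del Pezzo surfaces. Then $f^*(-K_{S_2})=-K_{S_1}$, so pull-back of sheaves gives a map $[F]\mapsto [(f^{-1})^*F]$ from $M_{S_2,-K_{S_2}}$ to $M_{S_1,-K_{S_1}}$, and the construction is clearly functorial in $f$, yielding the desired isomorphism $Y_1\cong Y_2$. (This is the same kind of action appearing in Theorem \ref{psautM}.)

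For the backward direction, suppose we are given an isomorphism $Y_1\cong Y_2$. Since an isomorphism is in particular a pseudo-isomorphism, and since $-K_{S_i}$ is ample (the $S_i$ being del Pezzo) and lies in $\Pi_i$ by the remark opening Section B of the introduction, the pair $(L_1,L_2):=(-K_{S_1},-K_{S_2})$ satisfies the hypotheses of Theorem \ref{pseudo}. Applying that theorem produces the required isomorphism $S_1\cong S_2$.

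There is no genuine obstacle beyond what has already been overcome in the proof of Theorem \ref{pseudo}; the anticanonical choice of polarization is precisely what allows one to bypass the caveat illustrated by Proposition \ref{P^4}, where a non-generic polarization $L_0$ can collapse the moduli space to $\mathbb{P}^4$ and thus lose all information about $S$.
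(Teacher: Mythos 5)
Your reduction is circular with respect to the way Theorem \ref{pseudo} is actually established. In the paper, the nontrivial direction of Theorem \ref{pseudo} is proved as follows: a pseudo-isomorphism $M_{S_1,L_1}\dasharrow M_{S_2,L_2}$ with $L_i\in\Pi_i$ induces (via Cor.~\ref{birational}) a pseudo-isomorphism $Y_1\dasharrow Y_2$ between the Fano models, which must be an isomorphism since it pulls back $-K_{Y_2}$ to $-K_{Y_1}$, \emph{and then one invokes Theorem \ref{moduli}} to conclude $S_1\cong S_2$. So Theorem \ref{pseudo} is a consequence of Theorem \ref{moduli}, not the other way around; citing \ref{pseudo} to prove \ref{moduli} proves nothing unless you supply an independent proof of \ref{pseudo} that does not pass through the Fano model — and your proposal contains no such argument.

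The substantive content of Theorem \ref{moduli}, which your proposal never addresses, is the reconstruction of $S$ from the abstract variety $Y$: given an isomorphism $f\colon Y_1\to Y_2$, one uses Lemma \ref{bigray} to show that $f^*H_{Y_2,h_2}$ spans an extremal ray of $\Mov(Y_1)$ interior to $\Eff(Y_1)$, hence is proportional to some $H_{Y_1,h_1}$; this yields a commutative square with the birational maps $\eta_{h_i}\colon Y_i\dasharrow\pr^4$ and a projective transformation of $\pr^4$ matching the images of the exceptional divisors, i.e.\ the two $8$-point sets in $\pr^4$; by Prop.~\ref{H_Y} these point sets are Gale-associated to the point sets in $\pr^2$ defining $S_1$ and $S_2$, so projective equivalence upstairs forces projective equivalence downstairs and $S_1\cong S_2$. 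None of this (the structure of $\Mov(Y)$, the divisors $H_{Y,h}$, association) can be bypassed by an appeal to Theorem \ref{pseudo}. Your forward direction (an isomorphism $S_1\cong S_2$ induces $Y_1\cong Y_2$ by pullback of sheaves) is fine, but that is the trivial half.
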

\begin{thm}[see \ref{secauto}]\label{autY}
The map $\psi\colon\Aut(S)\to\Aut(Y)$ given by $\psi(f)[F]=[(f^{-1})^*F]$, for $f\in\Aut(S)$ and $[F]\in Y$, is a group isomorphism. In particular $\Aut(Y)$ is finite, and if $S$ is general,
then $\Aut(Y)=\{\Id_Y,\iota_Y\}$, where  $\iota_Y\colon Y\to Y$ is induced by the Bertini involution of $S$. 
\end{thm}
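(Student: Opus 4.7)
The plan is to build $\psi$ as a map into $\Aut(Y)$, identify it with the pseudo-automorphism isomorphism of Theorem~\ref{psautM}, and read off the statement for general $S$ from the classical theory of del Pezzo surfaces of degree one.

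First I would check that $\psi$ is a well-defined group homomorphism landing in $\Aut(Y)$, not merely in the group of pseudo-automorphisms. Any $f\in\Aut(S)$ satisfies $f^*(-K_S)=-K_S$, so the exact equivalence $(f^{-1})^*$ on coherent sheaves preserves the Chern invariants $c_1=-K_S$, $c_2=2$ and the Gieseker stability condition determined by $-K_S$; since this equivalence has $f^*$ as a quasi-inverse, the induced transformation $\psi(f)\colon Y\to Y$ is a biregular automorphism with biregular inverse $\psi(f^{-1})$. Functoriality of pullback then yields $\psi(f_1f_2)=\psi(f_1)\psi(f_2)$, so $\psi$ is a homomorphism into $\Aut(Y)$.

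To deduce bijectivity I would invoke Theorem~\ref{psautM} with $L=-K_S\in\Pi$: it provides a group isomorphism from $\Aut(S)$ onto the group $G$ of pseudo-automorphisms of $Y$, defined by exactly the same formula as $\psi$. Combined with the inclusions $\psi(\Aut(S))\subseteq\Aut(Y)\subseteq G$ already established, this forces $\Aut(Y)=G$ and $\psi\colon\Aut(S)\to\Aut(Y)$ to be a group isomorphism. Finiteness of $\Aut(Y)$ now follows from the classical finiteness of $\Aut(S)$ for a smooth del Pezzo of degree one (its action on $\Pic(S)$ preserves $K_S$ and the intersection pairing, hence embeds $\Aut(S)$ into the Weyl group $W(E_8)$).

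It remains to handle the case of general $S$. Every smooth degree one del Pezzo surface carries the Bertini involution $\iota\in\Aut(S)$, realised as the covering involution of the degree two morphism $\ph_{|{-2K_S}|}\colon S\to Q\subset\pr^3$ onto a quadric cone; it is classical that for $S$ general in moduli one has $\Aut(S)=\{\Id_S,\iota\}$. Applying $\psi$ then gives $\Aut(Y)=\{\Id_Y,\iota_Y\}$ with $\iota_Y:=\psi(\iota)$. The whole substantive content of the argument is absorbed into Theorem~\ref{psautM}; granted that input, the main point to verify is simply that $\psi$ takes values in $\Aut(Y)$ rather than only in the larger group of pseudo-automorphisms, which is what forces these two groups of $Y$ to coincide.
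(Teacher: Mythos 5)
Your verification that $\psi$ is a well-defined homomorphism into $\Aut(Y)$ (using $f^*(-K_S)=-K_S$, preservation of the stability condition, and functoriality) is fine, and your treatment of the general-$S$ case, given an isomorphism $\psi$, is also fine. But the core of your argument is circular relative to the paper's logical structure: you deduce bijectivity of $\psi$ from Theorem~\ref{psautM}, whereas in the paper Theorem~\ref{psautM} is itself a two-line consequence of Theorem~\ref{autY} (see \ref{secauto}: the pseudo-automorphism group of $M_{S,L}$ is identified with that of $Y$ via the pseudo-isomorphism of Cor.~\ref{birational}, every pseudo-automorphism of the Fano $Y$ is an automorphism, and then Th.~\ref{autY} is invoked). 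So the step you describe as absorbing ``the whole substantive content'' is precisely the statement you are asked to prove, and your proposal never establishes the hard direction: surjectivity of $\psi$, i.e.\ that \emph{every} automorphism of $Y$ is induced by an automorphism of $S$. Nothing in your outline would produce that; you would need an independent proof of Th.~\ref{psautM}, which you do not give.

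For comparison, the paper proves surjectivity directly. Given $g\in\Aut(Y)$, one uses Lemma~\ref{bigray} (the extremal rays of $\Mov(Y)$ lying in the interior of $\Eff(Y)$ are exactly those spanned by the classes $H_{Y,h}$ for cubics $h$) to find cubics $h,h'$ with $g^*H_{Y,h}$ proportional to $H_{Y,h'}$, hence a commutative square relating $g$ to a projective transformation $g'$ of $\pr^4$ via the maps $\eta_h,\eta_{h'}$. Association (Gale duality) transfers the resulting projective equivalence of the eight points in $\pr^4$ to one of the associated points in $\pr^2$, producing an automorphism of $S$; after correcting $g$ by its image under $\psi$, one may assume $g^*H_{Y,h}=H_{Y,h}$, so $g'$ permutes the eight points, and the induced permutation again descends through association to an automorphism $k$ of $S$. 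Finally, one checks that $\Aut(Y)\to\Aut(H^2(Y,\R))$ is injective and, using the equivariance of $\rho$ (Prop.~\ref{equivariant}), that $\psi(k)^*=g^*$ on the basis $H_{Y,h},E_{C_1},\dotsc,E_{C_8}$, whence $\psi(k)=g$. Some argument of this kind (or another genuine proof that automorphisms of $Y$ come from $S$) is what is missing from your proposal.
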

The description of the automorphism group of $Y$, and of its action on $H^2(Y,\R)$, is also used to show that $Y$ is \emph{fibre-like}, namely that it can appear as a fiber of a Mori fiber space, see \ref{MFS}.

Finally, motivated by the low values of $h^0(Y,-K_Y)$ and $(-K_Y)^4$ (see Prop.~\ref{numinvintro}), and also by the analogy with degree one del Pezzo surfaces, in \S \ref{anti}
we study the base loci of the anticanonical and bianticanonical linear systems of $Y$, and prove the following.
\begin{thm}[see \ref{anticanonical} and \ref{bianticanonical}]\label{system}
The linear system $|-K_Y|$ has a base locus of positive dimension, while
the linear system $|-2K_Y|$ is base point free.
\end{thm}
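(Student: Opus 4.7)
The plan is to leverage the determinant map $\rho\colon\Pic(S)\to\Pic(Y)$ (Th.~\ref{isointro}) and the universal rank~$2$ bundle on $Y\times S$ to translate both statements into questions about linear systems on the del Pezzo surface $S$. First I would compute $\rho^{-1}(-K_Y)\in\Pic(S)$ by applying Grothendieck--Riemann--Roch to the universal bundle; I expect an explicit class $D\in\Pic(S)$ with $\rho(D)=-K_Y$, from which a push-forward along the first projection $Y\times S\to Y$ yields a description of $H^0(Y,-K_Y)$ and $H^0(Y,-2K_Y)$ as spaces of sections of natural sheaves on $S$ built from the universal family and pullbacks of $-K_S$.

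For the first assertion, I would exploit the fact that $|-K_S|$ is a pencil with a single base point $p_0\in S$. The candidate for a positive-dimensional component of $\Bs|-K_Y|$ is a subvariety $Z\subset Y$ parametrizing bundles $[F]$ whose behavior at $p_0$ is non-generic (for example, those admitting a non-zero section vanishing at $p_0$, or having extra cohomology on members of the anticanonical pencil through $p_0$). A universal family argument should show that every section of $-K_Y$ vanishes on $Z$, while a direct parameter count on $Z$ gives $\dim Z\geq 1$. This mirrors exactly the situation on $S$, where the base point of $|-K_S|$ forces the non-base-point-free behavior, transported to $Y$ via the moduli interpretation.

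For the second assertion, I would use that $|-2K_S|$ is base point free on $S$ (indeed it defines a degree $2$ morphism onto a quadric cone in $\pr^3$) and construct, for every point $[F]\in Y$, a section of $-2K_Y$ not vanishing at $[F]$ by combining sections of $-2K_S$ with the universal bundle via the determinant construction. An alternative route would apply a Kawamata-type base-point-freeness theorem, using the explicit Mori chamber decomposition of Th.~\ref{isointro} together with the fact $\Nef(Y)\cap\partial\Mov(Y)=\{0\}$ from Prop.~\ref{NEintro}, which indicates strong positivity for nef divisors on $Y$.

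The main obstacle I foresee is the anticanonical step: verifying simultaneously that the candidate subvariety $Z\subset Y$ is contained in every member of $|-K_Y|$ and that $\dim Z\geq 1$ requires careful cohomological control of the universal family, and a clean bijection between sections of $\rho(D)$ on $Y$ and geometric data on $S$. Once enough sections of $-2K_Y$ are produced from the $S$-side, the bianticanonical part should be considerably more routine, since one only needs to separate individual points of $Y$.
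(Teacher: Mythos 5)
Your plan hinges on a step that does not exist in the paper and that you have no mechanism for: passing from the determinant isomorphism $\rho\colon\Pic(S)\to\Pic(Y)$ to an identification of \emph{global sections}. The map $\rho$ matches divisor classes, cones and chamber structures, but it does not relate $H^0(Y,\rho(D))$ to any space of sections naturally built on $S$; indeed $h^0(S,-K_S)=2$ and $h^0(S,-2K_S)=4$, while $h^0(Y,-K_Y)=6$, and the paper computes this last number not on $S$ but on the birational model $X=\Bl_8\pr^4$ via the blow-up formula (Prop.~\ref{numinv}). Computing sections of determinant line bundles on a moduli space is precisely the hard part, and your proposed GRR/push-forward argument is exactly the missing content. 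Consequently the anticanonical half of your plan has no engine: you never exhibit a concrete $Z$ nor a reason why every section of $-K_Y$ vanishes on it. The paper's actual proof is quite different: it works on $X$, takes the pencil of elliptic normal quintics through the $8$ points, which sweeps out a cubic scroll (Lemma \ref{scroll}); its transform $W'\subset X$ is an elliptic surface disjoint from the flipped curves (Lemma \ref{locus}); a direct computation gives $(-K_X)_{|W'}\sim R+2F$ with $R$ a $(-1)$-section and $R=\Bs|(-K_X)_{|W'}|$ (Lemma \ref{restriction}), so $R\subset\Bs|-K_X|$ and $\xi(R)\subset\Bs|-K_Y|$. Your intuition that the base point of $|-K_S|$ is responsible is loosely consistent with this ($W'$ is the blow-up of $S$ at that base point and $\zeta(R)=-K_S$), but by itself it proves nothing.

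The bianticanonical half also fails as proposed. A Kawamata/Fujita-type effective freeness statement does not apply: writing $-2K_Y=K_Y+3(-K_Y)$, the known $4$-fold results require either the coefficient to be at least $5$ or numerical positivity such as degree at least $4$ on curves, whereas $Y$ contains curves of anticanonical degree $1$ (the lines $\Gamma_\ell\subset P_\ell$) and $(-K_Y)^4=13$; the fact that $\Nef(Y)\cap\partial\Mov(Y)=\{0\}$ gives no control on the base locus of one specific small multiple. The alternative route via ``combining sections of $-2K_S$ with the universal bundle'' again presupposes the unproven section-level correspondence. What the paper does instead is write down explicit members of $|-2K_Y|$: since $E_C=\frac{1}{2}\rho(C)$ and $\iota_S^*C\sim-4K_S-C$, one has $E_C+\iota_Y^*E_C\in|-2K_Y|$ for every conic $C$; intersecting these members and using the explicit flip description of $\xi$ confines $\Bs|-2K_Y|$ to $\bigcup_i(P_{e_i}\cup P_{\iota_S^*e_i})$ (Lemma \ref{partial}), and then the combinatorics of the mutual positions of the surfaces $P_\ell$ (Lemma \ref{excplanes}), applied with several choices of cubic, eliminates every candidate point. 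If you want to salvage your approach, you would first have to prove a genuine isomorphism between $H^0(Y,-K_Y)$ (resp. $H^0(Y,-2K_Y)$) and explicit section spaces attached to $S$ or to $X$; short of that, the paper's geometric route through $X$ and the Bertini involution is the one that actually closes both halves of the statement.
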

\stepcounter{a}
\setcounter{subsection}{9}
\subsection{The blow-up $X$ of $\pr^4$ at $8$ general points}
As we already recalled, association gives a bijection between (general) sets of $8$ points in $\pr^2$  and in $\pr^4$. This gives a natural correspondence between pairs $(S,h)$, where $S$ is a del Pezzo surface of degree one, and $h\in\Pic(S)$ defines a birational map $S\to\pr^2$, and blow-ups $X$ of $\pr^4$ at $8$ general points (see \ref{bijection}). The interplay between $S$, $X$, and $Y$ is the key point of this paper.
This also yields new results on the blow-up $X$ of $\pr^4$ at $8$ general points, which are  mostly treated in \S \ref{sezX}; let us give an overview.

First of all we describe explicitly
the relation among $X$ and $Y$: the Fano $4$-fold $Y$ is obtained from $X$ by flipping the transforms of the lines  in $\pr^4$ through $2$ blown-up points, and of the rational normal quartics through $7$  blown-up points (Lemma \ref{sequence}).

In particular, Th.~\ref{system} on the anticanonical and bianticanonical linear systems on $Y$ is proved using the birational map $X\dasharrow Y$ and studying the corresponding linear systems in $X$. We show that the base locus of $|-K_X|$ contains the transform $R$ of a smooth rational quintic curve in $\pr^4$ through the $8$ blown-up points, and that
the transform of $R$ in $Y$ is contained in the base locus of $|-K_Y|$
 (Cor.~\ref{conclusion} and Lemma \ref{images}, see also Rem.~\ref{macaulay}).
  
We also have the following direct consequence of Th.~\ref{Xintro} and \ref{pseudo}.
\begin{corollary}
Let $q_1^i,\dotsc,q_8^i\in\pr^2$ be such that $S_i:=\Bl_{q_1^i,\dotsc,q_8^i}\pr^2$ is a del Pezzo surface, for  $i=1,2$.
Let $p_1^i,\dotsc,p_8^i\in\pr^4$ be the associated points to $q_1^i,\dotsc,q_8^i\in\pr^2$, and  set $X_i:=\Bl_{p_1^i,\dotsc,p_8^i}\pr^4$, for $i=1,2$. Then $S_1\cong S_2$ if and only if $X_1$ and $X_2$ are pseudo-isomorphic.
\end{corollary}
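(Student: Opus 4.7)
The plan is to realize each $X_i$ as a moduli space of the form $M_{S_i,L_i}$ for a polarization $L_i$ lying in the cone $\Pi_i\subset H^2(S_i,\R)$, and then deduce the statement directly from Theorem~\ref{pseudo}. Since association sends the pair $(S_i,h_i)$, with $h_i\in\Pic(S_i)$ the pull-back of $\ol_{\pr^2}(1)$ under the blow-up $S_i\to\pr^2$, to the blow-up $X_i$, Mukai's Theorem~\ref{Xintro} yields the identification
\[
X_i \;\cong\; M_{S_i,L_i}, \qquad L_i:=-K_{S_i}+2h_i,
\]
for $i=1,2$. This is the input which lets us translate a question about the blow-ups $X_i$ into a question about moduli of sheaves on the $S_i$.

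Next, I would check that $L_i$ is an ample polarization lying in $\Pi_i$. The class $-K_{S_i}$ is ample and, as recalled in the excerpt just before Proposition~\ref{numinvintro}, belongs to $\Pi_i$; moreover $h_i$ is nef on $S_i$, so $L_i$ remains ample. The content of the verification is thus that $\Pi_i$ is stable under adding the nef class $2h_i$ to the element $-K_{S_i}$. This is a short check against the explicit description of $\Pi$ given in Section~\ref{secPi}, and is the only place where one actually touches the combinatorics of the cones $\ma{E}$ and $\Pi$. Since Mukai's theorem produces $X$ as $M_{S,L}$ precisely for this polarization, and since $X$ is visibly birational (in fact pseudo-isomorphic) to the other moduli spaces associated with $\Pi$-polarizations via the wall-crossings of Section~\ref{secmoduli}, I expect this containment to fall out of Mukai's analysis with no extra work.

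With $L_i\in\Pi_i$ established, I would conclude by invoking Theorem~\ref{pseudo}: if $S_1\cong S_2$, then $M_{S_1,L_1}$ and $M_{S_2,L_2}$ are pseudo-isomorphic, hence so are $X_1$ and $X_2$; conversely, a pseudo-isomorphism $X_1\dasharrow X_2$ transports via the above identification to a pseudo-isomorphism $M_{S_1,L_1}\dasharrow M_{S_2,L_2}$, and Theorem~\ref{pseudo} returns $S_1\cong S_2$. The main obstacle, if any, is purely the bookkeeping step $L_i\in\Pi_i$; once this is in place the result is just a juxtaposition of Theorems~\ref{Xintro} and~\ref{pseudo}, as the introduction claims.
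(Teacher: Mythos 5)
Your proposal is correct and is essentially the paper's own argument: the corollary is stated there as a direct consequence of Theorem~\ref{Xintro} (in its refined form, Theorem~\ref{X}, which identifies $X_{(S,h)}$ with $M_{S,\ma{B}_h}$) together with Theorem~\ref{pseudo}. The only point you flag, namely $L_i=-K_{S_i}+2h_i\in\Pi_i$, is exactly what the paper records in Lemma~\ref{L_t} and Notation~\ref{chamb} (the chamber $\ma{B}_h$ containing $-K_S+2h$ satisfies $\ma{B}_h\subset\Pi$), and it indeed follows from a one-line check that $(-K_S+2h)\cdot(2C+K_S)=4\,h\cdot C-3>0$ for every conic $C$.
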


The previous results also give a description of the group of pseudo-automorphisms of $X$; we show (Prop.~\ref{bertiniX}) that  $X$ has a unique non-trivial 
 pseudo-automorphism $\iota_X$, that 
we call 
the \emph{Bertini involution} of $X$.
Via the blow-up map $X\to\pr^4$, this also defines a birational involution $\iota_{\pr^4}\colon\pr^4\dasharrow\pr^4$.

The birational maps $\iota_X$ and $\iota_{\pr^4}$ are in fact classically known, as they can be   defined via the Cremona action of the Weyl group $W(E_8)$ on sets of $8$ points in $\pr^4$, see Dolgachev and Ortland \cite[Ch.~VI, \S 4, and p.~131]{dolgort} and  Du Val \cite[p.~199 and p.~201]{duval}. 
With the standard notation for divisors in $X$ (see \ref{notationP^4}), we have 
$$\iota_X^*H=49H-30\bigl(\sum_iE_i\bigr)$$
(see \cite[(11) on p.~199]{duval}), thus
 $\iota_{\pr^4}$ is defined by the linear system  $V\subset|\ol_{\pr^4}(49)|$ of hypersurfaces having multiplicity at least $30$ at $p_1,\dotsc,p_8$.
As noted in  \cite[p.~201]{duval} and \cite[p.~131]{dolgort}, the classical definitions of $\iota_X$ and $\iota_{\pr^4}$ do not give a geometrical description of these maps. Using the interpretation of $X$ as a moduli space of vector bundles on $S$, we give  a factorization of these maps as smooth blow-ups and blow-downs, see  Prop.~\ref{bertiniX}
and Cor.~\ref{bertiniP^4}.

Finally, as a direct application of Th.~\ref{Xintro} and \ref{isointro}, we  describe the fixed\footnote{A prime divisor $E$ is fixed if it is a fixed component of the linear system $|mE|$ for every $m\geq 1$.}  divisors of $X$ in terms of conics in $S$. It has been shown by Castravet and Tevelev \cite[Th.~2.7]{CoxCT} that
$\Eff(X)$  is generated by the classes of fixed divisors, which form an orbit under the action of the Weyl group $W(E_8)$ on $H^2(X,\Z)$ (see \ref{secisometry}). We get the following.
\begin{proposition}[see \ref{secfixeddiv}]\label{fixeddiv}
Let $X$ be the blow-up of $\pr^4$ at $8$ general points. Then the cone of effective divisors $\Eff(X)$ is generated by the classes of $2160$ fixed divisors, which are in bijection with the classes of conics in a del Pezzo surface $S$ of degree $1$. With the standard notation for divisors in $S$ and $X$ (see \ref{notationP^2} and \ref{notationP^4}), if $C\sim dh-\sum_{i}m_ie_i$ is such a conic, then the corresponding fixed divisor $E_C$ has class:
$$
E_C\sim \frac{1}{2}\bigl(\sum_{i}m_i-d\bigr)\bigl(H-\sum_{i}E_i\bigr)+\sum_{i}m_iE_i.
$$
\end{proposition}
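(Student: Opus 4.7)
The plan is to deduce the proposition by combining Th.~\ref{Xintro}, Th.~\ref{isointro}, and Castravet--Tevelev's description of $\Eff(X)$ from \cite[Th.~2.7]{CoxCT}. At this point in the paper all the heavy lifting has been done, and the proof is essentially a matter of transporting the structure on the cone $\ma{E}\subset H^2(S,\R)$ across the determinant isomorphism $\rho$.

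First I would use association (see \ref{bijection}) together with Th.~\ref{Xintro} to identify $X\cong M_{S,-K_S+2h}$; by the description of $\Pi$ in \ref{secPi}, the polarization $L=-K_S+2h$ lies in $\Pi$, so Th.~\ref{isointro} applies and yields an isomorphism $\rho\colon H^2(S,\R)\to H^2(X,\R)$ carrying $\ma{E}$ onto $\Eff(X)$. In particular $\rho$ induces a bijection between the extremal rays of $\ma{E}$ and those of $\Eff(X)$.

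Next, by \cite[Th.~2.7]{CoxCT} the cone $\Eff(X)$ is generated by a single $W(E_8)$-orbit of fixed prime divisors, and any fixed prime divisor must span an extremal ray of $\Eff(X)$ (it cannot lie in the relative interior of a higher-dimensional face of the effective cone). Hence the classes of fixed divisors are precisely the primitive generators of the extremal rays of $\Eff(X)$, which under $\rho^{-1}$ correspond to the primitive generators of the extremal rays of $\ma{E}$. From the explicit description of $\ma{E}$ recalled in \ref{secE}, these are exactly the classes of conics on $S$, i.e.\ classes $C\in\Pic(S)$ with $-K_S\cdot C=2$ and $C^2=0$. On a degree $1$ del Pezzo surface the classical enumeration of such classes (they form a single $W(E_8)$-orbit in $\Pic(S)$) gives $2160$ of them, matching the stated count.

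Finally, to obtain the formula for $E_C$, I would apply the explicit description of $\rho$ from \S\ref{sec_determinant} to a conic class $C\sim dh-\sum_{i}m_ie_i$. The relations $C^2=0$ and $-K_S\cdot C=2$ give $\sum_i m_i=3d-2$, hence $\sum_i m_i-d=2d-2$ is a nonnegative even integer, and the coefficient $\frac{1}{2}(\sum_i m_i-d)$ in the statement is a nonnegative integer as it should. The remaining step is a direct substitution in the determinant formula, expressing the result in the basis $H,E_1,\dotsc,E_8$ of $\Pic(X)$ coming from association (see \ref{notationP^4}). The main obstacle, such as it is, lies precisely here: one must check the normalization, so that $\rho$ sends the primitive conic class $C$ to the primitive class of the corresponding fixed prime divisor $E_C$ rather than to some positive multiple of it. Given the detailed analysis of $\rho$ already carried out in \S\ref{sec_determinant}, this reduces to short bookkeeping and a comparison of coefficients with the formula in the statement.
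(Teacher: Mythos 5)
Your route is essentially the paper's own (see \ref{secfixeddiv}): identify $X$ with a moduli space $M_{S,\ma{C}}$ for a chamber $\ma{C}\subset\Pi$ (the paper uses Cor.~\ref{SfromX}, $X\cong M_{S,\ma{B}_h}$, where $\ma{B}_h$ is precisely the chamber of $-K_S+2h$), transport $\ma{E}$ onto $\Eff(X)$ via Th.~\ref{iso}, invoke \cite[Th.~2.7]{CoxCT}, and compute in the basis $H,E_1,\dotsc,E_8$ using the explicit form of $\rho$ --- which, incidentally, is Prop.~\ref{explicit} in \S\ref{sec_blowup}, not \S\ref{sec_determinant}. Two smaller remarks: your parenthetical claim that a fixed prime divisor automatically spans an extremal ray of $\Eff(X)$ is asserted rather than proved, but it can be bypassed by counting (the $2160$ classes of the $W_X$-orbit must account for all $2160$ extremal rays of $\rho(\ma{E})$, one on each ray), which is in the spirit of what the paper does inside the proof of Th.~\ref{iso}.

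The step that would fail as you have set it up is the normalization, and it is exactly the content of the $\frac{1}{2}$ in the stated formula. You propose to check that $\rho$ sends the primitive conic class $C$ to the primitive class of the fixed divisor ``rather than to some positive multiple of it''; in fact $\rho(C)=2E_C$ (Lemma \ref{E_C=}), equivalently the map matching the two lattices is $\tilde{\rho}=\frac{1}{2}\rho$ of Lemma \ref{isometry}, not $\rho$ itself (cf.\ Rem.~\ref{integral}: $\rho^{-1}$ does not take integral classes to integral classes, so primitive generators do not correspond under $\rho$). Direct substitution of $C\sim dh-\sum_i m_ie_i$ into Prop.~\ref{explicit} gives $\rho(C)=\bigl(\sum_im_i-d\bigr)\bigl(H-\sum_iE_i\bigr)+2\sum_im_iE_i$, i.e.\ twice the class in the statement, and your ``comparison of coefficients'' would leave an unexplained factor $2$ that the plan as written does not resolve. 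To close it you need one further input: either the paper's Lemma \ref{E_C=}, whose proof rests on the geometric identification $E_{C_i}=E_i$ of Th.~\ref{X} together with $\rho(h-e_1)=2E_1$; or, staying inside your argument, the observation that by \cite[Th.~2.7]{CoxCT} the fixed divisor classes form the $W_X$-orbit of $E_1$ and hence are primitive, while $\frac{1}{2}\rho(C)$ is integral (since $\sum_im_i=3d-2$, so $\sum_im_i-d=2d-2$) and primitive (its coefficients are $d-1$ and $m_i-(d-1)$, and a common divisor of $d-1$ and all the $m_i$ divides $\sum_im_i-3(d-1)=1$), so the fixed divisor on the ray $\R_{\geq 0}\,\rho(C)$ must equal $\frac{1}{2}\rho(C)$. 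With that ingredient added, the rest of your argument goes through and coincides with the paper's proof.
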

\noindent We also give  generators for the semigroup of integral effective divisors of $X$ (Lemma \ref{integralgenerators}), by applying a result from \cite{CoxCT}.

We conclude by 
mentioning that our study of the blow-up $X$ and of its Fano model $Y$ is analogous to
the study in \cite{fanomodel} of the Fano model of 
the blow-up of $\pr^n$ in $n+3$ general points, for $n$ even. 

We work over the field of complex numbers.

\medskip

\noindent{\bf Acknowledgments. }
We would like to thank Paolo Cascini, Ana-Maria Castravet, Daniele Faenzi, Emanuele Macr\`i, John Ottem, Zsolt Patakfalvi, and Filippo Viviani for interesting discussions related to this work, 
 and the referees for useful comments.

The first-named author has been partially supported by the PRIN 2015 ``Geometria delle Variet\`a Algebriche''.
The second-named author has been supported by the FIRB 2012 ``Moduli spaces and their applications''.
The third-named author has been supported by the SNF grant ``Algebraic subgroups of the Cremona groups'' and the DFG grant ``Gromov-Witten Theorie, Geometrie und Darstellungen'' (PE 2165/1-2). The second-named and third-named authors are grateful to the University of Torino for the warm hospitality provided during part of the preparation of this work. 
\section{Preliminaries on del Pezzo surfaces of degree $1$ and association}\label{delpezzo}
\renewcommand{\thesubsection}{\arabic{section}.\arabic{subsection}}
\noindent Let  $S$ be a del Pezzo surface of degree $1$; we  always assume that $S$ is smooth.
In this section we collect the properties of $S$ that are needed in the sequel, fix the relevant notation and terminology, and introduce the convex rational polyhedral cones 
$$\ma{N}\subset\Pi\subset\ma{E}\subset\Nef(S)\subset H^2(S,\R)$$
that play a crucial role for the rest of the paper. We also recall the relation via association among $S$ and the blow-up $X$ of $\pr^4$ in $8$ points.

We refer the reader to \cite[Ch.~8]{dolgachevbook} for the classical properties of $S$.
 In particular, we recall that $H^2(S,\Z)$ has  a lattice structure given by the intersection form, and the sublattice $K_S^{\perp}$ is an $E_8$-lattice; we denote by $W_S\cong W(E_8)$ its Weyl group of automorphisms.

With a slight abuse of notation, we will often write $C\in H^2(S,\R)$ for the class of a curve $C\subset S$, and similarly for divisors in higher dimensional varieties.
\subsection{The cones $\NE(S)$ and $\Nef(S)$}
The cone of effective curves $\NE(S)\subset H^2(S,\R)$ is generated by the classes of the $240$ $(-1)$-curves, on which  $W_S$ acts transitively. 

A {\bf conic} $C$ on $S$ is a smooth rational curve  such that $-K_S\cdot C=2$ and $C^2=0$; every such conic yields a conic bundle $S\to\pr^1$ having $C$ as a fiber. There are $2160$ conics in $H^2(S,\Z)$, on which $W_S$ acts transitively \cite[\S 8.2.5]{dolgachevbook}.

We will repeatedly use the explicit description of $(-1)$-curves and conics of $S$ once a birational map $S\to\pr^2$ is fixed; see \cite[Prop.~8.2.19, \S 8.2.6, \S 8.8.1]{dolgachevbook}.

The dual cone of $\NE(S)$ is the cone of nef divisors $\Nef(S)$, which has two types of generators. 
 The first  are the conics, which lie on the boundary of $\NE(S)$, and correspond to conic bundles $S\to\pr^1$. 
The second type of generators are big and correspond to birational maps $\sigma\colon S\to\pr^2$, which realise $S$ as the blow-up of $\pr^2$ in $8$ distinct points; the corresponding generator of $\Nef(S)$ is $h:=\sigma^*\ol_{\pr^2}(1)$; we call such $h$ a {\bf   cubic}. 
There are  $17280$   cubics in $H^2(S,\Z)$, which form an orbit under the action of  $W_S$ (see \cite[\S 8.2.5, 8.2.6, 8.8.1]{dolgachevbook}). 
Summing-up we have: 
$$\Nef(S)=\langle C,h\,|\,C\text{ a conic and $h$ a   cubic}\rangle\subset H^2(S,\R),$$
where $\langle v_1,\dotsc,v_r\rangle$ denotes the convex cone generated by $v_1,\dotsc,v_r$ in a real vector space.
\subsection{Notation for $S\to\pr^2$}\label{notationP^2} Given a   cubic $h$, we use the following notation:
\begin{enumerate}[$\bullet$]
\item
 $\sigma\colon S\to \pr^2$ is the birational map defined by $h$ 
\item $q_1,\dotsc,q_8\in\pr^2$ are the points blown-up by $\sigma$
\item $e_i\subset S$ is the exceptional curve over $q_i$, for $i=1\dotsc,8$
\item $e:=e_1+\cdots+e_8$, so that $-K_S=3h-e$
\item $C_i\subset S$ is the  transform of a general line through $q_i$, so that $C_i\sim h-e_i$, for $i=1,\dotsc,8$
\item $\ell_{ij}\subset S$ is the transform of the line $\overline{q_iq_j}\subset\pr^2$, so that $\ell_{ij}\sim h-e_i-e_j$, for $1\leq i<j\leq 8$.
\end{enumerate}
\subsection{The cone $\ma{E}$}\label{secE}
We are interested in the subcone of $\Nef(S)$ generated  by the conics: 
$$
\ma{E}:=\langle C\,|\,C\text{ a conic}\rangle\subset H^2(S,\R).
$$ 
Since $\ma{E}\subsetneq\Nef(S)$, dually we have $\ma{E}^{\vee}\supsetneq\NE(S)$, for the dual cone $\ma{E}^{\vee}$ of $\ma{E}$. We have:
\begin{equation}\label{dualE}
\ma{E}^{\vee}=\langle \ell, 2h+K_S\,|\,\ell\text{ a $(-1)$-curve and $h$ a   cubic}\rangle.\end{equation}
Indeed, given a   cubic $h$, $(2h+K_S)^{\perp}\cap \ma{E}$ is a simplicial facet\footnote{A facet is a face of codimension one.} of $\ma{E}$, generated by the conics $C_i$ for $i=1,\dotsc,8$ (notation as in \ref{notationP^2}). On the other hand, given a $(-1)$-curve $\ell$, $\ell^{\perp}\cap\ma{E}$ is a non-simplicial facet of $\ma{E}$, generated by the $126$ conics disjoint from $\ell$. 

It follows from \eqref{dualE} that the cone $\ma{E}$ can equivalently be described as:
\begin{equation}\label{E}
\ma{E}=\{L\in \Nef(S)\,|\,L\cdot (2h+K_S)\geq 0\text{ for every   cubic }h\}.
\end{equation}
\begin{remark}\label{boundaryample}
Every $L\in\Pic(S)$ contained in the interior of $\ma{E}$ is ample.
If $L\in\Pic(S)$ is contained in the boundary of $\ma{E}$, then $L$ is ample if and only if $L$ is in the relative interior of a facet $(2h+K_S)^{\perp}\cap \ma{E}$, where $h$ is a   cubic. 
\end{remark}
\subsection{The cone $\ma{N}$}\label{sec_N} We set:
$$
\ma{N}=\{L\in H^2(S,\R)\,|\,L\cdot (2\ell+K_S)\geq 0\text{ for every $(-1)$-curve }\ell\},$$
equivalently $\ma{N}$ is defined via its dual cone:
$$\ma{N}^{\vee}=\langle 2\ell+K_S\,|\,\ell\text{ a $(-1)$-curve}\rangle.$$
The cone $\ma{N}^{\vee}$ has $240$ extremal rays, and is isomorphic to $\NE(S)$ via the automorphism of $H^2(S,\R)$ given by $\gamma\mapsto \gamma+(\gamma\cdot K_S)K_S$. This is a self-adjoint map and coincides with its transpose. Dually, $\Nef(S)$ is isomorphic to  $\ma{N}$  via the same linear map.
This gives a description of  the generators of $\ma{N}$:
\begin{equation}\label{N}
\ma{N}=\langle -2K_S+C,-3K_S+h\,|\,C\text{ a conic and $h$ a   cubic}\rangle.
\end{equation}
\begin{lemma}\label{nef}
The cone $\ma{N}^{\vee}$ contains all $(-1)$-curves; equivalently, every $L\in\ma{N}$ is nef.
\end{lemma}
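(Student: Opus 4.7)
The plan is to reduce the statement to the effectivity of $-K_S$ on $S$, via the isomorphism $T\colon H^2(S,\R)\to H^2(S,\R)$, $T(\gamma)=\gamma+(\gamma\cdot K_S)K_S$, already introduced in \ref{sec_N}.

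The first step is to note that the two formulations in the lemma are tautologically equivalent. Indeed $\ma{N}$ is defined as the dual cone of $\ma{N}^{\vee}$, while $\Nef(S)$ is by definition the dual of $\NE(S)$, which is generated by the classes of the $(-1)$-curves; hence the inclusion $\ma{N}\subset\Nef(S)$ is the reverse inclusion $\NE(S)\subset\ma{N}^{\vee}$, i.e.\ the assertion that every $(-1)$-curve lies in $\ma{N}^{\vee}$.

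To prove this last inclusion, I would use the isomorphism $T(\ma{N}^{\vee})=\NE(S)$ recorded in \ref{sec_N}: one checks directly, using $\ell\cdot K_S=-1$ and $K_S^2=1$, that $T(2\ell+K_S)=2\ell$ for every $(-1)$-curve $\ell$, so $T$ sends the generators of $\ma{N}^{\vee}$ onto positive multiples of the generators of $\NE(S)$. Thus, for a $(-1)$-curve $\ell'$, showing that $\ell'\in\ma{N}^{\vee}$ amounts to showing that
$$T(\ell')=\ell'+(\ell'\cdot K_S)K_S=\ell'-K_S$$
lies in $\NE(S)$.

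This final step is immediate and is really the only input: on a degree one del Pezzo surface $-K_S$ is represented by an effective divisor (in fact $|-K_S|$ is a pencil, since by Riemann--Roch $h^0(-K_S)=K_S^2+1=2$), while $\ell'$ is itself effective, so their sum $\ell'-K_S$ is effective and in particular lies in $\NE(S)$. There is no genuine obstacle here, only the bookkeeping with the duality and with the map $T$; if one preferred to avoid invoking $T$, essentially the same argument can be phrased directly, using the Bertini involution $\iota$ on $S$ to pair $(-1)$-curves $\ell,\iota^*\ell$ with $\ell+\iota^*\ell\sim -2K_S$, so that $(2\ell+K_S)+(2\iota^*\ell+K_S)=-2K_S$ gives $-K_S\in\ma{N}^{\vee}$ and hence $\ell'=\tfrac12(2\ell'+K_S)+\tfrac12(-K_S)\in\ma{N}^{\vee}$ for every $(-1)$-curve $\ell'$.
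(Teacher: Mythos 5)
Your proof is correct, but it is organized differently from the paper's. The paper argues entirely inside $\ma{N}^{\vee}$: given a $(-1)$-curve $\ell$, it chooses a cubic $h$ with $\ell=e_1$ and the $(-1)$-curve $\ell'\sim 3h-2e_2-e_3-\cdots-e_8$, and writes the explicit convex combination $\ell\sim\frac{1}{2}(2e_2+K_S)+\frac{1}{2}(2\ell'+K_S)$ of two generators of $\ma{N}^{\vee}$; this is nothing but an explicit witness that $\ell-K_S$ is a sum of two $(-1)$-curves. You instead invoke the linear isomorphism $T(\gamma)=\gamma+(\gamma\cdot K_S)K_S$ from \ref{sec_N}, check $T(2\ell+K_S)=2\ell$ so that $T(\ma{N}^{\vee})=\NE(S)$, and reduce the claim to $\ell'-K_S\in\NE(S)$, which you get from effectivity of $-K_S$ (here you implicitly use that $\NE(S)$ contains every effective class, which is how the paper defines it, so this is fine); your Bertini-involution variant, $\ell+\iota_S^*\ell\sim-2K_S$ hence $-K_S\in\ma{N}^{\vee}$ and $\ell'=\frac{1}{2}(2\ell'+K_S)+\frac{1}{2}(-K_S)$, is also correct and is perhaps the cleanest formulation, since it needs no choice of cubic. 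What your route buys is the elimination of the explicit choice of curves, at the price of appealing to the description of $\NE(S)$ (or to $h^0(-K_S)=2$); what the paper's buys is self-containedness within the generators of $\ma{N}^{\vee}$. One small expository caveat: the relation $\iota_S^*\ell\sim-2K_S-\ell$ is recorded only later, in \eqref{w_0} of \ref{bertiniS}, so if you use that variant you should either cite the classical reference directly or move the argument after that subsection; there is no circularity, since that fact does not depend on Lemma \ref{nef}.
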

\begin{proof}
 Let $\ell$ be a $(-1)$-curve, and $h$ a   cubic such that $\ell=e_1$ (notation as in \ref{notationP^2}). Let $\ell'$ be the $(-1)$-curve such that
$\ell'\sim 3h-2e_2-e_3-\cdots-e_8$.
Then $\ell\in\ma{N}^{\vee}$ because:
\begin{equation*}
\ell=e_1\sim e_2+\ell'+K_S=\frac{1}{2}\left(2e_2+K_S+2\ell'+K_S\right).
\qedhere\end{equation*}
\end{proof}
\subsection{The cone $\Pi$}\label{secPi}
We will also consider the following cone, defined in \cite[p.~8]{mukaiADE}:
$$\Pi=\{L\in \Nef(S)\,|\,L\cdot (2C+K_S)\geq 0\text{ for every conic }C\},$$
equivalently $\Pi$ is defined via its dual cone:
$$\Pi^{\vee}=\langle \ell, 2C+K_S\,|\,\ell\text{ is a $(-1)$-curve and $C$ is a conic}\rangle.$$
\begin{lemma}
We have: $\ma{N}\subset\Pi\subset\ma{E}\subset\Nef(S)$.
\end{lemma}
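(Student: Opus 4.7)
The plan is to prove the three inclusions separately. The outer inclusion $\ma{E}\subset\Nef(S)$ is built into the definition of $\ma{E}$ given in \S\ref{secE} (either from the generator description or the characterization \eqref{E}), so nothing is required there.

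For $\Pi\subset\ma{E}$, I would use the characterization \eqref{E} and check that every $L\in\Pi$ satisfies $L\cdot(2h+K_S)\geq 0$ for every   cubic $h$. Fix such an $h$ and, with the notation of \ref{notationP^2}, consider the eight conics $C_i\sim h-e_i$. A short calculation using $-K_S=3h-e$ produces an identity of the form
$$6(2h+K_S)=\sum_{i=1}^{8}(2C_i+K_S)+2h$$
in $\Pic(S)$. Pairing with $L$, each $L\cdot(2C_i+K_S)$ is $\geq 0$ since $L\in\Pi$, and $L\cdot h\geq 0$ since $L\in\Nef(S)$, so $L\cdot(2h+K_S)\geq 0$ as desired.

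For $\ma{N}\subset\Pi$, Lemma \ref{nef} already gives $\ma{N}\subset\Nef(S)$, so it suffices to show $L\cdot(2C+K_S)\geq 0$ for every conic $C$ and every $L\in\ma{N}$. The geometric input I would use is that every conic class on $S$ is realised by a reducible fibre $\ell+\ell'$ of the associated conic bundle $S\to\pr^1$, with $\ell,\ell'$ two $(-1)$-curves meeting transversely at one point — this can be seen concretely in the $\pr^2$-picture, where e.g.\ $h-e_i\sim\ell_{ij}+e_j$. Writing
$$2C+K_S=(2\ell+K_S)+(2\ell'+K_S)+(-K_S),$$
the first two summands pair non-negatively with $L$ by hypothesis, while the third does because $-K_S$ is ample and $L$ is nef.

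The main obstacle is simply spotting the two linear identities above; once they are in hand, both non-trivial inclusions reduce to a single pairing with $L$ together with three elementary non-negativity facts. The first identity is the slightly fiddly one, because one needs a strictly positive coefficient on $2h$, which forces the factor of $6$ and the leftover multiple of $h$; the second identity is essentially tautological given the reducible-fibre decomposition.
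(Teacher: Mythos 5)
Your proposal is correct and is essentially the paper's argument run in primal form: the paper proves the dual inclusions $\ma{E}^{\vee}\subset\Pi^{\vee}\subset\ma{N}^{\vee}$ by checking that $2h+K_S\in\Pi^{\vee}$ and $2C+K_S\in\ma{N}^{\vee}$ (using \eqref{dualE} and Lemma \ref{nef}), and your two identities, once paired with $L$, verify exactly these memberships. The only difference is cosmetic: for $\Pi\subset\ma{E}$ the simpler relation $2h+K_S=(2C_i+K_S)+2e_i$ already suffices, avoiding the factor $6$ and the extra appeal to $L\cdot h\geq 0$.
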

\begin{proof}
We show the dual inclusions $\ma{N}^{\vee}\supset\Pi^{\vee}\supset\ma{E}^{\vee}$. It
is easy to see that for every   cubic $h$, $2h+K_S\in\Pi^{\vee}$,
hence $\ma{E}^{\vee}\subset \Pi^{\vee}$ by \eqref{dualE}.
Similarly, given a conic $C$, it is easy to see that $2C+K_S\in\ma{N}^{\vee}$, so that $\Pi^{\vee}\subset\ma{N}^{\vee}$ by Lemma \ref{nef}.
\end{proof}
In \ref{movable} we will show that the one-dimensional faces of $\Pi$ contained in the interior of $\ma{E}$ are generated by  $-K_S+3h$, where
$h$ is
a    cubic.
\subsection{The Bertini involution}\label{bertiniS}
Let  $\iota_S\colon S\to S$ be the Bertini involution (see \cite[\S 8.8.2]{dolgachevbook}); for $S$ general, 
 $\iota_S$ is the unique non-trivial automorphism of $S$.
The pull-back $\iota_S^*$ acts on $\Pic(S)$ (and on $H^2(S,\R)$) by  fixing
 $K_S$ and acting as $-1$ on $K_S^{\perp}$. This yields:
\begin{equation}
\label{w_0}
\iota_S^*\gamma=2(\gamma\cdot K_S)K_S-\gamma\quad\text{ for every $\gamma\in H^2(S,\R)$.}
\end{equation}
\subsection{Other preliminary elementary properties of $S$}
\begin{remark}\label{positions}
Let $\ell,\ell'$ be $(-1)$-curves in $S$.
\begin{enumerate}[$(a)$]
\item
If $\ell\cdot\ell'=0$, then $\ell\cap\ell'=\emptyset$, and there exists a   cubic $h$  such that $h\cdot \ell=h\cdot\ell'=0$. 
\item
If $\ell\cdot\ell'=2$, then there exist (notation as in \ref{notationP^2}):
\begin{enumerate}[--]
\item  a   cubic $h$ such that $\ell=\ell_{12}\sim h-e_1-e_2$ and $\ell'\sim 2h-e_4-\cdots-e_8$
\item a   cubic $h'$ such that $\ell=e'_1$ and $\ell'\sim 3h'-2e_1'-e_2'-\cdots-e_7'$.
\end{enumerate}  
\item If $\ell\cdot\ell'\geq 3$, then $\ell\cdot\ell'= 3$ and $\ell'=\iota^*_S\ell\sim -2K_S-\ell$.
\end{enumerate}
\end{remark}
\begin{lemma}\label{cubics}
Let $h,h'$ be   cubics in $S$. Then $h\cdot h'\leq 17$, and equality holds if and only if $h'=\iota_S^*h\sim -6K_S-h$.
\end{lemma}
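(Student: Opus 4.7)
The plan is to reduce the estimate to a Cauchy-Schwarz inequality inside the negative definite part of $H^2(S,\mathbb{R})$. Since the intersection form has signature $(1,8)$ and $-K_S$ is a positive class with $K_S^2=1$, the orthogonal complement $K_S^{\perp}$ is negative definite (in fact the $-E_8$ lattice recalled at the beginning of \S \ref{delpezzo}), and one has an orthogonal decomposition $H^2(S,\mathbb{R})=\mathbb{R}K_S\oplus K_S^{\perp}$.

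I would first decompose each cubic along this splitting. From $h\cdot K_S=-3$ and $K_S^2=1$, writing $h=-3K_S+h_0$ with $h_0\in K_S^{\perp}$ and imposing $h^2=1$ forces $h_0^2=-8$; likewise $h'=-3K_S+h'_0$ with $(h'_0)^2=-8$. A direct expansion then yields
$$h\cdot h'=9+h_0\cdot h'_0.$$

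The Cauchy-Schwarz inequality applied to the positive definite pairing $-\langle\cdot,\cdot\rangle$ on $K_S^{\perp}$ gives $|h_0\cdot h'_0|\leq \sqrt{-h_0^2}\sqrt{-(h'_0)^2}=8$, which immediately yields $h\cdot h'\leq 17$. For the equality case $h_0\cdot h'_0=8$, the Cauchy-Schwarz equality condition forces $h'_0=\lambda h_0$ for some $\lambda\in\mathbb{R}$; the common value $h_0^2=(h'_0)^2=-8$ then gives $\lambda=\pm 1$, and the sign of the pairing selects $\lambda=-1$. Substituting $h'_0=-h_0$ back, one obtains $h'=-3K_S-h_0=-6K_S-h$, which by the formula \eqref{w_0} is exactly $\iota_S^* h$.

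There is no real obstacle here; the whole argument is essentially one line of linear algebra once the orthogonal splitting is set up. The only point that requires a little care is tracking signs when translating Cauchy-Schwarz on the negative definite lattice $K_S^{\perp}$ into a statement about intersection numbers on $S$.
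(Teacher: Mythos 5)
Your argument is correct, and it is a genuinely different implementation of the same underlying idea (a Cauchy--Schwarz estimate in a definite space). The paper fixes the blow-up presentation attached to $h$, writes $h'\sim mh-\sum_i a_ie_i$ with $m=h\cdot h'$, extracts $\sum a_i^2=m^2-1$ and $\sum a_i=3(m-1)$ from $(h')^2=1$ and $-K_S\cdot h'=3$, and applies Cauchy--Schwarz to the coefficient vector $(a_1,\dotsc,a_8)$ against $(1,\dotsc,1)$, getting $(m-1)(m-17)\leq 0$; in the equality case this forces $a_1=\cdots=a_8=6$, so the extremal class appears explicitly as $17h-6e=-6K_S-h$. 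You instead use the intrinsic orthogonal splitting $H^2(S,\R)=\R K_S\oplus K_S^{\perp}$, write $h=-3K_S+h_0$, $h'=-3K_S+h'_0$ with $h_0^2=(h'_0)^2=-8$, and apply Cauchy--Schwarz for the positive definite form $-\langle\cdot,\cdot\rangle$ on $K_S^{\perp}$, with the equality case handled by linear dependence; the identification of the extremal class with $\iota_S^*h$ then follows from \eqref{w_0}, exactly as in the paper. Your route is coordinate-free and makes clear that only the numerical data $h^2=(h')^2=1$, $K_S\cdot h=K_S\cdot h'=-3$ and the negative definiteness of $K_S^{\perp}$ are used (integrality plays no role), whereas the paper's coordinate computation has the small advantage of producing the explicit expression $h'\sim 17h-6e$ that is reused later (e.g.\ in \ref{secL_t}). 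The only cosmetic omission is the trivial converse check that $h'=-6K_S-h$ indeed gives $h\cdot h'=17$, which the paper also leaves implicit.
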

\begin{proof}
Consider the   cubic $h$;  notation as in \ref{notationP^2}.
We have
$h'\sim mh-\sum_{i} a_ie_i$
where $m,a_i\in\Z$ and $m=h\cdot h'$. Since $(h')^2=1$ and $-K_S\cdot h'=3$, we get
$$\sum a_i^2=m^2-1\quad\text{and}\quad \sum a_i=3(m-1).$$
Now the inequality $(\sum_{i}a_i)^2\leq 8\sum_{i}a_i^2$ (given by Cauchy-Schwarz  applied to $(a_1,\dotsc,a_8)$ and $(1,\dotsc,1)$)
yields $(m-1)(m-17)\leq 0$, hence $m\leq 17$.  If $m=17$, then $a_1=\cdots=a_8=6$, thus $h'\sim 17h-6(e_1+\cdots+e_8)=-6K_S-h=\iota_S^*h$ (see \eqref{w_0}).
\end{proof}
\begin{remark}\label{easy}
Let $L\in\Pic(S)$ be nef and such that $-K_S\cdot L=2$. Then 
 $L$ is one of the following classes: $\{-2K_S,C,-K_S+\ell\,|\,\text{$C$ a conic, $\ell$ a $(-1)$-curve}\}$.

Indeed by vanishing and Riemann-Roch we have $h^0(S,L)>0$. The semigroup of effective divisors of $S$ is generated by $(-1)$-curves and $-K_S$ \cite[Cor.~3.3]{batyrevpopov}, and since $-K_S\cdot L=2$, then $L$ is either  $-2K_S$, $-K_S+\ell$, or $\ell+\ell'$. In this last case, since $L$ is nef, we must have $\ell\cdot\ell'>0$. If $\ell\cdot\ell'=1$, then $\ell+\ell'$ is a conic, and if  $\ell\cdot\ell'\geq 3$, then $\ell+\ell'\sim -2K_S$ (see Rem.~\ref{positions}$(c)$). 
Finally, if $\ell\cdot\ell'=2$, then by Rem.~\ref{positions}$(b)$
there exists a   cubic $h$ 
such that $\ell+\ell'\sim 3h-e_1-e_2-e_4-\cdots-e_8\sim -K_S+e_3$ (notation as in \ref{notationP^2}).
\end{remark}
\subsection{Association}\label{introassociation}
We refer the reader to \cite{dolgort,GaleEP} for the definition and main properties of association, or Gale duality; here we just give a brief outline.

Consider the natural action of $\Aut(\pr^2)$ on $(\pr^2)^8$, and similarly of 
 $\Aut(\pr^4)$ on $(\pr^4)^8$. In both cases, every semistable element is also stable \cite[Ch.~II, Cor.\ on p.~25]{dolgort}.
Let us consider the GIT quotients $P^8_2:=((\pr^2)^8)^s/\Aut(\pr^2)$ and 
$P^8_4:=((\pr^4)^8)^s/\Aut(\pr^4)$.

  Association is an algebraic construction which  yields an isomorphism $a\colon P^8_2\cong P^8_4$
 \cite[Ch.~III, Cor.\ on p.~36]{dolgort}. In particular,
to every stable ordered set of $8$ points in $\pr^2$, we associate a stable ordered set of $8$ points in $\pr^4$, unique up to projective equivalence, and viceversa. Moreover, the same bijection can be given for non-ordered sets of points \cite[Ch.~III, \S 1]{dolgort}. We also need the following.
\begin{lemma}\label{generallinear}
Let $q_1,\dotsc,q_8\in\pr^2$ be in general linear position, and let $p_1,\dotsc,p_8\in\pr^4$ be the associated points. Then $p_1,\dotsc,p_8$ are in general linear position.
\end{lemma}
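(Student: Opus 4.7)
The plan is to argue via the standard linear-algebra/Gale-duality description of association. After choosing lifts $\tilde{q}_i\in\C^3$ and $\tilde{p}_i\in\C^5$ of the $q_i$ and $p_i$, we may organise them into matrices
\[
A=(\tilde{q}_1\,|\,\cdots\,|\,\tilde{q}_8)\in\operatorname{Mat}_{3\times 8}(\C),\qquad B=(\tilde{p}_1\,|\,\cdots\,|\,\tilde{p}_8)\in\operatorname{Mat}_{5\times 8}(\C).
\]
The content of association, as recalled in \cite[Ch.~III]{dolgort}, is that the lifts can be chosen so that $BA^T=0$ (possibly after replacing $B$ by $BD$ for an invertible diagonal $D$, which does not affect the statement about ranks). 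Since the $q_i$ span $\pr^2$ and the $p_i$ span $\pr^4$, we have $\rk A=3$ and $\rk B=5$, so the row span of $A$ coincides with $\ker B\subset\C^8$, which has dimension $3$.

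The statement ``$q_1,\dotsc,q_8$ are in general linear position in $\pr^2$'' means that every $3$ columns of $A$ are linearly independent. The statement to be proved, ``$p_1,\dotsc,p_8$ are in general linear position in $\pr^4$'', is equivalent to the single condition that every $5$ columns of $B$ are linearly independent: indeed, if some $k\leq 5$ points of $\pr^4$ spanned a $\pr^{k-2}$, any $5-k$ extra points would yield $5$ points in a hyperplane. So it suffices to prove the following Gale-duality lemma.

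\textbf{Key step.} For every partition $[8]=I\sqcup J$ with $|I|=5$ and $|J|=3$, the columns $(B_i)_{i\in I}$ are linearly independent in $\C^5$ if and only if the columns $(A_j)_{j\in J}$ are linearly independent in $\C^3$. A nontrivial relation $\sum_{i\in I}\lambda_i B_i=0$ amounts to a nonzero vector $\lambda\in\C^8$ supported on $I$ lying in $\ker B$. Since $\ker B=\im(A^T)$, we can write $\lambda=A^T\mu$ for some $\mu\in\C^3$; the support condition translates to $\tilde{q}_j\cdot\mu=0$ for every $j\in J$, i.e.\ to $\mu$ lying in the kernel of the $3\times 3$ submatrix $A_J$. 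Nonzero $\mu$ exists precisely when the columns $(\tilde{q}_j)_{j\in J}$ fail to span $\C^3$, and $\mu\neq 0$ forces $\lambda=A^T\mu\neq 0$ because $A$ has rank $3$. This proves the equivalence.

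Applying the lemma to every $3$-subset $J\subset[8]$ and using the hypothesis that every such $A_J$ has full rank, we conclude that every $5$ columns of $B$ are linearly independent, which is the desired conclusion. The only minor obstacle is matching the normalisation of association from \cite{dolgort} with the identity $BA^T=0$, but the possible diagonal rescaling by a matrix with nonzero entries preserves all the linear-independence statements, so the argument goes through verbatim.
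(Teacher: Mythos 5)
Your proof is correct, but it runs along a different track than the paper's. The paper sets up the same matrices $A$ and $B$ with $A\cdot B^t=0$, yet instead of analysing kernels it invokes the quadratic identity $a_Ib_J+a_Jb_I=0$ from \cite[Ch.~III, Lemma 1]{dolgort}, valid for $3$-subsets $I,J\subset\{1,\dotsc,8\}$ with $|I\cap J|=2$, where $a_I$ is the maximal minor of $A$ on the columns in $I$ and $b_I$ the maximal minor of $B$ on the complementary columns: since all $a_I\neq 0$, the vanishing of $b_I$ propagates along chains of adjacent $3$-subsets, and since $B$ has maximal rank some $b_I$ is non-zero, hence all are. You instead prove the complementary-minor duality directly: from $\rk A=3$, $\rk B=5$ and $BA^t=0$ you identify $\ker B$ with the row span of $A$, so a linear relation among five columns of $B$ corresponds exactly to a non-zero $\mu\in\C^3$ annihilating the three complementary columns of $A$, which is excluded by general linear position of the $q_i$. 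Your route is more self-contained (it replaces the citation and the propagation/connectedness step by a two-line kernel argument) and yields for every $3$-subset the clean equivalence that the two complementary minors vanish simultaneously; the paper's route is shorter given the reference and records the finer scalar relation between those minors. Both arguments rest on the same two inputs, namely that all maximal minors of $A$ are non-zero and that $\rk B=5$ (your ``the $p_i$ span $\pr^4$'' is the paper's ``$B$ has maximal rank by construction''), and your remark that the diagonal rescaling implicit in the definition of association does not affect any rank or independence statement correctly disposes of the only normalisation issue.
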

\begin{proof}
Let $A$ be a $3\times 8$ matrix with columns the coordinates of the points $q_i$'s, and similarly let $B$ be a $5\times 8$ matrix containing the coordinates of the points $p_j$'s; by the definition of association we have $A \cdot B^t=0$. Since $q_1,\dotsc,q_8\in\pr^2$ are in general linear position, 
every maximal minor of $A$ is non-zero.
For $I\subset\{1,\dotsc,8\}$ with $|I|=3$, let $a_I\in\C$ be the minor of $A$ given by the columns in $I$, and $b_I$ the minor of $B$ given by the columns \emph{not} in $I$. 

Let $I,J\subset \{1,\dotsc,8\}$ be such that  $|I|=|J|=3$ and $|I\cap J|=2$. It is shown in \cite[Ch.~III, Lemma 1]{dolgort} that $a_{I}b_{J}+a_{J}b_{I}=0$, thus $b_{I}=0$ if and only if $b_{J}=0$. Since  by construction $B$ has maximal rank, this shows that every maximal minor of $B$ is non-zero, hence the points $p_1,\dotsc,p_8$ are in general linear position.
\end{proof}
\begin{remark}\label{OK}
Let $q_1,\dotsc,q_8\in\pr^2$ be such that the blow-up of $\pr^2$ at  $q_1,\dotsc,q_8$ is a smooth del Pezzo surface (see \cite[Prop.~8.1.25]{dolgachevbook}). In 
 particular the $q_i$'s are in general linear position, and hence stable
\cite[Ch.~II, Th.~1]{dolgort}. This yields an open subset  $U_{\dP}\subset P^8_2$.
If $(p_1,\dotsc,p_8)\in a(U_{\dP})$, then 
 $p_1,\dotsc,p_8\in\pr^4$ are in general linear position by Lemma \ref{generallinear}.
\end{remark}
\subsection{Degree one del Pezzo surfaces and blow-ups of $\pr^4$ in $8$ points}\label{bijection}
Let $S$ be a del Pezzo surface of degree $1$, and
$h$ a   cubic in $S$. We associate to $(S,h)$ a blow-up $X$ of $\pr^4$ in $8$ points in  general linear position, as follows.

Let $q_1,\dotsc,q_8\in\pr^2$ be the points blown-up under the birational morphism 
$S\to\pr^2$ defined by $h$, and let $p_1,\dotsc,p_8\in\pr^4$ be the  associated points to $q_1,\dotsc,q_8\in\pr^2$ (which are in  general linear position by Rem.~\ref{OK}). Then we set $$X=X_h=X_{(S,h)}:=\Bl_{p_1,\dotsc,p_8}\pr^4.$$ 
We will always assume that $q_1,\dotsc,q_8\in\pr^2$ and  $p_1,\dotsc,p_8\in\pr^4$ are associated as \emph{ordered} sets of point.

Conversely, let $X$ be a blow-up of $\pr^4$ in $8$ general
points. Differently from the case of surfaces, the blow-up map $X\to\pr^4$ is unique, see \cite[p.~64]{dolgort}. Thus $X$ determines  $p_1,\dotsc,p_8\in\pr^4$ up to projective equivalence, which in turn determine 
$q_1,\dotsc,q_8\in\pr^2$  up to projective equivalence, and hence a pair $(S,h)$, such that $X\cong X_{(S,h)}$. The pair $(S,h)$ is unique up to isomorphism, therefore $S$ is determined up to isomorphism, and $h$ is determined up to
the action of $\Aut(S)$ on   cubics; in particular $X_{(S,h)}=X_{(S,\iota_S^*h)}$.
\subsection{Notation for the blow-up $X$ of $\pr^4$ at $8$ points}\label{notationP^4}
Let $p_1,\dotsc,p_8\in\pr^4$ be points in general linear position, and set $X:=\Bl_{p_1,\dotsc,p_8}\pr^4$. We use the following notation:
\begin{enumerate}[$\bullet$]
\item $E_i\subset X$ is the exceptional divisor over $p_i\in\pr^4$, for $i=1,\dotsc,8$
\item $H\in\Pic(X)$ is the pull-back of $\ol_{\pr^4}(1)$
\item $L_{ij}\subset X$ is the transform of the line $\overline{p_ip_j}\subset\pr^4$,  for $1\leq i<j\leq 8$
\item $e_i\subset E_i$ is a line, for $i=1,\dotsc,8$
\item $h\subset X$ is the transform of a general line in $\pr^4$
\item $\gamma_i\subset\pr^4$ is the rational normal quartic  through $p_1,\dotsc,\check{p}_i,\dotsc,p_8$, for $i=1,\dotsc,8$ ($\gamma_i$ exists and is unique, see for instance \cite[p.~14]{harris})
\item $\Gamma_i\subset X$ is the transform of  $\gamma_i\subset\pr^4$, for $i=1,\dotsc,8$.
\end{enumerate}
The notation $h,e_1,\dotsc,e_8$ is standard and will be used both in $S$ and in $X$ (see \ref{notationP^2}); it will be clear from the context whether we are referring to classes in $S$ or in $X$.
\section{Moduli of rank $2$ vector bundles on $S$ with $c_1=-K_S$ and $c_2=2$: non-emptyness, walls, special loci}\label{secmoduli}
\subsection{}\label{introsecmoduli} Let $S$ be a del Pezzo surface of degree $1$, and $L\in\Pic(S)$ an ample line bundle. Following \cite{mukaiADE}, 
in this section we introduce the moduli space $M_{S,L}$ of rank 2 torsion free sheaves $F$ on $S$, with $c_1(F)=-K_S$ and $c_2(F)=2$, semistable 
 with respect to $L$. We resume and expand the study made in \cite{mukaiADE} of this moduli space.

More precisely,
we determine explicitly all the walls for slope semistability, and introduce the stability fan $\ST(S)$ in $H^2(S,\R)$ determined by these walls.
We
describe the birational transformation occurring in $M_{S,L}$ when the polarization $L$ crosses a wall, and describe $M_{S,L}$ when $L$ belongs to some wall.
 Finally, for every   cubic $h$, we construct a chamber $\ma{C}_h$ such that for $L\in\ma{C}_h$, $M_{S,L}\cong\pr^4$. Using these results,
 we show Prop.~\ref{propertiesintro}.

Many results in this section are due to Mukai \cite{mukaiADE}.
Our new contributions are: the determination of the cone $\ma{E}$ of the polarizations for which the moduli space is non-empty (Cor.~\ref{nonempty2}), the completion of the description of the walls for slope semistability (Prop.~\ref{walls} and Cor.~\ref{walls2}), the description of the moduli space when the polarization is not in a chamber (Lemma \ref{specialL}), and the description of the exceptional locus of the morphism $\gamma\colon M_L\to M_L^{\mu}$ to the moduli space of slope semistable sheaves (Lemma \ref{slope}).
\subsection{The moduli space $M_{S,L}=M_L$} Let   $L\in\Pic(S)$ be ample,
and $F$ a rank $2$ torsion free sheaf on $S$.
By ``stable'' and ``semistable'' we mean stable or semistable in the sense of Gieseker-Maruyama; we will use $\mu$-stable or $\mu$-semistable for slope stability. We refer the reader to Huybrechts and Lehn's book \cite[\S 1.2]{huybrechtslehn} for these notions,
and recall that: \begin{center}
$\mu$-stable$\ \Rightarrow\ $ stable $\ \Rightarrow\ $ semistable
$\ \Rightarrow\ $ $\mu$-semistable.
\end{center}
 In particular, for a fixed polarization $L$, either the $4$ notions of stability and semistability above coincide, or there exists a strictly $\mu$-semistable sheaf.
\begin{definition}\label{M_L}
Given 
 $L\in\Pic(S)$ ample, $M_{S,L}$ is
the  moduli space  of torsion-free sheaves $F$ of rank $2$ on $S$, with $c_1(F)=-K_S$ and $c_2(F)=2$, semistable with respect to $L$. 
When
 the surface $S$ is fixed,  we will often write $M_L$
for $M_{S,L}$.
\end{definition}
\begin{remark}\label{stable}
Let $L\in\Pic(S)$ be ample, and let $F$ be a rank $2$ torsion-free sheaf  with $c_1(F)=-K_S$ and $c_2(F)=2$. Then either $F$ is stable, or $F$ is not semistable. Indeed by Riemann-Roch we have $\chi(S,F)=1$.
\end{remark}
The following is a standard application of Rem.~\ref{stable}, see for instance \cite[3.1]{BMW} and  \cite[Th.~4.5.4 and p.~115]{huybrechtslehn}.
\begin{corollary}\label{smoothness}
Let $L\in\Pic(S)$ be ample. If the moduli space $M_L$ is non-empty, then it is smooth, projective, of pure dimension $4$. 
\end{corollary}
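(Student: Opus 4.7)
The plan is to exploit Remark \ref{stable} to reduce to the case of a moduli space of stable (and in fact $\mu$-semistable, as will be seen) sheaves, and then invoke the standard Gieseker-Maruyama theory together with the del Pezzo hypothesis. The three conclusions—projectivity, smoothness, and dimension 4—split into three essentially independent observations.

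First, by Remark \ref{stable} the four notions of stability coincide for $F$ with $c_1(F)=-K_S$ and $c_2(F)=2$: since $\chi(S,F)=1$ is odd, no rank $2$ torsion-free sheaf with these invariants can be strictly $\mu$-semistable (the two Jordan-Hölder factors would have to share the same reduced Hilbert polynomial, forcing $\chi$ to be even). Hence $M_L$ is set-theoretically the locus of stable sheaves inside the moduli space of semistable sheaves constructed by Gieseker-Maruyama; since there are no strictly semistable points, $M_L$ is the full moduli space and is therefore projective, and in particular of finite type.

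Next, for smoothness and dimension I would use that every $[F]\in M_L$ is $\mu$-stable and locally free-like in the relevant cohomological sense. Because $-K_S$ is ample and $L$ is ample, $K_S\cdot L<0$, so for any stable $F$ the line bundle twist $F(K_S)$ has strictly smaller slope than $F$. A non-zero morphism $F\to F(K_S)$ would contradict $\mu$-semistability of both sheaves (its image would have slope $\geq\mu(F)$ as a quotient of $F$, and $\leq\mu(F(K_S))<\mu(F)$ as a subsheaf of $F(K_S)$). Thus $\Hom(F,F(K_S))=0$, and Serre duality gives $\Ext^2(F,F)=0$. Since $h^1(\ol_S)=h^2(\ol_S)=0$ on a del Pezzo surface, the trace map identifies $\Ext^i(F,F)_0=\Ext^i(F,F)$ for $i=1,2$; the vanishing of the obstruction space $\Ext^2(F,F)_0$ then gives smoothness at $[F]$.

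Finally, the dimension is read off Riemann-Roch for $\mathcal{E}nd(F)$: with $r=2$, $c_1^2=K_S^2=1$, $c_2=2$ and $\chi(\ol_S)=1$ one computes $\chi(\mathcal{E}nd(F))=r^2\chi(\ol_S)+(r-1)c_1^2-2rc_2=4+1-8=-3$. Combined with $\dim\Hom(F,F)=1$ (stability) and $\dim\Ext^2(F,F)=0$ (just shown), this forces $\dim\Ext^1(F,F)_0=4$, so $M_L$ is smooth of pure dimension $4$. The only step that requires any care is the $\mu$-semistability argument showing $\Hom(F,F(K_S))=0$; everything else is routine once Remark \ref{stable} is in hand, which is why the authors correctly label the result a standard application.
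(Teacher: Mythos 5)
Your overall strategy is exactly the standard argument that the paper has in mind: it gives no proof of its own, only citations to \cite[3.1]{BMW} and \cite[Th.~4.5.4 and p.~115]{huybrechtslehn}, and those references proceed as you do -- Remark~\ref{stable} rules out strictly semistable points, Serre duality plus a slope argument using the ampleness of $-K_S$ kills $\Ext^2(F,F)$, and Riemann--Roch then gives $\dim\Ext^1(F,F)=4$. One correction, though: your opening claim is false as stated. The four notions of (semi)stability do \emph{not} coincide for these invariants, and strictly $\mu$-semistable sheaves certainly exist -- they are exactly the sheaves sitting on the walls described in Prop.~\ref{walls}, and their existence is the engine of the whole wall-crossing analysis in the paper. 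The parity argument with $\chi(S,F)=1$ only excludes strictly \emph{Gieseker}-semistable sheaves (the two rank-one Jordan--H\"older factors would need equal reduced Hilbert polynomials, hence Euler characteristics summing to an even number), which is precisely Remark~\ref{stable}; likewise the assertion that every $[F]\in M_L$ is $\mu$-stable fails when $L$ lies on a wall, since Prop.~\ref{walls}$(c)$ produces stable but strictly $\mu$-semistable bundles. Fortunately these overstatements are not load-bearing: projectivity only needs semistable $=$ stable in the Gieseker sense, and your vanishing $\Hom(F,F\otimes K_S)=0$ uses only $\mu$-\emph{semi}stability of $F$ and $F\otimes K_S$, which follows from Gieseker semistability via the chain of implications recalled in the paper. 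With that relabelling the proof is complete and is the intended one.
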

 We will see in Cor.~\ref{connected} that $M_L$ is always irreducible; this is already known, see \cite[Prop.~3.11]{costamiroroig} and \cite[Th.~III]{gomez}.

Recall that in the ample cone of $S$, the polarizations $L$ for which there exists  a strictly $\mu$-semistable sheaf belong to a (locally finite) set of rational hyperplanes ({\bf walls}), which yield 
 a chamber decomposition of the ample cone (where a {\bf chamber} is a connected component of the complement of the walls in the ample cone). 
For any chamber $\mathcal{C}$ we have:
\begin{enumerate}[(1)]
\item for $L\in\ma{C}$, every $\mu$-semistable sheaf is also stable and $\mu$-stable;
\item the stability condition is the same for every $L\in\ma{C}$, and for $L\in\ma{C}$
the moduli spaces $M_L$ are all equal, hence they only depend on the chamber $\ma{C}$. 
\end{enumerate}
We will sometimes denote by $M_{\ma{C}}$ or $M_{S,\ma{C}}$ the moduli space $M_L$ for $L\in\ma{C}$.

First of all, Mukai  gives a necessary condition on the polarization $L$ for the existence of $\mu$-semistable sheaves with respect to $L$.
\begin{lemma}[\cite{mukaiADE}, p.~9]\label{nonempty}
Let $L\in\Pic(S)$ be ample. If there exists a $\mu$-semistable torsion-free sheaf $F$  of rank $2$ with $c_1(F)=-K_S$ and $c_2(F)=2$,   then $L\cdot (2h+K_S)\geq 0$ for every   cubic $h$, namely: $L\in\ma{E}$ (see \eqref{E}).
\end{lemma}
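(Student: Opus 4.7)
The target inequality $L\cdot(2h+K_S)\ge 0$ rearranges to $L\cdot(-K_S-h)\le L\cdot(-K_S)/2=\mu_L(F)$. Thus it suffices to produce inside $F$ a rank one subsheaf with first Chern class $-K_S-h$, because the $\mu$-semistability of $F$ applied to such a subsheaf yields exactly the desired bound. Equivalently, the goal is to exhibit a non-zero global section of $F\otimes\ol(K_S+h)$.

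I will obtain $h^0(F(K_S+h))\ge 1$ by combining Riemann-Roch with a Serre-duality vanishing. A direct computation using $K_S^2=1$ and $K_S\cdot h=-3$ gives $c_1(F(K_S+h))=K_S+2h$ and $c_2(F(K_S+h))=0$, and then Riemann-Roch on the surface yields $\chi(F(K_S+h))=1$. Hence it is enough to prove $h^2(F(K_S+h))=0$.

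By Serre duality (on the smooth surface $S$, applied to the torsion free sheaf $F(K_S+h)$), we have $h^2(F(K_S+h))=\dim\Hom(F,\ol(-h))$. Any non-zero homomorphism $\varphi\colon F\to \ol(-h)$ has image a rank one torsion free subsheaf of the line bundle $\ol(-h)$, so the quotient $Q$ of $F$ through which $\varphi$ factors satisfies $c_1(Q)=-h$. But $\mu$-semistability of $F$ forces $\mu_L(Q)\ge\mu_L(F)$, i.e.\ $-L\cdot h\ge L\cdot(-K_S)/2>0$, which contradicts $L$ being ample and $h$ being effective non-zero. Hence $\Hom(F,\ol(-h))=0$. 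The conclusion is then immediate: $h^0(F(K_S+h))\ge 1$ produces a non-zero map $\ol\to F(K_S+h)$, which because $F$ is torsion free is an injection $\ol(-K_S-h)\hookrightarrow F$, and $\mu$-semistability applied to this subsheaf closes the argument.

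The main obstacle is really the vanishing of $h^2$ in the previous paragraph: this is where both the $\mu$-semistability hypothesis and the ample/effective interplay between $L$ and $h$ get used. Everything else is Chern class bookkeeping together with the observation that non-zero homomorphisms between torsion free sheaves of the appropriate ranks behave as expected.
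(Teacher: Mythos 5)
Your argument is correct. Note that the paper itself gives no proof of this lemma: it is quoted from Mukai's preprint (p.~9), so there is no in-text argument to compare with; your proof is a complete and self-contained substitute. It is also very much in the spirit of the technique the paper does use nearby: Mukai's ``Lemma 2'' (that any such $F$ has a non-zero global section), invoked in the proof of Prop.~\ref{walls}, is exactly the untwisted version of your computation ($\chi=1$ plus a vanishing of $h^2$ via semistability), and you run the same scheme after twisting by $\ol_S(K_S+h)$: the Chern class bookkeeping ($c_1=K_S+2h$, $c_2=0$, $\chi(F(K_S+h))=1$ checks out, e.g.\ against the paper's formula $\chi(\mathfrak f\otimes\mathfrak c)=2\chi(\mathfrak c)-K_S\cdot c_1(\mathfrak c)-\rk\mathfrak c$), Serre duality identifies $h^2$ with $\Hom(F,\ol_S(-h))$, and slope semistability kills both that $\Hom$ and closes the argument once the section produces $\ol_S(-K_S-h)\hookrightarrow F$. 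One small imprecision: the image of a non-zero map $F\to\ol_S(-h)$ is $\ma{I}_Z\otimes\ol_S(-h-D)$ for some effective divisor $D$ and finite $Z$, so $c_1(Q)$ is $-h-D$ rather than necessarily $-h$; since $L$ is ample this only makes $\mu_L(Q)$ smaller, so the contradiction with $\mu_L(Q)\geq\mu_L(F)=\tfrac12 L\cdot(-K_S)>0$ is unaffected, but the statement ``$c_1(Q)=-h$'' should be weakened accordingly.
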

\subsection{Walls and special extensions}
 \cite[Lemma 3]{mukaiADE} describes all the walls that intersect the cone $\Pi$; we generalise it and describe every wall.
\begin{proposition}\label{walls}
Let $L\in\Pic(S)$ be ample, and suppose that there exists a strictly $\mu$-semistable  torsion-free sheaf $F$ of rank $2$ with $c_1(F)=-K_S$ and $c_2(F)=2$.
Then we have the following.
\begin{enumerate}[$(a)$]
\item
There exists a divisor $D$ such that 
$L\cdot (2D+K_S)=0$, and either  $D$ or $-K_S-D$ is linearly equivalent to  a
$(-1)$-curve, a conic, or a   cubic;
\item
 $F$ is locally free and there is
an exact sequence:
\begin{equation}\label{ext}
0\la\ol_S(D)\la F\la\ol_S(-K_S-D)\la 0.
\end{equation}
\item
If $D$ is effective, or if the extension is split, then $F$ is not stable.\\
If $-K_S-D$ is effective, and the extension is not split, then $F$ is stable.
\end{enumerate}
\end{proposition}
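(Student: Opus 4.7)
I would start by producing the destabilising subsheaf and constraining its class by the numerical data. Since $F$ is strictly $\mu$-semistable of rank $2$, a rank-$1$ subsheaf of $F$ realising the slope $\mu_L(F)=\tfrac12 L\cdot(-K_S)$ exists; saturating it in $F$ gives a sub-line bundle $\ol_S(D)\hookrightarrow F$ with quotient $\ol_S(E)\otimes I_Z$, where $E:=-K_S-D$ and $Z\subset S$ is a zero-dimensional subscheme. Equality of slopes is $L\cdot(2D+K_S)=0$, and $c_2(F)=D\cdot E+\ell(Z)=2$ gives $D^2+D\cdot K_S=-2+\ell(Z)$. To prove (b), I would invoke the Hodge index theorem on the $L$-orthogonal class $2D+K_S$, which is non-zero because $K_S$ is not $2$-divisible in $\Pic(S)$, to get $(2D+K_S)^2<0$, i.e.\ $D^2+D\cdot K_S\leq -\tfrac14$; combined with the parity fact that $D\cdot(D+K_S)$ is always even on $S$ (an elementary check on the basis $h,e_1,\dotsc,e_8$ from \ref{notationP^2}), this forces $\ell(Z)=0$, so $F$ is locally free and $D^2+D\cdot K_S=-2$.

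For (a), set $d:=-K_S\cdot D$ and $e:=-K_S\cdot E$, so $d+e=1$. I would first note that Riemann-Roch gives $\chi(F)=1$, and Serre duality together with the slope inequality $\mu_L(F(2K_S))<0$ forces $H^2(F)=0$, so $h^0(F)\geq 1$; the exact sequence then implies $h^0(\ol_S(D))+h^0(\ol_S(E))\geq 1$, while ampleness of $-K_S$ excludes both classes from being effective at once. Assume $E$ is effective (the case $D$ effective is handled symmetrically via the same argument applied to $F(-E)$, where $h^0$ drops by one in the non-split case). Tensoring the sequence by $\ol_S(-D)$ gives $0\to\ol_S\to F(-D)\to\ol_S(E-D)\to 0$; since $L\cdot(E-D)=0$ with $L$ ample and $E\neq D$, one gets $h^0(\ol_S(E-D))=0$, hence $h^0(F(-D))=1$. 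A slope argument yields $h^2(F(-D))=0$, and Riemann-Roch gives $\chi(F(-D))=D\cdot K_S-1$, so the inequality $h^1(F(-D))\geq 0$ forces $D\cdot K_S\leq 2$, i.e.\ $e\leq 3$. Using $E^2=e-2$, the three cases $(E^2,-K_S\cdot E)\in\{(-1,1),(0,2),(1,3)\}$ identify $E$ as the class of a $(-1)$-curve, a conic, or a  cubic, since every such class on $S$ is effective and represented by an irreducible curve.

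For (c), I would use Riemann-Roch to compute $p_{\ol_S(D)}(n)-p_F(n)=d-\tfrac12$, so $\ol_S(D)$ Gieseker-destabilises $F$ precisely when $d\geq 1$, i.e.\ when $D$ is effective; in the split case $\ol_S(E)$ is also a subsheaf and similarly destabilises when $e\geq 1$, giving the first half of (c). Conversely, if $E$ is effective and the extension is non-split, then $d\leq 0$ and $\ol_S(D)$ does not destabilise; any other saturated rank-$1$ subsheaf $\ol_S(D')\subset F$ of the same slope either factors through $\ol_S(D)$ (forcing $D'=D$ by slope equality and $L$-ampleness) or projects non-trivially onto $\ol_S(E)$ (forcing $D'=E$ and a splitting of the extension), so no such $D'$ exists, and $F$ is Gieseker-semistable, hence Gieseker-stable by Remark \ref{stable}. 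The main obstacle I expect is the cohomology bookkeeping yielding $D\cdot K_S\leq 2$ in (a), together with its symmetric analogue for $F(-E)$ when it is $D$ that is effective, where the dimension of $H^0(F(-E))$ drops depending on whether the extension class is trivial.
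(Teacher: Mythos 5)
Your treatment of (b) and (c) is essentially sound, and (b) takes a genuinely different route from the paper's: the paper kills the zero-dimensional subschemes by computing $\chi(S,\ol_S(K_S+D))$ in two ways (it is $\leq 0$ because $h^0(K_S+D)=h^0(-D)=0$, both forced by $L\cdot D=\tfrac12 L\cdot(-K_S)>0$, while Riemann--Roch makes it equal to $\tfrac12(l(Z_1)+l(Z_2))\geq 0$), whereas you use Hodge index on $L^{\perp}$ plus the parity of $D\cdot(D+K_S)$; both work. One small correction: saturating the destabilising subsheaf makes the quotient torsion-free but does not make the subsheaf a line bundle, so you should start from $0\la\ol_S(D)\otimes\ma{I}_{Z_1}\la F\la\ol_S(-K_S-D)\otimes\ma{I}_{Z_2}\la 0$ and let your inequality kill $l(Z_1)+l(Z_2)$; the computation is unchanged. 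Your bound $-K_S\cdot E\leq 3$ obtained from $h^0(F(-D))=1$, $h^2(F(-D))=0$ and $\chi(F(-D))=D\cdot K_S-1$ is also correct, and it is an attractive substitute for part of the work the paper does elsewhere. Part (c) is essentially the paper's argument (the paper computes the same Euler characteristics and quotes Friedman for the uniqueness of the equal-slope subsheaf, which you reprove directly).

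The genuine gap is the final step of (a). From ``$E$ effective with $(E^2,-K_S\cdot E)\in\{(-1,1),(0,2),(1,3)\}$'' you conclude that $E$ is linearly equivalent to a $(-1)$-curve, a conic, or a cubic, justified only by the remark that such classes are effective and irreducible; that is the converse of what is needed, and the needed implication is false in the third case: for any $(-1)$-curve $\ell$, the class $E=-K_S+2\ell$ is effective with $E^2=1$ and $-K_S\cdot E=3$, yet $E\cdot\ell=-1$, so $E$ is not nef and therefore not a cubic (in the notation of \ref{notationP^2} this is $3h-e_1-\cdots-e_7+e_8$). Such classes must be excluded using hypotheses you have not invoked at this point: either $h^0(S,K_S+E)=h^0(S,-D)=0$ (which follows from $L\cdot D>0$, and fails for the example since $K_S+E=2\ell$), or the wall equation $L\cdot(2E+K_S)=0$ (which the example violates, as $L\cdot(4\ell-K_S)>0$), possibly together with $L\in\ma{E}$ from Lemma \ref{nonempty}. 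This exclusion is precisely the content of the paper's Lemma \ref{int}, proved via the Zariski decomposition of the effective class, a case division on its Iitaka dimension, and Fujita's $\Delta$-genus-zero classification in the big case. Your degree bound $-K_S\cdot E\leq 3$ would simplify the big case (nef and big with $E^2=1$ and $\Delta$-genus $0$ gives a birational morphism to $\pr^2$), but the reduction to the nef case, i.e.\ ruling out a nonzero negative part, still requires an argument; as written, the classification step is unsupported and rests on a false general principle.
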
 
For the proof of Prop.~\ref{walls} we need the following elementary result.
\begin{lemma}\label{int}
Let $L\in\Pic(S)$ be ample, $L\in\ma{E}$, and let $B$ be an effective divisor on $S$ such that:
$$B\cdot(K_S+B)=-2,\quad h^0(S,K_S+B)=0,\ \text{ and }\ L\cdot(2B+K_S)=0. $$
Then $B$ is linearly equivalent to either a $(-1)$-curve, or a conic, or a   cubic.
\end{lemma}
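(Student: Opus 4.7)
My plan is to set $k := -K_S\cdot B$, bound $k \leq 3$ using Riemann--Roch on $2B+K_S$, and then identify the class of $B$ in each remaining case. Since $B$ is a nonzero effective divisor and $-K_S$ is ample, $k \geq 1$; combined with $B\cdot(K_S+B) = -2$ this forces $B^2 = k - 2$.

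For the bound $k \leq 3$, I will apply Riemann--Roch to $2B+K_S$: substituting $B^2 + K_S\cdot B = -2$ yields $\chi(\ol_S(2B+K_S)) = 1 + \tfrac{1}{2}(2B+K_S)\cdot 2B = k-3$. Serre duality gives $h^2(2B+K_S) = h^0(-2B) = 0$, since $B$ is a nonzero effective divisor. Thus for $k \geq 4$ one has $h^0(2B+K_S) \geq k-3 \geq 1$, so $2B+K_S$ is linearly equivalent to a nonzero effective divisor $D$; but $L$ is ample, so $L\cdot(2B+K_S) = L\cdot D > 0$, contradicting the hypothesis. Hence $k\in\{1,2,3\}$.

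The cases $k=1,2$ are then structural. For $k=1$, the inequality $-K_S\cdot B_i \geq 1$ for each irreducible component $B_i$ of $B$ forces $B$ irreducible of multiplicity one, and adjunction identifies it as a $(-1)$-curve. For $k=2$, every class with $(B^2, -K_S\cdot B) = (0, 2)$ is a conic class: there are $2160$ such classes (one for each norm-$(-4)$ vector in the $E_8$-lattice $K_S^\perp$), and all are nef by Cauchy--Schwarz (which forces $v\cdot w \geq -2$ for $w$ a root), so $B$ is linearly equivalent to a conic.

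The delicate case is $k=3$, where the hypothesis $h^0(K_S+B) = 0$ is essential, and this is the main obstacle. If instead $h^0(K_S+B) \geq 1$, I will pick an effective $D \sim K_S+B$; then $D^2 = -4$ and $-K_S\cdot D = 2$. By \cite[Cor.~3.3]{batyrevpopov}, write $D = \sum a_i\ell_i + b(-K_S)$ with $a_i, b \in \Z_{\geq 0}$; a short enumeration using $\sum a_i + b = 2$ together with $D^2 = -4$ rules out every possibility except $D = 2\ell$ for some $(-1)$-curve $\ell$, whence $B \sim -K_S + 2\ell$, excluded by the hypothesis. Among the $17520$ classes with invariants $(1, 3)$, the $240$ ``Bertini-type'' classes $-K_S+2\ell$ are thus ruled out, and the remaining $17280$ are precisely the cubic classes (corresponding to birational morphisms $S\to\pr^2$), so $B$ is linearly equivalent to a cubic. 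The numerical invariants $(1, 3)$ alone do not single out a cubic class, and $h^0(K_S+B) = 0$ is exactly the condition needed to eliminate the Bertini-type exceptions.
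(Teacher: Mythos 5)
Your proof is correct, but it follows a genuinely different route from the paper's. The paper takes a Zariski decomposition $B\sim P+N$ and argues according to the Iitaka dimension of $B$: the non-big cases are settled by direct intersection computations, while the big case uses vanishing to show $N=0$ and that $(S,B)$ has $\Delta$-genus zero, then invokes Fujita's classification to produce a cubic $h$ with $B-h$ effective, and finally uses $L\in\ma{E}$ together with $L\cdot(2B+K_S)=0$ to force $B=h$. You instead bound $k:=-K_S\cdot B\leq 3$ by Riemann--Roch applied to $2B+K_S$ (this uses only the ampleness of $L$, not $L\in\ma{E}$, so your argument proves the lemma under a weaker hypothesis) and then classify $B$ purely by its numerical type $(B^2,-K_S\cdot B)=(k-2,k)$ via the $E_8$-lattice counts $240$, $2160$, $17520$, with $h^0(S,K_S+B)=0$ entering only to exclude the $240$ classes $-K_S+2\ell$ when $k=3$. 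What your approach buys is the elimination of Zariski decomposition and the $\Delta$-genus machinery; what it costs is reliance on the lattice enumeration (in particular the count of $17520$ norm-$8$ vectors in $E_8$, which is standard but nowhere verified in the paper) in place of intrinsic geometry. Three small points to tidy, none of them gaps: (i) for $k\geq 4$ you should note that $2B+K_S\not\sim 0$ (e.g.\ $K_S\cdot(2B+K_S)=1-2k$ is odd, or $(2B+K_S)^2=-7$), so an effective representative is genuinely nonzero and meets the ample $L$ positively; (ii) in the case $k=3$ your Batyrev--Popov enumeration establishes the implication $h^0(K_S+B)\geq 1\Rightarrow B\sim -K_S+2\ell$, whereas the direction actually needed is the trivial converse, namely that $K_S+(-K_S+2\ell)\sim 2\ell$ is effective, so the enumeration is superfluous; (iii) in the case $k=2$ the step from ``all classes of type $(0,2)$ are nef'' to ``$B$ is a conic class'' is really the bijection between the $2160$ such classes and the $2160$ conics, which is worth stating explicitly.
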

\begin{proof}
We can write $B\sim P+N$ where $P$ and $N$ are integral divisors, $P$ is nef, $P\cdot N=0$, and $N=\sum_{i=1}^rm_i\ell_i$ with $m_i\in\Z_{>0}$ and the $\ell_i$'s pairwise disjoint $(-1)$-curves (see \cite[Ex.~3.3]{BPS}).

If $P=0$, we have $B=m_1\ell_1+\cdots+m_r\ell_r$ and $-2=B\cdot(K_S+B)=-\sum_{i=1}^rm_i(m_i+1)$, hence $r=1$, $m_1=1$, and 
 $B$ is a $(-1)$-curve.

Suppose that $B$ has Iitaka dimension $1$, and let $S\to\pr^1$ be the conic bundle given by $P$, so that $P=m_0C$ where $C$ is a general fiber and $m_0\geq 1$. Then we have
$$-2=(m_0C+N)\cdot(K_S+m_0C+N)=
-\Bigl(2m_0+\sum_{i=1}^rm_i(m_i+1)\Bigr),$$
which yields $m_0=1$ and $r=0$, namely $B$ is linearly equivalent to a conic.

Finally, suppose that $B$ is big. We have $L\cdot(2h+K_S)\geq 0$ for every   cubic $h$, because $L\in\ma{E}$ (see \eqref{E}).
Since  $L\cdot(2B+K_S)=0$, if there exists some   cubic $h$ such that $B-h$ is effective, then $B=h$.

Since $P$ is nef and big, by vanishing we have $h^i(S,K_S+P)=0$ for $i=1,2$. Using the assumptions and Riemann-Roch, one gets $P\cdot(K_S+P)=-2$ and $N\cdot (K_S+N)=0$. This yields that $N=0$ and $B\sim P$ is nef and big, and moreover  using again Riemann-Roch and vanishing one gets
$h^0(S,B)=2+B^2$.
If $\sigma\colon S\to S'$ is the birational map given by $B$, we have $B=\sigma^*B'$ where $B'$ is ample and again $h^0(S',B')=2+(B')^2$, namely
the pair  $(S',B')$ has $\Delta$-genus zero (see \cite[\S 3.1]{beltrametti_sommese}). These pairs have been classified by Fujita, see for instance \cite[Prop.~3.1.2]{beltrametti_sommese}, and the possibilities are: $(\pr^2,\ol_{\pr^2}(1))$, $(\pr^2,\ol_{\pr^2}(2))$, $(\pr^1\times\pr^1,\ma{O}(1,a))$ with $a\geq 1$,  and $(\mathbb{F}_1,L)$ where $L$ is ample and $L_{|F}\cong\ol_{\pr^1}(1)$ for a fiber $F$ of the $\pr^1$-bundle on $\mathbb{F}_1$. In each case one can find a   cubic $h$ on $S$ such that $B-h$ is effective, so that $B=h$.
\end{proof}
\begin{proof}[Proof of Prop.~\ref{walls}]
Since $F$ is not $\mu$-stable with respect to $L$, there is an exact sequence
$$0\la\ol_S(D)\otimes\ma{I}_{Z_1}\la F\la\ol_S(-K_S-D)\otimes\ma{I}_{Z_2}\la 0$$
where $Z_1$ and $Z_2$ are $0$-dimensional subschemes of $S$ (see \cite[Ex.~1.1.16]{huybrechtslehn}) and $D$ is a divisor on $S$ with 
$L\cdot D=\frac{1}{2}L\cdot (-K_S)$, namely
$L\cdot (2D+K_S)=0$.
Using that $c_2(F)=2$ we get
$D\cdot (K_S+D)=l(Z_1)+l(Z_2)-2$,
where $l(Z_i)$ is the length of $Z_i$.
Since both $L$ and $-K_S$ are ample, we have $L\cdot(-K_S)>0$, and hence $L\cdot D>0$ and $L\cdot (D+K_S)<0$. This yields $h^0(S,K_S+D)=0$ and $h^2(S,K_S+D)=h^0(S,-D)=0$, hence $\chi(S,\ol_S(K_S+D))=-h^1(S,K_S+D)\leq 0$. On the other hand by Riemann-Roch:
$$\chi(S,\ol_S(K_S+D))=1+\frac{1}{2}D\cdot (K_S+D)=\frac{1}{2}\big(l(Z_1)+l(Z_2)
\big)\geq 0,$$
which yields $Z_1=Z_2=\emptyset$, $D\cdot (K_S+D)=-2$, and the exact sequence:
$$0\la\ol_S(D)\la F\la\ol_S(-K_S-D)\la 0;$$
in particular $F$ is locally free, and we have $(b)$.

We have $L\in\ma{E}$ by Lemma \ref{nonempty}.
By \cite[Lemma 2]{mukaiADE} the sheaf $F$ has a non-zero global section; thus by the sequence above, either $\ol_S(D)$ or $\ol_S(-K_S-D)$ must have a non-zero global section.
Now if $D$ is effective, then $D$ is either a $(-1)$-curve, or a conic, or a   cubic, by Lemma \ref{int}. Similarly, if $-K_S-D$ is  effective, then it is either a $(-1)$-curve, or a conic, or a   cubic, again by Lemma \ref{int}. This shows $(a)$.

For $(c)$, notice that if $\ell$ is a $(-1)$-curve, $C$ a conic, and $h$ a   cubic, we have:
$$ \chi(S,F)=1,\quad \chi(S,\ol_S(\ell))=1,\quad  \chi(S,\ol_S(C))=2,\quad  \chi(S,\ol_S(h))=3.$$

If $D$ is linearly equivalent to $\ell$, $C$, or $h$, then $\chi(S,\ol_S(D))>\frac{1}{2}\chi(S,F)=\frac{1}{2}$, thus $F$ is not stable.

Suppose that $-K_S-D$ is linearly equivalent to $\ell$, $C$, or $h$. If the extension is split, then we can exchange $D$ with $-K_S-D$, so that again $F$ is not stable. 
If instead the extension is not split, then  $\ol_S(D)$ is the unique locally free rank $1$ subsheaf $G$ of $F$ with $\mu_L(G)=\mu_L(F)$, where $\mu_L$ denotes the slope with respect to $L$
(see \cite[Ch.~4, Prop.~21]{friedmanbook}).  Since $\chi(S,\ol_S(D))=\chi(S,F)-\chi(S,\ol_S(-K_S-D))\in\{0,-1,-2\}$, we deduce that $F$ is stable.
\end{proof}
\begin{corollary}\label{walls2}
 Let $L\in\Pic(S)$ be ample, and suppose that there exists a strictly $\mu$-semistable  torsion-free sheaf $F$ of rank $2$ with $c_1(F)=-K_S$ and $c_2(F)=2$. 
Then $L$ belongs to a wall $(2\ell+K_S)^{\perp}$, $(2C+K_S)^{\perp}$, or $(2h+K_S)^{\perp}$, where $\ell$ is a $(-1)$-curve, $C$ is a conic, and $h$ is a   cubic.
\end{corollary}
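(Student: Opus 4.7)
The plan is to deduce this directly from Proposition \ref{walls}$(a)$, which already gives us almost everything: a divisor $D$ satisfying $L\cdot(2D+K_S)=0$, with either $D$ or $-K_S-D$ linearly equivalent to a $(-1)$-curve, a conic, or a cubic. The only issue is that in the second case, $D$ itself need not be one of these three classes, so the wall is named by $-K_S-D$ rather than $D$. This is resolved by the symmetry of the wall equation under the involution $D\mapsto -K_S-D$.

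First, I would apply Proposition \ref{walls}$(a)$ to the given sheaf $F$, producing an effective divisor in the set $\{D,-K_S-D\}$ that is linearly equivalent to $\ell$, $C$, or $h$. Call this effective representative $D'$. In the case $D'=D$ we are done, because $(2D+K_S)^{\perp}$ is, depending on the type of $D'$, one of the three walls $(2\ell+K_S)^{\perp}$, $(2C+K_S)^{\perp}$, or $(2h+K_S)^{\perp}$, and $L$ lies on it by construction.

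The second case is $D'=-K_S-D$. Here I would observe the elementary identity
\[
2D'+K_S \;=\; 2(-K_S-D)+K_S \;=\; -(2D+K_S),
\]
so that the condition $L\cdot(2D+K_S)=0$ given by Proposition \ref{walls}$(a)$ is equivalent to $L\cdot(2D'+K_S)=0$. Thus $L$ lies on the hyperplane $(2D'+K_S)^{\perp}$, which again is one of the three advertised walls $(2\ell+K_S)^{\perp}$, $(2C+K_S)^{\perp}$, $(2h+K_S)^{\perp}$ according to whether $D'$ is a $(-1)$-curve, a conic, or a cubic.

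Since this is essentially a bookkeeping corollary of Proposition \ref{walls}, there is no real obstacle; the only content is noticing that the wall equation is invariant under the symmetry $D\leftrightarrow -K_S-D$ of the destabilizing extension, which lets us always name the wall by the class of the effective subquotient produced by Lemma \ref{int} in the proof of Proposition \ref{walls}.
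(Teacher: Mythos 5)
Your proposal is correct and matches the paper's (implicit) argument: the corollary is stated as a direct consequence of Proposition \ref{walls}$(a)$, with the only observation needed being exactly the sign symmetry $2(-K_S-D)+K_S=-(2D+K_S)$, so that the wall can always be labelled by whichever of $D$, $-K_S-D$ is the $(-1)$-curve, conic, or cubic. Nothing further is required.
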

\begin{definition}[the stability fan]\label{wallsfan}
 The hyperplanes  $(2h+K_S)^{\perp}$, $(2\ell+K_S)^{\perp}$, and $(2C+K_S)^{\perp}$
define a fan $\ST(S)$ in $H^2(S,\R)$, supported on the cone $\ma{E}$, that we call the {\bf stability fan}. 
The cones of maximal dimension of the fan are the closures $\overline{\mathcal{C}}$, where $\ma{C}\subset\ma{E}$ is a chamber. The cone  $\Pi$ is a union of cones of the fan, and the cone $\ma{N}$ belongs to the fan (see \ref{secPi} and \ref{sec_N}).
Notice that the hyperplanes  $(2h+K_S)^{\perp}$ cut the boundary of the cone $\ma{E}$, so that they do not separate different chambers for which the moduli space is non-empty (see Cor.~\ref{nonempty}).
\end{definition}
\begin{remark}\label{fanochamber}
The anticanonical class $-K_S$ does not belong to any wall and lies in the interior of the cone $\ma{N}$ (in particular $-K_S\in\Pi$ and $-K_S\in\ma{E}$), so that $\ma{N}$ is the closure of the chamber containing $-K_S$.
\end{remark}
We now study more in detail the sheaves arising from extensions as in \eqref{ext}.
\begin{remark}\label{dimext}
Given a $(-1)$-curve $\ell$, a conic $C$, and a   cubic $h$, we have:
\begin{align*}
h^1(S,K_S+2\ell)&=2, \quad &h^1(S,K_S+2C)=1, \qquad  &h^1(S,K_S+2h)=0,\\
h^1(S,-K_S-2\ell)&=3, \quad &h^1(S,-K_S-2C)=4, \qquad &h^1(S,-K_S-2h)=5.
\end{align*}
\end{remark}
\begin{lemma}\label{special}
Let $F$ be a  sheaf  on $S$  sitting in an extension
$$0\la\ol_S(D)\la F\stackrel{\ph}{\la}\ol_S(-K_S-D)\la 0$$
where $D$ or $-K_S-D$ is either a $(-1)$-curve, or a conic, or a   cubic.
We have:
\begin{enumerate}[$(a)$]
\item
$F$ is locally free with rank $2$, $c_1(F)=-K_S$, and $c_2(F)=2$;
\item
$\ph$ is unique up to non-zero scalar multiplication;
\item
either the extension is split, or $F$ is determined, up to isomorphism, by an element of $$\pr\big(\Ext^1(\ol_S(-K_S-D),\ol_S(D))\big)=\pr\big(H^1(S,\ol_S(K_S+2D))\big)$$
(here $\pr$ stands for the classical projectivization, namely that of $1$-dimensional linear subspaces);
\item
if $D$ is a   cubic, then the extension is split.
\end{enumerate}
\end{lemma}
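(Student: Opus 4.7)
For (a), since $F$ is the middle term of an extension of two line bundles, it is locally free of rank $2$. From the Whitney formula $c_1(F)=D+(-K_S-D)=-K_S$, and $c_2(F)=D\cdot(-K_S-D)=-D\cdot(K_S+D)$. A direct check shows that $D\cdot(K_S+D)=-2$ whenever $D$ is a $(-1)$-curve, a conic, or a cubic (values $-1+(-1)$, $-2+0$, $-3+1$), and this quantity is symmetric in $D\leftrightarrow -K_S-D$.

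For (b), apply $\Hom(-,\ol_S(-K_S-D))$ to the extension to obtain
\begin{equation*}
0\la\Hom(\ol_S(-K_S-D),\ol_S(-K_S-D))\la\Hom(F,\ol_S(-K_S-D))\la H^0(S,\ol_S(-K_S-2D)).
\end{equation*}
The first term is $\C$, and I claim the last term vanishes. If $D$ is a $(-1)$-curve, conic, or cubic, then $(-K_S-2D)\cdot(-K_S)$ equals $-1,-3,-5$ respectively, all negative, so $-K_S-2D$ is not effective since $-K_S$ is ample. If instead $-K_S-D$ is one of those curves, write $E:=-K_S-D$ and compute $(K_S+2E)\cdot(-K_S)$, which is $-3,-5,-7$ respectively. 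So in every case $H^0(\ol_S(-K_S-2D))=0$, and $\Hom(F,\ol_S(-K_S-D))$ is one-dimensional, yielding (b).

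For (c), the standard theory of extensions says that equivalence classes of extensions of $\ol_S(-K_S-D)$ by $\ol_S(D)$ are parametrized by $\Ext^1(\ol_S(-K_S-D),\ol_S(D))\cong H^1(S,\ol_S(K_S+2D))$. Two non-split extensions yield isomorphic middle terms exactly when they differ by the action of $\Aut(\ol_S(D))\times\Aut(\ol_S(-K_S-D))=\C^*\times\C^*$ on $\Ext^1$; by (b) this action factors through the diagonal scalar action of $\C^*$. Hence non-split classes up to isomorphism of $F$ form $\pr(H^1(S,\ol_S(K_S+2D)))$.

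For (d), I need $H^1(S,\ol_S(K_S+2h))=0$. By Serre duality $h^2(S,K_S+2h)=h^0(S,-2h)$, which vanishes because $(-2h)\cdot(-K_S)=-6<0$ rules out effectivity. For $h^0(S,K_S+2h)$, note $K_S+2h\sim e-h$ (with $-K_S=3h-e$), and $(e-h)\cdot h=-1<0$ contradicts nefness of $h$ on any effective divisor, so $h^0=0$. Finally Riemann-Roch gives $\chi(S,K_S+2h)=1+\tfrac{1}{2}(K_S+2h)\cdot 2h=1+(-3+2)=0$, hence $h^1=0$. Thus $\Ext^1(\ol_S(-K_S-h),\ol_S(h))=0$ and every such extension is split.

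The only genuinely technical step is the extension-theory bookkeeping in (c); the rest is Riemann-Roch plus ampleness of $-K_S$.
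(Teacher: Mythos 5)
Your overall route is the same as the paper's: everything hinges on the vanishing $H^0(S,\ol_S(-K_S-2D))=0$ in all six cases (which gives $(b)$, and $(c)$ follows from $(b)$), plus $h^1(S,K_S+2h)=0$ for $(d)$. Parts $(a)$, $(c)$, $(d)$ and the homological bookkeeping in $(b)$ are fine. However, there is a genuine error in your verification of the key vanishing in the three cases where $-K_S-D$ (rather than $D$) is the special curve. Writing $E:=-K_S-D$, you claim $(K_S+2E)\cdot(-K_S)$ equals $-3,-5,-7$; in fact
$$(K_S+2E)\cdot(-K_S)=-K_S^2+2\,E\cdot(-K_S)=-1+2\,E\cdot(-K_S)=1,\,3,\,5$$
for $E$ a $(-1)$-curve, conic, cubic respectively (recall $K_S^2=1$). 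These are all \emph{positive}, so pairing with the ample class $-K_S$ does not rule out effectivity of $-K_S-2D=K_S+2E$, and as written this half of the argument fails — which then undermines $(b)$, and hence $(c)$, in those cases.

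The conclusion is nevertheless true and easily repaired: test against a nef class adapted to $E$ instead of against $-K_S$. If $E=h$ is a cubic, $(K_S+2h)\cdot h=-3+2=-1<0$; if $E=C$ is a conic, choose a cubic $h$ with $C\sim h-e_1$, so $(K_S+2C)\cdot h=-3+2=-1<0$; if $E=\ell$ is a $(-1)$-curve, choose $h$ with $\ell=e_1$, so $(K_S+2\ell)\cdot h=-3<0$. Since $h$ is nef, in each case $K_S+2E$ cannot be effective, so $\Hom(\ol_S(D),\ol_S(-K_S-D))=H^0(S,\ol_S(-K_S-2D))=0$ in all six cases, and your proof of $(b)$ goes through. (The paper asserts exactly this non-effectivity in all six cases and deduces $(b)$ by showing any other surjection has the same kernel; your $\Hom$-sequence argument is an equivalent formulation.) One smaller imprecision: in $(c)$, the real use of $(b)$ is to guarantee that any isomorphism of middle terms carries the subsheaf $\ol_S(D)$ to $\ol_S(D)$, i.e.\ respects the extension structure; that the $\C^*\times\C^*$-action on $\Ext^1$ is by scalars is automatic and is not the point where $(b)$ enters.
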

\begin{proof}
Statement $(a)$ is straightforward. For $(b)$, let  $\ph'\colon F\to\ol_S(-K_S-D)$ be another surjective map. Then the restriction $\ph'_{|\ker\ph}\colon \ol_S(D)\to\ol_S(-K_S-D)$ must be trivial, because
 in all six possible cases for $D$, $-K_S-2D$ cannot be effective. Similarly, $\ph_{|\ker\ph'}=0$, thus $\ker\ph=\ker\ph'$ and hence $\ph'=\lambda\ph$. Statement $(c)$ follows from $(b)$, and $(d)$ holds because $h^1(S,K_S+2h)=0$ (see Rem.~\ref{dimext}).
\end{proof}
\begin{definition}[{\bf special loci in the moduli spaces $M_L$}]\label{defiloci}
Given a $(-1)$-curve $\ell$ and a conic $C$ in $S$, we set:
\begin{gather*}
P_\ell=\pr(\Ext^1(\ol_S(\ell),\ol_S(-K_S-\ell)))\cong\pr^2,\quad\,
Z_\ell=\pr(\Ext^1(\ol_S(-K_S-\ell),\ol_S(\ell)))\cong\pr^1,\\
E_C=\pr(\Ext^1(\ol_S(C),\ol_S(-K_S-C)))\cong\pr^3,
\end{gather*}
and we denote by $F_C$ the unique sheaf on $S$ sitting in a non-split extension:
$$0\la\ol_S(C)\la F_C\la\ol_S(-K_S-C)\la 0.$$

These loci are crucial in the description of the birational transformation occurring in $M_L$ when $L$ crosses a wall; following Mukai \cite[p.~8]{mukaiADE}, we describe this in detail.
\end{definition}
\subsection{Crossing the wall $(2\ell+K_S)^{\perp}$}\label{flip}
Let us fix a  $(-1)$-curve $\ell$.
Let $\ma{C}$ be a chamber lying in the halfspace
$$(K_S+2\ell)^{>0}:=\{\gamma\in H^2(S,\R)\,|\,\gamma\cdot (K_S+2\ell)>0\},$$ and such that
$\overline{\ma{C}}\cap(2\ell+K_S)^{\perp}$ intersects the ample cone.
Then for any non-trivial extension
$$0\la\ol_S(-K_S-\ell)\la F\la\ol_S(\ell)\la 0,$$
$F$ is stable with respect to $L\in\ma{C}$ \cite[Ch.~II, Th.~1.2.3]{Qin1993}. 
By Lemma \ref{special}$(c)$, these sheaves $F$
 are parametrized by 
$P_\ell\cong\pr^2$, and we have $P_\ell\subset M_L$ 
\cite[Ch.~II, Cor.~1.2.4]{Qin1993}.

Similarly, let  $\ma{C}'$ be a chamber  lying in the halfspace
$(K_S+2\ell)^{<0}$, and such that 
$\overline{\ma{C}'}\cap(2\ell+K_S)^{\perp}$ intersects the ample cone.
Then for any non-trivial extension
$$0\la\ol_S(\ell)\la F'\la\ol_S(-K_S-\ell)\la 0,$$
$F'$ is stable with respect to $L'\in\ma{C}'$; these sheaves $F$ are parametrized by $Z_\ell\cong\pr^1$, and we have $Z_\ell\subset M_{L'}$.

Suppose moreover that $\overline{\ma{C}}$ and $\overline{\ma{C}'}$ share a common facet (which must lie on the hyperplane $(K_S+2\ell)^{\perp}$). Then the moduli spaces $M_L$ and $M_{L'}$ are birational and isomorphic in codimension $1$ under the natural map $[F]\mapsto[F]$. More precisely, the normal bundles of $P_\ell$ and $Z_\ell$ are 
$\ma{N}_{P_\ell/M_L}\cong\ol_{\pr^2}(-1)^{\oplus 2}$ and
$\ma{N}_{Z_\ell/M_{L'}}\cong\ol_{\pr^1}(-1)^{\oplus 3}$
\cite[Prop.~3.6 and Lemma 3.2(ii)]{FriedmanQin}, 
 the birational map $M_L\dasharrow M_{L'}$ is a $K$-negative flip,
 and factors as 
$M_L\leftarrow \widehat{M}\to M_{L'}$, where $\widehat{M}\to M_L$ is the blow-up of $P_\ell$, and $\widehat{M}\to M_{L'}$ is the blow-up of $Z_\ell$ (see \cite[Th.~2]{Qin1993} and
\cite[Th.~3.9]{FriedmanQin}).
\subsection{Crossing the wall $(2C+K_S)^{\perp}$}\label{blowup}
Let us fix a conic $C$.
Let $\ma{C}$ be a chamber  lying in the halfspace
$(K_S+2C)^{>0}$, such that 
$\overline{\ma{C}}\cap(2C+K_S)^{\perp}$ intersects the ample cone.
Then for any non-trivial extension
$$0\la\ol_S(-K_S-C)\la F\la\ol_S(C)\la 0,$$
$F$ is stable with respect to $L\in\ma{C}$ \cite[Ch.~II, Th.~1.2.3]{Qin1993}. By Lemma \ref{special}$(c)$, these sheaves $F$ are parametrized by $E_C\cong\pr^3$, and we have $E_C\subset M_L$ 
\cite[Ch.~II, Cor.~1.2.4]{Qin1993}.

Similarly, let  $\ma{C}'$ be a chamber 
 lying in the halfspace
$(K_S+2C)^{<0}$,
and such that 
$\overline{\ma{C}'}\cap(2C+K_S)^{\perp}$ intersects the ample cone.
Then for any non-trivial extension
$$0\la\ol_S(C)\la F'\la\ol_S(-K_S-C)\la 0,$$
$F'$ is stable with respect to $L'\in\ma{C}'$; then $F'\cong F_C$, which yields a point $[F_C]\in M_{L'}$.

Suppose moreover that $\overline{\ma{C}}$ and $\overline{\ma{C}'}$ share a common facet (which must lie on the hyperplane $(K_S+2C)^{\perp}$). Then the natural map $[F]\mapsto[F]$ yields a birational morphism  $M_L\to M_{L'}$. More precisely, the
 normal bundle of $E_C$ is
$\ma{N}_{E_C/M_L}\cong\ol_{\pr^3}(-1)$ \cite[Prop.~3.6]{FriedmanQin}, and
 $M_L\to M_{L'}$ is just the blow-up of the smooth point $[F_C]$, with exceptional divisor $E_C$ (see \cite[Th.~2]{Qin1993} and
\cite[Th.~3.9]{FriedmanQin}).
\subsection{The moduli space for the outer chamber $\ma{C}_h$}\label{outer}
Let us fix a   cubic $h$. Recall from \ref{secE} that the wall $(2h+K_S)^{\perp}$ cuts the facet $\tau_h=\langle C_1,\dotsc,C_8\rangle$ of $\ma{E}$ (notation as in \ref{notationP^2}). We have $C_1+\cdots+C_8\sim -K_S+5h$, so $-K_S+5h$ belongs to the relative interior of $\tau_h$.

All the walls different from $(2h+K_S)^{\perp}$ cut the cone  $\tau_h$ along a proper face. Thus there is a unique chamber $\ma{C}_h\subset\ma{E}$ such that $\overline{\ma{C}}_h$ intersects the relative interior of $\tau_h$; in particular $\tau_h$ is a facet of  $\overline{\ma{C}}_h$. 
We have $\overline{\ma{C}}_h=\langle C_1,\dotsc,C_8,-K_S+3h\rangle$ (see Lemma \ref{L_t} and the following discussion).
\begin{proposition}[\cite{mukaiADE}, p.~9]\label{P^4}
For every $[F]\in M_{\ma{C}_h}$ the sheaf  $F$ is locally free, and
$M_{\ma{C}_h}\cong\pr(\Ext^1(\ol_S(h),\ol_S(-K_S-h)))=\pr^4$.
\end{proposition}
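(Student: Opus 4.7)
The plan is a wall-crossing argument exploiting the fact that the facet $\tau_h$ of $\overline{\ma{C}}_h$ lies on the boundary $\partial\ma{E}$: crossing $\tau_h$ takes the polarization out of $\ma{E}$, where Lemma~\ref{nonempty} rules out the existence of any $\mu$-semistable sheaf with the prescribed invariants. Every $[F]\in M_{\ma{C}_h}$ is therefore forced to destabilize on the wall $(2h+K_S)^{\perp}$, and Prop.~\ref{walls} together with Lemma~\ref{special} will pin the extension down to the unique non-split shape
\[
0\la\ol_S(-K_S-h)\la F\la\ol_S(h)\la 0,
\]
which, via Lemma~\ref{special}$(c)$ and Rem.~\ref{dimext}, is parametrized by $\pr(H^1(S,\ol_S(-K_S-2h)))\cong\pr^4$.

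Concretely, first pick $L_0\in\Pic(S)$ lying in the relative interior of $\tau_h$ and off all other walls of $\ST(S)$; such an $L_0$ exists and is ample by Rem.~\ref{boundaryample}. For $[F]\in M_{\ma{C}_h}$, closedness of $\mu$-semistability in the polarization makes $F$ $\mu$-semistable at $L_0$, while $F$ cannot be $\mu$-stable at $L_0$: otherwise openness of $\mu$-stability would give an ample $L'$ slightly beyond $\tau_h$, hence outside $\ma{E}$, at which $F$ remained $\mu$-stable, contradicting Lemma~\ref{nonempty}. Prop.~\ref{walls} therefore applies to $F$ and produces an exact sequence
\[
0\la\ol_S(D)\la F\la\ol_S(-K_S-D)\la 0,\qquad L_0\cdot(2D+K_S)=0,
\]
with $D$ (or $-K_S-D$) a $(-1)$-curve, a conic, or a   cubic. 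Since $L_0$ lies on no other wall, $2D+K_S$ must be proportional to $2h+K_S$; using $K_S\cdot(2\ell+K_S)=-1$, $K_S\cdot(2C+K_S)=-3$, $K_S\cdot(2h'+K_S)=-5$ together with the fact that each of the six candidate classes $\pm(2\beta+K_S)$ has self-intersection $-7$, a short case check leaves only $D\sim h$ or $D\sim -K_S-h$. The alternative $D\sim h$ is excluded by Lemma~\ref{special}$(d)$: the extension would split and $F$ would be strictly semistable throughout $\ma{C}_h$.

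Hence $D\sim -K_S-h$, $F$ is locally free by Prop.~\ref{walls}$(b)$, and by Lemma~\ref{special}$(c)$ its isomorphism class is determined by an element of $\pr(\Ext^1(\ol_S(h),\ol_S(-K_S-h)))=\pr(H^1(S,\ol_S(-K_S-2h)))\cong\pr^4$ (Rem.~\ref{dimext}); conversely, each such non-split extension yields an $F$ that is stable with respect to every $L\in\ma{C}_h$ by the Qin criterion recalled in \S\ref{flip}--\ref{blowup}. The universal extension over $\pr(H^1(S,\ol_S(-K_S-2h)))\times S$ induces a morphism $\pr^4\to M_{\ma{C}_h}$, bijective on closed points by the above, and since both varieties are smooth, irreducible, and projective of dimension $4$, this is an isomorphism. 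The delicate point in the argument is the openness step in the second paragraph: it forces us to select an \emph{ample} $L_0$ on $\partial\ma{E}$ so that nearby ample polarizations genuinely lie outside $\ma{E}$, which is possible precisely because of Rem.~\ref{boundaryample}.
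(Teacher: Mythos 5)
The paper offers no internal proof of this proposition: it is quoted directly from Mukai \cite[p.~9]{mukaiADE}, so there is no argument of the paper to compare yours with. Your reconstruction is sound and fits the paper's logical order: it only uses Prop.~\ref{walls}, Lemma~\ref{special}, Lemma~\ref{nonempty}, Rem.~\ref{dimext} and Qin's criterion, and it avoids the later statements (Lemma~\ref{specialL}, Cor.~\ref{connected}, \ref{locallyfree}) that depend on Prop.~\ref{P^4}; note that you never need irreducibility of $M_{\ma{C}_h}$ a priori, since surjectivity of your morphism from $\pr^4$ already gives it. The wall analysis at $L_0$ is correct: the hyperplane $(2D+K_S)^{\perp}$ must coincide with $(2h+K_S)^{\perp}$, and since all candidate classes have square $-7$ while their $K_S$-degrees are $-1,-3,-5$, only $D\sim h$ or $D\sim -K_S-h$ survive.

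Two points need tightening, neither fatal. First, your exclusion of $D\sim h$ is right in substance but misstated: by Lemma~\ref{special}$(d)$ the extension splits, and the split sheaf $\ol_S(h)\oplus\ol_S(-K_S-h)$ is not ``strictly semistable throughout $\ma{C}_h$'' --- it is unstable there, because $\mu_L(\ol_S(h))-\mu_L(F)=\frac{1}{2}L\cdot(2h+K_S)>0$ for $L\in\ma{C}_h$ (or simply: it is not stable, hence not semistable by Rem.~\ref{stable}); the same remark is needed to discard the split case when $D\sim-K_S-h$ before invoking Lemma~\ref{special}$(c)$. Second, the ``openness of $\mu$-stability'' in the polarization, which you rightly flag as the delicate step, should be justified in a line: any saturated destabilizing rank-one subsheaf $G\subset F$ has $\xi_G=2c_1(G)+K_S$ with $\xi_G^2\geq c_1^2-4c_2=-7$, so by Hodge index only finitely many such integral classes have orthogonal hyperplane meeting a compact neighbourhood of $L_0$ inside the ample cone (this is exactly the local finiteness of walls the paper already invokes), while classes with $\xi_G^2\geq 0$ and $\xi_G\cdot L_0<0$ are negative on the whole ample cone; this yields the openness, after which your choice of an ample $L'\notin\ma{E}$ near $L_0$ (possible by Rem.~\ref{boundaryample}) and Lemma~\ref{nonempty} indeed force $F$ to be strictly $\mu$-semistable at $L_0$. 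Finally, \S\ref{flip}--\ref{blowup} recall Qin's theorem only for walls of $(-1)$-curves and conics, so for the stability of the non-split extensions on the cubic wall you should cite \cite[Ch.~II, Th.~1.2.3]{Qin1993} directly; its numerical hypotheses hold here since $(2h+K_S)^2=-7=K_S^2-8$ and $\overline{\ma{C}}_h$ meets $(2h+K_S)^{\perp}$ in ample classes.
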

\subsection{The moduli space $M_L$ when $L$ belongs to some wall}
\begin{lemma}\label{specialL}
Let $L\in\Pic(S)$ be ample, $L\in\ma{E}$. 
We have the following.
\begin{enumerate}[$(a)$]
\item
There exists a unique chamber
$\ma{C}$ such that $L\in\overline{\ma{C}}$ and $\ma{C}\subset (K_S+2D)^{>0}$ for every $(-1)$-curve, conic, or   cubic $D$ with $L\cdot(2D+K_S)=0$.
\item We have  $M_L=M_{\ma{C}}$, and $L\in\Pi$ if and only if $\ma{C}\subset\Pi$.
\item If $L$ belongs to the boundary of $\ma{E}$, then $L$ belongs to a unique wall $(2h+K_S)^{\perp}$ where $h$ is  a   cubic;
$L$ is in the relative interior of the facet 
$\tau_h=(2h+K_S)^{\perp}\cap\ma{E}$, $\ma{C}=\ma{C}_h$, and $M_L=M_{\ma{C}_h}\cong\pr(\Ext^1(\ol_S(h),\ol_S(-K_S-h)))=\pr^4$.
\item Suppose that $L$ is in the interior of $\ma{E}$, and that it is contained in the walls $(2\ell_i+K_S)^{\perp}$ and $(2C_j+K_S)^{\perp}$,  $\ell_i$  a $(-1)$-curve and $C_j$ a conic, for $i=1,\dotsc,r$ and $j=1,\dotsc,s$, with $r\geq 0$ and $s\geq 0$.
 Then $M_L$ contains the special loci $P_{\ell_i}\cong\pr^2$ and $E_{C_j}\cong\pr^3$ for all $i,j$, and these loci are pairwise disjoint in $M_L$.
\end{enumerate}
\end{lemma}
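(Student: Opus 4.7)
The plan is to treat the four assertions in turn, building on the wall-crossing analysis of \S\S\ref{flip}--\ref{outer}.

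For (a), I would construct $\ma{C}$ by perturbing $L$ in the direction of $-K_S$. Only finitely many walls $(2D+K_S)^{\perp}$ pass through $L$, each with $D$ a $(-1)$-curve, conic, or cubic, and for any such $D$
\[
-K_S\cdot(2D+K_S)=-2K_S\cdot D - 1 \geq 1.
\]
Hence $A := L-\varepsilon K_S$ is ample for small $\varepsilon>0$, lies strictly in $(2D+K_S)^{>0}$ for every wall through $L$, and, for $\varepsilon$ small enough, on the same side as $L$ of every other wall. Such $A$ then lies in the unique chamber $\ma{C}$ with the prescribed sign conditions.

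For (b), I would compare stability for $L$ and for $A\in\ma{C}$. By Prop.~\ref{walls}(c), a sheaf $F\in M_L$ that fails to be $\mu$-$L$-stable arises as a non-split extension $0\to\ol_S(D)\to F\to\ol_S(-K_S-D)\to 0$ with $-K_S-D$ effective; the construction of $\ma{C}$ forces $A\cdot D < A\cdot(-K_S-D)$, so $F$ is $\mu$-$A$-stable, hence $A$-stable. Conversely, any $A$-stable sheaf is $L$-semistable by openness of $\mu$-semistability, and $L$-stable by Rem.~\ref{stable}; hence $M_L=M_{\ma{C}}$. For the $\Pi$-statement, $\Pi$ is a union of cones of the stability fan (Def.~\ref{wallsfan}), cut out by the conic walls; since $\ma{C}$ sits on the same side as $L$ of every conic wall (strictly positive on those containing $L$), one has $L\in\Pi$ iff $\ma{C}\subset\Pi$.

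For (c), Rem.~\ref{boundaryample} places an ample $L\in\partial\ma{E}$ in the relative interior of a facet $\tau_h=(2h+K_S)^{\perp}\cap\ma{E}$, for a unique cubic $h$ (distinct facets have disjoint relative interiors). Any other wall through $L$ would confine $L$ to a proper subface of $\tau_h$, contradicting the relative-interior condition; hence $(2h+K_S)^{\perp}$ is the unique wall through $L$. The chamber $\ma{C}$ of (a) then coincides with $\ma{C}_h$ by the characterisation in \S\ref{outer}, and (b) combined with Prop.~\ref{P^4} yields $M_L\cong\pr^4$.

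For (d), the inclusions $P_{\ell_i},E_{C_j}\subset M_L$ follow by applying the wall-crossing descriptions of \S\S\ref{flip}--\ref{blowup} to $\ma{C}$ and to each wall through $L$: $\ma{C}$ lies in the correct open halfspace at every such wall and $\overline{\ma{C}}$ meets the wall along a face. Pairwise disjointness will follow from the uniqueness (up to ordering) of the $\mu$-Jordan--H\"older factors of a strictly $\mu$-semistable sheaf: every $F\in P_\ell$ (resp.\ $F\in E_C$) has $\mu$-graded $\ol_S(-K_S-\ell)\oplus\ol_S(\ell)$ (resp.\ $\ol_S(-K_S-C)\oplus\ol_S(C)$), so an intersection would force these unordered pairs of line bundles to coincide. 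This is impossible for distinct labels, since the possible summands have pairwise distinct self-intersection numbers on $S$ (namely $\ell^2=-1$, $C^2=0$, $(-K_S-\ell)^2=-2$, $(-K_S-C)^2=-3$).

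The hardest step I anticipate is the matching of stability conditions in (b), which requires careful bookkeeping between Gieseker and slope (semi)stability for $L$ on a wall versus for a generic $A\in\ma{C}$; once this is in hand, parts (a), (c), (d) follow comparatively directly from the perturbation construction and the already established wall-crossing machinery.
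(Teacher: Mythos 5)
Your parts (a), (c) and (d) follow essentially the paper's own route: the chamber in (a) is produced by the same perturbation in the direction of $-K_S$ (the paper uses $-K_S+mL$ with $m\gg 0$ and the condition that the segment meets no wall away from $L$), (c) is the same facet argument via Rem.~\ref{boundaryample} and \ref{outer}, and the disjointness in (d) rests, as in the paper, on the uniqueness of the equal-slope sub-line bundle of a non-split extension (\cite[Ch.~4, Prop.~21]{friedmanbook}); your self-intersection bookkeeping is a correct, if slightly roundabout, way to distinguish the loci.

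The genuine gap is in (b), which you correctly identify as the crux. In the converse direction you claim that an $A$-stable sheaf is ``$L$-semistable by openness of $\mu$-semistability, and $L$-stable by Rem.~\ref{stable}''. The limiting argument only yields $\mu$-$L$-semistability, and Rem.~\ref{stable} cannot be applied to a merely $\mu$-semistable sheaf: with these invariants there exist $\mu$-$L$-semistable sheaves that are not $L$-semistable at all, for instance the split extensions $\ol_S(D)\oplus\ol_S(-K_S-D)$ and the non-split extensions $0\to\ol_S(D)\to F\to\ol_S(-K_S-D)\to 0$ with $D$ one of the $\ell_i$ or $C_j$ (Prop.~\ref{walls}$(c)$). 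So, as written, the converse proves nothing; what is needed, and what the paper does, is the case analysis of Prop.~\ref{walls}: a $\ma{C}$-stable sheaf that is strictly $\mu$-$L$-semistable cannot be of the latter type, because $A\cdot(2D+K_S)>0$ makes $\ol_S(D)$ an $A$-slope destabilizer, while the remaining type is $L$-stable by Prop.~\ref{walls}$(c)$. In the forward direction, checking the slope inequality against the single subsheaf $\ol_S(D)$ does not establish $\mu$-$A$-stability of $F$ (other rank-one subsheaves could a priori destabilize with respect to $A$); the paper instead invokes the standard comparison $L$-$\mu$-stable $\Rightarrow$ $\ma{C}$-stable $\Rightarrow$ $L$-$\mu$-semistable from \cite[Ch.~4, Prop.~22]{friedmanbook}, together with Qin's theorem, quoted in \ref{flip} and \ref{blowup}, that these non-split extensions are stable for polarizations in the adjacent chamber on the positive side of the wall; you also leave the $\mu$-$L$-stable members of $M_L$ untreated. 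All of this is repairable with the references already set up in the paper, but as it stands the matching of stability conditions is asserted rather than proved.
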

\begin{proof}
The anticanonical class $-K_S$ belongs to $\ma{E}$  and is not contained in any wall (see Rem.~\ref{fanochamber}). Thus 
we can choose  $m\gg 0$ such that the class $L':=-K_S+mL$ is contained in a chamber, and such that no wall contains a convex combination of $L$ and $L'$ different from $L$.
Then the chamber $\ma{C}$ containing $L'$ satisfies $(a)$, because for every $D$ as in $(a)$ we have $L'\cdot (K_S+2D)=-K_S\cdot (K_S+2D)>0$.

It follows from standard arguments (see for instance \cite[Ch.~4, Prop.~22]{friedmanbook}; the argument for torsion free sheaves is the same) that:
\begin{equation}\label{comparison}
\text{$L$-$\mu$-stable}\ \Rightarrow\ \text{$\ma{C}$-stable}\ \Rightarrow\ \text{$L$-$\mu$-semistable}.\end{equation}

Suppose that $L$ is on the boundary of $\ma{E}$. Since $L$ is ample, by Rem.~\ref{boundaryample} $L$ is contained in the relative interior of a facet 
$\tau_h=(2h+K_S)^{\perp}\cap\ma{E}$, where $h$ is a   cubic. By  \ref{outer} we see that $(2h+K_S)^{\perp}$ is the unique wall containing $L$, and that $\ma{C}=\ma{C}_h$.

We have described $\ma{C}_h$-stable sheaves in Prop.~\ref{P^4}, they concide with non-split extensions
$$0\la\ol_S(-K_S-h)\la F\la\ol_S(h)\la 0.$$
In particular such sheaves are all $L$-stable by Prop.~\ref{walls}$(c)$.

Conversely, by Prop.~\ref{walls} and Lemma \ref{special}$(d)$, the strictly $L$-$\mu$-semistable sheaves are the sheaves appearing in an extension as above, and the split case $\ol_S(-K_S-h)\oplus\ol_S(h)$; this last one is not $L$-stable.

Together with \eqref{comparison}, this shows that $L$-stability coincides with $\ma{C}_h$-stability, and we get $(c)$ by Prop.~\ref{P^4}. Notice also that there are no $L$-$\mu$-stable sheaves.

Suppose now that $L$ is not on the boundary of $\ma{E}$, as in $(d)$.
By Prop.~\ref{walls}, the set of strictly $L$-$\mu$-semistable sheaves is given by:
\begin{enumerate}[(1)]
\item
 the sheaves in non-split extensions $0\la\ol_S(-K_S-D)\la F\la\ol_S(D)\la 0$
\item the sheaves in  extensions $0\la\ol_S(D)\la F\la\ol_S(-K_S-D)\la 0$
\end{enumerate}
where $D\in\{\ell_1,\dotsc,\ell_r,C_1,\dotsc,C_s\}$. The sheaves in (1) are $L$-stable by Prop.~\ref{walls}$(c)$, and $\ma{C}$-stable as explained in \ref{flip} and \ref{blowup}. On the other hand, the sheaves in (2) are neither $L$-stable (again by Prop.~\ref{walls}$(c)$) nor $\ma{C}$-stable (because $L'\cdot (K_S+2D)>0$). 

Together with \eqref{comparison}, this shows that $L$-stability coincides with $\ma{C}$-stability, and we get $(b)$. The sheaves in (1) yield the loci $P_{\ell_1},\dotsc,P_{\ell_r},E_{C_1},\dotsc,E_{C_s}$ in $M_L$. Finally,  given a strictly $L$-$\mu$-semistable sheaf $F$  as in (1), the subsheaf $\ol_S(-K_S-D)$ is unique (see \cite[Ch.~4, Prop.~21]{friedmanbook}), so these loci are pairwise disjoint, and
we get $(d)$.
\end{proof}
\subsection{Properties of $M_L$}\label{properties}
\begin{corollary}\label{birational}
Let $L_1,L_2\in \Pic(S)$  be ample, $L_1,L_2\in\ma{E}$. Then  there exist non-empty dense open subsets $U_1\subseteq M_{S,L_1}$ and  $U_2\subseteq M_{S,L_2}$ parametrizing the same objects; this yields a natural birational 
 map $\ph\colon M_{S,L_1}\dasharrow M_{S,L_2}$ given by $\ph([F])=[F]$.

Suppose that $L_1\in\Pi$. Then $\ph$ is a contracting\footnote{A birational map $\ph\colon X_1\dasharrow X_2$ is \emph{contracting} if there exist open subsets $U_1\subseteq X_1$ and $U_2\subseteq X_2$ such that $\ph$ yields an isomorphism between $U_1$ and $U_2$, and $\codim (X_2\smallsetminus U_2)\geq 2$.} 
birational map, and  $\ph$ is a pseudo-isomorphism if and only if
 $L_2\in\Pi$.
\end{corollary}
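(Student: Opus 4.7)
The plan is to build $\ph$ by wall-crossing along a generic segment in the ample part of $\ma{E}$ from $L_1$ to $L_2$, and to read off the contracting/pseudo-isomorphism behaviour from the walls crossed.

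First I would use Lemma \ref{specialL}(a,b) to replace each $L_i$ by an ample divisor $L_i'$ in the interior of its chamber $\ma{C}_i$, without altering $M_{L_i}$ and with $L_i\in\Pi$ iff $\ma{C}_i\subseteq\Pi$. Since every open chamber lies in the interior of $\ma{E}$ (by \ref{wallsfan} the walls $(2h+K_S)^{\perp}$ meet $\ma{E}$ only along $\partial\ma{E}$), convexity puts the segment $[L_1',L_2']$ in the interior of $\ma{E}$ intersected with the ample cone. After a small generic perturbation the segment meets $\ST(S)$ transversely, crossing one wall at a time; by local finiteness only finitely many $(2\ell+K_S)^{\perp}$ and $(2C+K_S)^{\perp}$ walls are crossed. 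Between crossings $L$ stays in a chamber and $M_L$ is constant; at each wall the descriptions \ref{flip} or \ref{blowup} supply a natural birational map $[F]\mapsto[F]$ between adjacent moduli spaces on the dense open locus of sheaves stable for both chambers. Composing yields $\ph\colon M_{S,L_1}\dashrightarrow M_{S,L_2}$ acting as $[F]\mapsto[F]$ on the common locus $U_i$ of sheaves stable for every polarization along the segment; this $U_i$ is open by openness of stability and dense because its complement is a finite union of the special loci $P_\ell$, $Z_\ell$, $E_C$, $\{[F_C]\}$ of \ref{defiloci}.

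For the second part, suppose $L_1\in\Pi$, so $\ma{C}_1\subseteq\Pi$ and $L_1'$ lies in the interior of $\Pi$. If $L_2\in\Pi$, then by convexity the whole segment lies in the interior of $\Pi$, where $L\cdot(2C+K_S)>0$ strictly for every conic $C$; hence only $(2\ell+K_S)^{\perp}$-walls are crossed, each a flip by \ref{flip}, and $\ph$ is a pseudo-isomorphism. If instead $L_2\notin\Pi$, the segment must exit $\Pi$ across some $(2C+K_S)^{\perp}$-wall, and since $t\mapsto((1-t)L_1'+tL_2')\cdot(2C+K_S)$ is linear and positive at $t=0$, every such crossing occurs in the direction $\{>0\}\to\{<0\}$. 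By \ref{blowup} the corresponding step is a morphism contracting $E_C\cong\pr^3$ to the point $[F_C]$; the strict transform of $E_C$ back in $M_{S,L_1}$ is a divisor (the earlier wall-crossings being flips or blow-downs, whose inverses preserve divisors), and this divisor is contracted by $\ph$, so $\ph$ is not a pseudo-isomorphism. In both cases every intermediate map is a flip or a blow-down of a smooth point, each contracting in the sense of the footnote, and a short codimension argument shows that the composition of contracting birational maps is contracting; hence $\ph$ is contracting.

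The crucial point is that $\Pi$ is defined precisely by the conic inequalities $L\cdot(2C+K_S)\geq 0$ that also govern the blow-downs of \ref{blowup}: exiting $\Pi$ corresponds exactly to contracting some $E_C$, while staying in $\Pi$ allows only $(-1)$-curve flips. The main technical obstacle is ensuring that the hypotheses of \ref{flip} and \ref{blowup} (adjacent chambers sharing a common facet on the wall) hold at each crossing; this is secured by the transversality provided by the generic perturbation.
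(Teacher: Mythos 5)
Your proposal is correct and takes essentially the same route as the paper's proof: reduce to polarizations lying in chambers via Lemma \ref{specialL}, then factor $\ph$ through the explicit wall-crossings of \ref{flip} and \ref{blowup} along a segment, so that $\ph$ is a composition of flips and blow-downs (hence contracting) when $L_1\in\Pi$, and is a pseudo-isomorphism exactly when only $(2\ell+K_S)^{\perp}$-walls are crossed, i.e.\ when $L_2\in\Pi$. The paper states this more tersely (citing Mukai), and your added details --- the generic choice of segment, the observation that every $(2C+K_S)^{\perp}$-crossing goes from the positive to the negative side, and the contracted divisor $E_C$ obstructing a pseudo-isomorphism --- just make the same argument explicit.
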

\begin{proof}
By Lemma \ref{specialL} we can assume that $L_1$ and $L_2$ do not belong to any wall. Then the first statement follows from the explicit description of wall crossings in \ref{flip} and \ref{blowup}. If $L_1\in\Pi$, then $\ph$ is a compositions of flips and blow-downs, so it is contracting. Moreover, $\ph$ is a pseudo-isomorphism if and only if it can be factored as a sequence of flips, if and only if 
 $L_2\in\Pi$. See \cite[p.~9]{mukaiADE}.
\end{proof}
The following are  straightforward consequences of Cor.~\ref{nonempty},  Cor.~\ref{birational} and Prop.~\ref{P^4}.
\begin{corollary}\label{nonempty2}
Let $L\in\Pic(S)$ be ample. The following are equivalent:
\begin{enumerate}[$(i)$]
\item there exists a $\mu$-semistable  torsion-free sheaf $F$ of rank $2$ with $c_1(F)=-K_S$ and $c_2(F)=2$;
\item $M_L\neq\emptyset$;
\item $L\in\ma{E}$, namely
$L\cdot (2h+K_S)\geq 0$ for every   cubic $h$.
\end{enumerate}
\end{corollary}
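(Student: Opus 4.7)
The plan is to prove the three implications in the standard cyclic fashion, observing that each is essentially immediate from a previously established result.

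First, the implication $(ii)\Rightarrow(i)$ is essentially a tautology: any $F$ with $[F]\in M_L$ is by definition semistable with respect to $L$, and semistability implies $\mu$-semistability in the stability hierarchy recalled at the beginning of \S\ref{secmoduli}. The implication $(i)\Rightarrow(iii)$ is precisely the content of Lemma \ref{nonempty}, so nothing further needs to be said.

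The only non-trivial direction is $(iii)\Rightarrow(ii)$. Here I would argue as follows. Fix any   cubic $h$ on $S$; the outer chamber $\ma{C}_h$ of the stability fan was constructed in \ref{outer}. Pick any $L_0$ in the relative interior of $\ma{C}_h$; since $\ma{C}_h$ lies in the interior of $\ma{E}$ (its only boundary facet on $\partial\ma{E}$ is $\tau_h$), Rem.~\ref{boundaryample} guarantees that $L_0$ can be chosen ample. By Prop.~\ref{P^4} we then have $M_{L_0}=M_{\ma{C}_h}\cong\pr^4$, which is non-empty. Now apply Cor.~\ref{birational} to the pair $(L_0,L)$, both of which are ample polarizations in $\ma{E}$: it produces a birational map $\ph\colon M_{L_0}\dasharrow M_L$ together with non-empty dense open subsets on which $\ph$ restricts to an isomorphism. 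In particular $M_L$ must contain such a non-empty open subset, so $M_L\neq\emptyset$.

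There is no real obstacle: the only small point to watch is that Cor.~\ref{birational} is invoked for an ample polarization $L$ which may lie on walls of the stability fan or even on $\partial\ma{E}$, but the statement of Cor.~\ref{birational} is formulated for arbitrary ample $L_1,L_2\in\ma{E}$, so this is covered. (Implicitly this relies on Lemma \ref{specialL} which identifies $M_L$ with $M_{\ma{C}}$ for the canonical chamber $\ma{C}$ attached to $L$.) Thus the corollary follows by combining Lemma \ref{nonempty}, Prop.~\ref{P^4}, and Cor.~\ref{birational} as indicated.
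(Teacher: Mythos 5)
Your proposal is correct and follows essentially the same route as the paper: the paper derives Cor.~\ref{nonempty2} precisely as a straightforward consequence of Lemma \ref{nonempty} (for $(i)\Rightarrow(iii)$), Prop.~\ref{P^4} (non-emptiness of $M_{\ma{C}_h}\cong\pr^4$), and Cor.~\ref{birational} (which, via Lemma \ref{specialL}, handles polarizations on walls or on $\partial\ma{E}$), with $(ii)\Rightarrow(i)$ being immediate from the stability hierarchy. Your handling of the wall/boundary case matches the paper's intended use of these results.
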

\begin{corollary}\label{connected}
Let $L\in\ma{E}$ be an ample line bundle. Then $M_L$ is irreducible. 
\end{corollary}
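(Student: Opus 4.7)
The plan is to reduce to a wall-crossing argument connecting $M_L$ to $\pr^4$.

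First, by Cor.~\ref{smoothness}, $M_L$ is smooth and projective of pure dimension $4$, so its irreducible components coincide with its connected components, and it suffices to prove that $M_L$ is connected. By Lemma \ref{specialL}$(b)$ we have $M_L = M_{\ma{C}}$ for a unique chamber $\ma{C}\subset\ma{E}$ with $L\in\overline{\ma{C}}$, so we may assume that $L$ lies in the interior of a chamber $\ma{C}$.

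Next, fix any cubic $h$ and consider the outer chamber $\ma{C}_h\subset\ma{E}$ of \ref{outer}, for which $M_{\ma{C}_h}\cong\pr^4$ is connected by Prop.~\ref{P^4}. Since the interior of $\ma{E}$ is contained in the ample cone (Rem.~\ref{boundaryample}) and is convex, I can choose a general path inside the interior of $\ma{E}$ from an interior point of $\ma{C}_h$ to $L$, meeting finitely many walls of $\ST(S)$, each transversally in its relative interior and one at a time. This produces a finite sequence of chambers $\ma{C}_h=\ma{C}_0,\ma{C}_1,\dotsc,\ma{C}_n=\ma{C}$ in $\ma{E}$ such that $\overline{\ma{C}_{i-1}}$ and $\overline{\ma{C}_i}$ share a common facet lying on a single wall. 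By Def.~\ref{wallsfan}, walls of type $(2h'+K_S)^{\perp}$ with $h'$ a cubic only cut the boundary of $\ma{E}$, so the walls crossed along the path are all of the form $(2\ell+K_S)^{\perp}$ or $(2C+K_S)^{\perp}$ with $\ell$ a $(-1)$-curve and $C$ a conic.

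Now I argue by induction on $i$ that $M_{\ma{C}_i}$ is connected. The base case $i=0$ is Prop.~\ref{P^4}. For the inductive step, the explicit wall-crossing analyses in \ref{flip} and \ref{blowup} describe $M_{\ma{C}_{i-1}}$ and $M_{\ma{C}_i}$ as either (i) both smooth blow-downs of a common smooth variety $\widehat{M}$ along $\pr^{\oplus}$-bundles over $P_\ell$ and $Z_\ell$ respectively, or (ii) the smooth blow-up of one at the single point $[F_C]$ of the other. In either case, connectedness of one implies connectedness of the other, so by induction every $M_{\ma{C}_i}$ is connected, and in particular $M_L=M_{\ma{C}_n}$ is connected, hence irreducible. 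The main subtlety is simply the path-choice step, namely ruling out simultaneous crossings of several walls and crossings on the boundary of $\ma{E}$, but both points are automatic by genericity and by the fact recalled in Def.~\ref{wallsfan}.
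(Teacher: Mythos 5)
Your proof is correct and follows essentially the same route as the paper: the paper deduces irreducibility as a straightforward consequence of Cor.~\ref{birational} (whose proof is exactly your chain of chambers, reduced via Lemma \ref{specialL} and resolved wall by wall through \ref{flip} and \ref{blowup}) together with Prop.~\ref{P^4}, which identifies the endpoint $M_{\ma{C}_h}$ with $\pr^4$. Your only cosmetic deviation is to run the argument as an induction on connectedness along the path of chambers, rather than quoting the natural birational map to $\pr^4$ and using smoothness; the ingredients are identical.
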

\begin{corollary}\label{locallyfree}
Let $L\in\ma{E}$ be an ample line bundle, and $F$ a $\mu$-semistable  torsion-free sheaf $F$ of rank $2$ with $c_1(F)=-K_S$ and $c_2(F)=2$.
 Then $F$ is locally free.
\end{corollary}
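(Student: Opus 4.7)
The proof splits into two cases according to whether $F$ is strictly $\mu$-semistable or $\mu$-stable with respect to $L$.

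If $F$ is strictly $\mu$-semistable, Proposition \ref{walls}(b) directly provides an extension
$$0\la\mathcal{O}_S(D)\la F\la\mathcal{O}_S(-K_S-D)\la 0$$
with $D$ or $-K_S-D$ a $(-1)$-curve, conic, or cubic. Since both end terms are line bundles, $F$ is locally free.

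Now assume $F$ is $\mu$-stable with respect to $L$. Then $F$ is in particular Gieseker-stable (Remark \ref{stable}), so $[F]\in M_L$. By Lemma \ref{specialL}(b) we have $M_L=M_{\mathcal{C}}$ for some chamber $\mathcal{C}\subseteq\mathcal{E}$, so we may replace $L$ by any polarization in $\mathcal{C}$. Fix a cubic $h$ and consider the outer chamber $\mathcal{C}_h$. By Corollary \ref{birational}, the natural identification $[F]\mapsto[F]$ yields a birational map $\phi\colon M_{\mathcal{C}}\dashrightarrow M_{\mathcal{C}_h}\cong\pr^4$, realized as an isomorphism between dense open subsets that parametrize the same underlying sheaves. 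If $[F]$ lies in the domain of $\phi$, then the corresponding sheaf is a point of $M_{\mathcal{C}_h}$, which by Proposition \ref{P^4} parametrizes only locally free sheaves, so $F$ is locally free. Otherwise $[F]$ lies in the indeterminacy locus of $\phi$; by the explicit wall-crossing descriptions of \ref{flip} and \ref{blowup}, this locus is a union, over the finitely many intermediate wall crossings, of special loci of the form $P_\ell$, $Z_\ell$, $E_C$ and isolated points $[F_C]$ (Definition \ref{defiloci}). Every sheaf in such a locus arises as a non-split extension of two line bundles on $S$, and is therefore locally free by Lemma \ref{special}(a). Hence $F$ is locally free in this subcase as well.

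The main technical point is the precise identification of the indeterminacy locus of the composite birational map $\phi$ as a union of the special loci from Definition \ref{defiloci}. This is handled by induction on the number of walls crossed along a chosen path from $\mathcal{C}$ to $\mathcal{C}_h$ in $\mathcal{E}$: each individual wall crossing has indeterminacy locus given by one of the loci $P_\ell$, $Z_\ell$, $E_C$, $\{[F_C]\}$ as explicated in \ref{flip} and \ref{blowup}, and Lemma \ref{special}(a) guarantees that at every step the sheaves involved are locally free, so local freeness propagates from $M_{\mathcal{C}_h}=\pr^4$ through the whole sequence of wall-crossings back to $M_{\mathcal{C}}$.
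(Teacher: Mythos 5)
Your proof is correct and follows essentially the same route as the paper: local freeness holds on the outer chamber by Prop.~\ref{P^4}, propagates to every chamber through the explicit wall-crossing descriptions of \ref{flip} and \ref{blowup} (where the new sheaves are non-split extensions of line bundles, hence locally free by Lemma \ref{special}$(a)$), and the wall case is reduced via Lemma \ref{specialL}. The only difference is cosmetic: you dispose of the strictly $\mu$-semistable sheaves up front by Prop.~\ref{walls}$(b)$, a step the paper leaves implicit in its citation of Lemma \ref{specialL}$(b)$, which is a reasonable way to make the argument explicit.
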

\begin{proof} This holds for $L\in\ma{C}_h$  by Prop.~\ref{P^4}. By the explicit description of wall crossings given in  \ref{flip} and \ref{blowup}, the statement stays true whenever $L$ is in a chamber. Finally if $L$ does not belong to a chamber, the statement follows from Lemma \ref{specialL}$(b)$.
\end{proof}
\subsection{The moduli space of  $\mu$-semistable   sheaves}\label{secslope}
We consider now the moduli space $M_L^{\mu}$ of $\mu$-semistable  rank $2$ torsion-free sheaves $F$ on $S$, with $c_1(F)=-K_S$ and $c_2(F)=2$, see \cite[\S 8.2]{huybrechtslehn} and references therein. 
We recall that by Cor.~\ref{locallyfree}, every such sheaf $F$ is locally free.

 By  \cite[Def.\ and Th.~8.2.8]{huybrechtslehn}  $M_L^{\mu}$ is a projective scheme (which we consider with its reduced scheme structure), endowed with a natural morphism 
$\gamma\colon M_L\to M_L^{\mu}$,
with the following properties:
\begin{enumerate}[$(a)$]
\item the points of $M_L^{\mu}$ are in bijection with equivalence classes of 
$\mu$-semistable  rank $2$ locally free sheaves $F$ on $S$, with $c_1(F)=-K_S$ and $c_2(F)=2$;
\item by \cite[Def.~8.2.10 and Th.~8.2.11]{huybrechtslehn},  if $F_1$ and $F_2$ are not isomorphic, then they are equivalent if and only if they are strictly $\mu$-semistable, and in the 
 exact sequences given by Prop.~\ref{walls}:
 $$0\la \ol_S(D_i)\la F_i\la\ol_S(-K_S-D_i)\to 0\qquad i=1,2$$
we have either $\ol_S(D_1)\cong\ol_S(D_2)$ or  $\ol_S(D_1)\cong\ol_S(-K_S-D_2)$;
\item for $[F]\in M_L$, $\gamma([F])$ is the class of $F$ in $M_L^{\mu}$, and
$\gamma$ is an isomorphism on the open subsets of $\mu$-stable, locally free sheaves \cite[Cor.~8.2.16]{huybrechtslehn}.
\end{enumerate}

Thus if the polarization $L$ is in a chamber, it follows from $(c)$ and Cor.~\ref{locallyfree} that $\gamma$ is an isomorphism and the two moduli spaces coincide.
In general we have the following.
\begin{lemma}\label{slope}
Let $L\in\Pic(S)$ be ample, $L\in\ma{E}$. 

 If $L$ belongs to the boundary of $\ma{E}$, then $M_L^{\mu}$ is a point.

Suppose that $L$ is not on the boundary of $\ma{E}$, and that it is contained in the walls $(2\ell_i+K_S)^{\perp}$ and $(2C_j+K_S)^{\perp}$,  $\ell_i$  a $(-1)$-curve and $C_j$ a conic, for $i=1,\dotsc,r$ and $j=1,\dotsc,s$, with $r\geq 0$ and $s\geq 0$.
Then $\gamma$ is birational, $\Exc(\gamma)=P_{\ell_1}\cup\cdots\cup P_{\ell_r}\cup 
E_{C_1}\cup\cdots\cup E_{C_s}$, and the image of $\Exc(\gamma)$ is given by $r+s$ distinct points.
\end{lemma}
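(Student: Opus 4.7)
The proof naturally splits into the two cases stated in the lemma, and in both cases the key is to combine Lemma \ref{specialL} (which describes the strictly $\mu$-semistable locus in $M_L$) with property $(b)$ of the S-equivalence relation recalled in \ref{secslope}.

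First I would handle the boundary case. By Lemma \ref{specialL}$(c)$, $L$ lies in the relative interior of a unique facet $\tau_h=(2h+K_S)^{\perp}\cap\ma{E}$ for some cubic $h$, and every sheaf $[F]\in M_L=M_{\ma{C}_h}\cong\pr^4$ sits in a non-split extension of the form $0\to\ol_S(-K_S-h)\to F\to\ol_S(h)\to 0$. Since $L\cdot(2h+K_S)=0$, every such $F$ is strictly $\mu$-semistable, and any two of them are S-equivalent by property $(b)$ (they all have $D=-K_S-h$). Hence $\gamma$ is constant and $M_L^{\mu}$ is a single point.

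For the interior case, the $\mu$-stable locus of $M_L$ is, by Prop.~\ref{walls}, the complement of the loci parametrizing sheaves in non-split extensions with divisor $D\in\{\ell_1,\dotsc,\ell_r,C_1,\dotsc,C_s\}$, and by Lemma \ref{specialL}$(d)$ this complement is $M_L\smallsetminus\bigl(P_{\ell_1}\cup\cdots\cup P_{\ell_r}\cup E_{C_1}\cup\cdots\cup E_{C_s}\bigr)$, which is a non-empty open subset (note that walls $(2h+K_S)^{\perp}$ cannot contain $L$ here, since they cut only the boundary of $\ma{E}$). By property $(c)$, $\gamma$ restricts to an isomorphism on this open set, so $\gamma$ is birational with $\Exc(\gamma)$ contained in the stated union. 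Next, for each fixed $(-1)$-curve $\ell_i$, all sheaves parametrized by $P_{\ell_i}$ have associated divisor $D=-K_S-\ell_i$, hence by $(b)$ they are mutually S-equivalent; thus $\gamma$ contracts $P_{\ell_i}$ to a point. The same argument applies to each $E_{C_j}$.

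It remains to show that the $r+s$ image points are distinct, and this is where the small subtlety lies. If two sheaves coming from distinct special loci were S-equivalent, then by $(b)$ their associated divisors $D_1,D_2\in\{\ell_1,\dotsc,\ell_r,C_1,\dotsc,C_s\}$ would satisfy $\ol_S(D_1)\cong\ol_S(D_2)$ or $\ol_S(D_1)\cong\ol_S(-K_S-D_2)$. The first alternative forces $D_1=D_2$, hence the same locus. The second alternative would force $D_1+D_2\sim -K_S$, which is impossible because $-K_S\cdot(D_1+D_2)\in\{2,3,4\}$ while $-K_S\cdot(-K_S)=1$. This rules out any coincidence among the $r+s$ image points and gives the equality $\Exc(\gamma)=P_{\ell_1}\cup\cdots\cup P_{\ell_r}\cup E_{C_1}\cup\cdots\cup E_{C_s}$ with $r+s$ distinct image points. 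The most delicate step is this last numerical exclusion; everything else is essentially a direct translation of the statements already established in Lemma \ref{specialL} and Prop.~\ref{walls}.
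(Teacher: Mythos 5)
Your proposal is correct and follows essentially the same route as the paper: the paper's proof simply invokes property $(b)$ of \ref{secslope} together with Lemma \ref{specialL} (and its proof), which is exactly what you do, just with the details written out. In particular, your explicit verification that the $r+s$ image points are distinct (excluding $D_1+D_2\sim -K_S$ by comparing anticanonical degrees with $(-K_S)^2=1$) is the part the paper leaves implicit, and it is carried out correctly.
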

\begin{proof}
The statement follows from $(b)$ above and from Lemma \ref{specialL} (and its proof).
\end{proof} 
\section{The determinant map}\label{sec_determinant}
\subsection{}\label{introsec_determinant} In this section we recall the classical construction of determinant line bundles on the moduli space $M_L$, and use it to define a group homomorphism
$\rho\colon \Pic(S)\to\Pic(M_L)$,
that we  call the determinant map.
We study the properties of $\rho$,  together with its real extension $\rho\colon H^2(S,\R)\to H^2(M_L,\R)$ and its dual map
$\zeta:=\rho^t\colon \N(M_L)\la\N(S)=H^2(S,\R)$,
where $\N(M_L)$ is the vector space of real $1$-cycles in $M_L$ up to numerical equivalence, and similarly for $S$.
Using the classical positivity properties of the determinant line bundle, we determine the images via $\zeta$ of the lines in the exceptional loci in $M_L$
(Cor.~\ref{facet_ell} and \ref{facet_C}). We also compare the maps $\rho$ for different $L$'s (Lemma \ref{cod1}), and show that $\rho$ is equivariant for the action of the group of automorphisms of $S$ which fix the polarization $L$ (Prop.~\ref{equivariant}).
Finally we deduce that the moduli space $M_{-K_S}$ is a Fano variety (Prop.~\ref{fano}), and more generally that $M_L$ is always a Mori dream space (Cor.~\ref{MDS}).
\subsection{Determinant line bundles}
We refer the reader to \cite[Ch.\ 8]{huybrechtslehn} and references therein, in particular Le Potier  \cite{lepotierdet} and Li \cite{JunLi}, for the construction and properties of  determinant line bundles; let us give a brief outline.

Let $L\in\Pic(S)$ be ample, $L\in\ma{E}$. To simplify the notation, set $M:=M_L$.
Let $K(S)$ be the Grothendieck group of coherent sheaves on $S$, and similarly for the moduli space $M$ and the product $S\times M$; since $S$ and $M$ are smooth projective varieties, their Grothendieck groups of coherent sheaves are naturally isomorphic to the Grothendieck groups of locally free sheaves. Moreover, being $S$ a rational surface, we have a group isomorphism
\begin{equation}\label{groupiso}
(\rk,c_1,\chi)\colon K(S)\la\Z\oplus \Pic(S)\oplus \Z.
\end{equation} 
Under this isomorphism, the class $[\ol_{pt}]$ corresponds to $(0,\ol_S,1)$, in particular it does not depend on the point.

Let $\mathfrak{f}\in K(S)$ be the class of a rank $2$ torsion-free sheaf $F$ on $S$ with $c_1(F)=-K_S$ and $c_2(F)=2$; for every $\mathfrak{c}\in K(S)$ we have
$$
\chi(\mathfrak{f}\otimes \mathfrak{c})=2\chi(\mathfrak{c})-K_S\cdot c_1(\mathfrak{c})-\rk \mathfrak{c}.
$$
We are interested in the subgroup $\mathfrak{f}^{\perp}=\{\mathfrak{c}\in K(S)\,|\,
\chi(\mathfrak{f}\otimes \mathfrak{c})=0\}$. Notice that since $\chi(\mathfrak{f}\otimes [\ol_{pt}])=2$, for every $\mathfrak{c}\in K(S)$ we have $2\mathfrak{c}-\chi(\mathfrak{f}\otimes \mathfrak{c})[\ol_{pt}]\in \mathfrak{f}^{\perp}$.

By Rem.~\ref{stable} and Cor.~\ref{locallyfree}, for every $[F]\in M$ the sheaf $F$ is locally free and stable, thus there exists a universal vector bundle $\ma{U}$ over $S\times M$, which is unique up to twists with pull-backs of line bundles on $M$. We denote by $\pi_S\colon S\times M\to S$ and 
$\pi_{M}\colon S\times M\to M$ the two projections.
One defines a group homomorphism
$\lambda_{\ma{U}}\colon K(S)\to\Pic(M)$ \cite[Def.~8.1.1]{huybrechtslehn} as the composition:
$$\lambda_{\ma{U}}\colon K(S)\stackrel{\pi_S^*}{\la} K(S\times M)\stackrel{\otimes[\ma{U}]}{\la}K(S\times M)\stackrel{(\pi_{M})_!}{\la} 
K(M)\stackrel{\det}{\la}\Pic(M),$$
where we recall that $(\pi_{M})_!=\sum_i(-1)^iR^i(\pi_{M})_*$, thus
\begin{equation}\label{deflambda}
\lambda_{\ma{U}}(\mathfrak{c})=\det\bigl((\pi_{M})_!(\pi_S^*\mathfrak{c}\otimes[\ma{U}])\bigr).
\end{equation}
Given a line bundle $N\in\Pic(M)$, the vector bundle $\ma{U}\otimes\pi_{M}^*N$ is another universal family which yields another group homomorphism $\lambda_{\ma{U}\otimes\pi_{M}^*N}\colon K(S)\to\Pic(M)$. We have: 
$$\lambda_{\ma{U}\otimes\pi_{M}^*N}(\mathfrak{c})=\lambda_{\ma{U}}(\mathfrak{c})\otimes[N^{\otimes\chi(\mathfrak{f}\otimes \mathfrak{c})}]\quad\text{ for every $\mathfrak{c}\in K(S)$.}$$
In order to have an intrinsic map, one restricts to the subgroup $\mathfrak{f}^{\perp}$, and  set:
$$\lambda:=(\lambda_\ma{U})_{|\mathfrak{f}^{\perp}}\colon  \mathfrak{f}^{\perp}\la\Pic(M).$$
\begin{thm}[\cite{huybrechtslehn}, Th.~8.2.8]\label{thmL_1}
Let $H\in|L|$ be a curve,
and set $\mathfrak{h}:=[\ol_H]\in K(S)$.
Consider the class 
$$u_1=-2\mathfrak{h}+\chi(\mathfrak{f}\otimes \mathfrak{h})[\ol_{pt}]\in \mathfrak{f}^{\perp}\subset K(S),$$
and set $\ma{L}_1:=\lambda(u_1)\in\Pic(M_L)$ \cite[Def.~8.1.9]{huybrechtslehn}. Then $\ma{L}_1$ is semiample, and some positive multiple of $\ma{L}_1$ defines the map $\gamma\colon M_L\to M_L^{\mu}$ (see \eqref{secslope}).
\end{thm}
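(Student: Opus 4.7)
The plan is to adapt the Le Potier--Li construction of determinant line bundles to our setting, exactly as in \cite[Ch.~8]{huybrechtslehn}. The strategy is to realize $\ma{L}_1$ as the descent of a natural GIT-polarization on a parameter space; this simultaneously yields semiampleness and identifies the resulting semi-ample contraction with $\gamma$.

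First I would set up the GIT picture. Choose $n \gg 0$ so that every $F$ parametrized by $M_L$ has $F(n)$ globally generated with $h^i(S,F(n))=0$ for $i>0$, and fixed dimension $N=\chi(F(n))$. Let $V=\C^N$ and let $R$ be the locally closed subscheme of the Quot scheme parametrizing quotients $V\otimes\ol_S(-n)\twoheadrightarrow F$ with $[F]\in M_L$. Then $M_L = R^{ss}/\!\!/\mathrm{PGL}(V)$ with respect to the Gieseker linearization, and $M_L^{\mu}$ is obtained from the same $R$ but with a Mumford weight corresponding to $\mu$-semistability; by construction the forgetful inclusion of GIT-stable loci induces $\gamma$.

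Next I would lift $u_1$ to $R$. Applying the same determinant construction \eqref{deflambda} to the universal quotient on $S\times R$ produces a $\mathrm{GL}(V)$-equivariant line bundle $\widetilde{\ma{L}}_1$ on $R$. The restriction of $\lambda$ to $\mathfrak{f}^{\perp}$ is precisely the condition needed for the central $\C^*\subset\mathrm{GL}(V)$ to act trivially on $\widetilde{\ma{L}}_1$, so it descends to $\ma{L}_1$ on the quotient. The key computation is Grothendieck--Riemann--Roch for $\pi_R\colon S\times R\to R$: expanding $(\pi_R)_!(\pi_S^*u_1\otimes[\ma{U}])$ and taking determinants shows that the specific form $u_1=-2\mathfrak{h}+\chi(\mathfrak{f}\otimes\mathfrak{h})[\ol_{pt}]$ kills the scalar weight, and identifies the descent with a positive multiple of the standard Gieseker GIT-polarization attached to $H\in|L|$.

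The main obstacle is then the final identification: that a positive power of this GIT-polarization is base-point free and that the induced morphism coincides with $\gamma$. For this one uses Gieseker's theorem: global sections of high powers separate $\mathrm{GL}(V)$-orbits of $\mu$-polystable points, while collapsing strictly $\mu$-semistable strata to their polystable representatives. Combined with the explicit description of these strata in Prop.~\ref{walls} and Lemma \ref{specialL} --- each non-split extension locus $P_\ell$, $Z_\ell$, $E_C$ from Def.~\ref{defiloci} corresponds to a single $\mu$-equivalence class, matching $(b)$ in \S\ref{secslope} --- one concludes that the semi-ample morphism defined by $\ma{L}_1^{\otimes m}$ for $m\gg 0$ is precisely $\gamma\colon M_L\to M_L^{\mu}$, finishing the proof.
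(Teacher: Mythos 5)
Note first that the paper does not actually prove this statement: it is quoted directly from \cite[Def.\ and Th.~8.2.8]{huybrechtslehn}, a theorem due to Le Potier \cite{lepotierdet} and Li \cite{JunLi}, so the only meaningful comparison is with that proof. Your sketch reproduces its general shape (an equivariant determinant bundle on the Quot scheme, descent to $M_L$, identification of the induced map), but two of your steps fail as stated. The most serious one is that you take for granted that $M_L^{\mu}$ is a GIT quotient of the same parameter scheme $R$ ``with a Mumford weight corresponding to $\mu$-semistability'' and that comparing semistable loci produces $\gamma$. No such linearization is constructed in the sources you invoke: in \cite[\S 8.2]{huybrechtslehn}, and correspondingly in \ref{secslope} of this paper, the projective scheme $M_L^{\mu}$ and the morphism $\gamma$ are \emph{defined} as the output of this very theorem, namely as the image of the map given by sections of a power of $\ma{L}_1$. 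As written, your argument is therefore circular: it presupposes the existence of $\gamma$ and of a projective target in order to prove the statement that constructs them.

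Second, the claimed identification of the descended bundle with ``a positive multiple of the standard Gieseker GIT-polarization attached to $H\in|L|$'' cannot be correct: the GIT polarization on $M_L$ is ample, whereas $\ma{L}_1$ is in general only nef and semiample --- by the paper's own Cor.~\ref{determinant} and Lemma~\ref{slope}, when $L$ lies on a wall the map $\gamma$ contracts the positive-dimensional loci $P_{\ell}$ and $E_C$, so $\ma{L}_1$ has degree zero on curves there and is not ample. The Gieseker linearization corresponds to determinant classes built from high twists $\ol_S(m)$, not to the class $u_1$, so your GRR computation would prove too much. Relatedly, ``Gieseker's theorem'' concerns Gieseker-semistability and the ample class on $M_L$; it does not produce sections of powers of $\ma{L}_1$ separating $\mu$-equivalence classes. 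That base-point-freeness and separation statement is precisely the hard content of Le Potier's and Li's work, proved by restricting sheaves to curves $C\in|aL|$ of high degree (using the Mehta--Ramanathan/Flenner restriction theorems) and pulling back theta line bundles from moduli of semistable bundles on $C$; this step is entirely missing from your outline. Your final appeal to Prop.~\ref{walls} and Lemma~\ref{specialL} only describes the fibers of $\gamma$ once $\gamma$ is already known to exist, so it cannot substitute for it.
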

\subsection{The map $\rho$} 
We define a map $\mathfrak{u}\colon\Pic(S)\to  \mathfrak{f}^{\perp}\subset K(S)$, and use it to define a map  
$\rho:=\lambda\circ\mathfrak{u}\colon\Pic(S)\to\Pic(M)$, as follows.
We set, for every $P\in\Pic(S)$:
\begin{equation}
\label{u(P)}
\mathfrak{u}(P):=2\bigl([P^{\otimes(-1)}]-[\ol_S]\bigr)-\chi\Bigl(\mathfrak{f}
\otimes 
\bigl([P^{\otimes(-1)}]-[\ol_S]\bigr)\Bigr)[\ol_{pt}]\in \mathfrak{f}^{\perp}\subset K(S),
\end{equation}
where $\chi(\mathfrak{f}
\otimes 
([P^{\otimes(-1)}]-[\ol_S]))=P\cdot(P+2K_S)$;
one can check that
\begin{equation}\label{numerics}
(\rk \mathfrak{u}(P),c_1(\mathfrak{u}(P)),\chi(\mathfrak{u}(P)))=(0,P^{\otimes(-2)},-K_S\cdot P).
\end{equation}
Since  the map \eqref{groupiso} is an isomorphism, \eqref{numerics} implies that $\mathfrak{u}$ is a group homomorphism. 
The following is the key property of $\mathfrak{u}$; it will be used in the proof of Cor.~\ref{determinant}.
\begin{remark}\label{keyu}
Let $P\in\Pic(S)$ be effective,
 $H\in|P|$ a curve, and set $\mathfrak{h}:=[\ol_H]\in K(S)$. 
The exact sequence
$$0\la\ol_S(-H)\la\ol_S\la\ol_H\la 0$$
yields $\mathfrak{h}=[\ol_S]-[P^{\otimes(-1)}]$ in $K(S)$, hence
 $\mathfrak{u}(P)=-2\mathfrak{h}+\chi(\mathfrak{f}\otimes \mathfrak{h})[\ol_{pt}]$. 
\end{remark}

Finally we define 
$\rho:=\lambda\circ\mathfrak{u}\colon\Pic(S)\to\Pic(M)$, namely 
$\rho(P)=\lambda(\mathfrak{u}(P))$ for every $P\in\Pic(S)$
(this is in fact twice the map defined in \cite[(1.8)]{JunLi}).
\begin{remark}\label{same}
The maps $\lambda$ and $\rho$ depend only on the moduli space $M$, so that $\rho$ is the same for every polarization $L$ which yields the same stability condition and hence the 
same moduli space (like in the situation of Lemma \ref{specialL}).
\end{remark}
Whenever we need to highlight the dependence on the polarization, we will write
$\rho=\rho_L\colon \Pic(S)\to\Pic(M_L)$ or $\rho=\rho_{\ma{C}}\colon \Pic(S)\to\Pic(M_{\ma{C}})$ if $\ma{C}$ is a chamber.
\begin{corollary}\label{determinant}
Let $L\in\Pic(S)$ be ample, $L\in\ma{E}$, and consider $\rho(L)\in\Pic(M_L)$. Then  $\rho(L)=\ma{L}_1$ (notation as in Th.~\ref{thmL_1}), and the following hold:
\begin{enumerate}[$(a)$]
\item
$\rho(L)$ is semiample, and some positive multiple of $\rho(L)$ defines the map $\gamma\colon M_L\to M_L^{\mu}$ (see \eqref{secslope});
\item $\rho(L)$ is
 big if $L$ lies in the interior of the cone $\ma{E}$, while 
 $\rho(L)=0$ if $L$ lies on the boundary of $\ma{E}$;
\item
 $\rho(L)$ is ample if and only if  $L$ belongs to a chamber.
\item  Suppose that $L$ is in the interior of $\ma{E}$, and that it is contained in the walls $(2\ell_i+K_S)^{\perp}$ and $(2C_j+K_S)^{\perp}$,  $\ell_i$  a $(-1)$-curve and $C_j$ a conic, for $i=1,\dotsc,r$ and $j=1,\dotsc,s$, with $r\geq 0$ and $s\geq 0$.
If $\Gamma\subset M_L$ is an irreducible curve, then
$\rho(L)\cdot \Gamma=0$ if and only if $\Gamma\subset P_{\ell_1}\cup\cdots\cup P_{\ell_r}\cup E_{C_1}\cup\cdots\cup E_{C_s}$.
\end{enumerate} 
\end{corollary}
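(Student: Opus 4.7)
The plan is to reduce everything to the identification $\rho(L)=\ma{L}_1$ and then read off (a)--(d) from Theorem \ref{thmL_1} together with Lemma \ref{slope}.

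Since $L$ is ample on the rational surface $S$ we have $h^0(S,L)>0$, so we may pick $H\in|L|$ and set $\mathfrak{h}:=[\ol_H]\in K(S)$. Remark \ref{keyu} then identifies $\mathfrak{u}(L)=-2\mathfrak{h}+\chi(\mathfrak{f}\otimes\mathfrak{h})[\ol_{pt}]$ with the class $u_1$ of Theorem \ref{thmL_1}, whence $\rho(L)=\lambda(\mathfrak{u}(L))=\lambda(u_1)=\ma{L}_1$. Part (a) is then an immediate restatement of Theorem \ref{thmL_1}. For (b)--(d) I would fix $m>0$ such that $m\rho(L)$ is globally generated and realises the morphism $\gamma\colon M_L\to M_L^{\mu}$; then $m\rho(L)=\gamma^*A$ for some ample line bundle $A$ on $M_L^{\mu}$.

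Given this set-up, the three remaining parts follow directly from Lemma \ref{slope}. For (b), when $L$ lies on the boundary of $\ma{E}$, Lemma \ref{slope} gives that $M_L^{\mu}$ is a point while Lemma \ref{specialL}(c) gives $M_L\cong\pr^4$; hence $A$ is trivial, $m\rho(L)=0$, and $\rho(L)=0$ because $\Pic(\pr^4)=\Z$ is torsion-free. When $L$ lies in the interior of $\ma{E}$, Lemma \ref{slope} (which allows $r=s=0$, thereby also covering the chamber case discussed in \ref{secslope}) shows that $\gamma$ is birational, so $\gamma^*A$, and therefore $\rho(L)$, is big. For (c), if $L$ is in a chamber, $\gamma$ is an isomorphism (as recorded in \ref{secslope}), so $\rho(L)$ is ample; conversely, if $\rho(L)$ is ample then for $m\gg 0$ the very ample line bundle $m\rho(L)$ defines the morphism $\gamma$, forcing $\gamma$ to be a closed embedding, so $\Exc(\gamma)=\emptyset$ and, by Lemma \ref{slope}, $r=s=0$, meaning $L$ lies in a chamber. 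For (d), for an irreducible curve $\Gamma\subset M_L$ one has $m\,\rho(L)\cdot\Gamma=A\cdot\gamma_*\Gamma$, and since $A$ is ample this vanishes precisely when $\gamma(\Gamma)$ is a point, i.e., when $\Gamma\subset\Exc(\gamma)=P_{\ell_1}\cup\cdots\cup P_{\ell_r}\cup E_{C_1}\cup\cdots\cup E_{C_s}$ by Lemma \ref{slope}.

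The main (and essentially only) nontrivial step is the identification $\rho(L)=\ma{L}_1$; once this is established, the statement is a direct bookkeeping of the properties of $\gamma$ supplied by Theorem \ref{thmL_1} and Lemma \ref{slope}. The one subtlety worth mentioning is that on the boundary of $\ma{E}$ the vanishing $m\rho(L)=0$ upgrades to $\rho(L)=0$ in $\Pic(M_L)$ precisely because $M_L\cong\pr^4$ has torsion-free Picard group.
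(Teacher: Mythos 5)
Your proof is correct and follows essentially the same route as the paper: the identification $\rho(L)=\ma{L}_1$ via Rem.~\ref{keyu} and Th.~\ref{thmL_1} gives $(a)$, and then $(b)$--$(d)$ are read off from Lemma \ref{slope} (the paper leaves this bookkeeping implicit, while you spell it out, including the useful observation that on the boundary of $\ma{E}$ the vanishing of a multiple upgrades to $\rho(L)=0$ since $M_L\cong\pr^4$ has torsion-free Picard group).
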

\begin{proof}
Rem.~\ref{keyu} and Th.~\ref{thmL_1}
yield $\rho(L)=\ma{L}_1$, and $(a)$. Then the remaining statements follow from Lemma \ref{slope}.
\end{proof}
In particular, if $\ma{C}\subset\ma{E}$ is a chamber, it follows from $(c)$ above that $\rho(L)$ is ample on $M_{\ma{C}}$ for every $L\in\ma{C}$.
\begin{lemma}\label{cod1}
Let $L,L'\in\ma{E}$ be ample line bundles, 
and let $\ph\colon M_L\dasharrow M_{L'}$ be the natural birational map (see Cor.~\ref{birational}).
Let $U\subseteq M_L$ and $U'\subseteq M_{L'}$ be the open subsets over which $\ph$ is an isomorphism. Then for every $P\in\Pic(S)$ we have
$$\rho_L(P)_{|U}\cong \ph^*\bigl(\rho_{L'}(P)_{|U'}\bigr).$$
If moreover $L$ and $L'$ belong to $\Pi$, then we have a commutative diagram:
{\footnotesize$$\xymatrix{&{\Pic(S)}\ar[dl]_{\rho_L}\ar[dr]^{\rho_{L'}}&\\
{\Pic(M_L)}&&{\Pic(M_{L'}).}\ar[ll]_{\ph^*}
}$$}
\end{lemma}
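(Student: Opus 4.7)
The plan is to compare the universal families on $S\times M_L$ and $S\times M_{L'}$ over the locus where $\ph$ is an isomorphism, and exploit the fact that $\rho$ is defined through $\mathfrak{u}(P)\in\mathfrak{f}^{\perp}$, so it is insensitive to twisting the universal family by a line bundle pulled back from the moduli space. The first part (isomorphism on $U$) will then follow essentially from base change; the second part (global commutative diagram) will follow from the pseudo-isomorphism property of Cor.~\ref{birational} combined with smoothness.

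First I would fix universal families $\ma{U}$ on $S\times M_L$ and $\ma{U}'$ on $S\times M_{L'}$, and consider the isomorphism $\ph|_U\colon U\xrightarrow{\sim} U'$ given by $[F]\mapsto[F]$. By construction, the sheaf $(\id_S\times \ph|_U)^{*}(\ma{U}'_{|S\times U'})$ is again a family of stable sheaves on $S$ parametrized by $U$, inducing (through the universal property of $M_L$ on $U$) the identity map $U\to U$. By the standard uniqueness of universal families for moduli of stable sheaves up to twist by a line bundle pulled back from the base, there exists $N\in\Pic(U)$ with
$$\ma{U}_{|S\times U}\;\cong\;(\id_S\times \ph|_U)^{*}(\ma{U}'_{|S\times U'})\otimes \pi_U^{*}N.$$
Applying the determinant construction \eqref{deflambda} and using flat base change for the projection $\pi_M$ (which is legitimate because each fiber of $\ma{U}$ is locally free by Cor.~\ref{locallyfree}), one obtains, for every $\mathfrak{c}\in K(S)$,
$$\lambda_{\ma{U}}(\mathfrak{c})_{|U}\;\cong\;\ph|_U^{*}\bigl(\lambda_{\ma{U}'}(\mathfrak{c})_{|U'}\bigr)\otimes N^{\otimes \chi(\mathfrak{f}\otimes \mathfrak{c})}.$$
Specialising to $\mathfrak{c}=\mathfrak{u}(P)\in\mathfrak{f}^{\perp}$, the twist disappears because $\chi(\mathfrak{f}\otimes\mathfrak{u}(P))=0$, which yields precisely $\rho_L(P)_{|U}\cong \ph^{*}(\rho_{L'}(P)_{|U'})$.

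For the second statement, assume $L,L'\in\Pi$. By Cor.~\ref{birational}, the birational map $\ph$ is then a pseudo-isomorphism, so both $M_L\smallsetminus U$ and $M_{L'}\smallsetminus U'$ have codimension $\geq 2$. Since $M_L$ and $M_{L'}$ are smooth (Cor.~\ref{smoothness}), the restriction maps $\Pic(M_L)\to \Pic(U)$ and $\Pic(M_{L'})\to\Pic(U')$ are isomorphisms; via these identifications $\ph^{*}\colon\Pic(M_{L'})\to\Pic(M_L)$ is simply the restriction-pullback-extension composition. The first part of the lemma then extends the isomorphism $\rho_L(P)_{|U}\cong\ph^{*}(\rho_{L'}(P)_{|U'})$ uniquely across the codimension-two loci to the global identity $\rho_L=\ph^{*}\circ\rho_{L'}$ on all of $\Pic(S)$, giving the commutative triangle.

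The main technical point to verify carefully will be the twist formula after applying $\lambda$, i.e. the identity $\lambda_{\ma{V}\otimes\pi^{*}N}(\mathfrak{c})=\lambda_{\ma{V}}(\mathfrak{c})\otimes N^{\otimes\chi(\mathfrak{f}\otimes\mathfrak{c})}$ together with the base-change step, but both are standard ingredients in the theory of determinant line bundles (see \cite[Ch.~8]{huybrechtslehn}) and the crucial input is only that the universal sheaves are flat and locally free, which is guaranteed by Cor.~\ref{locallyfree} and by the stability of the parametrized sheaves (Rem.~\ref{stable}). Everything else is formal.
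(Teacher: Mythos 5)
Your proposal is correct and follows essentially the same route as the paper: compare the universal families over the common open locus, apply base change and the determinant construction, and then extend across the codimension-two complements using smoothness when $L,L'\in\Pi$. The only (harmless) difference is that the paper normalizes the two universal families so that $\ma{U}_{|S\times U}=(\Id_S\times\ph)^*\ma{U}'_{|S\times U'}$ with no twist, whereas you keep the twist by $N$ and cancel it using $\mathfrak{u}(P)\in\mathfrak{f}^{\perp}$, i.e. $\chi(\mathfrak{f}\otimes\mathfrak{u}(P))=0$.
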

\begin{proof}
For simplicity set $M:=M_L$ and $M':=M_{L'}$. We denote by $\pi_{\scriptscriptstyle A\times B,A}$ the projection $A\times B\to A$.

We can choose universal vector bundles $\ma{U}$ on $S\times M$ and $\mathcal{U}'$ on $S\times M'$ such that $\ma{U}_{|S\times U}= (\Id_S\times\ph)^*\ma{U}'_{|S\times U'}$.
Let $F$ be a locally free sheaf on $S$. We have
$$
\bigl((\pi_{\scriptscriptstyle S\times M,S})^*F\otimes\ma{U}\bigr)_{|S\times U}=(\pi_{\scriptscriptstyle S\times U,S})^*F\otimes 
\ma{U}_{|S\times U}
=(\Id_S\times\ph)^*\bigl((\pi_{\scriptscriptstyle S\times U',S})^*F\otimes 
\ma{U}'_{|S\times U'}\bigr)$$
and hence by base change:
\begin{multline*}
\bigl(R^i(\pi_{\scriptscriptstyle S\times M,M})_*((\pi_{\scriptscriptstyle S\times M,S})^*F\otimes\ma{U})\bigr)_{|U}=R^i(\pi_{\scriptscriptstyle S\times U,U})_*\bigl((\pi_{\scriptscriptstyle S\times M,S})^*F\otimes\ma{U}\bigr)_{|S\times U}\\
\shoveright{=
R^i(\pi_{\scriptscriptstyle S\times U,U})_*(\Id_S\times\ph)^*\bigl((\pi_{\scriptscriptstyle S\times U',S})^*F\otimes 
\ma{U}'_{|S\times U'}\bigr)}\\
\cong\ph^*R^i(\pi_{\scriptscriptstyle S\times U',U'})_*\bigl((\pi_{\scriptscriptstyle S\times U',S})^*F\otimes 
\ma{U}'_{|S\times U'}\bigr)=\ph^*\bigl(R^i(\pi_{\scriptscriptstyle S\times M',M'})_*((\pi_{\scriptscriptstyle S\times M',S})^*F\otimes 
\ma{U}')\bigr)_{|U'}.
\end{multline*}
Taking the determinant  commutes with restricting to an open subset, therefore we get: 
$$\bigl(\det(R^i(\pi_{\scriptscriptstyle S\times M,M})_*((\pi_{\scriptscriptstyle S\times M,S})^*F\otimes\ma{U})\bigr)_{|U}
\cong \ph^*\det\bigl(R^i(\pi_{\scriptscriptstyle S\times M,M})_*((\pi_{\scriptscriptstyle S\times M,S})^*F\otimes 
\ma{U}')\bigr)_{|U'},$$
and hence
$$\lambda_{\ma{U}}([F])_{|U}\cong \ph^*\bigl(\lambda_{\ma{U}'}([F])_{|U'}\bigr),$$
which yields the first statement.
Now if $L$ and $L'$ belong to $\Pi$, then
by Cor.~\ref{birational} the complements of $U$ and $U'$ both have codimension $>1$, and we deduce that $\lambda_{\ma{U}}([F])=\ph^*(\lambda_{\ma{U}'}([F]))$ in $\Pic(M)$, which concludes the proof.
\end{proof}
Let $\ma{C}\subset\Pi$ be a chamber, and $L\in\overline{\ma{C}}$. 
Cor.~\ref{determinant} and Lemma \ref{cod1} yield the following positivity property of $\rho(L)$ on $M_{\ma{C}}$.
\begin{lemma}\label{rho(L)}
Let $\ma{C}\subset\Pi$ be a chamber and $L\in\Pic(S)$ ample, $L\in\overline{\ma{C}}$.
Suppose that
$L$ is contained in the walls $(2\ell_i+K_S)^{\perp}$ and $(2C_j+K_S)^{\perp}$,  $\ell_i$  a $(-1)$-curve and $C_j$ a conic, for $i=1,\dotsc,r$ and $j=1,\dotsc,s$, with $r\geq 0$ and $s\geq 0$.
Suppose also that $\ma{C}$ is contained in $(2\ell_i+K_S)^{>0}$ for $i=1,\dotsc,h$, and in
$(2\ell_i+K_S)^{<0}$ for $i=h+1,\dotsc,r$, with $h\in\{0,\dotsc,r\}$.

Then $M_{\ma{C}}$ contains the loci $P_{\ell_1},\dotsc,P_{\ell_h},Z_{\ell_{h+1}},\dotsc,Z_{\ell_r},E_{C_1}\dotsc,E_{C_s}$.
Moreover $\rho(L)\in\Pic(M_{\ma{C}})$ is nef and big, and if $\Gamma\subset M_{\ma{C}}$ is an irreducible curve, then
$\rho(L)\cdot \Gamma=0$ if and only if $\Gamma\subset 
P_{\ell_1}\cup\cdots\cup P_{\ell_h}\cup Z_{\ell_{h+1}}\cup\cdots\cup Z_{\ell_r}\cup E_{C_1}\cup\cdots\cup E_{C_s}$.
\end{lemma}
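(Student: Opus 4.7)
The plan is to build on Cor.~\ref{determinant}---which settles the case $L$ in the interior of a chamber---by using the pseudo-isomorphisms among moduli spaces that are available whenever the polarizations stay inside $\Pi$. First, the existence of the exceptional loci in $M_{\ma{C}}$ follows directly from the wall-crossing descriptions in \ref{flip} and \ref{blowup}: since $\ma{C}\subset\Pi$ is an open chamber it lies strictly in the half-space $(2C_j+K_S)^{>0}$ for every conic $C_j$, so each $E_{C_j}$ appears in $M_{\ma{C}}$; the sign hypotheses on the $(-1)$-curve walls give analogously $P_{\ell_i}$ for $i\leq h$ and $Z_{\ell_i}$ for $i>h$.

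For nefness I would use an approximation argument. Pick any $L'\in\ma{C}$; since $\ma{C}$ is a convex open cone with $L\in\overline{\ma{C}}$, the class $L+\epsilon L'$ lies in $\ma{C}$ for every $\epsilon>0$, and Cor.~\ref{determinant}(c) makes $\rho_{\ma{C}}(L+\epsilon L')=\rho_{\ma{C}}(L)+\epsilon\rho_{\ma{C}}(L')$ ample on $M_{\ma{C}}$. Passing to the limit $\epsilon\to 0^+$ in $H^2(M_{\ma{C}},\R)$ exhibits $\rho_{\ma{C}}(L)$ as a limit of ample classes, hence nef. For bigness I would compare $M_{\ma{C}}$ with $M_L=M_{\ma{C}(L)}$, where $\ma{C}(L)$ is the chamber of Lemma~\ref{specialL}(a). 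The key verification is that $\ma{C}(L)$ also sits in $\Pi$: on every conic wall through $L$ both chambers lie on the positive side ($\ma{C}$ by hypothesis, $\ma{C}(L)$ by its defining property); on conic walls not through $L$, $L$ itself is strictly in the positive half-space since $L\in\Pi$, and the same therefore holds on the chamber $\ma{C}(L)$ containing $L$ in its closure. Cor.~\ref{birational} then yields a pseudo-isomorphism $\ph\colon M_{\ma{C}}\dasharrow M_{\ma{C}(L)}$, and Lemma~\ref{cod1} gives $\rho_{\ma{C}}(L)=\ph^*\rho_{\ma{C}(L)}(L)$ in the Picard group. Finally, because no cubic wall is listed in the hypothesis, $L$ lies in the interior of $\ma{E}$, and Cor.~\ref{determinant}(b) makes $\rho_{\ma{C}(L)}(L)$ big; bigness is preserved by pseudo-isomorphisms, so $\rho_{\ma{C}}(L)$ is big as well.

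For the curve characterization I would reduce to the contraction $\gamma\colon M_{\ma{C}(L)}\to M_L^{\mu}$ defined by a multiple of $\rho_{\ma{C}(L)}(L)$, cf.\ Cor.~\ref{determinant}(a). Because \eqref{comparison} says $\ma{C}$-stable implies $L$-$\mu$-semistable, the assignment $[F]\mapsto[F]$ defines a morphism $\gamma'\colon M_{\ma{C}}\to M_L^{\mu}$, and $\gamma'$ agrees with $\gamma\circ\ph$ on the open set where $\ph$ is an isomorphism, hence everywhere. The identity $m\,\rho_{\ma{C}}(L)=(\gamma')^*A$ for suitable $m\in\Z_{>0}$ and ample $A\in\Pic(M_L^{\mu})$ then upgrades from the codimension-one statement of Lemma~\ref{cod1} to a genuine equality in $\Pic(M_{\ma{C}})$, thanks again to $\ma{C},\ma{C}(L)\subset\Pi$---this is the main technical obstacle, since without it curves contained in the indeterminacy of $\ph$ could escape control. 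To identify the exceptional locus of $\gamma'$ I would mimic the proof of Lemma~\ref{specialL}$(d)$: two points of $M_{\ma{C}}$ have the same image in $M_L^{\mu}$ iff both are strictly $L$-$\mu$-semistable with isomorphic associated graded, and by Prop.~\ref{walls} and the sign conditions on $\ma{C}$ the strictly $L$-$\mu$-semistable locus in $M_{\ma{C}}$ decomposes exactly as $P_{\ell_1}\cup\cdots\cup P_{\ell_h}\cup Z_{\ell_{h+1}}\cup\cdots\cup Z_{\ell_r}\cup E_{C_1}\cup\cdots\cup E_{C_s}$, pairwise disjoint since their associated graded sheaves are pairwise non-isomorphic. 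Therefore $\rho_{\ma{C}}(L)\cdot\Gamma=0$ iff $\gamma'(\Gamma)$ is a point iff $\Gamma$ is contained in this union.
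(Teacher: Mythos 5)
Your overall strategy is the same as the paper's: reduce to the chamber $\ma{C}(L)$ of Lemma~\ref{specialL}$(a)$, where $M_L=M_{\ma{C}(L)}$ and Cor.~\ref{determinant} gives everything, and then transfer the conclusion to $M_{\ma{C}}$ across the pseudo-isomorphism $\ph$ using Lemma~\ref{cod1} (your verification that $\ma{C}(L)\subset\Pi$ is correct and is implicit in the paper). The difference is in the transfer step: the paper observes, using the disjointness in Lemma~\ref{specialL}$(d)$, that $P_{\ell_1},\dotsc,P_{\ell_h},E_{C_1},\dotsc,E_{C_s}$ lie in the isomorphism locus of $\ph$ and that the flips only replace the loci $P_{\ell_i}$, $i>h$ (on which $\rho_{\ma{C}(L)}(L)$ is trivial), by $Z_{\ell_i}$, and concludes directly; you instead route everything through a morphism $\gamma'\colon M_{\ma{C}}\to M_L^{\mu}$ and the identity $m\,\rho_{\ma{C}}(L)=(\gamma')^*A$.

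The one genuine soft spot is the sentence ``the assignment $[F]\mapsto[F]$ defines a morphism $\gamma'\colon M_{\ma{C}}\to M_L^{\mu}$''. The space $M_L^{\mu}$ of \ref{secslope} (Huybrechts--Lehn, Def.\ and Th.~8.2.8) is constructed as the image of the Gieseker space $M_L$ under sections of $\ma{L}_1$; the cited results give the morphism $\gamma$ from $M_L$ and a description of the points of $M_L^{\mu}$, but they do not say that $M_L^{\mu}$ corepresents the functor of families of $\mu$-semistable sheaves, so ``$\ma{C}$-stable $\Rightarrow$ $L$-$\mu$-semistable'' does not by itself produce a morphism from the different moduli space $M_{\ma{C}}$. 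This is repairable: e.g.\ resolve $\ph$ by the common blow-up $\widehat{M}\to M_{\ma{C}}$, $\widehat{M}\to M_{\ma{C}(L)}$ of the flips; since $\gamma$ contracts each flipped plane $P_{\ell_i}$ ($i>h$) to a point, the composite $\widehat{M}\to M_L^{\mu}$ is constant on the exceptional divisors over $Z_{\ell_i}$ and descends to the desired $\gamma'$ by normality; after that your identification of the contracted curves via the point-description of $M_L^{\mu}$ goes through. (Alternatively, one can simply avoid $\gamma'$ as the paper does.) A second, minor, point: the reason no cubic wall contains $L$ is not that ``none is listed'', but that $L$ is ample and lies in $\Pi$ (since $L\in\overline{\ma{C}}\subset\Pi$), hence in the interior of $\ma{E}$, because the cubic walls $(2h+K_S)^{\perp}$ only cut the boundary of $\ma{E}$ and $\Pi$ meets the facets $\tau_h$ only in $0$; this is what legitimizes the use of Cor.~\ref{determinant}$(b)$,$(d)$ in your argument.
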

\begin{proof}
Notice first of all that since $\ma{C}\subset\Pi$, we have $\ma{C}\subset(2C_j+K_S)^{>0}$ for $j=1,\dotsc,s$, so the first statement follows from \ref{flip} and \ref{blowup}.

Let $\ma{C}'$ be the chamber such that $L\in\overline{\ma{C'}}$ and 
 $\ma{C}'\subset(2\ell_i+K_S)^{>0}$, $\ma{C}'\subset(2C_j+K_S)^{>0}$  for every $i=1,\dotsc,r$ and  $j=1,\dotsc,s$, as in Lemma \ref{specialL}$(a)$. 
We consider both determinant maps
$$\rho_{\ma{C}}\colon \Pic(S)\la\Pic(M_{\ma{C}})\quad\text{and}\quad
 \rho_{\ma{C}'}\colon \Pic(S)\la\Pic(M_{\ma{C}'}).$$

We have $M_L=M_{\ma{C}'}$ by Lemma \ref{specialL}$(b)$.
Thus by Cor.~\ref{determinant} (see also Rem.~\ref{same}), $\rho_{\ma{C}'}(L)\in \Pic({M}_{\ma{C}'})$ is nef and big, and has intersection zero precisely with curves contained in the loci $P_{\ell_i}$ and $E_{C_j}$, for $i=1,\dotsc,r$ and $j=1,\dotsc,s$.

Going from $\ma{C}'$ to $\ma{C}$, we cross the walls $(K_S+2\ell_i)^{\perp}$ 
for $i=h+1,\dotsc,r$; correspondingly (see \ref{flip}) the natural birational map 
$\ph\colon M_{\ma{C}'}\dasharrow M_{\ma{C}}$ is an isomorphism in codimension one and flips $P_{\ell_{i}}\cong\pr^2$
to  $Z_{\ell_{i}}\cong\pr^1$, for every $i=h+1,\dotsc,r$. 

Notice that in $M_{\ma{C}'}$ the loci $P_{\ell_1},\dotsc,P_{\ell_r},E_{C_1},\dotsc,E_{C_s}$ are pairwise disjoint 
(see Lemma \ref{specialL}$(d)$), so that $P_{\ell_1},\dotsc,P_{\ell_h},E_{C_1},\dotsc,E_{C_s}$
 are contained in the open subset where $\ph$ is an isomorphism.
Therefore $(\ph^*)^{-1}(\rho_{\ma{C}'}(L))\in\Pic(M_{\ma{C}})$ is nef, and has intersection zero only with curves contained in $P_{\ell_1},\dotsc,P_{\ell_h},Z_{\ell_{h+1}},\dotsc,Z_{\ell_r},E_{C_1}\dotsc,E_{C_s}$.
 On the other hand $(\ph^*)^{-1}(\rho_{\ma{C}'}(L))=\rho_{\ma{C}}(L)$  by Lemma \ref{cod1}, so we get the statement.
\end{proof}
\begin{lemma}\label{-K}
Let $\ma{C}\subset\Pi$ be a chamber. Then $\rho(-K_S)=-K_{M_{\ma{C}}}\in\Pic(M_{\ma{C}})$.
\end{lemma}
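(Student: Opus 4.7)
The plan is to identify $\rho(-K_S)$ and $-K_{M_{\ma{C}}}$ as two descriptions of the same determinant line bundle on $M_{\ma{C}}$, via the universal family $\ma{U}$ on $S\times M_{\ma{C}}$ and Grothendieck--Riemann--Roch.

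By Rem.~\ref{stable}, Cor.~\ref{smoothness}, and Cor.~\ref{locallyfree}, every $[F]\in M_{\ma{C}}$ is a stable locally free sheaf and $M_{\ma{C}}$ is smooth of pure dimension $4$. Deformation theory then yields $T_{[F]}M_{\ma{C}}\cong\Ext^1(F,F)_0$ with $\Ext^i(F,F)_0=0$ for $i\ne 1$, so, writing $\pi:=\pi_{M}\colon S\times M_{\ma{C}}\to M_{\ma{C}}$, the complex $R\pi_*\mathcal{H}om(\ma{U},\ma{U})_0$ is concentrated in degree $1$ and equals $T_{M_{\ma{C}}}$; hence $K_{M_{\ma{C}}}=\det R\pi_*\mathcal{H}om(\ma{U},\ma{U})_0$. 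Combining the trace splitting $\mathcal{H}om(\ma{U},\ma{U})\cong\ol_{S\times M_{\ma{C}}}\oplus\mathcal{H}om(\ma{U},\ma{U})_0$ with the rationality of $S$ (so $R\pi_*\ol_{S\times M_{\ma{C}}}\cong\ol_{M_{\ma{C}}}$) gives the clean identity $K_{M_{\ma{C}}}=\det R\pi_*(\ma{U}^{\vee}\otimes\ma{U})$.

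On the other side, by the definition of $\rho$ and by \eqref{deflambda}, $\rho(-K_S)=\det R\pi_*(\pi_S^*\mathfrak{u}(-K_S)\otimes\ma{U})$, where a direct computation using \eqref{u(P)} gives $\mathfrak{u}(-K_S)=2[\omega_S]-2[\ol_S]+[\ol_{pt}]$. Since $\mathfrak{u}(-K_S)\in\mathfrak{f}^{\perp}$, this determinant is independent of the choice of $\ma{U}$ (Rem.~\ref{same}), exactly as $K_{M_{\ma{C}}}$ is intrinsic. It therefore suffices to compare the two line bundles via first Chern classes, computed by Grothendieck--Riemann--Roch along $\pi$ (relative dimension $2$). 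Using the del Pezzo identities $K_S^2=1$ and $\chi(\ol_S)=1$, we have $\operatorname{td}(S)=1-\tfrac12 K_S+[\ol_{pt}]$ and $\operatorname{ch}(\mathfrak{u}(-K_S))=2K_S+2[\ol_{pt}]$; on the $K_{M_{\ma{C}}}$-side, the self-duality of $\mathcal{H}om(\ma{U},\ma{U})$ forces the odd-degree components of its Chern character to vanish, and only $\operatorname{ch}_2=c_1(\ma{U})^2-4c_2(\ma{U})$ paired with $\operatorname{td}_1(S)=-\tfrac12 K_S$ contributes. Expanding both sides produces the same degree-three class on $S\times M_{\ma{C}}$, whose pushforward by $\pi$ gives the asserted equality of line bundles on $M_{\ma{C}}$ up to a constant; independence of this from the choice of $\ma{U}$ pins down the remaining ambiguity.

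The main obstacle is the bookkeeping in this final GRR comparison: one has to track the K\"unneth decomposition of $c_1(\ma{U})$ and $c_2(\ma{U})$ on $S\times M_{\ma{C}}$ and confirm that the dependence on the twist of $\ma{U}$ drops out on both sides---on the $\rho$-side by the $\mathfrak{f}^{\perp}$ condition, on the $K_{M_{\ma{C}}}$-side by the trace-freeness of $\mathcal{H}om(\ma{U},\ma{U})_0$. As an alternative to carrying out this computation by hand, one can directly invoke the classical formula for the canonical bundle of a moduli space of stable sheaves on a surface (see \cite[\S 8.3]{huybrechtslehn} and \cite{JunLi}), which for our Mukai vector $(2,-K_S,1)$ on the rational surface $S$ specializes at once to $-K_{M_{\ma{C}}}=\rho(-K_S)$.
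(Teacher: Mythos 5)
Your proposal is correct in substance, but it takes a genuinely different route from the paper's. The paper argues in two steps: first, for the single polarization $L=-K_S$ it combines Cor.~\ref{determinant} with the canonical-bundle formula for moduli of sheaves on a surface \cite[Th.~8.3.3]{huybrechtslehn} to get $\rho_{-K_S}(-K_S)=-K_Y$; second, it transports this to an arbitrary chamber $\ma{C}\subset\Pi$ via the pseudo-isomorphism $\ph\colon M_{\ma{C}}\dasharrow Y$ of Cor.~\ref{birational}, which satisfies $\ph^*(-K_Y)=-K_{M_{\ma{C}}}$, together with the compatibility $\rho_{\ma{C}}=\ph^*\circ\rho_{-K_S}$ of Lemma~\ref{cod1}. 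You instead prove the formula directly on $M_{\ma{C}}$: you identify $K_{M_{\ma{C}}}$ with $\det R\pi_*(\ma{U}^{\vee}\otimes\ma{U})$ via deformation theory (this uses $\Ext^0(F,F)_0=\Ext^2(F,F)_0=0$, both available since every parametrized $F$ is stable and $-K_S$ is ample) and compare with $\lambda(\mathfrak{u}(-K_S))$ by Grothendieck--Riemann--Roch; your bookkeeping is right (only $\operatorname{ch}_2(\ma{U}^{\vee}\otimes\ma{U})=c_1(\ma{U})^2-4c_2(\ma{U})$ paired with $\operatorname{td}_1(S)$ contributes on one side, $\operatorname{ch}(\mathfrak{u}(-K_S))=2K_S+2[\ol_{pt}]$ on the other, and the dependence on the K\"unneth $(0,2)$-part of $c_1(\ma{U})$ indeed cancels), so this is essentially a re-proof of \cite[\S 8.3]{huybrechtslehn}, and your fallback of quoting that formula directly for the polarization defining $\ma{C}$ is also legitimate. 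What each approach buys: yours never uses $\ma{C}\subset\Pi$ (it would apply to any chamber, e.g.\ to $\ma{C}_h$ with $M_{\ma{C}_h}\cong\pr^4$), whereas $\ma{C}\subset\Pi$ is exactly what makes the paper's transport step work; the paper's route, conversely, requires no Chern-class computation and reuses machinery already in place, invoking the classical formula only once, at the distinguished polarization $-K_S$. One loose end in your write-up: GRR yields the equality $c_1(\rho(-K_S))=c_1(-K_{M_{\ma{C}}})$ in cohomology, and to upgrade it to an equality in $\Pic(M_{\ma{C}})$ you should invoke the injectivity of $c_1\colon\Pic(M_{\ma{C}})\to H^2(M_{\ma{C}},\Z)$ and torsion-freeness of the latter (both hold because $M_{\ma{C}}$ is a smooth rational projective variety), rather than ``independence of the choice of $\ma{U}$'': the twist-independence only removes the ambiguity in $\lambda_{\ma{U}}$, it does not convert a cohomological identity into one in the Picard group.
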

\begin{proof}
Consider the moduli space $Y:=M_{-K_S}$, and the associated map 
$\rho_{-K_S}\colon\Pic(S)\to\Pic(Y)$. By Cor.~\ref{determinant} and \cite[Th.~8.3.3]{huybrechtslehn} we have $\rho_{-K_S}(-K_S)=-K_Y$. On the other hand, since $-K_S\in\Pi$ and $\ma{C}\subset\Pi$, there is a pseudo-isomorphism $\ph\colon M_{\ma{C}}\dasharrow Y$ (see Cor.~\ref{birational}), and $\ph^*(-K_Y)=-K_{M_{\ma{C}}}$.
The statement follows from Lemma \ref{cod1}.
\end{proof}
\subsection{The map $\zeta$}
Let us consider now the transpose map of $\rho$:
$$\zeta:=\rho^t\colon \N(M_{\ma{C}})\la\N(S)=H^2(S,\R).$$
It follows from Lemmas \ref{rho(L)} and \ref{-K} that we can determine the images, via  $\zeta$, of the lines in the exceptional loci of $M_{\ma{C}}$.
\begin{corollary}\label{facet_ell}
Let $\ma{C}\subset\Pi$ be a chamber such that $\overline{\ma{C}}$ has a facet on the wall $(2\ell+K_S)^\perp$, where $\ell$ is a $(-1)$-curve.
Let $\Gamma_\ell\subset M_{\ma{C}}$ be an irreducible curve defined as follows:
\begin{enumerate}[$\bullet$]
\item if  $\ma{C}\subset (2\ell+K_S)^{>0}$, then  $\Gamma_\ell$ is a line in  $P_\ell\cong\pr^2\subset M_{\ma{C}}$;
\item if $\ma{C}\subset (2\ell+K_S)^{<0}$, then  $\Gamma_\ell=Z_\ell\subset M_{\ma{C}}$.
\end{enumerate}
Then $\zeta(\Gamma_\ell)=\begin{cases}\ \ 2\ell+K_S\quad&\text{if   $\ma{C}\subset (2\ell+K_S)^{>0}$;}\\
-2\ell-K_S\quad&\text{if $\ma{C}\subset (2\ell+K_S)^{<0}$.}\end{cases}$
\end{corollary}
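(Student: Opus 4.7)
The plan is to pin down $\zeta(\Gamma_\ell)$ up to scalar using the vanishing given by Lemma \ref{rho(L)}, and then fix the scalar by pairing with $-K_S$ via Lemma \ref{-K}, together with an adjunction computation based on the normal bundles of $P_\ell$ and $Z_\ell$ recalled in \ref{flip}.

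For the direction, I would choose an ample class $L\in\Pic(S)$ in the relative interior of the facet of $\overline{\ma{C}}$ lying on the wall $(2\ell+K_S)^\perp$; such $L$ exists because this facet is shared with an adjacent chamber, so its relative interior lies in the open ample cone. Depending on which side of the wall the chamber $\ma{C}$ sits on, the locus $P_\ell$ (Case 1) or $Z_\ell$ (Case 2) appears among the special loci $P_{\ell_i}$ or $Z_{\ell_i}$ of Lemma \ref{rho(L)}, and $\Gamma_\ell$ is contained in it, so $\rho(L)\cdot\Gamma_\ell=0$. By $\R$-linearity of $\rho$ and of intersection, the functional $L'\mapsto\zeta(\Gamma_\ell)\cdot L'=\rho(L')\cdot\Gamma_\ell$ vanishes on an open subset of the hyperplane $(2\ell+K_S)^\perp$, hence on the whole hyperplane, forcing $\zeta(\Gamma_\ell)=\lambda(2\ell+K_S)$ for some $\lambda\in\R$.

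To identify $\lambda$, pair with $-K_S$: by Lemma \ref{-K} we have $\rho(-K_S)=-K_{M_{\ma{C}}}$, and from $\ell\cdot K_S=-1$ and $K_S^2=1$ one gets $(2\ell+K_S)\cdot(-K_S)=1$, so $\lambda=-K_{M_{\ma{C}}}\cdot\Gamma_\ell$. This last intersection is computed by adjunction from the normal bundles $\ma{N}_{P_\ell/M_{\ma{C}}}\cong\ol_{\pr^2}(-1)^{\oplus 2}$ and $\ma{N}_{Z_\ell/M_{\ma{C}}}\cong\ol_{\pr^1}(-1)^{\oplus 3}$ recalled in \ref{flip}: on $P_\ell\cong\pr^2$ one finds $-K_{M_{\ma{C}}}|_{P_\ell}\cong\ol_{\pr^2}(3-2)=\ol_{\pr^2}(1)$, giving $\lambda=1$ for $\Gamma_\ell$ a line; on $Z_\ell\cong\pr^1$ one finds $-K_{M_{\ma{C}}}|_{Z_\ell}\cong\ol_{\pr^1}(2-3)=\ol_{\pr^1}(-1)$, giving $\lambda=-1$. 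This yields the two cases of the statement. The main conceptual point is Step 1, namely recognising that Lemma \ref{rho(L)} can be invoked at an ample $L$ on the wall to confine $\zeta(\Gamma_\ell)$ to the line $\R(2\ell+K_S)$; after that, the determination of the sign is a one-line adjunction bookkeeping.
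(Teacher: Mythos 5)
Your proof is correct and follows essentially the same route as the paper: pick an ample $L$ in the relative interior of the facet on $(2\ell+K_S)^\perp$, use Lemma \ref{rho(L)} to force $\zeta(\Gamma_\ell)\in\R\,(2\ell+K_S)$, and fix the scalar by pairing with $-K_S$ via Lemma \ref{-K} together with the value of $-K_{M_{\ma{C}}}\cdot\Gamma_\ell$ computed from the normal bundles recalled in \ref{flip}. The only cosmetic difference is your justification that the relative interior of the facet consists of ample classes (asserted via adjacency of chambers, which is slightly circular); the paper gets this by observing that the boundary of $\Pi$ meets the facet only along proper faces, so its relative interior lies in the interior of $\Pi$, and then invoking Rem.~\ref{boundaryample}.
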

\begin{proof}
If $\ma{C}\subset (2\ell+K_S)^{>0}$ we have $\ma{N}_{P_\ell/M_{\ma{C}}}\cong\ol_{\pr^2}(-1)^{\oplus 2}$ (see \ref{flip}), thus $-K_{M_{\ma{C}}}\cdot\Gamma_\ell=1$. Similarly, if $\ma{C}\subset (2\ell+K_S)^{<0}$, we get
  $-K_{M_{\ma{C}}}\cdot\Gamma_\ell=-1$.

Let $L\in\Pic(S)$ be in the relative interior of the facet $\overline{\ma{C}}\cap(2\ell+K_S)^{\perp}$. The boundary of $\Pi$ intersects 
the facet $\overline{\ma{C}}\cap(2\ell+K_S)^{\perp}$ along proper faces, hence $L$ 
is in the interior of $\Pi$, and it is ample by Rem.~\ref{boundaryample}.
Consider $\rho(L)\in\Pic(M_{\ma{C}})$. Then by Lemma \ref{rho(L)}, $\rho(L)\cdot\Gamma_\ell=0$. Since the linear span of the facet is the hyperplane $(2\ell+K_S)^{\perp}$, this yields 
$$\rho((2\ell+K_S)^{\perp})\subseteq \Gamma_\ell^{\perp}\subset H^2(M_{\ma{C}},\R),$$
and dually $\zeta(\Gamma_\ell)=a (2\ell+K_S)$ for some $a\in\R$. 

On the other hand we have $\rho(-K_S)=-K_{M_{\ma{C}}}$ by Lemma \ref{-K}, so that 
$$-K_{M_{\ma{C}}}\cdot\Gamma_\ell=\rho(-K_S)\cdot\Gamma_\ell=-K_S\cdot
\zeta(\Gamma_\ell)=a(-K_S)\cdot(2\ell+K_S)=a.$$
This yields the statement.
\end{proof}
With a similar proof, one shows the following.
\begin{corollary}\label{facet_C}
Let $\ma{C}\subset\Pi$ be a chamber such that $\overline{\ma{C}}$ has a facet on the wall $(2C+K_S)^\perp$, where $C\subset S$ is a conic.
Let $\Gamma_C\subset M_{\ma{C}}$ be a line in $E_C\cong\pr^3\subset
M_{\ma{C}}$.
Then
$\zeta(\Gamma_C)=2C+K_S$.
\end{corollary}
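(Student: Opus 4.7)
The plan is to imitate the proof of Cor.~\ref{facet_ell}, with the exceptional $\pr^3$ (rather than $\pr^2$) playing the role of the exceptional locus, and using the inputs from \ref{blowup}, Rem.~\ref{boundaryample}, and Lemmas \ref{rho(L)} and \ref{-K}. Since $\ma{C}\subset\Pi\subset\{L\,|\,L\cdot(2C+K_S)\geq 0\}$ and $\overline{\ma{C}}$ has a facet on $(2C+K_S)^{\perp}$, the chamber $\ma{C}$ lies strictly in the halfspace $(K_S+2C)^{>0}$; hence by \ref{blowup} the locus $E_C\cong\pr^3$ sits inside $M_{\ma{C}}$, with normal bundle $\ma{N}_{E_C/M_{\ma{C}}}\cong\ol_{\pr^3}(-1)$, and $E_C$ is the exceptional divisor of the blow-up $M_{\ma{C}}\to M_{\ma{C}'}$ of a smooth point in a smooth $4$-fold.

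First I would compute $-K_{M_{\ma{C}}}\cdot\Gamma_C$. By the standard blow-up formula $K_{M_{\ma{C}}}=\pi^*K_{M_{\ma{C}'}}+3E_C$, combined with $E_C\cdot\Gamma_C=\deg\ol_{\pr^3}(-1)_{|\Gamma_C}=-1$, this yields $-K_{M_{\ma{C}}}\cdot\Gamma_C=3$.

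Next I would choose $L\in\Pic(S)$ in the relative interior of the facet $\overline{\ma{C}}\cap(2C+K_S)^{\perp}$. Exactly as in the proof of Cor.~\ref{facet_ell}, the boundary of $\Pi$ intersects this facet only along proper faces, so $L$ lies in the interior of $\Pi$ and hence is ample by Rem.~\ref{boundaryample}. By Lemma \ref{rho(L)}, $\rho(L)\cdot\Gamma_C=0$ because $\Gamma_C\subset E_C$. Since the linear span of the facet is the hyperplane $(2C+K_S)^{\perp}$, this gives $\rho((2C+K_S)^{\perp})\subseteq\Gamma_C^{\perp}$, and dually $\zeta(\Gamma_C)=a(2C+K_S)$ for some $a\in\R$.

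Finally I would pin down $a$ using Lemma \ref{-K}, which gives $\rho(-K_S)=-K_{M_{\ma{C}}}$:
$$3=-K_{M_{\ma{C}}}\cdot\Gamma_C=\rho(-K_S)\cdot\Gamma_C=-K_S\cdot\zeta(\Gamma_C)=a(-K_S)\cdot(2C+K_S)=3a,$$
since $(-K_S)\cdot(2C+K_S)=-2K_S\cdot C-K_S^2=4-1=3$. Thus $a=1$, which is the claim. There is no real obstacle here: once the identification of $E_C$ as a smooth blow-up divisor (from \ref{blowup}) and the positivity package of Lemma \ref{rho(L)} are in hand, the argument is purely the numerical mirror of Cor.~\ref{facet_ell}.
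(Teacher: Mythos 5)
Your proof follows exactly the route the paper intends: Cor.~\ref{facet_C} is stated with ``with a similar proof'', meaning the argument of Cor.~\ref{facet_ell} with $E_C\cong\pr^3$ and $\ma{N}_{E_C/M_{\ma{C}}}\cong\ol_{\pr^3}(-1)$ in place of $P_\ell$; your computation $-K_{M_{\ma{C}}}\cdot\Gamma_C=3$ (via the blow-up formula for $M_{\ma{C}}\to M_{\ma{C}'}$, equivalently by adjunction using the normal bundle from \ref{blowup}), followed by Lemma \ref{rho(L)} and Lemma \ref{-K}, is precisely that argument.

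One step, copied verbatim from the proof of Cor.~\ref{facet_ell}, is however false in the conic case and needs a small repair: you claim that the boundary of $\Pi$ meets the facet $\overline{\ma{C}}\cap(2C+K_S)^{\perp}$ only along proper faces, so that $L$ lies in the interior of $\Pi$. For a conic this cannot hold, because $(2C+K_S)^{\perp}$ is a supporting hyperplane of $\Pi$ itself (the hyperplanes $(2C+K_S)^{\perp}$ cut facets of $\Pi$, see \ref{secPi} and \ref{movable}), so the whole facet of $\overline{\ma{C}}$ lies on the boundary of $\Pi$ and $L$ is not interior to $\Pi$. What you actually need is only that $L$ is ample, and this is still true: the facets of $\ma{E}$ are supported on the hyperplanes $\ell^{\perp}$ and $(2h+K_S)^{\perp}$, and $2C+K_S$ is proportional to none of these classes (compare intersections with $-K_S$ and self-intersections), so $(2C+K_S)^{\perp}$ contains no facet of $\ma{E}$; hence the relative interior of the $8$-dimensional facet $\overline{\ma{C}}\cap(2C+K_S)^{\perp}$ lies in the interior of $\ma{E}$, and Rem.~\ref{boundaryample} gives ampleness of an integral class $L$ there. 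Note that Lemma \ref{rho(L)} only requires $L$ ample and $L\in\overline{\ma{C}}$, not $L\in\operatorname{int}\Pi$, so with this substitution the rest of your proof ($\rho(L)\cdot\Gamma_C=0$, hence $\zeta(\Gamma_C)=a(2C+K_S)$, and then $3=-K_{M_{\ma{C}}}\cdot\Gamma_C=3a$ by Lemma \ref{-K}) goes through unchanged.
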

\begin{remark}\label{cod1curves}
Let $\ma{C}_1$ and $\ma{C}_2$ be two chambers contained in $\Pi$,
and consider the maps $\zeta_{\ma{C}_1}\colon\N(M_{\ma{C}_1})\to H^2(S,\R)$ and 
$\zeta_{\ma{C}_2}\colon\N(M_{\ma{C}_2})\to H^2(S,\R)$, with the obvious notation.
Let $\ph\colon M_{\ma{C}_1}\dasharrow M_{\ma{C}_2}$ be the natural birational map (see Cor.~\ref{birational}), $\Gamma\subset M_{\ma{C}_1}$ an irreducible curve contained in the open subset where $\ph$ is an isomorphism, and $\Gamma':=\ph(\Gamma)$. Then
$\zeta_{\ma{C}_1}(\Gamma)=\zeta_{\ma{C}_2}(\Gamma')\in H^2(S,\R)$.
Indeed if $L\in\Pic(S)$, using Lemma \ref{cod1}, we have:
$$\zeta_{\ma{C}_2}(\Gamma')\cdot L=\Gamma'\cdot \rho_{\ma{C}_2}(L)=
\Gamma\cdot \ph^*\rho_{\ma{C}_2}(L)=\Gamma\cdot \rho_{\ma{C}_1}(L)=\zeta_{\ma{C}_1}(\Gamma)\cdot L.$$
\end{remark}
\subsection{$M_L$ is a Mori dream space}
Mori dream spaces are projective varieties with an especially nice 
behaviour with respect to
 birational geometry and the Minimal Model Program (see  \cite{hukeel} for more details). 
Fano varieties, and more generally log Fano varieties, are Mori dream spaces. We recall that a smooth projective variety $M$ is log Fano if there exists an effective $\Q$-divisor $\Delta$ on $M$ such that $-(K_M+\Delta)$ is ample, and the pair $(M,\Delta)$ is klt. We refer the reader to \cite[Def.~2.34]{kollarmori} for the notion of klt pair; since $M$ is smooth, this is a condition on the singularities of $\Delta$, which is automatically satisfied when $\Delta=0$. 
 
The following is another important consequence of Lemma \ref{-K}; see \cite[Prop.~3.3]{BMW} for a related result.
\begin{proposition}[the Fano model $Y$]\label{fano}
The moduli space $Y:=M_{-K_S}$ is a smooth Fano $4$-fold; we have $M_L=Y$ for every ample line bundle $L\in\ma{N}$.
\end{proposition}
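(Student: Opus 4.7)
The plan is to deduce both assertions directly from the machinery already built: the identification $\rho(-K_S)=-K_{M_{\ma{C}}}$ in Lemma~\ref{-K}, the positivity of $\rho(L)$ in Cor.~\ref{determinant}, and the chamber-based description of $M_L$ in Lemma~\ref{specialL}.

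First I would address the Fano assertion. By Rem.~\ref{fanochamber}, $-K_S$ lies in the interior of the cone $\ma{N}$, which is itself the closure of a chamber $\ma{C}_0$ of the stability fan; in particular $-K_S\in\ma{C}_0\subset\Pi\subset\ma{E}$. Since $-K_S$ is ample and belongs to a chamber, Cor.~\ref{determinant}$(c)$ gives that $\rho(-K_S)\in\Pic(M_{-K_S})$ is ample. Combining this with Lemma~\ref{-K}, which asserts $\rho(-K_S)=-K_{M_{\ma{C}_0}}$, we conclude that $-K_Y$ is ample, i.e.\ $Y$ is Fano. Smoothness and projectivity, as well as the dimension four statement, follow from Cor.~\ref{smoothness} applied to the non-empty moduli space $M_{-K_S}$.

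Next, for the second assertion, let $L\in\Pic(S)$ be ample with $L\in\ma{N}$. If $L$ lies in the interior of $\ma{N}$, then $L\in\ma{C}_0$, and since the moduli space only depends on the chamber, $M_L=M_{\ma{C}_0}=M_{-K_S}=Y$. Otherwise $L$ lies on the boundary of $\ma{N}$; since $L$ is ample, $L$ is contained in the interior of $\ma{E}$ or in the relative interior of a facet of $\ma{E}$ (Rem.~\ref{boundaryample}). I would then apply Lemma~\ref{specialL}$(a)$--$(b)$: there is a unique chamber $\ma{C}$ such that $L\in\overline{\ma{C}}$ and $\ma{C}\subset(2D+K_S)^{>0}$ for every $(-1)$-curve, conic, or   cubic $D$ with $L\cdot (2D+K_S)=0$, and for this chamber $M_L=M_{\ma{C}}$.

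The key (and only delicate) step is identifying $\ma{C}$ with $\ma{C}_0$. I would verify that $\ma{C}_0$ satisfies the defining property of $\ma{C}$: for any $(-1)$-curve $\ell$, conic $C$, or   cubic $h$, one computes
\[
-K_S\cdot(2\ell+K_S)=1,\qquad -K_S\cdot(2C+K_S)=3,\qquad -K_S\cdot(2h+K_S)=5,
\]
all strictly positive, so $\ma{C}_0\subset(2D+K_S)^{>0}$ for every such $D$. Moreover $L\in\ma{N}\subset\overline{\ma{C}_0}$. Thus $\ma{C}_0$ satisfies both conditions of Lemma~\ref{specialL}$(a)$, and by the uniqueness stated there, $\ma{C}=\ma{C}_0$. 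Hence $M_L=M_{\ma{C}_0}=Y$, completing the proof. The main obstacle, modest as it is, lies in this last identification on the boundary of $\ma{N}$; everything else is an immediate application of the results already proved.
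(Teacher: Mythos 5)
Your proof is correct and follows essentially the same route as the paper: ampleness of $-K_Y=\rho(-K_S)$ via Lemma~\ref{-K} together with Cor.~\ref{determinant}$(c)$, and the second statement via Rem.~\ref{fanochamber} and Lemma~\ref{specialL}, where your explicit identification of the chamber through $-K_S$ (using $-K_S\cdot(2D+K_S)>0$ for every $(-1)$-curve, conic, or cubic $D$) simply spells out what the paper cites in one line. The only point you omit is irreducibility: Cor.~\ref{smoothness} yields only that $M_{-K_S}$ is smooth, projective and of pure dimension $4$, and to conclude that $Y$ is a $4$-fold the paper additionally invokes Cor.~\ref{connected}.
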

\begin{proof}
The moduli space $Y$ is a smooth projective $4$-fold  by 
Cor.~\ref{smoothness} and \ref{connected}. Moreover  $-K_{Y}=\rho(-K_S)$ by Lemma \ref{-K}, so it is ample by Cor.~\ref{determinant}$(c)$, and $Y$ is Fano.
The second statement follows from
 Rem.~\ref{fanochamber} and 
Lemma \ref{specialL}.
\end{proof} 
\begin{corollary}\label{MDS}
For every $L\in\Pic(S)$ ample, $L\in\ma{E}$, the moduli space $M_L$ is log Fano and a Mori dream space.
\end{corollary}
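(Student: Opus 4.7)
The plan is to deduce the Mori dream space property from the log Fano property, using the general fact (recalled just before Proposition~\ref{fano}) that log Fano varieties are Mori dream spaces. Thus the bulk of the argument is to construct, for every ample $L\in\ma{E}$, an effective $\mathbb{Q}$-divisor $\Delta$ on $M_L$ with simple normal crossings support and coefficients strictly less than $1$, such that $-(K_{M_L}+\Delta)$ is ample.

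The base case is handled by Proposition~\ref{fano}: when $L\in\ma{N}$ is ample, $M_L=Y$ is Fano, hence log Fano with $\Delta=0$. For general ample $L\in\ma{E}$, I would first apply Lemma~\ref{specialL}(b) to reduce to the case where $L$ lies in the interior of a chamber $\ma{C}\subset\ma{E}$, so that $M_L=M_{\ma{C}}$ is smooth (Corollary~\ref{smoothness}) and $\rho(L)\in\Pic(M_L)$ is ample by Corollary~\ref{determinant}(c). I would then connect $L$ to $-K_S$ by a segment inside the ample part of $\ma{E}$ and track the induced birational transformations of moduli spaces, which by \ref{flip}, \ref{blowup}, and~\ref{outer} consist only of explicit $K$-negative flips, smooth divisorial contractions of loci $E_C\cong\pr^3$ to points, and their inverses.

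The natural candidate for the support of $\Delta$ is the union of the exceptional divisors $E_C$ created along this path (and, when $L$ lies outside $\Pi$, the relevant proper transforms coming from the outer-chamber description $M_{\ma{C}_h}\cong\pr^4$ of Proposition~\ref{P^4}); the natural choice for the ample part is a small perturbation of $\rho(L)$, obtained for example by replacing $L$ with $L+t(-K_S)$ for a small rational $t>0$ (which still lies in $\ma{C}$ and stays ample) and then extracting a small multiple of $\rho(L)$ before writing down $\Delta$. Taking the coefficients of the $E_C$'s sufficiently small ensures the klt condition for the pair $(M_L,\Delta)$, since $M_L$ is smooth and the $E_C$'s are smooth disjoint divisors in the models where they appear.

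The main obstacle is verifying the ampleness of $-(K_{M_L}+\Delta)$, which I would reduce to a numerical check on curves by the Nakai--Moishezon criterion. This reduction is made tractable by the determinant framework: the dual map $\zeta=\rho^t$ controls how curves in the special loci pair with classes coming from $\Pic(S)$, via the explicit formulas of Corollaries~\ref{facet_ell} and~\ref{facet_C}, while Lemma~\ref{-K} identifies $-K_{M_L}$ with $\rho(-K_S)$ on chambers inside $\Pi$ (and an analogous identification, obtained by tracking canonical classes through the blow-downs of \ref{blowup}, handles chambers outside $\Pi$). In this way the ampleness of $-(K_{M_L}+\Delta)$ is translated into finitely many strict numerical inequalities in $H^2(S,\R)$, which are verified directly from the definitions of $\ma{E}$, $\Pi$, and $\ma{N}$.
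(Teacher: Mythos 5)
Your overall strategy --- establish that $M_L$ is log Fano and then invoke \cite[Cor.~1.3.2]{BCHM} --- is the same as the paper's, but the explicit construction you propose for the boundary $\Delta$ does not work, and the gap is visible already in the simplest cases. Take $L$ in a chamber $\ma{C}\subset\Pi$ with $\ma{C}\neq\ma{N}$ (for instance $\ma{C}=\ma{B}_h$, so that $M_L\cong X=\Bl_8\pr^4$). The path from $-K_S$ to $L$ crosses only walls of the form $(2\ell+K_S)^{\perp}$, which produce flips and no divisorial contractions, so your recipe yields $\Delta=0$; but $-K_{M_{\ma{C}}}$ is then not even nef (e.g.\ $-K_X\cdot L_{ij}=-1$), so $-(K_{M_L}+\Delta)$ cannot be ample. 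More generally, if $-K_{M_L}\cdot\Gamma<0$ for some curve $\Gamma$, then ampleness of $-(K_{M_L}+\Delta)$ forces $\Delta\cdot\Gamma<-K_{M_L}\cdot\Gamma<0$, so $\Delta$ must contain $\Gamma$ with substantial multiplicity: a boundary ``with sufficiently small coefficients'' supported on a few exceptional divisors can never repair the failure of nefness of $-K_{M_L}$. (There is also a secondary issue: reducing ampleness to ``finitely many numerical inequalities'' presupposes knowing generators of $\overline{\NE}(M_L)$, which at this point of the paper is not available --- Th.~\ref{iso} is proved \emph{after}, and using, this corollary.)

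The correct shape of the boundary is different: one wants $\Delta$ to be a general effective $\Q$-divisor with $\Delta\sim_{\Q}-K_{M_L}-\epsilon A$ for $A$ ample and $0<\epsilon\ll 1$, so that $-(K_{M_L}+\Delta)\sim_{\Q}\epsilon A$ is ample by construction and the only thing to check is that $(M_L,\Delta)$ is klt; this uses the bigness of $-K_{M_L}$, not a small perturbation by exceptional divisors. The paper avoids doing this by hand: since $Y=M_{-K_S}$ is Fano (Prop.~\ref{fano}) and the natural map $Y\dasharrow M_L$ is a \emph{contracting} birational map because $-K_S\in\Pi$ (Cor.~\ref{birational}), the fact that $M_L$ is log Fano follows directly from \cite[Lemma 2.8]{prokshok}, which packages exactly the argument just described. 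If you want an explicit proof, you should follow that route rather than the one you outline.
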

\noindent  Via the relation with the blow-up of $\pr^4$ in $8$ general points (Th.~\ref{Xintro}), the Corollary above is already known by \cite[Th.~1.3]{CoxCT}; see also \cite[Th.~1.3]{araujo_massarenti}.  
\begin{proof}
By Prop.~\ref{fano}, the moduli space $Y=M_{-K_S}$ is Fano. Let now $L\in\Pic(S)$ be ample, $L\in\ma{E}$. Consider the natural birational map $\ph\colon Y\dasharrow M_L$ (see Cor.~\ref{birational}). Since $-K_S\in\Pi$, the map $\ph$ is contracting. Thus $M_L$ is log Fano by \cite[Lemma 2.8]{prokshok}. Finally, log Fano varieties are Mori dream spaces by \cite[Cor.~1.3.2]{BCHM}.
\end{proof}
\subsection{Automorphisms}
Let $\Aut(S)_L$ be the subgroup of $\Aut(S)$ of automorphisms fixing $L$ in $\Pic(S)$. There is  a natural homomorphism $\psi\colon \Aut(S)_L\to\Aut(M_L)$ defined as follows:
$$
\psi(f)([F])=[(f^{-1})^*F]\qquad\text{for every $f\in\Aut(S)_L$ and $[F]\in M_L$.}
$$   
\begin{proposition}\label{equivariant}
The determinant map $\rho\colon \Pic(S) \to \Pic(M_L)$
is equivariant for the action of $\Aut(S)_L$, namely for every $f\in\Aut(S)_L$ and $P\in \Pic(S)$ we have:
$$\rho(f^*P)=\psi(f)^*(\rho(P))\in \Pic(M_L).$$
\end{proposition}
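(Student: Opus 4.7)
The plan is to trace how an automorphism $f \in \Aut(S)_L$ transports the data entering the definition of $\rho$, and then exploit the fact that the definition of $\lambda$ on $\mathfrak{f}^{\perp}$ is insensitive to twisting the universal family by a line bundle pulled back from $M_L$.

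First I would establish the ``$K$-theoretic equivariance'' of $\mathfrak{u}$: for every $P \in \Pic(S)$ we have $\mathfrak{u}(f^*P) = f^*\mathfrak{u}(P)$ in $K(S)$. This follows from \eqref{u(P)} because $f^*\ol_S = \ol_S$, the class $[\ol_{pt}]$ is preserved by any automorphism (it does not depend on the point), and the coefficient $\chi\bigl(\mathfrak{f} \otimes ([P^{\otimes(-1)}]-[\ol_S])\bigr)$ is unchanged when $P$ is replaced by $f^*P$, since $f^*\mathfrak{f}=\mathfrak{f}$ in $K(S)$ (because $f$ preserves $c_1(\mathfrak{f})=-K_S$ and $c_2(\mathfrak{f})=2$, and the isomorphism \eqref{groupiso} is $f$-equivariant).

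Next, writing $g:=\psi(f)\in\Aut(M_L)$, I would compare the two universal bundles on $S \times M_L$ given by $\ma{U}$ and by $(f \times g)^*\ma{U}$. For $m=[F]\in M_L$, one has $((f \times g)^*\ma{U})_{|S \times \{m\}} \cong f^*(\ma{U}_{|S \times \{g(m)\}}) \cong f^*((f^{-1})^*F) = F$, so $(f \times g)^*\ma{U}$ is again a universal family and therefore $(f \times g)^*\ma{U} \cong \ma{U} \otimes \pi_M^*N$ for some $N \in \Pic(M_L)$.

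Now I would compute $\lambda(f^*\mathfrak{c})$ for $\mathfrak{c}\in\mathfrak{f}^{\perp}$ via \eqref{deflambda}. Using $\pi_S \circ (f \times g) = f \circ \pi_S$ and the identity $(f \times g)^*\ma{U} = \ma{U} \cdot \pi_M^*[N]$ in $K(S\times M_L)$, one has in $K(S\times M_L)$ the equality $\pi_S^*f^*\mathfrak{c}\cdot[\ma{U}] = (f \times g)^*(\pi_S^*\mathfrak{c}\cdot[\ma{U}])\cdot \pi_M^*[N^{-1}]$. Applying $(\pi_M)_!$, invoking the projection formula, and flat base change for the Cartesian square with vertical maps $\pi_M$ and horizontal maps $f\times g$ and $g$, we obtain
\[
(\pi_M)_!(\pi_S^*f^*\mathfrak{c}\cdot[\ma{U}]) = g^*\bigl((\pi_M)_!(\pi_S^*\mathfrak{c}\cdot[\ma{U}])\bigr)\cdot [N^{-1}].
\]
Taking determinants, and using that the generic rank of $(\pi_M)_!(\pi_S^*\mathfrak{c}\cdot[\ma{U}])$ equals $\chi(\mathfrak{f}\otimes\mathfrak{c})=0$ because $\mathfrak{c}\in\mathfrak{f}^{\perp}$, the twist by $[N^{-1}]$ disappears and we get $\lambda(f^*\mathfrak{c}) = g^*\lambda(\mathfrak{c})$. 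Combining with the first step,
\[
\rho(f^*P) = \lambda(\mathfrak{u}(f^*P)) = \lambda(f^*\mathfrak{u}(P)) = g^*\lambda(\mathfrak{u}(P)) = \psi(f)^*\rho(P),
\]
as desired.

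The main technical point to get right is the base change step together with the bookkeeping of the twisting line bundle $N$: it is precisely the restriction to $\mathfrak{f}^{\perp}$ that makes $\rho$ independent of the choice of universal family, and hence produces an honest equivariance rather than an equality up to a $\psi$-twisted character. Everything else is a routine manipulation of classes in $K$-theory.
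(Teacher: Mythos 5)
Your proof is correct and follows essentially the same route as the paper's: you pull back the universal family along the diagonal automorphism $(f,\psi(f))$ of $S\times M_L$, check it is again a universal family, and use $\mathfrak{u}(f^*P)=f^*\mathfrak{u}(P)$ together with the compatibility of $(\pi_M)_!$ and $\det$ with the isomorphism $\psi(f)$. The only cosmetic difference is that you track the twisting line bundle $N$ explicitly and kill it via $\chi(\mathfrak{f}\otimes\mathfrak{c})=0$, whereas the paper absorbs this into the already-stated fact that $\lambda$ restricted to $\mathfrak{f}^{\perp}$ is independent of the choice of universal family.
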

\begin{proof}
For simplicity set $M:=M_L$.
Let $f\in\Aut(S)_L$, $g:=\psi(f)$ the induced automorphism of $M$, and $h:=(f,g)$ acting diagonally on $S\times M$.
 
Let $\mathcal{U}$ be a universal vector bundle over $M$.
Let us check that $h^*\ma{U}$ is again a universal family. 
 To this end, 
we compute the restriction over $S\times [F]$, for $[F]\in M$:
$$
(h^*\mathcal{U})_{|S\times [F]}=(h_{|S\times [F]})^*(\mathcal{U}_{|S\times g([F])})
\cong f^*(f^{-1})^*F\cong F.$$

We have a commutative diagram
{\footnotesize$$\xymatrix{{S\times M}\ar[r]^h\ar[d]^{\pi_M}&{S\times M}\ar[d]^{\pi_M}\\
M\ar[r]^{g}&M,
}$$}where the horizontal maps are isomorphism, hence
for every locally free sheaf ${G}$ on $S\times M$ and every $i\geq 0$ we have 
$$R^i(\pi_M)_*(h^*{G})\cong g^*(R^i(\pi_M)_*{G}),$$
thus in $\Pic(M)$:
$$\det\bigl((\pi_M)_![h^*{G}]\bigr)\cong g^*\bigl(\det((\pi_M)_![{G}])\bigr).$$

Therefore given a locally free sheaf $V$ on $S$ we have, by \eqref{deflambda}:
\begin{gather*}
\lambda_{h^*\ma{U}}([f^*V])=\det\bigl((\pi_M)_![\pi_S^*f^*V\otimes h^*\ma{U}]\bigr)\cong
\det\bigl((\pi_M)_![h^*(\pi_S^*V\otimes \ma{U})]\bigr)\\
\cong g^*\bigl(\det((\pi_M)_![\pi_S^*V\otimes \ma{U}])\bigr)= g^*\lambda_{\ma{U}}([V]).
\end{gather*}

The pullback via $f$ of vector bundles on $S$ induces a natural group automorphism $f^*\colon K(S)\to K(S)$, which preserves the subgroup $\mathfrak{f}^{\perp}$. Thus the equality above yields
$\lambda(f^*\mathfrak{c})=g^*\lambda(\mathfrak{c})$ for every $\mathfrak{c}\in \mathfrak{f}^{\perp}$.
To conclude, we remark that $\mathfrak{u}(f^*P)=f^*(\mathfrak{u}(P))$  for every $P\in\Pic(S)$  (see \eqref{u(P)}). 
\end{proof}
\section{The relation with the blow-up $X$ of $\pr^4$ at $8$ points -- identification of the stability fan with the Mori chamber decomposition}\label{sec_blowup} 
\subsection{Polarizations in the linear span of $h$ and $-K_S$}\label{secL_t}
In this preliminary subsection, for a fixed   cubic $h$, we
describe the chambers that intersect the plane in $H^2(S,\R)$ spanned by $h$ and $-K_S$ (see  \cite[p.~9]{mukaiADE}).
 This will be needed to describe the birational maps that relate the associated moduli spaces.
Notation as in \ref{notationP^2}; set 
$$h':=\iota_S^*h=17h-6e=-6K_S-h,$$ so that $h'$ is another   cubic (see Lemma \ref{cubics}).
Since both $h$ and $h'$ are nef and non-ample, the cone $\Nef(S)$ intersects the plane in $H^2(S,\R)$ spanned by $h$ and $-K_S$ in the cone $\langle h,h'\rangle$. For every  $t\in (0,1)\cap\Q$ consider the ample class:
$$L_t:=(1-t)h+th'=-6tK_S+(1-2t)h.$$ 
\begin{lemma}\label{L_t}
We have:
$$L_t\in\ma{E}\ \Leftrightarrow\
t\in\Big[\frac{1}{32},\frac{31}{32}\Big],\qquad L_t\in\Pi\ \Leftrightarrow\
t\in\Big[\frac{1}{20},\frac{19}{20}\Big],\qquad
L_t\in\ma{N}\ \Leftrightarrow\
t\in\Big[\frac{1}{4},\frac{3}{4}\Big],$$
and $L_t$ belongs to a wall if and only if $t\in\left\{\frac{1}{32},\frac{1}{20},\frac{1}{8},\frac{1}{4},\frac{3}{4},\frac{7}{8},\frac{19}{20},\frac{31}{32}\right\}$. More precisely:
\begin{enumerate}[$\bullet$]
\item
$L_{\frac{1}{32}}=\frac{3}{16}(-K_S+5h)\in(2h+K_S)^{\perp}$
\item
$L_{\frac{1}{20}}=\frac{3}{10}(-K_S+3h)\in(2C_i+K_S)^{\perp}$, $i=1,\dotsc,8$
\item $L_{\frac{1}{8}}=\frac{3}{4}(-K_S+h)\in(2\ell_{ij}+K_S)^{\perp}$, $1\leq i<j\leq 8$
\item $L_{\frac{1}{4}}=\frac{1}{2}(-3K_S+h)\in(2e_{i}+K_S)^{\perp}$, $i=1,\dotsc,8$
\item $L_{\frac{3}{4}}\in(2\ell+K_S)^{\perp}$ when $\ell$ is a $(-1)$-curve such that
$\ell\sim 6h-2e-e_i$, for $i=1,\dotsc,8$
\item $L_{\frac{7}{8}}\in(2\ell+K_S)^{\perp}$ when $\ell$ is a $(-1)$-curve such that
$\ell\sim 5h-2e+e_i+e_j$, with
$1\leq i<j\leq 8$ 
\item
$L_{\frac{19}{20}}\in(2C'_i+K_S)^{\perp}$, $C'_i$ the conic such that
$C_i'\sim11h-4e+e_i$, for
$i=1,\dotsc,8$
\item
$L_{\frac{31}{32}}\in(2h'+K_S)^{\perp}$
\end{enumerate}
and no other wall contains some $L_t$.
\end{lemma}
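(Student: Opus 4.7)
The plan is to reduce the entire statement to a one-parameter linear computation. From the elementary intersection numbers $K_S^2=1$, $h^2=1$, $h\cdot K_S=-3$ one verifies that for any divisor $D$ on $S$ with $\kappa:=-K_S\cdot D$ and $d:=h\cdot D$,
\[
L_t\cdot D=6t\kappa+d(1-2t),\qquad L_t\cdot(2D+K_S)=(2d-3)+(12\kappa-4d)t.
\]
Since $L_t=(1-t)h+th'$ is nef for $t\in(0,1)$ as a non-negative combination of the two nef classes $h$ and $h'$, the memberships $L_t\in\ma{E}$, $L_t\in\Pi$, $L_t\in\ma{N}$ reduce to testing non-negativity of the boxed expression as $D$ ranges respectively over cubics ($\kappa=3$), conics ($\kappa=2$) and $(-1)$-curves ($\kappa=1$). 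I would also note the Bertini symmetry $\iota_S^*L_t=L_{1-t}$ coming from $\iota_S^*h=h'$ and $\iota_S^*K_S=K_S$, which halves the case analysis by pairing each wall at $t$ with the dual wall at $1-t$.

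The second step is to determine the admissible range of $d$ for each class type. Writing $D\sim dh-\sum m_ie_i$ gives $\sum m_i=3d-\kappa$ and $\sum m_i^2=d^2-D^2$; Cauchy--Schwarz $(\sum m_i)^2\leq 8\sum m_i^2$ then yields $d\in\{0,\dots,7\}$ for $(-1)$-curves, $d\in\{1,\dots,11\}$ for conics, $m\in\{1,\dots,17\}$ for cubics. The value $d=7$ for a $(-1)$-curve is excluded by integrality, since equality in Cauchy--Schwarz would force all $m_i=\tfrac{5}{2}$; hence $d\leq 6$. The extremes are realised by $e_i$, $C_i=h-e_i$, $h$ at the low end and by the Bertini transforms $6h-2e-e_i$, $11h-4e+e_i$, $h'=17h-6e$ at the high end (using \eqref{w_0} and Lemma~\ref{cubics}). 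Substituting the extremal values of $d$ into the sign inequality and solving for $t$ yields exactly the three intervals $[\tfrac1{32},\tfrac{31}{32}]$, $[\tfrac1{20},\tfrac{19}{20}]$, $[\tfrac14,\tfrac34]$, which proves the first part of the statement.

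For the explicit wall description, I would solve $(2d-3)+(12\kappa-4d)t=0$ to get $t(\kappa,d)=(2d-3)/(4d-12\kappa)$, and tabulate this over all admissible pairs $(\kappa,d)$. A direct inspection shows that exactly the eight listed values lie in $[1/32,31/32]$, corresponding to $(\kappa,d)\in\{(3,1),(2,1),(1,1),(1,0),(1,6),(1,5),(2,11),(3,17)\}$; all intermediate values either produce a vanishing denominator (e.g.\ $d=3$ for $(-1)$-curves, $d=6$ for conics, $m=9$ for cubics) or give $t\notin(0,1)$. For each listed $t$, the classification of divisor classes with the matching $(\kappa,d)$ reduces to solving the small Diophantine system $\sum m_i=3d-\kappa$, $\sum m_i^2=d^2-D^2$: the low-$d$ cases are immediate, and the high-$d$ cases are obtained from them by applying $\iota_S^*$. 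The main obstacle is not conceptual but the bookkeeping of this exhaustive tabulation, in particular ruling out $d=7$ for $(-1)$-curves and keeping careful track of the symmetry between $t$ and $1-t$.
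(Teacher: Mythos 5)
Your proof is correct and follows essentially the same route as the paper: the paper likewise reduces everything to the linear function $L_t\cdot(2\tilde h+K_S)=4(9-m)t+2m-3$ (the $\kappa=3$ case of your formula), bounds $m$ via Lemma~\ref{cubics}, and declares the conic and $(-1)$-curve cases "completely analogous." Your Cauchy--Schwarz bounds $d\le 11$ for conics and $d\le 6$ for $(-1)$-curves (with the integrality exclusion of $d=7$) are exactly the analogues of Lemma~\ref{cubics} that the paper leaves implicit, so your write-up is just a more detailed version of the same argument.
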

\begin{figure}[h]\caption{Chambers intersecting the plane spanned by $h$ and $-K_S$}\label{figura}\footnotesize
\begin{tikzpicture}
\draw [dotted] (0,0) -- (1,0);
\draw [dotted] (11.5,0) -- (12.5,0);
\draw (1,0) -- (2.5,0);\draw  (10,0) -- (11.5,0);
\draw [thick] (2.5,0) -- (10,0);
\draw (0,-0.1,0) -- (0,0.1);\draw (1,-0.1,0) -- (1,0.1);\draw (2.5,-0.1,0) -- (2.5,0.1);
\draw (4,-0.1,0) -- (4,0.1);\draw (5.5,-0.1,0) -- (5.5,0.1);\draw (7,-0.1,0) -- (7,0.1);
\draw (8.5,-0.1,0) -- (8.5,0.1);\draw (10,-0.1,0) -- (10,0.1);\draw (11.5,-0.1,0) -- (11.5,0.1);
\draw (12.5,-0.1,0) -- (12.5,0.1);
\node [above] at (0,0.15) {$h$};
\node [above] at (1,0) {$L_{\frac{1}{32}}$};
\node [above] at (2.5,0) {$L_{\frac{1}{20}}$};
\node [above] at (4,0) {$L_{\frac{1}{8}}$};
\node [above] at (5.5,0) {$L_{\frac{1}{4}}$};
\node [above] at (7,0) {$L_{\frac{3}{4}}$};
\node [above] at (8.5,0) {$L_{\frac{7}{8}}$};
\node [above] at (10,0) {$L_{\frac{19}{20}}$};
\node [above] at (11.5,0) {$L_{\frac{31}{32}}$};
\node [above] at (12.5,0.15) {$h'$};
\node [below] at (1.75,0) {$\ma{C}_h$};
\node [below] at (3.25,0) {$\ma{B}_h$};
\node [below] at (4.75,0) {$\ma{F}_h$};
\node [below] at (6.25,0) {$\ma{N}$};
\node [below] at (7.75,0) {$\ma{F}_{h'}$};
\node [below] at (9.25,0) {$\ma{B}_{h'}$};
\node [below] at (10.75,0) {$\ma{C}_{h'}$};
\draw[snake=brace,segment amplitude=15pt] (2.5,0.7) -- (10,0.7);
\node [above] at (6.25,1.2) {$\Pi$};
\draw[snake=brace,segment amplitude=15pt] (1,1.5) -- (11.5,1.5);
\node [above] at (6.25,2) {$\ma{E}$};
\end{tikzpicture}
\end{figure}
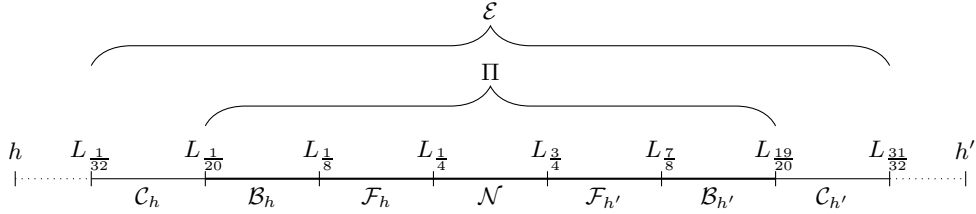
We introduce some notation for the $7$ chambers containing some $L_t$ (see Fig.~\ref{figura}). 
First we
notice that $L_{\frac{1}{32}}=\frac{3}{16}(-K_S+5h)$ is in the relative interior of the facet $\tau_h$ of $\ma{E}$  cut by $(2h+K_S)^{\perp}$ (see \ref{outer}), 
so that $L_t$ belongs to the outer chamber $\ma{C}_h$ for
$t\in(\frac{1}{32},\frac{1}{20})$. In this chamber the moduli space is isomorphic to $\pr^4$, as shown in Prop.~\ref{P^4}.
\begin{notation}[the chambers $\ma{B}_h$ and $\ma{F}_h$]\label{chamb}
Given a   cubic $h$, we denote by $\ma{B}_h$ the chamber containing $-K_S+2h=\frac{7}{3}L_{\frac{1}{14}}$, and by $\ma{F}_h$ the chamber containing 
$-2K_S+h=\frac{5}{3}L_{\frac{1}{5}}$ (see Fig.~\ref{figura}).

Notice that $\ma{B}_h\subset\Pi$. It is not difficult to see that $-K_S+3h$ and $-K_S+h$ generate one-dimensional faces of $\overline{\ma{B}}_h$, and that the hyperplanes $(2C_i+K_S)^{\perp}$ and $(2\ell_{jk}+K_S)^{\perp}$ intersect $\overline{\ma{B}}_h$ along a facet, for every
 $i=1,\dotsc,8$
and $1\leq j<k\leq 8$.
\end{notation}
\begin{proof}[Proof of Lemma \ref{L_t}]
We use Cor.~\ref{walls2}. If $\tilde{h}$ is a   cubic, and $m:=h\cdot\tilde{h}$, we have
$L_t\cdot(2\tilde{h}+K_S)=4(9-m)t+2m-3$.
Using Lemma \ref{cubics} and \eqref{E}, we get the statement for  $\ma{E}$ and
the walls 
$(2\tilde{h}+K_S)^{\perp}$.
The computation is completely analogous for ${\Pi}$ and the walls 
$(2C+K_S)^{\perp}$, and for $\ma{N}$ and the walls 
$(2\ell+K_S)^{\perp}$.
\end{proof}
\subsection{The blow-up $X$ of $\pr^4$ in $8$ points}
Let $h$ be a   cubic in $S$, and recall from \ref{bijection} that we associate to $(S,h)$ a blow-up $X=X_{(S,h)}$ of $\pr^4$ in $8$ points in general linear position.
The following  is a refined version of Th.~\ref{Xintro};  notation as in  \ref{notationP^2} and \ref{notationP^4}. 
\begin{thm}[\cite{mukaiADE}, p.~9]\label{X}
Let $S$ be a del Pezzo surface of degree $1$ and
$h$ a   cubic in $S$.  Then there is an isomorphism 
 $f\colon M_{S,\ma{B}_h}\to X_{(S,h)}$, and moreover:
$$f(E_{C_i})=E_i\quad\text{and}\quad f(Z_{\ell_{jk}})=L_{jk}\quad\text{for every }
i=1,\dotsc,8\text{ and }1\leq j<k\leq 8.$$
 \end{thm}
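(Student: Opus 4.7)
The isomorphism $f\colon M_{S,\ma{B}_h}\to X_{(S,h)}$ will come directly from Theorem \ref{Xintro}: since the class $-K_S+2h$ lies in the interior of $\ma{B}_h$ by the very definition of this chamber (see Notation \ref{chamb}), Remark \ref{same} gives $M_{S,-K_S+2h}=M_{S,\ma{B}_h}$, and Mukai's theorem supplies the isomorphism. The bulk of the argument is then devoted to identifying the exceptional loci; the key tools are the wall-crossing descriptions of \ref{flip} and \ref{blowup}, the outer-chamber computation of Proposition \ref{P^4}, and the facet computation of Lemma \ref{L_t}.

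\textbf{Identification of $E_{C_i}$ with $E_i$.} I would consider the natural birational map $M_{\ma{B}_h}\dasharrow M_{\ma{C}_h}$ of Corollary \ref{birational}, and analyse a generic path from $\ma{B}_h$ to $\ma{C}_h$ in $\ma{E}$. By Lemma \ref{L_t} and the sign computation $L_t\cdot(2C_i+K_S)=20t-1$ performed in \ref{secL_t}, the only walls crossed are the eight walls $(2C_i+K_S)^{\perp}$ for $i=1,\dotsc,8$, and $\ma{B}_h\subset(2C_i+K_S)^{>0}$ for each $i$. Choosing the path to cross these walls one at a time, \ref{blowup} presents the map as a sequence of eight smooth blow-downs of the divisors $E_{C_i}\cong\pr^3$ to smooth points. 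Since the loci $E_{C_1},\dotsc,E_{C_8}$ are pairwise disjoint in $M_{\ma{B}_h}$ by Lemma \ref{specialL}(d), this composition is the simultaneous contraction of all eight disjoint divisors. Combined with Proposition \ref{P^4} identifying $M_{\ma{C}_h}\cong\pr^4$, this exhibits $M_{\ma{B}_h}$ as the blow-up of $\pr^4$ at eight points, with exceptional divisors $E_{C_1},\dotsc,E_{C_8}$. By the content of Mukai's proof of Th.~\ref{Xintro}, these eight points correspond under $M_{\ma{C}_h}\cong\pr^4$ to the associated points $p_1,\dotsc,p_8\in\pr^4$, giving $f(E_{C_i})=E_i$ with the correct indexing.

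\textbf{Identification of $Z_{\ell_{jk}}$ with $L_{jk}$.} I would run the analogous argument on the opposite side, for the map $M_{\ma{B}_h}\dasharrow M_{\ma{F}_h}$. By Lemma \ref{L_t} and the computation $L_t\cdot(2\ell_{jk}+K_S)=8t-1$, the only walls separating $\ma{B}_h$ from $\ma{F}_h$ are the 28 walls $(2\ell_{jk}+K_S)^{\perp}$, and $\ma{B}_h\subset(2\ell_{jk}+K_S)^{<0}$ for every $j<k$. By \ref{flip}, at each wall the curve $Z_{\ell_{jk}}\cong\pr^1$ in $M_{\ma{B}_h}$ is flipped to $P_{\ell_{jk}}\cong\pr^2$ in $M_{\ma{F}_h}$; by Lemma \ref{specialL}(d) applied both in $M_{\ma{B}_h}$ and in $M_{\ma{F}_h}$, these 28 loci are disjoint on each side, so the map factors as a simultaneous flip of the 28 disjoint $\pr^1$'s. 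To match this with the classical birational transformation on the $X$-side --- flipping the 28 transforms $L_{jk}$ of lines through two blown-up points --- I would compute $\zeta(Z_{\ell_{jk}})=-2\ell_{jk}-K_S\in H^2(S,\R)$ using Corollary \ref{facet_ell}, and check that this agrees with the class obtained by transporting $L_{jk}\subset X$ through $f$ and applying the same $\zeta$. Since the 28 flipping loci are characterised by the wall they sit on, this matching pins down both the identification and the indexing $f(Z_{\ell_{jk}})=L_{jk}$.

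The principal obstacle I foresee is purely bookkeeping: verifying that the indexing coming from association ($q_i\leftrightarrow p_i$, $C_i=h-e_i\leftrightarrow E_i$, $\ell_{jk}=h-e_j-e_k\leftrightarrow L_{jk}$) is the one actually realised by Mukai's isomorphism. The wall-crossing machinery above determines the loci up to permutation; the identification of the eight points $[F_{C_i}]\in M_{\ma{C}_h}=\pr^4$ with the eight associated points $p_i$ (and thus the correct indexing) is what reduces to tracing through Mukai's explicit construction, which in turn ultimately relies on the definition of association in \ref{introassociation}.
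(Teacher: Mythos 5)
The first thing to say is that the paper does not prove Theorem \ref{X} at all: it is quoted from Mukai (\cite{mukaiADE}, p.~9), and the only in-text justification is the remark immediately after the statement that passing from $\ma{C}_h$ to $\ma{B}_h$ crosses exactly the eight walls $(2C_i+K_S)^{\perp}$, so that $M_{\ma{B}_h}\to M_{\ma{C}_h}\cong\pr^4$ is a blow-up at eight points. Your structural wall-crossing analysis (Lemma \ref{L_t}, \ref{blowup}, \ref{flip}, Lemma \ref{specialL}$(d)$, Prop.~\ref{P^4}) reproduces and slightly expands that remark correctly, and your sign computations $L_t\cdot(2C_i+K_S)=20t-1$, $L_t\cdot(2\ell_{jk}+K_S)=8t-1$ are right. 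But as a proof of the stated theorem your proposal has two genuine gaps.

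First, the substantive content of Theorem \ref{X} beyond the bare isomorphism of Th.~\ref{Xintro} is precisely that the eight points of $M_{\ma{C}_h}=\pr\bigl(\Ext^1(\ol_S(h),\ol_S(-K_S-h))\bigr)$ to which the $E_{C_i}$ are contracted are the Gale-associated points $p_1,\dotsc,p_8$, with matching indices. You defer this to ``the content of Mukai's proof'', but Th.~\ref{Xintro} as stated is an unspecified isomorphism and nothing in the paper supplies that identification; the uniqueness of the blow-up structure on $X$ (\ref{bijection}) would only give $f(E_{C_i})=E_{\tau(i)}$ for some permutation $\tau$. So this step is not proved, it is re-cited — which is in effect what the paper itself does, but it means your argument does not establish the refined statement from the paper's results. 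Second, and this you do not flag, the proposed verification of $f(Z_{\ell_{jk}})=L_{jk}$ by comparing $\zeta$-classes is circular relative to the paper's architecture: the explicit form of $\zeta$ on $\N(X)$ (Prop.~\ref{explicit}), and with it Lemma \ref{E_C=}, is deduced \emph{from} Theorem \ref{X}, indeed from exactly the two identifications at stake, because the classes $\zeta(e_i)=2C_i+K_S$ and $\zeta(L_{jk})=-2\ell_{jk}-K_S$ are both needed to span $H^2(S,\R)$. Before Theorem \ref{X} is known you can compute $\zeta$ on the special curves of $M_{\ma{B}_h}$ (lines in $E_{C_i}$ via Cor.~\ref{facet_C}, the curves $Z_{\ell_{jk}}$ via Cor.~\ref{facet_ell}), but to ``transport $L_{jk}$ through $f$ and apply $\zeta$'' you must know the class of $f^{-1}(L_{jk})$ in a basis on which $\zeta$ is known; the only transport data available at that stage are $f(E_{C_i})=E_i$ (hence the eight line classes in the exceptional divisors) and $\rho(-K_S)=-K$ (Lemma \ref{-K}), which leave the image of the line class $h$ — equivalently of $L_{jk}=h-e_j-e_k$ — undetermined. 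So the intended ``check'' cannot be carried out; to close the argument one would have to identify the image of $Z_{\ell_{jk}}$ inside $\pr\bigl(\Ext^1(\ol_S(h),\ol_S(-K_S-h))\bigr)$ directly as the line through $[F_{C_j}]$ and $[F_{C_k}]$ (a sheaf-theoretic computation in Mukai's spirit), or find some other input independent of Theorem \ref{X}.
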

We can see from Fig.~\ref{figura} and Lemma \ref{L_t} that, to go from the chamber $\ma{C}_h$ to $\ma{B}_h$, one has to cross the walls $(2C_i+K_S)^{\perp}$ for $i=1,\dotsc,8$; this gives the blow-up map $M_{\ma{B}_h}\to M_{\ma{C}_h}\cong\pr^4$, see Prop.~\ref{P^4} and \ref{blowup}.
\begin{corollary}\label{SfromX}
Let  $X$ be a blow-up of $\pr^4$ at $8$ general points. Then there exists a smooth del Pezzo surface $S$ of degree one, and a   cubic $h$ on $S$, such that $X\cong  M_{S,\ma{B}_h}$.
\end{corollary}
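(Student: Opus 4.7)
The plan is to invert the construction of Theorem~\ref{X} using association, as described in~\ref{bijection}. Given $X=\Bl_{p_1,\dotsc,p_8}\pr^4$ with $p_1,\dotsc,p_8$ general, I would first recover a pair $(S,h)$ from $X$ as follows. Because the blow-up map $X\to\pr^4$ is unique (recalled in~\ref{bijection}, citing \cite[p.~64]{dolgort}), $X$ determines the points $p_1,\dotsc,p_8\in\pr^4$ up to projective equivalence. Taking them to be ordered, association (see \ref{introassociation}) produces ordered associated points $q_1,\dotsc,q_8\in\pr^2$, well-defined up to projective equivalence. The genericity of $p_1,\dotsc,p_8\in\pr^4$ should be taken strong enough so that $(p_1,\dotsc,p_8)\in a(U_{\dP})$, in the notation of Rem.~\ref{OK}; equivalently, $q_1,\dotsc,q_8$ lie in the open subset where the blow-up $S:=\Bl_{q_1,\dotsc,q_8}\pr^2$ is a smooth del Pezzo surface of degree one. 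Let $h\in\Pic(S)$ denote the pull-back of $\ol_{\pr^2}(1)$; this is a cubic in the sense of~\ref{notationP^2}.

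Next I would observe that by the very definition of $X_{(S,h)}$ in~\ref{bijection}, the blow-up $X_{(S,h)}=\Bl_{p_1,\dotsc,p_8}\pr^4$ coincides with $X$. Indeed, the points blown-up under $\sigma\colon S\to\pr^2$ defined by $h$ are precisely $q_1,\dotsc,q_8$, and by construction their associated points in $\pr^4$ are $p_1,\dotsc,p_8$.

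Finally, Theorem~\ref{X} applied to the pair $(S,h)$ produces an isomorphism
\[
M_{S,\ma{B}_h}\;\xrightarrow{\;\sim\;}\;X_{(S,h)}\;=\;X,
\]
which is exactly the conclusion sought.

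The only substantive point—which is really an issue of what ``general'' means in the statement rather than a mathematical obstacle—is ensuring the genericity hypothesis on $X$ is strong enough to land inside the open locus $a(U_{\dP})\subset P^8_4$ of Rem.~\ref{OK}, so that the associated $q_i$'s yield a smooth degree one del Pezzo surface $S$. Once this is granted, the corollary is essentially a bookkeeping exercise combining the bijection of~\ref{bijection} with Theorem~\ref{X}, and requires no further computation.
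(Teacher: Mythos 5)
Your proposal is correct and follows essentially the same route as the paper: the paper deduces the corollary directly from the correspondence set up in \ref{bijection} (uniqueness of the blow-up map $X\to\pr^4$, then association recovering $(S,h)$ with $X\cong X_{(S,h)}$) combined with Theorem~\ref{X}. Your remark about interpreting ``general'' so that the associated points land in $a(U_{\dP})$ is exactly the convention the paper uses, so there is no gap.
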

From now on we will identify $M_{\ma{B}_h}$ and $X$ via the isomorphism $f$.
\begin{proposition}\label{explicit}
 The maps $\rho\colon H^2(S,\R)\to H^2(X,\R)$ and 
$\zeta\colon\N(X)\to H^2(S,\R)$ are isomorphisms of real vector spaces, and we have (notation as in \ref{notationP^2} and \ref{notationP^4}):
\begin{align*}
\rho(h)&=\sum_{j=1}^8E_j-H,\quad &\rho(e_i)&=-2E_i+\sum_{j=1}^8E_j-H\ \text{ for }i=1,\dotsc, 8,\\
\zeta(h)&=e-h=2h+K_S,\quad &\zeta(e_{i})&=-2e_i+e-h\ \text{ for }i=1,\dotsc, 8.
\end{align*}
\end{proposition}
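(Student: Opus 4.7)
The plan is to compute $\zeta$ first on explicit curves of $X = M_{\ma{B}_h}$ (via Theorem \ref{X}) by applying the wall-crossing formulas of Corollaries \ref{facet_ell} and \ref{facet_C}, and then deduce $\rho$ by transposition.

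First I would locate $\ma{B}_h$ with respect to the relevant walls. Since $-K_S + 2h \in \ma{B}_h$ (Notation \ref{chamb}) and since the facets of $\overline{\ma{B}}_h$ listed there lie on the walls $(2C_i+K_S)^\perp$ and $(2\ell_{jk}+K_S)^\perp$, a direct intersection computation using $K_S = -3h+e$, $C_i \sim h - e_i$, and $\ell_{jk} \sim h-e_j-e_k$ gives
\[
(-K_S+2h)\cdot(2C_i+K_S) = 1 > 0,\qquad (-K_S+2h)\cdot(2\ell_{jk}+K_S) = -1 < 0.
\]
Hence $\ma{B}_h \subset (2C_i+K_S)^{>0}$ for every $i$ and $\ma{B}_h \subset (2\ell_{jk}+K_S)^{<0}$ for every $j<k$. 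Under the identification $X \cong M_{\ma{B}_h}$ from Theorem \ref{X}, a line $e_i \subset E_i$ corresponds to a line in $E_{C_i} \cong \pr^3$, and the curve $L_{jk}$ corresponds to $Z_{\ell_{jk}}$. Corollaries \ref{facet_C} and \ref{facet_ell} then give
\[
\zeta(e_i) = 2C_i + K_S = -h - 2e_i + e, \qquad \zeta(L_{jk}) = -(2\ell_{jk}+K_S) = h + 2e_j + 2e_k - e.
\]

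Next I would compute $\zeta(h)$ by linearity. A short intersection-number check shows $L_{12} = h - e_1 - e_2$ in $N_1(X)$, so
\[
\zeta(h) = \zeta(L_{12}) + \zeta(e_1) + \zeta(e_2) = e - h = 2h + K_S,
\]
which matches the claimed formula. To see that $\zeta$ is an isomorphism, one notes that $H^2(S,\R)$ and $\N(X)$ are both $9$-dimensional (for $X$ this is $b_2(X) = 9$; for $S$ it follows from the $E_8$-lattice structure of $K_S^\perp$), so it suffices to verify linear independence of $\zeta(h), \zeta(e_1), \dotsc, \zeta(e_8)$ in the basis $\{h, e_1, \dotsc, e_8\}$ of $H^2(S,\R)$; this is a direct determinant computation. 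Consequently $\rho = \zeta^t$ is also an isomorphism.

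Finally the formulas for $\rho(h)$ and $\rho(e_i)$ follow by duality. Writing $\rho(h) = aH + \sum_j b_j E_j$ and using $\rho(h)\cdot h = h \cdot \zeta(h)$, $\rho(h)\cdot e_i = h \cdot \zeta(e_i)$, together with the intersection numbers $h\cdot H = 1$, $h\cdot E_j = 0$, $e_i \cdot H = 0$, $e_i \cdot E_j = -\delta_{ij}$, yields $a = -1$ and $b_j = 1$, so $\rho(h) = \sum_j E_j - H$. The same method applied to $\rho(e_i)$ gives $\rho(e_i) = -H + \sum_j E_j - 2E_i$. The main obstacle is purely bookkeeping: the symbols $h$ and $e_i$ play different roles on the two sides of $\zeta$ (curves in $X$ vs.\ classes in $S$), and one must be careful with the sign in Corollary \ref{facet_ell} depending on which halfspace contains $\ma{B}_h$; once those are pinned down in Step 1, the rest is mechanical.
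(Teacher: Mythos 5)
Your proposal is correct and takes essentially the same route as the paper: the paper's proof also applies Th.~\ref{X} together with Cor.~\ref{facet_C} and \ref{facet_ell} (using that $\overline{\ma{B}}_h$ has facets on the walls $(2C_i+K_S)^{\perp}$, $(2\ell_{jk}+K_S)^{\perp}$ with $\ma{B}_h\subset(2\ell_{jk}+K_S)^{<0}$) to get $\zeta(e_i)=2C_i+K_S$ and $\zeta(L_{jk})=-2\ell_{jk}-K_S$, and then leaves the linearity/duality bookkeeping implicit. You simply make explicit the halfspace check via $-K_S+2h$ and the final computation of $\zeta(h)$, the isomorphism, and $\rho$, which the paper summarizes as ``which easily yields the statement.''
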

\begin{proof}
The cone $\overline{\ma{B}}_h$ is contained in $\Pi$, intersects the walls $(2C_i+K_S)^{\perp}$ and $(2\ell_{ij}+K_S)^{\perp}$ along a facet, and $\ma{B}_h\subset(2\ell_{ij}+K_S)^{<0}$ (see \ref{chamb}). Thus by Th.~\ref{X} and Cor.~\ref{facet_C} and \ref{facet_ell} we have:
$$\zeta(e_{i})=2C_i+K_S=-2e_i+e-h,\quad
\zeta(h-e_i-e_j)=
\zeta(L_{ij})=-2\ell_{ij}-K_S=h-e+2e_i+2e_j
$$
which easily yields the statement.
\end{proof}
\subsection{Relating the intersection product in $S$ to Dolgachev's pairing in $X$}\label{secisometry}
We recall that $H^2(X,\Z)$ has a natural pairing, related to the action of the symmetric group $S_8$ and of the standard Cremona map, see 
\cite{dolgachev,dolgort} and also \cite{mukaiXIV}.
This pairing is defined by imposing that $H,E_1,\dotsc,E_8$ is an orthogonal basis, $H^2=3$, and $E_i^2=-1$ for $i=1,\dotsc,8$ (notation as in \ref{notationP^4}).  The sublattice 
 $K_X^{\perp}$ is an $E_8$-lattice; we denote by $W_X\cong W(E_8)$ its Weyl group of automorphisms.

Using Prop.~\ref{explicit}, we see that $\rho^{-1}\colon H^2(X,\R)\to H^2(S,\R)$ coincides with $\frac{1}{2}$ the linear map $\ph$ defined by Mukai in \cite[p.~7]{mukaiADE}. In particular, we get the following.
\begin{lemma}[\cite{mukaiADE}, p.\ 7]\label{isometry}
 Set $\tilde{\rho}:=\frac{1}{2}\rho\colon H^2(S,\R)\to H^2(X,\R)$. We have $\tilde{\rho}(K_S^{\perp}\cap H^2(S,\Z))=K_X^{\perp}\cap H^2(X,\Z)$, and the restriction $\tilde{\rho}\colon K_S^{\perp}\cap H^2(S,\Z)\to K_X^{\perp}\cap H^2(X,\Z)$
is an isometry.
Moreover
there is a group isomorphism $\phi\colon W_S\to W_X$ such that $\rho$ and $\tilde{\rho}$ are equivariant with respect to $\phi$.
\end{lemma}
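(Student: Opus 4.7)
The plan is to verify Lemma \ref{isometry} by a direct finite computation using the explicit formulas of Proposition \ref{explicit} on a basis of $H^2(S,\Z)$, and then obtain the Weyl-group isomorphism by conjugating through the resulting lattice isometry.

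First I would confirm that $\rho(K_S) = K_X$. Expanding $K_S = -3h + e_1 + \cdots + e_8$ and using Prop.~\ref{explicit},
$$\rho(K_S) = -3\Bigl(\sum_{j=1}^8 E_j - H\Bigr) + \sum_{i=1}^8\Bigl(-2E_i + \sum_{j=1}^8 E_j - H\Bigr) = -5H + 3\sum_{i=1}^8 E_i = K_X.$$
(This can also be read off from Lemma \ref{-K}, since $\ma{B}_h \subset \Pi$ and $M_{\ma{B}_h} = X$.) Consequently, once we know $\tilde\rho$ is an isometry on $K_S^\perp$, the inclusion $\tilde\rho(K_S^\perp) \subseteq K_X^\perp$ follows formally.

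Next I would work with a convenient set of generators of the $E_8$-lattice $K_S^\perp \cap H^2(S,\Z)$, for instance the simple-root basis $\alpha_i := e_i - e_{i+1}$ for $i = 1,\dotsc,7$ together with $\alpha_8 := h - e_1 - e_2 - e_3$. A straightforward application of the formulas of Prop.~\ref{explicit} yields
$$\tilde\rho(\alpha_i) = E_{i+1} - E_i \quad (i=1,\dotsc,7), \qquad \tilde\rho(\alpha_8) = H - E_4 - E_5 - E_6 - E_7 - E_8.$$
These are integral elements of $K_X^\perp \cap H^2(X,\Z)$ with respect to Dolgachev's pairing (one checks $K_X \cdot \tilde\rho(\alpha_i) = 0$ directly), and their Gram matrix with respect to Dolgachev's pairing coincides with the Gram matrix of $\alpha_1,\dotsc,\alpha_8$ under the intersection form on $S$, namely $-1$ times the $E_8$ Cartan matrix. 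Hence $\tilde\rho$ restricts to an isometric embedding of $E_8$-lattices $K_S^\perp \cap H^2(S,\Z) \hookrightarrow K_X^\perp \cap H^2(X,\Z)$, and it is surjective because any injective isometry between unimodular lattices of the same rank is an isomorphism.

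For the Weyl group statement, I would define $\phi\colon W_S \to W_X$ by $\phi(w) := \tilde\rho \circ w \circ \tilde\rho^{-1}$ on $K_X^\perp$, extended by the identity on $\R K_X$; this is consistent because $W_S$ and $W_X$ each act trivially on the respective anticanonical class. Since $\tilde\rho$ restricts to an isometry of $E_8$-lattices and the Weyl group is intrinsic to the lattice structure, $\phi$ is a well-defined group isomorphism. Equivariance $\tilde\rho \circ w = \phi(w) \circ \tilde\rho$ on $K_S^\perp$ is tautological. To extend to all of $H^2(S,\R)$, decompose any $v = c K_S + v_0$ with $v_0 \in K_S^\perp$; using $w(K_S) = K_S$, $\phi(w)(K_X) = K_X$, and $\rho(K_S) = K_X$, one finds
$$\rho(w(v)) = c K_X + 2\phi(w)(\tilde\rho(v_0)) = \phi(w)(\rho(v)),$$
giving the equivariance for $\rho$. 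There is no genuine obstacle: the whole statement reduces to a short check on an explicit basis plus unimodularity of $E_8$. The only point requiring a moment's care is the natural extension of $\phi(w)$ from $K_X^\perp$ to $H^2(X,\R)$, which is forced by the requirement that $\phi(w)$ preserve $K_X$.
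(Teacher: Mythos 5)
Your argument is correct, but it takes a genuinely different route from the paper: the paper does not reprove this lemma at all --- it observes that, by Prop.~\ref{explicit}, $\rho^{-1}$ coincides with one half of the linear map $\ph$ defined by Mukai, and then imports the isometry and Weyl-equivariance statements directly from \cite[p.~7]{mukaiADE}. You instead give a self-contained verification: $\rho(K_S)=K_X$ (also available from Lemma~\ref{-K}, as you note), the simple roots $e_i-e_{i+1}$ and $h-e_1-e_2-e_3$ are sent by $\tilde\rho$ to $E_{i+1}-E_i$ and $H-E_4-\cdots-E_8$, the two Gram matrices agree, and surjectivity onto $K_X^\perp\cap H^2(X,\Z)$ follows from unimodularity (a finite-index isometric image of a unimodular lattice in a unimodular lattice has index one); conjugating by the resulting lattice isometry gives $\phi$, and the equivariance of $\rho$ and $\tilde\rho$ on all of $H^2(S,\R)$ follows from the decomposition $v=cK_S+v_0$ together with the fact that $W_S$ and $W_X$ fix the canonical classes. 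This buys a proof independent of Mukai's preprint and makes the lattice identification completely explicit, at the cost of a basis computation the paper's citation avoids. One small logical slip, harmless because you immediately supersede it: knowing that $\tilde\rho$ is an isometry on $K_S^\perp$ does not ``formally'' give $\tilde\rho(K_S^\perp)\subseteq K_X^\perp$, since $\tilde\rho$ is not an isometry on all of $H^2(S,\R)$ (indeed $\tilde\rho(K_S)^2=\tfrac{3}{4}\neq K_S^2$); what is needed is exactly the direct check $K_X\cdot\tilde\rho(\alpha_i)=0$, which you carry out.
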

\begin{remark}\label{integral}
The relation between integral points in $H^2(S,\R)$ and $H^2(X,\R)$ via $\tilde{\rho}$ is the following: 
$$\tilde{\rho}^{-1}(H^2(X,\Z))=\{L\in H^2(S,\Z)\,|K_S\cdot L\text{ is even}\}.$$ 
Indeed, we have $\tilde{\rho}(K_S)=\frac{1}{2}K_X$ by Lemma \ref{-K}. 
Let  $L\in H^2(S,\R)$ and set $m:=K_S\cdot L$; we have 
$L-mK_S\in K_S^{\perp}$ and  $\tilde{\rho}(L)=\tilde{\rho}(L-mK_S)+\frac{1}{2}mK_X$. 

If $L$ is integral and $m$
 is even, then  $\tilde{\rho}(L-mK_S)\in H^2(X,\Z)$ by Lemma \ref{isometry}, so  $\tilde{\rho}(L)\in H^2(X,\Z)$. 

Conversely, suppose that $\tilde{\rho}(L)$ is integral. It is not difficult to check that $K_X$ and $K_X^{\perp}\cap H^2(X,\Z)$ generate $H^2(X,\Z)$ as a group, and since $\tilde{\rho}(L-mK_S)\in K_X^{\perp}$, we see that $m$ must be an even integer and $\tilde{\rho}(L-mK_S)$ must be integral. By Lemma \ref{isometry}, $L-mK_S$ is integral, and hence $L$ is integral.
\end{remark}
\subsection{Cones of divisors and chamber decompositions}\label{sec_cones}
We recall that if $M$ is a Mori dream space, then in $\Nu(M)$ the convex cones $\Eff(M)$,  $\Mov(M)$, and $\Nef(M)$
 (respectively of effective, movable, and nef divisors)  are all closed and polyhedral. Moreover there is a fan $\MCD(M)$, supported on $\Eff(M)$, called the {\bf Mori chamber decomposition} (see  \cite{hukeel,okawa_MCD}); the cones of maximal dimension of the fan are in bijection with contracting birational maps $\ph\colon M\dasharrow M'$ (up to isomorphism of the target), where $M'$ is projective, normal and $\Q$-factorial. The cone corresponding to $\ph$ is $\ph^*\Nef(M')+\langle E_1,\dotsc,E_r\rangle$, where $E_1,\dotsc,E_r\subset M$ are the exceptional prime divisors of $\ph$. 
In particular,  $\ph\colon M\dasharrow M'$ is a pseudo-isomorphism if and only if the corresponding cone is contained in $\Mov(M)$. 

Let $\ma{C}\subset\Pi$ be a chamber. After Cor.~\ref{MDS}, we know that $M_{\ma{C}}$ is a Mori dream space. Applying the previous results,
we relate the Mori chamber decomposition $\MCD(M_{\ma{C}})$ to the stability fan $\ST(S)$ in $S$ (see \ref{wallsfan}),
via the determinant map $\rho\colon H^2(S,\R)\to H^2(M_{\ma{C}},\R)$; this is
 our main result in this section.
\begin{thm}\label{iso}
Let $\ma{C}$
be a chamber contained in $\Pi$. We have the following:
\begin{enumerate}[$(a)$]
\item
the determinant map $\rho\colon H^2(S,\R)\to H^2(M_{\ma{C}},\R)$ is an isomorphism;
\item $\rho(\overline{\ma{C}})=\Nef(M_{\ma{C}})$, $\rho({\Pi})=\Mov(M_{\ma{C}})$, and $\rho(\ma{E})=\Eff(M_{\ma{C}})$;
\item $\rho$ yields an isomorphism between the stability fan $\ST(S)$ in $H^2(S,\R)$ (see \ref{wallsfan}), and the fan $\MCD(M_{\ma{C}})$ in  $H^2(M_{\ma{C}},\R)$ given by the Mori chamber decomposition.
\end{enumerate}
\end{thm}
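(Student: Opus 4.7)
Plan. The strategy is to reduce the theorem to the special case $\ma{C}=\ma{B}_h$, where by Th.~\ref{X} we identify $M_{\ma{B}_h}$ with the blow-up $X$ of $\pr^4$ at $8$ general points, and then to exploit Prop.~\ref{explicit} together with the positivity of the determinant line bundle proved in \S\ref{sec_determinant}. For the reduction, note that by Cor.~\ref{birational} any chamber $\ma{C}\subset\Pi$ yields a moduli space $M_{\ma{C}}$ pseudo-isomorphic to $X$ via a natural map $\ph\colon M_{\ma{C}}\dasharrow X$, and Lemma \ref{cod1} gives the commutative triangle $\rho_{\ma{C}}=\ph^{*}\circ\rho_{\ma{B}_h}$. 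Since $\ph$ is a linear isomorphism on $\N^1$ preserving $\Mov$, $\Eff$ and the Mori chamber decomposition (each Mori chamber being carried to a Mori chamber by $\ph^{*}$), every assertion of the theorem for $\ma{C}=\ma{B}_h$ transports to the corresponding assertion for $\ma{C}$. Part (a) is then Prop.~\ref{explicit}.

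For $\rho(\overline{\ma{B}}_h)=\Nef(X)$, the inclusion ``$\subseteq$'' comes from Cor.~\ref{determinant}(c) (ampleness of $\rho(L)$ for $L$ in the chamber) and the closedness of $\Nef$. I would prove the reverse inclusion dually through $\zeta=\rho^{t}$. The facets of $\overline{\ma{B}}_h$ lie on the walls $(2C_i+K_S)^{\perp}$ and $(2\ell_{jk}+K_S)^{\perp}$ listed in \ref{chamb} (no cubic wall can appear since $\ma{B}_h$ sits in the interior of $\ma{E}$), so $\overline{\ma{B}}_h^{\vee}$ is generated by the classes $2C_i+K_S$ and $-(2\ell_{jk}+K_S)$. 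Under the identifications $f(E_{C_i})=E_i$ and $f(Z_{\ell_{jk}})=L_{jk}$ from Th.~\ref{X}, Cor.~\ref{facet_C} and Cor.~\ref{facet_ell} say exactly that these generators are $\zeta(e_i)$ and $\zeta(L_{jk})$; since $e_i$ and $L_{jk}$ are effective curve classes on $X$, the cone $\zeta^{-1}(\overline{\ma{B}}_h^{\vee})$ is contained in $\NE(X)$. Combined with the dual of the first inclusion this gives $\NE(X)=\zeta^{-1}(\overline{\ma{B}}_h^{\vee})$, hence $\Nef(X)=\rho(\overline{\ma{B}}_h)$.

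For $\rho(\Pi)=\Mov(X)$ and $\rho(\ma{E})=\Eff(X)$, I would unwind the stratifications
\begin{equation*}
\Pi=\bigcup_{\ma{C}'\subset\Pi}\overline{\ma{C}'}\qquad\text{and}\qquad\ma{E}=\Pi\cup\bigcup_{h'}\overline{\ma{C}}_{h'},
\end{equation*}
the second union over all cubics $h'$. The nef-cone result applied to $\ma{C}'\subset\Pi$ together with Lemma \ref{cod1} identifies $\rho(\overline{\ma{C}'})=\ph_{\ma{C}'}^{*}\Nef(M_{\ma{C}'})$ as a Mori chamber of $X$ sitting inside $\Mov(X)$. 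For an outer chamber, a direct computation from Prop.~\ref{explicit} yields $\rho(C_i)=2E_i$ and $\rho(-K_S+3h)=2H$, so $\rho(\overline{\ma{C}}_{h'})=\langle E_1,\dotsc,E_8,H\rangle$, which is precisely the Mori chamber $\ph_{h'}^{*}\Nef(\pr^{4})+\langle E_1,\dotsc,E_8\rangle$ of the blow-down $\ph_{h'}\colon X\to\pr^{4}=M_{\ma{C}_{h'}}$. Since $X$ is a Mori dream space by Cor.~\ref{MDS} and its Mori chamber decomposition tiles $\Eff(X)$, the two cone equalities follow once we know the cones $\rho(\overline{\ma{C}'})$ exhaust $\MCD(X)$; part (c) is then automatic, as the map $\ma{C}'\mapsto\rho(\overline{\ma{C}'})$ is inclusion-reversing on faces, carries flip walls to flip walls, and conic walls to divisorial walls.

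The main obstacle is the surjectivity of $\ma{C}'\mapsto\rho(\overline{\ma{C}'})$ onto the cones of $\MCD(X)$, i.e.\ ruling out Mori chambers not originating from $\ST(S)$. For this I would invoke the full variation-of-polarization picture developed in \ref{flip}--\ref{outer}: every pseudo-isomorphic $\Q$-factorial SQM of $X$ is realised as $M_{\ma{C}'}$ for a chamber $\ma{C}'\subset\Pi$, and every divisorial contraction of such an SQM corresponds to crossing a conic wall of $\partial\Pi$, producing an outer chamber whose moduli space is $\pr^{4}$. The combinatorial identification of the walls of the two fans then forces the bijection between $\ST(S)$ and $\MCD(X)$ and closes the argument.
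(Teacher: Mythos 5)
Your overall skeleton (reduce to $\ma{B}_h$ via Lemma \ref{cod1}, get (a) from Prop.~\ref{explicit}, show each stability chamber produces a Mori chamber) matches the paper, and your dual computation of $\Nef(X)=\rho(\overline{\ma{B}}_h)$ via $\zeta(e_i)=2C_i+K_S$ and $\zeta(L_{jk})=-(2\ell_{jk}+K_S)$ is a reasonable alternative route for that sub-statement (modulo verifying that the $36$ listed walls are \emph{all} the facets of $\overline{\ma{B}}_h$, which \ref{chamb} does not quite assert). But the heart of the theorem --- ruling out Mori chambers of $X$ that do not come from $\ST(S)$ --- is exactly where your argument has a genuine gap. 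You write that ``every pseudo-isomorphic $\Q$-factorial SQM of $X$ is realised as $M_{\ma{C}'}$ for a chamber $\ma{C}'\subset\Pi$'' and propose to deduce this from \ref{flip}--\ref{outer}; but those subsections only describe the birational modifications produced by crossing walls of the stability fan, and say nothing about whether $X$ admits further small modifications or divisorial contractions unrelated to wall-crossing. Asserting this realisability is essentially asserting the conclusion of part (c). The paper closes this gap by a different mechanism: it first proves $\rho(\ma{E})=\Eff(X)$ \emph{independently}, using Castravet--Tevelev's theorem that $\Eff(X)$ is generated by the $W_X$-orbit of $E_1$, combined with the Weyl-equivariance of $\tilde\rho$ (Lemma \ref{isometry}) and $\rho(C_1)=2E_1$ (Lemma \ref{E_C=}). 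Once the two fans are known to have the same support, it suffices to show that each stability chamber maps \emph{into} a Mori chamber (this needs the positivity $a_i>0$ of the exceptional coefficients, proved by the discrepancy computation $b_j=-L\cdot(2C_j+K_S)>0$ via Cor.~\ref{facet_C} and the projection formula) and that distinct stability chambers determine distinct Mori chambers (proved by comparing universal families); the tiling of a common support then forces all containments to be equalities. Your proposal uses neither the Castravet--Tevelev input nor the injectivity step, and without them the argument does not close: a priori some $\rho(\overline{\ma{C}'})$ could be a proper subcone of its Mori chamber, or two stability chambers could land in the same one, or $\Eff(X)$ could strictly contain $\rho(\ma{E})$.

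A secondary but concrete error: the stratification $\ma{E}=\Pi\cup\bigcup_{h'}\overline{\ma{C}}_{h'}$ is false. The chambers of $\ma{E}$ outside $\Pi$ are not only the outer chambers $\ma{C}_{h'}$ (where all eight divisors $E_{C'_i}$ are contracted and the moduli space is $\pr^4$); exiting $\Pi$ through a single facet $(2C+K_S)^{\perp}\cap\Pi$ one reaches a chamber whose moduli space is the blow-down of the single divisor $E_C$ (e.g.\ $\Bl_7\pr^4$), and there are many such intermediate chambers corresponding to contracting proper subsets of the $E_C$'s. So even granting your exhaustion claim on $\Mov(X)$, the identification of $\rho(\ma{E})$ with $\Eff(X)$ does not follow from the two pieces you list. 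To repair the proof along your lines you would either have to import \cite[Th.~2.7]{CoxCT} as the paper does, or give an independent argument that $\Eff(X)$ is generated by the classes $E_C$.
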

Before proving Th.~\ref{iso}, we need a preliminary property.
\begin{definition}[the divisor $E_C$]\label{E_C}
Let $C$ be a conic and $\ma{C}\subset\Pi$  a chamber. We generalise Def.~\ref{defiloci} and define a fixed
prime divisor $E_C\subset M_{\ma{C}}$, as follows.

If  $\overline{\ma{C}}\cap (2C+K_S)^{\perp}$ is a facet of $\overline{\ma{C}}$ and intersects the ample cone of $S$, then by
\ref{blowup} 
 $M_{\ma{C}}$ contains the divisor $E_C\cong\pr^3$, which is the exceptional divisor of a blow-up of a point. 

In general, we choose a chamber $\ma{C}'\subset\Pi$ such that $\overline{\ma{C}'}\cap (2C+K_S)^{\perp}$  is a facet of $\overline{\ma{C}'}$ and intersects the ample cone of $S$, so that 
 $M_{\ma{C}'}$ contains $E_C$ as an exceptional divisor.
 Let $\ph\colon M_{\ma{C}}\dasharrow M_{\ma{C}'}$ be the natural pseudo-isomorphism (see Cor.~\ref{birational}). Then we still denote by $E_C\subset M_{\ma{C}}$ the strict transform of $E_C$ under $\ph$.
\end{definition}
\begin{lemma}\label{E_C=}
In the setting of Def.~\ref{E_C}, we have $\rho(C)=2E_C$ in $H^2(M_{\ma{C}},\R)$.
\end{lemma}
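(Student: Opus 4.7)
The plan is to transport the identity along pseudo-isomorphisms to a well-chosen chamber, where $E_C$ is a genuine exceptional divisor and $M_{\ma{C}}$ is identified with the blow-up $X$ of $\pr^4$, so that the formula becomes a one-line computation using Prop.~\ref{explicit}.

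First I would reduce to a single chamber. For any two chambers $\ma{C}_1,\ma{C}_2\subset\Pi$, the natural birational map $\ph\colon M_{\ma{C}_1}\dasharrow M_{\ma{C}_2}$ is a pseudo-isomorphism (Cor.~\ref{birational}), so Lemma \ref{cod1} gives $\rho_{\ma{C}_1}=\ph^*\circ\rho_{\ma{C}_2}$. By Def.~\ref{E_C}, the divisor $E_C\subset M_{\ma{C}_1}$ is the strict transform of $E_C\subset M_{\ma{C}_2}$ under $\ph$, and for a pseudo-isomorphism this coincides with $\ph^*E_C$ as a divisor class. Hence the identity $\rho(C)=2E_C$ transports between chambers of $\Pi$, and it suffices to prove it on one suitable chamber where $E_C$ appears directly as an exceptional divisor.

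Next I would choose the chamber. Fix any cubic $h_0$; then $h_0-e_1$ is a conic, and since $W_S$ acts transitively on conics (\cite[\S 8.2.5]{dolgachevbook}) there exists $w\in W_S$ with $C=w(h_0-e_1)$. Setting $h:=w(h_0)$ (another cubic) and $e_i:=w(e_1)$ (a $(-1)$-curve with $h\cdot e_i=0$, hence an exceptional for the birational morphism $S\to\pr^2$ determined by $h$), we obtain $C\sim h-e_i$. I then work in $\ma{C}:=\ma{B}_h\subset\Pi$: by Notation \ref{chamb} the hyperplane $(2C_i+K_S)^\perp$ is a facet of $\overline{\ma{B}_h}$, and a direct numerical check (e.g.\ $L_t\cdot(2C_i+K_S)=20t-1$ from the proof of Lemma \ref{L_t}) places $\ma{B}_h$ on the side $(2C_i+K_S)^{>0}$. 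Thus $E_C=E_{C_i}$ is a genuine blow-up exceptional divisor in $M_{\ma{B}_h}$, which by Th.~\ref{X} corresponds to $E_i\subset X$ under the isomorphism $M_{\ma{B}_h}\cong X$. Prop.~\ref{explicit} applied to $C\sim h-e_i$ then gives
\[\rho(C)=\rho(h)-\rho(e_i)=\Bigl(\sum_{j=1}^8E_j-H\Bigr)-\Bigl(-2E_i+\sum_{j=1}^8E_j-H\Bigr)=2E_i=2E_C,\]
which is the claim.

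The only conceptual step is the Weyl-group transitivity used to realise $C$ as $h-e_i$ for some cubic $h$; this is standard for del Pezzo surfaces of degree one but is what I regard as the main input. The remainder is just bookkeeping of pullbacks under pseudo-isomorphisms together with the one-line calculation above.
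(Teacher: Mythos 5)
Your proposal is correct and follows essentially the same route as the paper's proof: realise $C$ as $C_i\sim h-e_i$ for a suitable cubic $h$, compute $\rho_{\ma{B}_h}(C)=2E_i=2E_C$ in $M_{\ma{B}_h}\cong X$ via Prop.~\ref{explicit} and Th.~\ref{X}, and transport the identity to an arbitrary chamber of $\Pi$ by Lemma \ref{cod1}. The extra details you supply (the Weyl-group transitivity justifying the existence of $h$, and the sign check placing $\ma{B}_h$ in $(2C_i+K_S)^{>0}$) are correct elaborations of steps the paper leaves implicit.
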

\begin{proof}
There exists a   cubic $h$ such that $C=C_1$ (notation as in \ref{notationP^2}). Consider the chamber $\ma{B}_h\subset\Pi$ and $\rho_{\ma{B}_h}\colon H^2(S,\R)\to H^2(M_{\ma{B}_h},\R)$.
By Prop.~\ref{explicit} and Th.~\ref{X} we have
$\rho_{\ma{B}_h}(C)=\rho_{\ma{B}_h}(h-e_1)=2E_1=2E_{C}$, and this yields $\rho_{\ma{C}}(C)=2E_C$ by Lemma \ref{cod1}.
\end{proof}
\begin{proof}[Proof of Th.~\ref{iso}]
We first show $(a)$ and that  $\rho(\ma{E})=\Eff(M_{\ma{C}})$.
Let $h$ be a   cubic, and consider  the chamber $\ma{B}_h\subset\Pi$. By Cor.~\ref{birational} there is a pseudo-isomorphism $\ph\colon M_{\ma{C}}\dasharrow M_{\ma{B}_h}$, and $\ph^*\colon H^2(M_{\ma{B}_h},\R)\to H^2(M_{\ma{C}},\R)$ is an isomorphism which preserves the effective cone.  Thus, by Lemma \ref{cod1}, it is enough to prove the statements
for the chamber $\ma{B}_h$. Now $\rho_{\ma{B}_h}$ is isomorphism by Prop.~\ref{explicit}, so we have $(a)$.

Let $X=X_{(S,h)}$. The cone $\Eff(X)$ is generated by the orbit $W_X\cdot E_1$, see \cite[Th.~2.7]{CoxCT}. On the other hand
 $\ma{E}$ is generated by conics, namely by the orbit $W_S\cdot C_1$ (notation as in \ref{notationP^2}).
  Since  $\rho_{\ma{B}_h}(C_1)=2E_1$ by Lemma \ref{E_C=}, we have 
  $\rho_{\ma{B}_h}(\ma{E})=\Eff(X)$ by Lemma \ref{isometry}. Thus $\rho_{\ma{C}}(\ma{E})=\Eff(M_{\ma{C}})$.

\smallskip

Let now $\ma{C}'\subset\ma{E}$ be a chamber and $\ph'\colon M_{\ma{C}}\dasharrow M_{\ma{C}'}$ the
natural birational map (see  Cor.~\ref{birational}). Since $\ma{C}\subset\Pi$, the map $\ph'$ is contracting, so it determines a Mori chamber in $H^2(M_{\ma{C}},\R)$.
Let us show that $\rho_{\ma{C}}(\ma{C}')$ is contained in this Mori chamber, namely that:
\begin{equation}\label{chambers}
\rho_{\ma{C}}(\ma{C}')\subseteq (\ph')^*\Nef  (M_{\ma{C}'})+\langle E_1,\dotsc,E_r\rangle\quad\text{in }H^2( M_{\ma{C}},\R),\end{equation}
where $E_1,\dotsc,E_r\subset M_{\ma{C}}$ are the exceptional prime divisors of $\ph'$.

Let $L\in\Pic(S)$, $L\in\ma{C}'$. By Cor.~\ref{determinant}$(c)$, $\rho_{\ma{C'}}(L)$ is ample on $M_{\ma{C}'}$, so $(\ph')^*(\rho_{\ma{C'}}(L))\in (\ph')^*\Nef(M_{\ma{C}'})$. On the other hand, if $U\subseteq M_{\ma{C}}$ is the open subset where $\ph'$ is an isomorphism, we have
$\rho_{\ma{C}}(L)_{|U}\cong  (\ph')^*(\rho_{\ma{C'}}(L))_{|U}$
by Lemma \ref{cod1}, and hence $\rho_{\ma{C}}(L)\cong(\ph')^*(\rho_{\ma{C'}}(L))+\sum_{i=1}^ra_i E_i$, where $a_i\in\Z$. 
We show that $a_i> 0$ for every $i=1,\dotsc,r$, which yields \eqref{chambers}.

Let $L_0\in\ma{C}$, and consider the segment in $H^2(S,\R)$ joining $L_0$ and $L$. By varying the polarization along the segment, we factor $\ph'\colon M_{\ma{C}}\dasharrow M_{\ma{C}'}$ in flips and blow-downs as described in \ref{flip} and \ref{blowup}:
{\footnotesize$$\xymatrix{
{M_{\ma{C}}=M_{\ma{C}_0}}\ar@{-->}[r]_(0.6){\sigma_1}\ar@{-->}@/^1pc/[rrrr]^{\ph'}&
{M_{\ma{C}_1}}\ar@{-->}[r]&{\cdots\cdots}\ar@{-->}[r]&{M_{\ma{C}_{t-1}}}\ar@{-->}[r]_(0.4){\sigma_t}&
{M_{\ma{C}_t}=M_{\ma{C}'}}
}$$}where $t\geq r$ and the sequence contains precisely $r$ blow-downs $\sigma_{i_1},\dotsc,\sigma_{i_r}$, with exceptional divisors (the transforms of) $E_1,\dotsc,E_r$.

When $\sigma_i\colon M_{\ma{C}_{i-1}}\dasharrow M_{\ma{C}_i}$ is a flip, we have
$\sigma_i^*(\rho_{\ma{C}_i}(L))=\rho_{\ma{C}_{i-1}}(L)$ by Lemma \ref{cod1}. 

Consider a blow-up $\sigma_{i_j}\colon M_{\ma{C}_{i_j-1}}\to M_{\ma{C}_{i_j}}$.
Again by Lemma \ref{cod1} we have $\rho_{\ma{C}_{i_j-1}}(L)=\sigma_{i_j}^*\rho_{\ma{C}_{i_j}}(L)+b_jE_j$, with $b_j\in\Z$.
Let 
$\Gamma_j\subset E_j\cong\pr^3\subset M_{\ma{C}_{i_j}-1}$ be a line, and let $C_j\subset S$ be the conic such that $\sigma_{i_j}$ corresponds to crossing the wall $(2C_j+K_S)^{\perp}$; by construction  $L\cdot(2C_j+K_S)<0$.
 Then by Cor.~\ref{facet_C} and the projection formula we have
$$L\cdot (2C_j+K_S)=L\cdot \zeta_{\ma{C}_{i_j-1}}(\Gamma_j)=\rho_{\ma{C}_{i_j-1}}(L)\cdot\Gamma_j=\bigl(\sigma_{i_j}^*\rho_{\ma{C}_{i_j}}(L)+b_jE_j\bigr)\cdot\Gamma_j=-b_j,$$
thus $b_j>0$. 
This shows  that $a_i>0$ for $i=1,\dotsc,r$.

\smallskip

Let us show that different maximal cones in $\ST(S)$ yield different Mori chambers. Let $\ma{C}''\subset\ma{E}$ be another chamber, and $\ph''\colon M_{\ma{C}}\dasharrow M_{\ma{C}''}$ the associated contracting birational map. If $\ph'$ and $\ph''$ have the same Mori chamber, then there exists an isomorphism $\chi\colon M_{\ma{C}'}\to M_{\ma{C}''}$ such that $\ph''=\chi\circ\ph'$. Then, by 
Cor.~\ref{birational}, there exists an open subset $U'\subseteq M_{\ma{C}'}$ such that $\codim(M_{\ma{C}'}\smallsetminus U')\geq 2$, and $\chi([F])=[F]$ for every $[F]\in U'$. By comparing two universal families for $M_{\ma{C}'}$ and $M_{\ma{C}''}$, we see that  $\chi([F])=[F]$ for every $[F]\in
M_{\ma{C}'}$. Therefore $\ma{C}'$ and $\ma{C}''$ yield the same stability condition, and hence  $\ma{C}'=\ma{C}''$.

\smallskip

Thus the image of a chamber $\ma{C}\subset\ma{E}$ is contained in a unique Mori chamber. Then, using  \eqref{chambers}
and $\rho_{\ma{C}}(\ma{E})=\Eff(M_{\ma{C}})$, it is not difficult to see that
$$\rho_{\ma{C}}(\overline{\ma{C}'})=(\ph')^*\Nef  M_{\ma{C}'}+\langle E_1,\dotsc,E_r\rangle$$
for every chamber $\ma{C}'\subset\ma{E}$,
and hence $(c)$. For $\ma{C}'=\ma{C}$ we get $\rho_{\ma{C}}(\overline{\ma{C}})=\Nef(M_{\ma{C}})$.
Finally, by Cor.~\ref{birational} $\ma{C}'\subset\Pi$ if and only if $\ph'$ is a pseudo-isomorphism, if and only if $\rho_{\ma{C}}(\ma{C'})\subset\Mov(M_{\ma{C}})$, so we get $(b)$.
\end{proof}
\begin{proof}[Proof of  Th.~\ref{isointro}]
It is
a direct consequence of  Lemma \ref{specialL} and Th.~\ref{iso}. 
\end{proof}
\subsection{From the blow-up $X$ to the Fano model $Y$}\label{secxi}
Let $h$ be a   cubic, and
let us go back to the chambers intersecting the plane spanned by $h$ and $-K_S$, described in \ref{secL_t}. We have a diagram:
{\footnotesize$$
\xymatrix{{X\cong M_{\ma{B}_h}}\ar@{-->}[r]\ar@{-->}@/^1pc/[rr]^{\xi}
&
{M_{\ma{F}_h}}\ar@{-->}[r]& Y=M_{-K_S},
}$$}where the birational maps are the natural ones (see Cor.~\ref{birational}), and we denote by $\xi\colon X\dasharrow Y$ the composition $X\stackrel{f^{-1}}{\to} M_{\ma{B}_h}\dasharrow Y$. We will occasionally write $\xi_h\colon X_h\dasharrow Y$, when we need to stress that $X_h$ and $\xi_h$ depend on the chosen   cubic $h$ (while $Y$ does not).
Notation as in \ref{notationP^4}.
\begin{lemma}\label{sequence}
The birational map $\xi\colon X\dasharrow Y$ is the composition of $36$ ($K$-positive) flips: first the flips of  $L_{ij}$ for $1\leq i<j\leq 8$, and then the flips of   $\Gamma_k$ for $k=1,\dotsc,8$. There is a commutative diagram:
{\footnotesize$$\xymatrix{&{\widehat{X}}\ar[dl]\ar[dr]&\\
{X}\ar@{-->}[rr]^{\xi}&&Y}$$}where $\widehat{X}\to X$ is the blow-up of the curves $L_{ij}$  and $\Gamma_k$, with every exceptional divisor isomorphic to $\pr^1\times\pr^2$ with normal bundle $\ma{O}(-1,-1)$, and $\widehat{X}\to Y$ is the blow-up of $36$ pairwise disjoint smooth rational surfaces.
\end{lemma}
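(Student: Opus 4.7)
\emph{Plan.} The plan is to read the factorisation off from the chamber structure in~\ref{secL_t}. The segment from $\ma{B}_h$ (which contains $-K_S+2h$) to $\ma{N}$ (which contains $-K_S$) in the plane $\langle h,-K_S\rangle$ passes through $\ma{F}_h$ and, by Lemma~\ref{L_t}, crosses exactly two hyperplanes of the stability fan: at $t=\tfrac{1}{8}$ the intersection $\bigcap_{1\leq i<j\leq 8}(2\ell_{ij}+K_S)^\perp$, and at $t=\tfrac{1}{4}$ the intersection $\bigcap_{k=1}^{8}(2e_k+K_S)^\perp$. A direct sign computation with $-K_S+2h$, $-2K_S+h$, and $-K_S$ shows that each wall is crossed from the negative side to the positive side. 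By~\ref{flip} and the pairwise disjointness of the flipping loci (Lemma~\ref{specialL}$(d)$), each crossing is therefore a simultaneous family of flips $Z_\ell\cong\pr^1\dasharrow P_\ell\cong\pr^2$, where $Z_\ell$ is a $\pr^1$ with normal bundle $\ol(-1)^{\oplus 3}$; each such flip is $K$-positive since $-K\cdot Z_\ell=-1$.

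Under the isomorphism $f$ of Theorem~\ref{X}, $Z_{\ell_{ij}}$ is identified with $L_{ij}\subset X$, yielding the first $28$ flips. The crucial step is to identify the $8$ curves $Z_{e_k}\subset M_{\ma{F}_h}$, transported to $X$ through the pseudo-isomorphism $X\dasharrow M_{\ma{F}_h}$, with the $\Gamma_k$. The strategy has three parts: (i) compute $[\Gamma_k]=4h-\sum_{j\neq k}e_j$ in $\N(X)$ and deduce via Proposition~\ref{explicit} that $\zeta_X(\Gamma_k)=3h-e-2e_k=-2e_k-K_S$, matching $\zeta_{M_{\ma{F}_h}}(Z_{e_k})$ from Corollary~\ref{facet_ell}; (ii) verify, using the generic position of $p_1,\dotsc,p_8$, that each $\Gamma_k$ is disjoint from every $L_{ij}$ in $X$ (shared base points in $\pr^4$ are separated on the exceptional divisors, since the tangent direction of $\gamma_k$ and that of $\overline{p_ip_j}$ at a common blown-up point differ generically); (iii) conclude that $\Gamma_k$ lies in the isomorphism locus of $X\dasharrow M_{\ma{F}_h}$, so its image is an irreducible rational curve with $\zeta$-class $-2e_k-K_S$, and then apply Lemma~\ref{rho(L)} to a polarization on the wall $(2e_k+K_S)^\perp\cap\overline{\ma{F}_h}$ to force this image into the exceptional locus $\bigcup_k Z_{e_k}$; class comparison then isolates $Z_{e_k}$ itself.

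For the common resolution, the $36$ curves $\{L_{ij}\}\cup\{\Gamma_k\}$ are pairwise disjoint in $X$ (the $\Gamma_k$'s among themselves by generic position, the $L_{ij}$'s by Lemma~\ref{specialL}$(d)$, and across the two families by (ii)), and each is a $\pr^1$ with normal bundle $\ol(-1)^{\oplus 3}$; for $\Gamma_k$ this follows either by transporting through the iso locus of the pseudo-iso from $Z_{e_k}$, or directly from the classical fact $N_{\gamma_k/\pr^4}\cong\ol(6)^{\oplus 3}$, twisted by $\ol(-1)$ at each of the seven blow-up points along $\gamma_k$. The simultaneous blow-up yields $\widehat{X}$, with $36$ disjoint exceptional divisors, each $E\cong\pr(\ol(-1)^{\oplus 3})\cong\pr^1\times\pr^2$; the identification $\pr(\ol(-1)^{\oplus 3})=\pr(\ol^{\oplus 3})$ twists the tautological $\ol_{\pr(N)}(-1)$ by $\ol_{\pr^1}(-1)$, giving normal bundle $\ol(-1,-1)$. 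Contracting each $E$ onto its $\pr^2$-factor realises $\widehat{X}\to Y$ as the blow-up of $36$ disjoint smooth $\pr^2$'s. The main obstacle is the combined geometric/class argument in (ii)--(iii) identifying $\Gamma_k$ with $Z_{e_k}$; everything else is dictated by the general wall-crossing description in \S\ref{secmoduli} together with the determinant-map dictionary of Section~\ref{sec_determinant}.
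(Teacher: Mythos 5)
Your proposal is correct and follows essentially the same route as the paper: factor $\xi$ by crossing the $28$ walls $(2\ell_{ij}+K_S)^{\perp}$ and then the $8$ walls $(2e_k+K_S)^{\perp}$ along the segment in the plane spanned by $h$ and $-K_S$, identify the first family of flipped curves with the $L_{ij}$ via Theorem~\ref{X}, and identify the transform of $\Gamma_k$ with $Z_{e_k}$ by computing $\zeta_{\ma{B}_h}(\Gamma_k)=-2e_k-K_S$ and comparing with Corollary~\ref{facet_ell}. The only (harmless) variation is the last step of that identification, where you pin the transform of $\Gamma_k$ inside $\bigcup_j Z_{e_j}$ via Lemma~\ref{rho(L)} and then compare classes, whereas the paper invokes the fact that $Z_{e_k}$ is the whole locus of the extremal ray it spans; your explicit disjointness check in (ii) and the normal-bundle computation for the exceptional divisors are also consistent with what the paper derives from \ref{flip}.
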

\begin{proof}
Firstly, to go from the chamber $\ma{B}_h$ to  $\ma{F}_h$, we have to cross the 
 $\binom{8}{2}=28$ walls $(2\ell_{ij}+K_S)^{\perp}$ (see Fig.~\ref{figura} and Lemma \ref{L_t}). Moreover, by Th.~\ref{X}, the loci $Z_{\ell_{ij}}$ correspond to the curves $L_{ij}\subset X$. Therefore 
the map $X\dasharrow M_{\ma{F}_h}$ is the composition of  $28$  flips, each  replacing $L_{ij}$
with $P_{\ell_{ij}}\cong\pr^2$. 

Secondly, to go from the chamber $\ma{F}_h$ to  $\ma{N}$, we have to cross the $8$ walls  $(2e_{k}+K_S)^{\perp}$ (see again Fig.~\ref{figura} and Lemma \ref{L_t}).
Thus the second map $M_{\ma{F}_h}\dasharrow Y$ is a composition of $8$ flips, each replacing 
$Z_{e_k}\cong\pr^1$ with $P_{e_k}\cong\pr^2$, for $k=1,\dotsc,8$. 

Let $k\in\{1,\dotsc,8\}$. We claim that $Z_{e_k}\subset M_{\ma{F}_h}$ is the transform of $\Gamma_k\subset X$.  
Indeed, consider the maps $\zeta_{\ma{F}_h}\colon \N(M_{\ma{F}_h})\to H^2(S,\R)$ and
$\zeta_{\ma{B}_h}\colon \N(X)\to H^2(S,\R)$.
 The curve $\Gamma_k$ is contained in the open subset where the birational map  $X\dasharrow M_{\ma{F}_h}$ is an isomorphism, 
and if we denote by $\Gamma_k'\subset M_{\ma{F}_h}$ the transform of $\Gamma_k$,
 Rem.~\ref{cod1curves} and Prop.~\ref{explicit} yield that 
$$\zeta_{\ma{F}_h}(\Gamma_k')=\zeta_{\ma{B}_h}(\Gamma_k)=
\zeta_{\ma{B}_h}(4h-e+e_k)=
-2e_k-K_S.$$ 
On the other hand, $\overline{\ma{F}}_h$ intersects $(2e_{k}+K_S)^{\perp}$ along a wall, and by Cor.~\ref{facet_ell} we also have
$\zeta_{\ma{F}_h}(Z_{e_k})=-2e_k-K_S$.
Since $\zeta_{\ma{F}_h}$ is an isomorphism by Th.~\ref{iso}$(a)$, we deduce that 
 $\Gamma'_k$ and $Z_{e_k}$ are numerically equivalent; the class of $Z_{e_k}$ generates an extremal ray of $\NE(M_{\ma{F}_h})$ whose locus is just 
$Z_{e_k}$, hence   $\Gamma_k'=Z_{e_k}$.

Finally, the factorization of $\xi$ as a sequence of smooth blow-ups and blow-downs follows from the explicit description of the flips in \ref{flip}.
\end{proof}
\begin{corollary}\label{negativecurves}
Let $X$ be the blow-up of $\pr^4$ at $8$ general points. If $C\subset X$ is an irreducible curve with $-K_X\cdot C\leq 0$, then either $C=L_{ij}$ or $C=\Gamma_k$ for some $1\leq i<j\leq 8$, $k=1,\dotsc,8$.
\end{corollary}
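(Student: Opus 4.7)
The plan is to exploit the birational factorization $\xi\colon X\dasharrow Y$ from Lemma~\ref{sequence}, combined with the fact that $-K_Y$ is ample by Prop.~\ref{fano}: any irreducible curve $C\subset X$ other than the $L_{ij}$'s and $\Gamma_k$'s has an irreducible strict transform $C'\subset Y$, and the ampleness of $-K_Y$ combined with a pullback comparison on the common resolution forces $(-K_X)\cdot C>0$, contradicting the hypothesis.

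First I would fix the diagram from Lemma~\ref{sequence}:
\[
X \xleftarrow{\,p\,} \widehat{X} \xrightarrow{\,q\,} Y,
\]
where $p$ is the blowup of the $36$ pairwise disjoint smooth curves $\{L_{ij}\}_{i<j}\cup\{\Gamma_k\}_{k}$ (codimension $3$) and $q$ is the blowup of $36$ pairwise disjoint smooth rational surfaces (codimension $2$). By the local description of each flip in \ref{flip}, the $36$ exceptional divisors of $p$ coincide with the $36$ exceptional divisors of $q$; call them $E_1,\dotsc,E_{36}$. The standard discrepancy formulas for smooth blowups then give
\[
K_{\widehat{X}} = p^*K_X + 2\sum_{i} E_i = q^*K_Y + \sum_{i} E_i,
\]
so that $p^*(-K_X) = q^*(-K_Y) + \sum_{i=1}^{36} E_i$ in $\Pic(\widehat{X})$.

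Next, let $C\subset X$ be an irreducible curve with $C\notin\{L_{ij},\Gamma_k\}$, and let $\widehat{C}\subset\widehat{X}$ be its strict transform under $p$. Since $C$ is not one of the centers of $p$, $\widehat{C}$ is irreducible, $p_*\widehat{C}=C$, and $\widehat{C}$ is not contained in any $E_i$. In particular $C':=q_*\widehat{C}$ is again an irreducible curve in $Y$ (not collapsed to a point). The projection formula yields
\[
(-K_X)\cdot C = (-K_Y)\cdot C' + \sum_{i=1}^{36} E_i\cdot\widehat{C}.
\]
Each term $E_i\cdot\widehat{C}$ is nonnegative since $\widehat{C}\not\subset E_i$, and $(-K_Y)\cdot C'>0$ by ampleness of $-K_Y$. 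Hence $(-K_X)\cdot C>0$, and the contrapositive is exactly the corollary.

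The main point to verify is the identification of the exceptional divisors of $p$ with those of $q$, which is the only thing beyond Lemma~\ref{sequence} that the argument really needs: it holds because in each individual flip of \ref{flip} the common resolution $\widehat{M}$ has a single irreducible exceptional divisor, contracted to $\pr^1$ on one side and to $\pr^2$ on the other. Stacking the $36$ flips into the global resolution $\widehat{X}$ therefore produces exactly $36$ divisors, exceptional for both $p$ and $q$, which makes the discrepancy computation above unambiguous.
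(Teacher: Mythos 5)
Your proof is correct and follows essentially the same route as the paper: the paper's proof of Cor.~\ref{negativecurves} is a one-line deduction from Lemma \ref{sequence} together with the cited general fact \cite[Lemma 2.8(2)]{blowup} about curves of nonpositive anticanonical degree under a sequence of $K$-positive flips onto a Fano variety, and your discrepancy computation on the common resolution $\widehat{X}$ (codimension-$3$ centers for $p$, codimension-$2$ centers for $q$, hence $p^*(-K_X)=q^*(-K_Y)+\sum_i E_i$) is exactly a self-contained proof of that cited fact in the present situation. The only auxiliary point, which you correctly flag, is the identification $\Exc(p)=\Exc(q)$; besides your local argument flip by flip, it also follows at once from the fact that $\xi$ is an isomorphism in codimension one.
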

\begin{proof}
This follows from Lemma \ref{sequence} and \cite[Lemma 2.8(2)]{blowup}.
\end{proof}
\section{Geometry of the Fano model $Y$}\label{fanosection}
\noindent Let $S$ be a del Pezzo surface of degree one; 
in this section we study the Fano $4$-fold 
 $Y=M_{S,-K_S}$ (see Prop.~\ref{fano}). 
\begin{proposition}[numerical invariants]\label{numinv}
We have $b_2(Y)=9$, $b_3(Y)=0$, $h^{2,2}(Y)=b_4(Y)=45$, $(-K_{Y})^4=13$, 
and 
$h^0(Y,-K_Y)=6$. Moreover $Y$ has index one. 
\end{proposition}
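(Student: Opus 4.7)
My plan is to read off the invariants from Th.~\ref{iso} (the isomorphism $\rho$), from the factorization of $\xi\colon X\dasharrow Y$ in Lemma \ref{sequence}, and from standard blow-up/Hodge-theoretic formulas applied to the chain
$$Y\longleftarrow\widehat{X}\longrightarrow X=\Bl_{p_1,\dots,p_8}\pr^4.$$
Th.~\ref{iso}(a) immediately gives $b_2(Y)=b_2(S)=9$. Since all blow-up centers along the chain are rational ($8$ points in $\pr^4$, then $36$ copies of $\pr^1$, then $36$ copies of $\pr^2$), the blow-up formula for Betti numbers yields $b_3(Y)=0$, and tracking the degree-$4$ part gives $b_4(X)=1+8=9$, $b_4(\widehat{X})=9+36\cdot 2=81$, $b_4(Y)=81-36=45$. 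Since $\pr^4$ satisfies $h^{p,q}=\delta_{p,q}$, the analogous Hodge-theoretic blow-up formula applied along the chain gives $h^{p,q}(Y)=0$ for $p\neq q$, and in particular $h^{2,2}(Y)=b_4(Y)=45$.

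For $(-K_Y)^4$ I would compare $-K_Y$ with $-K_X$ on $\widehat{X}$. Lemma \ref{-K} and Prop.~\ref{explicit} give $-K_X=\rho_X(-K_S)=5H-3\sum_i E_i$, hence $(-K_X)^4=5^4-8\cdot 3^4=-23$. From the discrepancy formulas $K_{\widehat{X}}=p^*K_X+2F=q^*K_Y+F$ (blow-ups of codim $3$ and $2$, respectively) we get $q^*(-K_Y)=p^*(-K_X)-F$, where $F=\sum_C F_C$ is the pairwise-disjoint union of the $36$ exceptional divisors $F_C\cong\pr^1\times\pr^2$. For each flipping curve $C\cong\pr^1$ with $-K_X\cdot C=-1$, adjunction forces $N_{C/X}\cong\ol_{\pr^1}(-1)^{\oplus 3}$, and a short Segre-class computation then gives $p_*(F_C)=p_*(F_C^2)=0$, $p_*(F_C^3)=[C]$, $\deg p_*(F_C^4)=-3$. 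Expanding $(p^*(-K_X)-F)^4$ and using the disjointness of the $F_C$, each exceptional component contributes $-4\,(-K_X\cdot C)+\deg p_*(F_C^4)=4-3=1$, so that
$$(-K_Y)^4=-23+36\cdot 1=13.$$

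For $h^0(Y,-K_Y)$ I would use that $\xi$ is a pseudo-isomorphism (Lemma \ref{sequence}), hence induces $H^0(Y,-K_Y)\cong H^0(X,\xi^*(-K_Y))$; Lemma \ref{cod1} together with the computation above gives $\xi^*(-K_Y)=\rho_X(-K_S)=-K_X$. Thus $h^0(Y,-K_Y)$ is the dimension of the linear system of quintic hypersurfaces in $\pr^4$ having multiplicity $\geq 3$ at $p_1,\dots,p_8$, of expected dimension $\binom{9}{4}-8\cdot\binom{6}{4}=126-120=6$. I would complete the argument by checking that the $3$-fold point conditions are independent for the $8$ general associated points by semicontinuity from an explicit configuration (cf.\ Rem.~\ref{macaulay}). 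Finally, the index of $Y$ is $1$: if $-K_Y=rH$ with $H\in\Pic(Y)$, then $H$ is ample and $H^4\in\Z_{>0}$, so $r^4 H^4=(-K_Y)^4=13$ forces $r=1$, since $13$ is prime.

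The main obstacle is the Segre-class bookkeeping on $\widehat{X}$: once $N_{C/X}\cong\ol_{\pr^1}(-1)^{\oplus 3}$ is pinned down by adjunction, the per-component contribution $+1$ is a short but sign-sensitive calculation, and one must invoke the disjointness of the $F_C$ to rule out cross terms. A secondary difficulty is the independence of the multiplicity conditions underlying $h^0(Y,-K_Y)=6$, which is best checked on an explicit example.
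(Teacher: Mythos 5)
Your route is essentially the paper's: the proof there also reads everything off the chain $Y\leftarrow\widehat{X}\rightarrow X\to\pr^4$ from Lemma \ref{sequence}, computing $b_i(X)$, $h^{2,2}(X)$, $(-K_X)^4=-23$ first and then transporting across the $36$ flips. The difference is only in what is done by hand versus by citation: the paper invokes \cite[Th.~7.31]{voisin} for the Betti/Hodge bookkeeping and \cite[Cor.~3.10 and Prop.~3.3]{blowup} for both $(-K_Y)^4=(-K_X)^4+36$ and $h^0(Y,-K_Y)=h^0(\pr^4,-K_{\pr^4})-15\cdot 8$, whereas you redo the intersection-theoretic step explicitly; your Segre-class computation is correct ($p_*F_C^3=[C]$, $\deg F_C^4=\deg N_{C/X}=-3$, per-component contribution $-4(-K_X\cdot C)-3=+1$), and in fact only $\deg N_{C/X}=-3$ enters, so your slight overstatement that adjunction ``forces'' $N_{C/X}\cong\ol_{\pr^1}(-1)^{\oplus 3}$ (it only pins down the degree; the splitting type is supplied by \ref{flip} and Lemma \ref{sequence}) is harmless. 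Your index argument ($r^4\mid 13$, and $13$ is fourth-power free) is a clean arithmetic alternative to the paper's geometric one (exhibiting a curve of anticanonical degree $1$ in a flipped surface); both work. The one genuine loose end, which you acknowledge, is $h^0(Y,-K_Y)=6$: you reduce correctly to showing that the eight triple-point conditions impose $120$ independent conditions on quintics, but you do not actually verify this. The paper closes exactly this point by citing \cite[Cor.~3.10 and Prop.~3.3]{blowup}, which gives $h^0(-K)$ under blow-up of a point of a smooth $4$-fold (and its invariance under the flips); if you want to avoid that reference, your proposed semicontinuity/explicit-configuration check must actually be carried out, since for special configurations of $8$ points the count can a priori fail.
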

\begin{proof}
Let $h$ be a   cubic in $S$, and let us consider $X=X_{(S,h)}$ (see \ref{bijection}) and the birational map $\xi\colon X\dasharrow Y$ (see \ref{secxi}). 
Since $X$ is a blow-up of $\pr^4$ in $8$ points, one computes 
that $b_2(X)=b_4(X)=h^{2,2}(X)=9$, $b_3(X)=0$, and  $(-K_X)^4=625-8\cdot 81=-23$.
By the explicit description of $\xi$ as a sequence of smooth blow-ups given in Lemma \ref{sequence},
this yields the Betti and Hodge numbers of $Y$ (see for instance \cite[Th.~7.31]{voisin}). Moreover
 $(-K_{Y})^4=(-K_X)^4+36=13$, and $h^0(Y,-K_Y)=h^0(\pr^4,-K_{\pr^4})-15\cdot 8=6$ (see for instance \cite[Cor.~3.10 and Prop.~3.3]{blowup}).
Finally, it is not difficult to see that $Y$ contains curves of anticanonical degree $1$, for instance a line in a smooth rational surface in the indeterminacy locus of  $\xi^{-1}\colon Y\dasharrow X$. Therefore $Y$ has index one.
\end{proof}
We are now going to describe the relevant cones of curves and divisors in $\N(Y)$ and $H^2(Y,\R)$, using the following direct consequence of
Th.~\ref{iso}. 
\begin{corollary}\label{isoY}
The determinant map $\rho\colon H^2(S,\R)\to H^2(Y,\R)$ yields an isomorphism between:
$$\ma{N}\subset\Pi\subset\ma{E}\subset H^2(S,\R)\quad\text{and}\quad
\Nef(Y)\subset\Mov(Y)\subset\Eff(Y)\subset H^2(Y,\R).$$
Dually, the map $\zeta\colon \N(Y)\to H^2(S,\R)$ yields an isomorphism between:
$$\Mov_1(Y)\subset\NE(Y)\subset\N(Y)\quad\text{and}\quad \ma{E}^{\vee}\subset\ma{N}^{\vee}\subset H^2(S,\R).$$
\end{corollary}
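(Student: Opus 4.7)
The plan is to apply Theorem \ref{iso} with $\ma{C}$ the open chamber whose closure is $\ma{N}$, and then dualise. By Remark \ref{fanochamber}, the class $-K_S$ lies in the interior of $\ma{N}$ and avoids every wall, so $\ma{N}$ is the closure of the chamber containing $-K_S$; together with Lemma \ref{specialL}$(b)$ this identifies $Y=M_{-K_S}$ with $M_{\ma{N}}$. Since $\ma{N}\subset\Pi$ (Lemma preceding \ref{secPi}), Theorem \ref{iso} applies verbatim to this chamber and yields that $\rho\colon H^2(S,\R)\to H^2(Y,\R)$ is a linear isomorphism with
$$\rho(\ma{N})=\Nef(Y),\qquad \rho(\Pi)=\Mov(Y),\qquad \rho(\ma{E})=\Eff(Y).$$

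For the dual statement I would simply take the transpose. The map $\zeta=\rho^t\colon\N(Y)\to H^2(S,\R)$ is defined via the projection formula $\rho(L)\cdot\gamma=L\cdot\zeta(\gamma)$, after identifying $\N(Y)$ with the dual of $H^2(Y,\R)$ through the intersection pairing on $Y$, and $H^2(S,\R)$ with its own dual through the intersection pairing on $S$. Because $\rho$ is a linear isomorphism, so is $\zeta$; moreover, the elementary fact that an isomorphism of vector spaces sends a cone to a cone whose dual is the image of the dual (i.e.\ $\zeta(f(C)^{\vee})=C^{\vee}$ for any convex cone $C$) applied to $C=\ma{N}$ and $C=\ma{E}$ gives
$$\zeta\bigl(\Nef(Y)^{\vee}\bigr)=\ma{N}^{\vee},\qquad \zeta\bigl(\Eff(Y)^{\vee}\bigr)=\ma{E}^{\vee}.$$
To conclude I would invoke the standard identifications $\Nef(Y)^{\vee}=\NE(Y)$ (by the very definition of nef) and $\Eff(Y)^{\vee}=\Mov_1(Y)$ (the Boucksom--Demailly--Paun--Peternell theorem, applicable since $Y$ is a smooth projective variety by Proposition \ref{fano}).

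There is no real obstacle: the corollary is essentially a restatement of Theorem \ref{iso} for the specific chamber $\ma{N}$, followed by formal cone duality. The only non-trivial ingredient beyond what has already been proved in the paper is the use of BDPP to identify $\Mov_1(Y)$ with $\Eff(Y)^{\vee}$; this is the mildest possible input, and is the only step where one must appeal to something outside the body of the excerpt.
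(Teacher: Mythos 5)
Your proposal is correct and matches the paper's intent exactly: the paper presents this corollary as a direct consequence of Theorem \ref{iso} applied to the chamber containing $-K_S$ (whose closure is $\ma{N}$ by Remark \ref{fanochamber}), followed by formal dualization via $\zeta=\rho^t$, and it likewise handles $\Mov_1(Y)=\Eff(Y)^{\vee}$ by citing \cite[\S 11.4.C]{lazII} (i.e.\ the same BDPP-type input you invoke). The only cosmetic remark is that $\Nef(Y)^{\vee}$ is a priori the \emph{closed} cone of curves, but since $Y$ is Fano (indeed a Mori dream space) $\NE(Y)$ is already closed and polyhedral, so nothing is lost.
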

\noindent Here $\Mov_1(Y)=\Eff(Y)^{\vee}$ is the convex cone generated by classes
of curves moving in a family of curves covering $Y$ (see \cite[\S 11.4.C and references therein]{lazII}).

In the following subsections we give a geometric description of the extremal rays and the facets of these cones in terms of special divisors and curves in $Y$,
using the explicit descriptions  given in \S\ref{delpezzo} of the cones $\ma{N}$, $\Pi$, $\ma{E}$ and their duals.
\subsection{The cone of effective curves and the nef cone}\label{nefcone}
The cone $\NE(Y)$  
has $240$ extremal rays, and is isomorphic to  $\NE(S)$ (see \ref{sec_N}).
 If $\ell$ is a $(-1)$-curve, the corresponding extremal ray of $\NE(Y)$ is generated by the class of a line $\Gamma_{\ell}$ in $P_{\ell}\cong\pr^2\subset Y$ (see Cor.~\ref{facet_ell}).
The corresponding elementary contraction is a
 small contraction, sending $P_{\ell}$  to a point. 
For completeness let us state here the following lemma on the relative positions of the special surfaces $P_\ell$ in $Y$; it will be proved in  \S \ref{sezX}. Recall that if  $\ell,\ell'\subset S$ are $(-1)$-curves, then $\ell\cdot\ell'\leq 3$, with equality if and only if $\ell'=\iota_S^*\ell$, see Rem.~\ref{positions}$(c)$.
\begin{lemma}\label{excplanes}
Let $\ell,\ell'\subset S$ be distinct $(-1)$-curves.
If $\ell\cdot\ell'\leq 1$, then $P_\ell\cap P_{\ell'}=\emptyset$ in $Y$.
If $\ell\cdot\ell'=2$, then $P_\ell$ and $P_{\ell'}$ intersect transversally in one point  in $Y$.

Suppose that $S$ is general. If
 $\ell\cdot\ell'=3$,
then 
$P_\ell$ and $P_{\ell'}$ intersect transversally in $3$ points  in $Y$.
\end{lemma}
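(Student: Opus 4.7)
My approach is to analyze $P_\ell \cap P_{\ell'} \subset Y$ by studying, for any $[F]$ in the intersection, the two non-split defining extensions $(\ast_\ell)\colon 0 \to \ol(-K_S - \ell) \to F \to \ol(\ell) \to 0$, with class $e_F \in H^1(-K_S - 2\ell)$, and its analogue $(\ast_{\ell'})$. The composition $\ol(-K_S - \ell) \hookrightarrow F \twoheadrightarrow \ol(\ell')$ is a section of $\ol(K_S + \ell + \ell')$, and the intersection is governed by $h^0(K_S + \ell + \ell')$ and hence by $\ell \cdot \ell'$.

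For $\ell \cdot \ell' \leq 1$, the restriction sequence $0 \to \ol(K_S + \ell') \to \ol(K_S + \ell + \ell') \to \ol_\ell(-2 + \ell \cdot \ell') \to 0$ gives $h^0(K_S + \ell + \ell') = 0$: indeed $h^0(K_S + \ell') = 0$ since $-K_S \cdot (K_S + \ell') = 0$ with $K_S + \ell' \neq 0$ (cf.\ Rem.~\ref{easy}, \cite[Cor.~3.3]{batyrevpopov}), and the restriction to $\ell \cong \pr^1$ has negative degree. Hence the composition vanishes, forcing $\ol(-K_S - \ell) \subset \ol(-K_S - \ell')$ as line subsheaves of $F$, which makes $\ell - \ell'$ an effective divisor of $-K_S$-degree $0$ with $\ell \neq \ell'$---impossible.

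For $\ell \cdot \ell' = 2$, Rem.~\ref{positions}(b) arranges $K_S + \ell + \ell' \sim e_3$ a $(-1)$-curve with unique nonzero section $s_0$. The $\Hom(\ol(-K_S - \ell'), -)$ long exact sequence applied to $(\ast_\ell)$ reduces $[F] \in P_{\ell'}$ to the vanishing $s_0 \cdot e_F = 0 \in H^1(\ell' - \ell)$, and the twisted sequence $0 \to \ol(-K_S - 2\ell) \xrightarrow{\cdot s_0} \ol(\ell' - \ell) \to \ol_{e_3} \to 0$ shows that cup product with $s_0$ is a surjection $H^1(-K_S - 2\ell) \twoheadrightarrow H^1(\ell' - \ell)$ with $1$-dimensional kernel, cutting out a single point of $\pr^2 = P_\ell$. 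For transversality, a Chern-class computation yields $0 \to L_1 \oplus L_2 \to F \to \ol_{e_3} \to 0$ (with $L_i$ the two distinguished line subsheaves), so $T_{[F]} P_\ell \cap T_{[F]} P_{\ell'}$ equals the image of $\Ext^1(\ol_{e_3}, F) \to \Ext^1(F, F)$. A short Tor computation gives $F|_{e_3} \cong \ol_{\pr^1} \oplus \ol_{\pr^1}(1)$, whence $\Ext^1(\ol_{e_3}, F) = \C$ by Serre duality, and the Hom exact sequence (with $\dim \Hom(L_1 \oplus L_2, F) - \dim \Hom(F, F) = 1$) forces this map to be zero.

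For $\ell \cdot \ell' = 3$ with $S$ general, $\ell' = \iota_S^* \ell = -2K_S - \ell$ gives $K_S + \ell + \ell' = -K_S$ and $h^0 = 2$; the intersection condition becomes a rank-$\leq 1$ condition on the $3 \times 2$ cup-product matrix $M(e_F)$ encoding $H^0(-K_S) \ni s \mapsto s \cdot e_F \in H^1(-2K_S - 2\ell)$. Since $M$ is linear in $e_F \in \C^3$, this defines a subscheme of $\pr^2 = P_\ell$ of length $3$---the degree of the Segre variety $\pr^1 \times \pr^2 \subset \pr^5$ in a linear $\pr^2$-section. To see the intersection consists of $3$ distinct reduced points for $S$ general, I will interpret the condition on the elliptic pencil $|-K_S| = \pr^1$: for $[s] \in \pr^1$ with fiber $Z_s$, the kernel of $\cdot s$ has dimension $h^0(\ol_{Z_s}(-2K_S - 2\ell))$, a degree-$0$ line bundle that is trivial iff $2 p_\ell(s) \sim 2 b_0$ on $Z_s$ (with $p_\ell(s) = \ell \cap Z_s$ and $b_0$ the base point); on the elliptic fibration $\tilde S \to \pr^1$ obtained by blowing up $b_0$, the strict transform of $\ell$ is a section meeting the non-identity $2$-torsion sub-group scheme in exactly $3$ points for $S$ general. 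Combining length $3$ with $3$ distinct points gives $3$ reduced intersection points, hence transversality in $Y$. The main obstacle is this last transversality step: the parallel Ext-sequence check now involves the cokernel $\ol_{Z_s}(p)$, a degree-$1$ line bundle on the elliptic curve $Z_s$, and requires a Serre-duality/Riemann-Roch verification that $\Ext^1(\ol_{Z_s}(p), F) = 1$ at each of the three intersection points.
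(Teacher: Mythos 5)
Your approach is genuinely different from the paper's. The paper never touches extension classes here: for $\ell\cdot\ell'\leq 1$ it picks a polarization $L_0$ on the common walls $(2\ell+K_S)^{\perp}\cap(2\ell'+K_S)^{\perp}$ and invokes Lemma \ref{specialL}$(d)$ to see that the loci are disjoint already in $M_{L_0}=Y$; for $\ell\cdot\ell'=2$ it identifies $P_{\ell'}$ with the image of a plane in $\pr^4$ under a Cremona map centered at three blown-up points (via Th.~\ref{specialsurfaces}$(a)$); and for $\ell\cdot\ell'=3$ it realizes $P_\ell,P_{\ell'}$ as transforms of two cones over rational normal quartics (Th.~\ref{specialsurfaces}$(b)$) and counts their intersection in $\pr^4$. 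Your computation instead works entirely on $S$, characterizing $P_\ell\cap P_{\ell'}$ by the vanishing of cup products $s\cdot e_F$; this buys independence from the blow-up model $X_{(S,h)}$ and from Th.~\ref{specialsurfaces} (whose proof in the paper is interleaved with this lemma), and your reduction of $[F]\in P_{\ell'}$ to $\Hom(\ol_S(-K_S-\ell'),F)\neq 0$ is legitimate because stability of $F$ forces any nonzero such map to be saturated with locally free quotient $\ol_S(\ell')$. I checked the numerology in your cases $\ell\cdot\ell'\leq 2$ (e.g.\ $(\ell'-\ell)\cdot e_3=0$, $c_2$ of the cokernel of $L_1\oplus L_2\hookrightarrow F$ giving $Q\cong\ol_{e_3}$, $F_{|e_3}\cong\ol\oplus\ol(1)$, $\Ext^1(Q,F)\cong\C$) and it is consistent; both routes yield comparable amounts of work, with the paper's being more visibly geometric and yours more self-contained.

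There is, however, one step you yourself flag as unfinished, and it is a genuine gap as written: transversality in the case $\ell\cdot\ell'=3$. Two remarks. First, you do not actually need the $\Ext^1(\ol_{Z_s}(p),F)$ computation you announce: once you know the degeneracy locus in $P_\ell\cong\pr^2$ is a length-$3$ scheme supported at $3$ distinct points, it is reduced, and its tangent space at each point is $T_{[F]}P_\ell\cap T_{[F]}P_{\ell'}$, which therefore vanishes --- \emph{provided} you justify that the determinantal ideal of $2\times 2$ minors of $M(e_F)$ computes the scheme-theoretic intersection $P_\ell\cap P_{\ell'}$ inside $P_\ell$ (i.e.\ that the locus where $\Hom(\ol_S(-K_S-\ell'),\ma{F})$ jumps, for the universal extension $\ma{F}$ over $P_\ell$, carries the intersection scheme structure). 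That identification is the missing link; without it, the "length $3$ plus $3$ points implies reduced implies transversal" chain does not close. Second, the genericity claim that the section $\ell$ meets the non-identity $2$-torsion trisection in $3$ \emph{distinct} points for $S$ general needs an argument (monodromy, or a single explicit example where the three values of $s$ and the three kernel lines are distinct); this is comparable in rigor to the paper's own appeal to generality of the two cones, but it should be said. Everything else in your proposal is sound.
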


The cone $\Nef(Y)$ is isomorphic to $\Nef(S)$.
It has $19440=2160+17280$ extremal rays, one for each conic $C$ and   cubic $h$ of $S$; the corresponding generators are $\rho(-2K_S+C)$ and $\rho(-3K_S+h)$ (see \eqref{N}).

Recall that extremal rays of $\Nef(Y)$ correspond to contractions $f\colon Y\to Z$ with $\rho_Z=1$. Let us describe these contractions using Lemma \ref{specialL}$(d)$ and Cor.~\ref{determinant}.

Given a   cubic $h$, the line bundle $-3K_S+h$ is contained in the walls $(2e_i+K_S)^{\perp}$, for $i=1,\dotsc,8$ (notation as in \ref{notationP^2}; see Lemma \ref{L_t}). 
Thus the contraction given by $\rho(-3K_S+h)$ is birational, small, and has exceptional locus  $P_{e_1}\cup\cdots\cup P_{e_8}$, where the $P_{e_i}$'s are pairwise disjoint. Correspondingly, the classes of the curves $\Gamma_{e_1},\dotsc,\Gamma_{e_8}$ span a simplicial facet of $\NE(Y)$.

Given a conic $C$, the line bundle $-2K_S+C$ is contained in 14 walls $(2\ell+K_S)^{\perp}$, where $\ell$ is a $(-1)$-curve such that $C\cdot\ell=0$, namely  $\ell$ is a component of a reducible conic linearly equivalent to $C$.
 Thus the contraction given by
$\rho(-2K_S+C)$ is birational, small, and has exceptional locus the disjoint union of $14$ $P_{\ell}$'s (all contained in the divisor $E_C$, see \ref{effcone}). This yields a non-simplicial facet of $\NE(Y)$.

This shows Prop.~\ref{NEintro} from the Introduction.
\subsection{The cone of effective divisors}\label{effcone}
The cone $\Eff(Y)$ has 2160 extremal rays, each generated by a fixed divisor $E_C$, where $C\subset S$ is a conic.
 Each such divisor comes, up to pseudo-isomorphism, from the blow-up of a smooth point.

The divisor $E_C\subset Y$ is smooth and is isomorphic to the blow-up of $\pr^3$ in $14$ points (in a special position). This can be seen by choosing a   cubic $h$ such that $C=C_1\sim h-e_1$ (notation as in \ref{notationP^2}), so that $E_C$ is the transform of the exceptional divisor $E_1\cong\pr^3\subset X_h=X$ under $\xi\colon X\dasharrow Y$ (see Th.~\ref{X}). By the explicit description of the map 
$\xi$ given in Lemma \ref{sequence}, we see that $E_1$ is blown-up in the $14$ points of intersection with $L_{12},\dotsc,L_{18},\Gamma_2,\dotsc,\Gamma_8$.

Recall that for every $(-1)$-curve $\ell$, $\Gamma_\ell\subset P_{\ell}\cong\pr^2\subset Y$ generates an extremal ray of $\NE(Y)$. By Lemmas \ref{E_C=} and \ref{facet_ell} we have
\begin{equation}\label{El}
E_C\cdot\Gamma_\ell=\frac{1}{2}\rho(C)\cdot\Gamma_\ell
=\frac{1}{2}C\cdot\zeta(\Gamma_\ell) =\frac{1}{2}C\cdot (2\ell+K_S)=C\cdot\ell-1,
\end{equation}
and there are $14$ special loci $P_\ell$ (given by the $(-1)$-curves $\ell$ with $C\cdot\ell=0$) contained in $E_C$; these are in $E_C$ the exceptional divisors of the blow-up $E_C\to\pr^3$.
\subsection{The cone of movable divisors and the divisors $H_{Y,h}$}\label{movable}
The cone $\Mov(Y)$ is isomorphic to the cone $\Pi\subset H^2(S,\R)$ via $\rho$, and it has two types of facets, cut by $\zeta^{-1}(\ell)^{\perp}$ and  $\zeta^{-1}(2C+K_S)^{\perp}$ for every $(-1)$-curve $\ell$ and conic $C$ in $S$ (see \ref{secPi}). The class $\zeta^{-1}(2\ell)$ is the class of a moving curve on $Y$, we will describe it in \ref{moving}. The class  $\zeta^{-1}(2C+K_S)$ is the class of the transform $\Gamma_C\subset E_C$ of a general line in $\pr^3$ under the blow-up $E_C\to\pr^3$, see Cor.~\ref{facet_C}.
\begin{definition}[the map $\eta_h$]\label{eta}
Let $h$ be a   cubic in $S$, 
and consider the outer chamber $\ma{C}_h$ introduced in \ref{outer}; by Prop.~\ref{P^4}, we have $M_{\ma{C}_h}\cong\pr^4$.
Thus the natural contracting birational map $Y=M_{-K_S}\dasharrow M_{\ma{C}_h}$ (see Cor.~\ref{birational}) yields a map
$\eta_h\colon Y\dasharrow \pr^4$.
By varying the polarization from $-K_S$ to $\ma{C}_h$ along the plane spanned by $-K_S$ and $h$ (see Fig.~\ref{figura}), we factor $\eta_h$ as
{\footnotesize$$
\xymatrix{Y\ar@{-->}[r]_{\xi_h^{-1}}
\ar@{-->}@/^1pc/[rr]^{\eta_h}&{X_h}\ar[r]&{\pr^4}}
$$}where $\xi_h$ is described in Lemma \ref{sequence} and $X_h\to\pr^4$ is the blow-up of $8$ points. It follows from Lemma \ref{sequence} (and its proof) that the indeterminacy locus of $\eta_h$ is the union of the surfaces $P_{e_i}$ and $P_{\ell_{jk}}$ for $i=1,\dotsc,8$ and $1\leq j<k\leq 8$, and these surfaces are pairwise disjoint.
\end{definition}
\begin{proposition}\label{H_Y}
Let $h$ be a   cubic, and set
 $H_{Y,h}:=\frac{1}{2}\rho(-K_S+3h)\in H^2(Y,\R)$. Then $H_{Y,h}\in\Pic(Y)$ and is a movable class. Its complete linear system defines the contracting birational map
$\eta_h\colon Y\dasharrow\pr^4$,
with exceptional divisors $E_{C_1},\dotsc,E_{C_8}$.
The images $\eta_h(E_{C_1}),\dotsc,\eta_h(E_{C_8})$ are $8$ points in $\pr^4$, associated to the points $q_1,\dotsc,q_8\in\pr^2$ (notation as in \ref{notationP^2}).
\end{proposition}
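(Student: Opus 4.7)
The plan is to transfer the statement to $X = X_{(S,h)} \cong M_{\ma{B}_h}$ via the natural pseudo-isomorphism $\ph\colon Y \dasharrow X$ provided by Cor.~\ref{birational} (applicable since both $-K_S$ and any class in $\ma{B}_h$ lie in $\Pi$). On $X$ the determinant map is given explicitly by Prop.~\ref{explicit}, and the key technical input is Lemma \ref{cod1}, which for polarizations in $\Pi$ identifies $\rho_Y$ with $\ph^* \circ \rho_{\ma{B}_h}$.

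First I would compute $\rho_{\ma{B}_h}(-K_S + 3h) \in \Pic(X)$. Writing $-K_S = 3h - \sum_{i=1}^8 e_i$ and applying Prop.~\ref{explicit}, a short calculation gives $\rho_{\ma{B}_h}(-K_S) = 5H - 3\sum_i E_i = -K_X$ and $\rho_{\ma{B}_h}(3h) = 3\sum_i E_i - 3H$, whence $\rho_{\ma{B}_h}(-K_S + 3h) = 2H$; thus $\tfrac{1}{2}\rho_{\ma{B}_h}(-K_S + 3h)$ is the pullback $H$ of the hyperplane class under the blow-up $\pi\colon X \to \pr^4$. Lemma \ref{cod1} then yields $H_{Y,h} = \ph^*(H) \in \Pic(Y)$, settling integrality. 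For movability, I would check directly that $-K_S + 3h$ is nef (it pairs positively with every $(-1)$-curve and every conic) and that $(-K_S + 3h) \cdot (2C_i + K_S) = 0$ for the conics $C_i$ (notation as in \ref{notationP^2}), so that $-K_S + 3h$ lies on the boundary of $\Pi$; Th.~\ref{iso}(b) then gives $H_{Y,h} \in \Mov(Y)$.

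For the linear system and the map, the fact that $Y$ is smooth and $\ph$ is an isomorphism off loci of codimension $\geq 2$ on both sides allows the identification $H_{Y,h}|_U \cong \ph^*(H)|_{U'}$ on big open subsets to promote to a canonical isomorphism $H^0(Y, H_{Y,h}) \cong H^0(X, H) \cong H^0(\pr^4, \ol_{\pr^4}(1))$. Consequently $|H_{Y,h}|$ defines, up to an isomorphism of the target, the composition $\pi \circ \ph \colon Y \dasharrow \pr^4$, which by Def.~\ref{eta} is precisely $\eta_h$. The exceptional divisors of $\pi$ are $E_1, \dots, E_8$, corresponding under $\ph$ and Th.~\ref{X} to the divisors $E_{C_1}, \dots, E_{C_8}$ on $Y$, and their images under $\pi$ are the blown-up points $p_1, \dots, p_8$, which by the definition of the correspondence in \ref{bijection} are exactly the points associated to $q_1, \dots, q_8 \in \pr^2$. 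The only subtle step is the extension of sections across the codimension-two indeterminacy of $\ph$, which is standard given the smoothness of $Y$.
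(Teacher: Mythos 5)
Your proposal is correct and follows essentially the same route as the paper: compute $\rho_{\ma{B}_h}(-K_S+3h)=2H$ via Prop.~\ref{explicit}, transfer to $Y$ by Lemma \ref{cod1} along the pseudo-isomorphism $\xi_h^{-1}$ to get $H_{Y,h}=\eta_h^*\ol_{\pr^4}(1)$, and conclude via Th.~\ref{X}. Your extra checks (that $-K_S+3h$ lies on $\partial\Pi$, and that sections extend across the codimension-two indeterminacy locus so that $H^0(Y,H_{Y,h})\cong H^0(\pr^4,\ol_{\pr^4}(1))$) are correct elaborations of details the paper leaves implicit.
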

\begin{proof}
After Prop.~\ref{explicit} we have $\rho_{\ma{B}_h}(-K_S+3h)=2H\in\Pic(X_h)$ (notation as in \ref{notationP^4}), so 
Lemma \ref{cod1} and the definition of $\eta_h$ yield
$$\rho_{-K_S}(-K_S+3h)=(\xi_h^{-1})^*(\rho_{\ma{B}_h}(-K_S+3h))=(\xi_h^{-1})^*(2H)
=\eta_h^*\ol_{\pr^4}(2).$$
Thus $H_{Y,h}=\eta_h^*\ol_{\pr^4}(1)$, and the rest of the statement follows from Th.~\ref{X}.
\end{proof}
\begin{lemma}\label{bigray}
The divisor $H_{Y,h}$ generates an extremal ray of $\Mov(Y)$, contained in the interior of $\Eff(Y)$.

Conversely, let $\tau$ be an extremal ray  of $\Mov(Y)$ lying in the interior of $\Eff(Y)$. Then there exists a   cubic $h'\subset S$ such that $H_{Y,h'}\in\tau$.
\end{lemma}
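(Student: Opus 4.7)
The plan is to use Corollary \ref{isoY}, which identifies $\Pi\subset H^2(S,\R)$ with $\Mov(Y)\subset H^2(Y,\R)$ and $\ma{E}$ with $\Eff(Y)$ via the isomorphism $\rho$. The lemma then becomes the assertion that the extremal rays of the $9$-dimensional convex cone $\Pi$ lying in the interior of $\ma{E}$ are precisely the rays $\langle -K_S+3h'\rangle$ for $h'$ a cubic of $S$, and the conclusion is transferred to $Y$ through the identity $H_{Y,h}=\tfrac{1}{2}\rho(-K_S+3h)$.

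For the forward direction I would set $v:=-K_S+3h$ and first check by direct computation that $v\cdot(2C_i+K_S)=0$ for the eight conics $C_i\sim h-e_i$ (notation as in \ref{notationP^2}), and that the vectors $\{2C_i+K_S\}_{i=1}^8$ are linearly independent in $H^2(S,\R)\cong\R^9$; this forces $\langle v\rangle$ to be a one-dimensional face of $\Pi$. To place $v$ in the interior of $\ma{E}$, I would verify $v\cdot w>0$ for every generator $w$ of $\ma{E}^\vee$ (see \eqref{dualE}): for a $(-1)$-curve $\ell\sim dh-\sum a_ie_i$ the relation $3d-\sum a_i=1$ yields $v\cdot\ell=3d+1>0$, and for a cubic $h'$ with $m:=h\cdot h'$, Hodge index together with Lemma \ref{cubics} gives $1\le m\le 17$, so $v\cdot(2h'+K_S)=6m-4\ge 2$. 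Invoking Corollary \ref{isoY} then finishes the first half.

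For the converse, I would let $v$ generate $\rho^{-1}(\tau)\subset\Pi$, an extremal ray in the interior of $\ma{E}$. The interiority inequalities $v\cdot\ell>0$ and $v\cdot(2h'+K_S)>0$ rule out any facet of $\Pi$ of type $\ell^\perp$ passing through $v$, so the eight independent facets cutting out $\langle v\rangle$ must be of the form $(2C_i+K_S)^\perp$ for eight conics $C_1,\dots,C_8$. Since $(v+\epsilon K_S)\cdot(2C_i+K_S)=-3\epsilon$ while the other strict positivity inequalities persist for small $\epsilon>0$, the class $v+\epsilon K_S$ lies in a single chamber $\ma{C}\subset\ma{E}\setminus\Pi$ of the stability fan $\ST(S)$ adjacent to $v$. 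By Theorem \ref{iso}(c), $\rho(\overline{\ma{C}})$ has the form $\ph^*\Nef(M_{\ma{C}})+\langle E_{C_1},\dots,E_{C_8}\rangle$ for the associated birational contraction $\ph\colon Y\dasharrow M_{\ma{C}}$, and a dimension count forces $\rho(M_{\ma{C}})=1$.

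The hard part will be to conclude that $\ma{C}$ is one of the outer chambers $\ma{C}_{h'}$ of \ref{outer}, equivalently that the eight conics $C_1,\dots,C_8$ are of the form $\{h'-e_1,\dots,h'-e_8\}$ for a cubic $h'$. My approach is to exploit the characterization in \ref{outer} of $\ma{C}_{h'}$ as the unique chamber whose closure meets the relative interior of $\tau_{h'}$: combining the Picard-rank one property of $M_{\ma{C}}$ with a careful analysis of the Mori chamber structure at the boundary of $\overline{\ma{C}}$ should force the remaining facet of $\overline{\ma{C}}$ (the one opposite $v$) to lie in $\partial\ma{E}$, i.e.\ to equal some $\tau_{h'}=(2h'+K_S)^\perp\cap\ma{E}$ for a cubic $h'$. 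Once this is established, $\ma{C}=\ma{C}_{h'}$ by uniqueness in \ref{outer}, and the explicit description $\overline{\ma{C}_{h'}}=\langle C_1,\dots,C_8,-K_S+3h'\rangle$ given there shows that the only one-dimensional face of $\overline{\ma{C}_{h'}}$ lying in the interior of $\ma{E}$ is $\langle -K_S+3h'\rangle$ (the other eight generators being conics, hence on $\partial\ma{E}$), so $v\propto -K_S+3h'$ and therefore $H_{Y,h'}\in\tau$.
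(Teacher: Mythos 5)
Your forward direction is correct and is essentially the paper's argument (the paper deduces interiority in $\Eff(Y)$ more quickly from the bigness of $H_{Y,h}=\eta_h^*\ol_{\pr^4}(1)$ in Prop.~\ref{H_Y}, but your direct check against the generators of $\ma{E}^{\vee}$ works; just state explicitly that $-K_S+3h\in\Pi$, e.g.\ from Lemma \ref{L_t}, before invoking the eight walls $(2C_i+K_S)^{\perp}$).

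The converse, however, contains a genuine gap precisely at the step you label ``the hard part'', and the sketch you offer does not close it. After your reductions you know: $v$ lies on eight walls $(2C_j+K_S)^{\perp}$ with $\{2C_j+K_S\}$ independent, and the adjacent chamber $\ma{C}$ yields a birational contraction with exceptional divisors $E_{C_1},\dotsc,E_{C_8}$ onto a Picard-number-one target. What remains — and is not supplied by ``a careful analysis of the Mori chamber structure should force\dots'' — is to exclude that $\langle C_1,\dotsc,C_8\rangle$ is a \emph{proper} subcone of one of the non-simplicial facets $\ell^{\perp}\cap\ma{E}$, which has $126$ conic generators and contains plenty of $8$-tuples of conics with independent classes. (Showing that this cone lies in $\partial\ma{E}$ is the easy half: a nonnegative combination of contracted prime divisors cannot be big. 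The missing content is that it is an \emph{entire} facet of $\ma{E}$; this is not automatic, and if it failed the common orthogonal of the $2C_j+K_S$ would not be forced to be $\R(-K_S+3h')$.) The paper closes this by the Mori-dream-space fact that the exceptional prime divisors of a birational contraction $f\colon Y\dasharrow Y'$ with $\rho_{Y'}=1$ \emph{generate} a simplicial facet of $\Eff(Y)$: if a fixed prime divisor $E$ spanning an extremal ray of the facet containing them were not contracted, then $f_*E$ would be ample on $Y'$ and $f^*f_*E=E+(\text{effective exceptional})$ would be big while lying on $\partial\Eff(Y)$, a contradiction. Since the only simplicial facets of $\ma{E}\cong\Eff(Y)$ are the $\tau_{h'}=(2h'+K_S)^{\perp}\cap\ma{E}$, one gets $\{C_1,\dotsc,C_8\}=\{h'-e_1',\dotsc,h'-e_8'\}$ for a cubic $h'$, and then $v\in\bigcap_j(2C_j+K_S)^{\perp}=\R(-K_S+3h')$ finishes; the paper phrases this last step by noting that $f$ and $\eta_{h'}$ have the same exceptional divisors, so $f\circ\eta_{h'}^{-1}$ is a pseudo-isomorphism of $\pr^4$, hence an isomorphism, whence $H_{Y,h'}\in\tau$. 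You should either import this simpliciality argument or find a combinatorial substitute; without it the converse is incomplete.
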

\begin{proof}
The divisor $-K_S+3h$ belongs to $\Pi$, and it lies on the facets $\Pi\cap(2C_i+K_S)^{\perp}$ of $\Pi$, for $i=1,\dotsc,8$ (notation as in \ref{notationP^2}). Since the classes $2C_i+K_S$, for $i=1,\dotsc,8$, are linearly independent, we see that $-K_S+3h$ generates an extremal ray of $\Pi$, and via the isomorphism $\rho$ we see that $H_{Y,h}$ generates an extremal ray of $\Mov(Y)$. Moreover
 $H_{Y,h}$ is big by Prop.~\ref{H_Y}. 

For the second statement,
a large enough integral divisor $D\in\tau$ defines a  contracting birational map $f\colon Y\dasharrow Y'$, where $Y'$ is $\Q$-factorial with $\rho_{Y'}=1$. The prime exceptional divisors of $f$  generate a simplicial facet of $\Eff(Y)$. By Th.~\ref{iso}$(b)$ we have $\Eff(Y)\cong\ma{E}$, and every simplicial facet of $\ma{E}$ has the form $(2h'+K_S)^{\perp}\cap\ma{E}$ for a   cubic $h'$, see \ref{secE}. The corresponding facet of $\Eff(Y)$ is generated by $E_{C'_1},\dotsc,E_{C'_8}$, thus $f\colon Y\dasharrow Y'$ and $\eta_{h'}\colon Y\dasharrow\pr^4$ have the same exceptional divisors. This means that the composition $f\circ\eta_{h'}^{-1}\colon \pr^4\dasharrow Y'$ is a pseudo-isomorphism, and hence an isomorphism. Therefore $H_{Y,h'}\in\tau$.
\end{proof}
Let us notice that the birational map $\eta_h\colon Y\dasharrow\pr^4$ allows to reconstruct the surface $S$ from $Y$: indeed, by Prop.~\ref{H_Y}, it determines the points $q_1,\dotsc,q_8\in\pr^2$ blown-up by $S\to\pr^2$.
  This is the key point for the proofs of Theorems 
 \ref{pseudo}, \ref{psautM},
\ref{moduli}, and  \ref{autY}.
\subsection{The cone of moving curves}\label{moving}  The cone 
$\Mov_1(Y)$ is isomorphic, via $\zeta^{-1}$, to $\ma{E}^{\vee}\subset H^2(S,\R)$. Thus it has $17520=17280+240$ extremal rays, generated by $\zeta^{-1}(2h+K_S)$ and $\zeta^{-1}(\ell)$ for every   cubic $h$ and $(-1)$-curve $\ell$ on $S$ (see \eqref{dualE}).
Let us describe some families of curves whose classes generate $\Mov_1(Y)$.

Let $h$ be a   cubic, and consider the birational map $\eta_h\colon Y\dasharrow\pr^4$. It follows from Prop.~\ref{explicit} and Rem.~\ref{cod1curves} that
 $\zeta^{-1}(2h+K_S)\in\N(Y)$ is the class of the transform under $\eta_h$ of a general line in $\pr^4$. The corresponding facet of $\Eff(Y)$ is simplicial, generated by the exceptional divisors $E_{C_1},\dotsc,E_{C_8}$ of $\eta_h$.

In order to describe the extremal ray generated by $\zeta^{-1}(\ell)$, we need the following.
\begin{remark}\label{P^1bundle}
Let $X$ be the blow-up of $\pr^4$ at $8$  points $p_1,\dotsc,p_8$ in general linear position.
Fix $i\in\{1,\dotsc,8\}$ and let $\ma{P}_i\subset\pr^3$ be the image of the set $\{p_1,\dotsc,\check{p}_i,\dotsc,p_8\}$ under the projection $\pi_{p_i}\colon\pr^4\dasharrow\pr^3$ from $p_i$. Let $T$ be the blow-up of $\pr^3$ at the $7$ points in $\ma{P}_i$.
There is a pseudo-isomorphism $X\dasharrow X_i$ and a $\pr^1$-bundle $X_i\to T$ extending $\pi_{p_i}$ (see \cite[Ex.~1]{mukaiADE} and \cite[Rem.~4.8]{fanomodel}). Let $\pi_i\colon X\dasharrow T$ be the composite map. Then the general fiber of $\pi_i$ is the transform in $X$ of a general line in $\pr^4$ through $p_i$, so it has class $h-e_i\in\N(X)$. 
\end{remark} 
\begin{remark}\label{-KP^3}
In the situation of Rem.~\ref{P^1bundle}, $-K_T$ is nef and big  \cite[Prop.~2.9]{blanclamy}, and
 $-K_T= 2 Q$, $Q\in\Pic(T)$. Using vanishing and Riemann-Roch one computes that $h^0(T,Q)=3$. 

Set $D:=\pi_i^*Q\in\Pic(X)$. Then $h^0(X,D)=3$, and by Prop.~\ref{explicit} we have $D\sim 2H-\sum_{j=1}^8 E_j-E_i=\frac{1}{2}\rho(-K_S+e_i)$.
\end{remark}
Let now $\ell\subset S$ be a $(-1)$-curve. Choose a   cubic $h$ such that  $\ell=e_1$ (notation as in \ref{notationP^2}), and consider $X=X_h$ and the $\pr^1$-bundle $X_1\to T$ described in Rem.~\ref{P^1bundle}. Set 
$Y_\ell:=X_1$ and $T_\ell:=T$. Then there is a pseudo-isomorphism $Y\dasharrow Y_\ell$ and a $\pr^1$-bundle $Y_\ell\to T_\ell$,   and we claim that  the general fiber $f$ of the composite map $Y\dasharrow T_\ell$ has class $2\zeta^{-1}(\ell)\in\N(Y)$.
Indeed by Prop.~\ref{explicit} we have $\zeta_{\ma{B}_h}(h-e_1)=2e_1\in H^2(S,\R)$, and
Rem.~\ref{cod1curves} yields
$\zeta_{-K_S}(f)=\zeta_{\ma{B}_h}(h-e_1)=2e_1=2\ell$.
 See \cite[p.~9-10]{mukaiADE} for a modular description of the map  $Y\dasharrow T_\ell$. 
The facet of $\Eff(Y)$ cut by $\zeta^{-1}(\ell)^{\perp}$ is generated by the 126 $E_C$'s such that $C$ is a conic disjoint from $\ell$.

In particular, we notice that $Y$ has $240$ distinct dominating families of rational curves of anticanonical degree $2$.
\subsection{Torelli type results}\label{sectorelli}
\begin{proof}[Proof of Th.~\ref{moduli}]
One implication is clear. For the other, let $f\colon Y_1\to Y_2$ be an isomorphism, and let $h_2$ be a   cubic on $S_2$.

Consider the divisor class $H_{Y_2,h_2}$ on $Y_2$. By Lemma \ref{bigray},
 $f^*H_{Y_2,h_2}$ generates an extremal ray of $\Mov(Y_1)$, lying in the interior of $\Eff(Y_1)$. Again by Lemma \ref{bigray}, there exists a   cubic $h_1$ on $S_1$ such that $H_{Y_1,h_1}$ is a positive multiple of $f^*H_{Y_2,h_2}$. Therefore we have a commutative diagram:
{\footnotesize$$
\xymatrix{{Y_1}\ar@{-->}[d]_{\eta_{h_1}}\ar[r]^{f}&{Y_2}\ar@{-->}[d]^{\eta_{h_2}}\\
{\pr^4}\ar[r]^{f'}&{\pr^4}}
$$}where $f'$ is a projective transformation. 

For $i=1,2$ let $p_1^i,\dotsc,p_8^i\in\pr^4$ be the images of the exceptional divisors of $\eta_{h_i}$, 
 $\sigma_i\colon S_i\to\pr^2$  the map induced by $h_i\in\Pic(S_i)$, and $q_1^i,\dotsc,q_8^i\in\pr^2$ the points blown-up by $\sigma_i$. Then  $p_1^i,\dotsc,p_8^i\in\pr^4$ and $q_1^i,\dotsc,q_8^i\in\pr^2$ are associated point sets by Prop.~\ref{H_Y}, and 
$p_1^1,\dotsc,p_8^1$ are projectively equivalent to $p_1^2,\dotsc,p_8^2$ by the diagram above.  We conclude that $q_1^1,\dotsc,q_8^1$ and  $q_1^2,\dotsc,q_8^2$ are projectively equivalent, and hence that $S_1\cong S_2$.
\end{proof}
\begin{proof}[Proof of Th.~\ref{pseudo}]\label{proofpseudo}
If $S_1\cong S_2$, then $M_{S_1,L_1}$ and $M_{S_2,L_2}$ are pseudo-isomorphic by Cor.~\ref{birational}. 

Conversely, suppose that  $M_{S_1,L_1}$ and $M_{S_2,L_2}$ are pseudo-isomorphic, and set $Y_i:=M_{S_i,-K_{S_i}}$ for $i=1,2$. Then, again by Cor.~\ref{birational}, there is a pseudo-isomorphism $f\colon Y_1\dasharrow Y_2$. Since $Y_1$ and $Y_2$ are Fano, $f$ must be an isomorphism (because $f^*(-K_{Y_2})=-K_{Y_1}$), hence $S_1\cong S_2$ by Th.~\ref{moduli}. 
\end{proof}
\subsection{Automorphisms and pseudo-automorphisms}\label{secauto}
\begin{proof}[Proof of Th.~\ref{autY}]\label{proofautY}
We have a natural group homomorphism $\Aut(S)\to\Aut(H^2(S,\R))$, given by $f\mapsto (f^{-1})^*$, and similarly for $Y$. Moreover, the isomorphism $\rho\colon H^2(S,\R)\to H^2(Y,\R)$ induces an isomorphism $\Aut(H^2(S,\R))\to \Aut(H^2(Y,\R))$, given by $\ph\mapsto \rho\circ \ph\circ \rho^{-1}$. These maps are related by the following diagram, which is commutative by Prop.~\ref{equivariant}:
\begin{equation}\label{c3}\begin{gathered}{\footnotesize
\xymatrix{{\Aut(S)}\ar[r]\ar[d]_{\psi}&{\Aut(H^2(S,\R))}\ar[d]^{\wr}\\
{\Aut(Y)}\ar[r]&{\Aut(H^2(Y,\R))}
}}\end{gathered}\end{equation}
The map  $\Aut(S)\to\Aut(H^2(S,\R))$ is injective \cite[Prop.~8.2.39]{dolgachevbook}, thus  $\psi$ is injective.
 To show that $\psi$ is also surjective, 
let $g\colon Y\to Y$ be an automorphism. 

The first step is to show that, up to multiply $g$ for an element in the image of $\psi$, we can assume that $g^*\colon H^2(Y,\R)\to H^2(Y,\R)$ fixes the ray $\R_{\geq 0} H_{Y,h}$ for some   cubic $h$ of $S$. 
As in the proof of Th.~\ref{moduli} (see \ref{sectorelli}), we find two   cubics $h,h'$ on $S$ such that $g^*H_{Y,h}$ is a multiple of $H_{Y,h'}$, and  a commutative diagram
\begin{equation}\label{c}\begin{gathered}{\footnotesize
\xymatrix{{Y}\ar@{-->}[d]_{\eta_{h'}}\ar[r]^{g}&{Y}\ar@{-->}[d]^{\eta_{h}}\\
{\pr^4}\ar[r]^{g'}&{\pr^4}}}\end{gathered}
\end{equation}
where $g'$ is a projective transformation. Moreover, 
if $\sigma\colon S\to\pr^2$ 
and $\sigma'\colon S\to\pr^2$ are the morphisms
induced respectively by $h$ and $h'$, 
there is a projective transformation $f'\colon\pr^2\to\pr^2$ sending the 
 the points blown-up by $\sigma'$ to the points blown-up by $\sigma$. 

 By the uniqueness of the blow-up, there exists an automorphism $f\colon S\to S$ such that the following diagram commutes:
\begin{equation}\label{c2}\begin{gathered}{\footnotesize
\xymatrix{{S}\ar[d]_{\sigma'}\ar[r]^{f}&{S}\ar[d]^{\sigma}\\
{\pr^2}\ar[r]^{f'}&{\pr^2}}}\end{gathered}
\end{equation}
and $f^*h=h'$.
Now by Prop.~\ref{equivariant} we get
$$
\psi(f)^*H_{Y,h}=\psi(f)^*\Bigl(\frac{1}{2}\rho(-K_S+3h)\Bigr)=
\frac{1}{2}\rho\bigl(f^*(-K_S+3h)\bigr)=\frac{1}{2}\rho(-K_S+3h')=H_{Y,h'},
$$
hence $(g\circ\psi(f^{-1}))^*H_{Y,h}$ is a positive multiple of $H_{Y,h}$.

We can now assume that $g^*$ fixes the ray $\R_{\geq 0}H_{Y,h}$, so that $h=h'$ in \eqref{c} and $\sigma=\sigma'$ in \eqref{c2}. Since $g^*$ also induces an automorphism of $H^2(Y,\Z)\subset H^2(Y,\R)$, we actually have $g^*H_{Y,h}=H_{Y,h}$.

Let 
$E_{C_1},\dotsc,E_{C_8}\subset Y$ be the exceptional  divisors of $\eta_h$,
$p_1,\dotsc,p_8\in\pr^4$ their images,  and $q_1,\dotsc,q_8\in\pr^2$ the (ordered) associated points, namely the points blown-up by $\sigma$.

The projective transformation $g'\colon \pr^4\to\pr^4$ fixes the set $\{p_1,\dotsc,p_8\}$, hence 
permutes the points $p_i$; let us call $\tau$ this permutation, so that
 $g^*E_{C_i}=E_{C_{\tau(i)}}$ for $i=1,\dotsc,8$.

We note that the map $\Aut(Y)\to\Aut(H^2(Y,\R))$ is injective. Indeed, suppose that $g^*=\Id_{H^2(Y,\R)}$. Then $\tau$ is the identity, and $g'$ fixes $p_i$ for every $i=1,\dots,8$. Since $p_1,\dotsc,p_8$ are in general linear position (see Rem.~\ref{OK}), we get $g'=\Id_{\pr^4}$ and hence $g=\Id_Y$ by \eqref{c}.

We carry on with the proof that $g\in\im(\psi)$.
Since $p_1,\dotsc,p_8$ and $p_{\tau(1)},\dotsc,p_{\tau(8)}$ are projectively equivalent, and $p_{\tau(1)},\dotsc,p_{\tau(8)}$ (as an ordered set of points) 
is associated to  $q_{\tau(1)},\dotsc,q_{\tau(8)}$ \cite[Ch.\ III, \S1]{dolgort},
 we conclude that also $q_1,\dotsc,q_8$ and $q_{\tau(1)},\dotsc,q_{\tau(8)}$
 are projectively equivalent. Let us call $k'$ the projective transformation of $\mathbb{P}^2$ which maps $q_i$ in $q_{\tau(i)}$ for $i=1,\dotsc,8$. This induces an automorphism $k$ of $S$.
We claim that $\psi(k)=g$; by what precedes,
 it is enough to show that $\psi(k)^*=g^*$.

Notice that $H_{Y,h},E_{C_1},\dotsc,E_{C_8}$ is a basis of $H^2(Y,\R)$, and $g^*E_{C_i}=E_{C_{\tau(i)}}$ for $i=1,\dotsc,8$. On the other hand $k^*h=h$, $k^*K_S=K_S$, and $k^*e_i=e_{\tau(i)}$ for $i=1,\dotsc,8$, hence $k^*C_i=C_{\tau(i)}$ for $i=1,\dotsc,8$. This easily implies, using Prop.~\ref{equivariant} and the map $\rho\colon H^2(S,\R)\to H^2(Y,\R)$, that $\psi(k)^*H_{Y,h}=H_{Y,h}$ and $\psi(k)^*E_{C_i}=E_{C_{\tau(i)}}$ for $i=1,\dotsc,8$, and finally that $\psi(k)^*=g^*$.

Therefore $\psi$ is an isomorphism; in particular $\Aut(Y)$ is finite, see \cite[\S 8.8.4]{dolgachevbook}.
\end{proof}
\begin{proof}[Proof of Th.~\ref{psautM}]\label{proofpsautM}
By Cor.~\ref{birational}, there is a pseudo-isomorphism $\ph\colon M_{S,L}\dasharrow Y:=M_{S,-K_S}$, which induces an isomorphism between the group of pseudo-automorphisms of $M_{S,L}$ and that of $Y$. On the other hand, being $Y$ Fano, every pseudo-automorphism of $Y$ is an automorphism. Thus the statement follows from Th.~\ref{autY}.
\end{proof}
Given a chamber $\ma{C}\subset\Pi$, it is not difficult to see that under the isomorphism given by Th.~\ref{psautM}, $\Aut(M_{S,\ma{C}})\cong\{f\in\Aut(S)\,|\,f^*\ma{C}=\ma{C}\}$. In particular, when $S$ is general, $\Aut(M_{S,\ma{C}})=\{\text{Id}\}$ unless $M_{S,\ma{C}}=Y$, because the Bertini involution $\iota_S$ fixes only the central chamber $\ma{N}$ (see \ref{bertiniS}).
\begin{definition}[the Bertini involution in $Y$]
The Bertini involution $\iota_S$ of $S$ induces an involution $\iota_Y=\psi(\iota_S)$ of $Y$, which we still call the Bertini involution; explicitly $\iota_Y\colon Y\to Y$ is given by
$\iota_Y([F])=[\iota_S^*F]$.
By Prop.~\ref{equivariant}, we have a commutative diagram:
\begin{equation}\label{bertini}\begin{gathered}{\footnotesize
\xymatrix{{H^2(S,\R)}\ar[r]^{\iota_S^*}\ar[d]_{\rho}&{H^2(S,\R)}\ar[d]^{\rho}\\{H^2(Y,\R)}\ar[r]^{\iota_Y^*}&{H^2(Y,\R).}
 }}\end{gathered}\end{equation}
\end{definition}
\subsection{Fibre-likeness}\label{MFS}
A Fano variety is  \emph{fibre-like} if it can appear as a fiber of a Mori fiber space;
this notion has been introduced and studied in \cite{CFST}. Every Fano variety with $b_2=1$ is fibre-like, while fibre-likeness becomes a rather strong condition on Fano varieties with $b_2>1$.

In the case of the Fano $4$-fold $Y$, 
our analysis of the automorphisms yields that 
the invariant part of 
$H^2(Y,\R)$ by the action of the Bertini involution $\iota_Y$ is $\R K_Y$ (see \eqref{bertini} and \ref{bertiniS}). By \cite[Th.~1.2]{CFST}, this implies the following.
\begin{proposition}
The Fano $4$-fold $Y$ is fibre-like.
\end{proposition}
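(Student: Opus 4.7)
The plan is to apply the criterion from \cite{CFST}, Th.~1.2, which gives a sufficient condition for a (smooth) Fano variety to be fibre-like: it is enough to exhibit a finite subgroup $G\subset\Aut(Y)$ whose induced action on $H^2(Y,\R)$ has invariant subspace equal to $\R K_Y$. For $Y$, the obvious candidate is the subgroup $G=\langle\iota_Y\rangle\subset\Aut(Y)$ generated by the Bertini involution, which is defined in the previous paragraph and is a genuine automorphism of $Y$ by Th.~\ref{autY}.

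The core of the proof is then a two-line linear-algebra computation. Using the commutative diagram \eqref{bertini}, where $\rho\colon H^2(S,\R)\to H^2(Y,\R)$ is an isomorphism by Th.~\ref{iso}$(a)$ (applied to the chamber $\ma{N}$, since $-K_S\in\ma{N}\subset\Pi$, cf.~Rem.~\ref{fanochamber}), the subspace of $\iota_Y^*$-invariants in $H^2(Y,\R)$ corresponds under $\rho^{-1}$ to the subspace of $\iota_S^*$-invariants in $H^2(S,\R)$. By the explicit formula \eqref{w_0}, $\iota_S^*\gamma=\gamma$ holds if and only if $2(\gamma\cdot K_S)K_S-\gamma=\gamma$, i.e.\ $\gamma=(\gamma\cdot K_S)K_S\in\R K_S$. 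Hence the invariant subspace on the $S$-side is exactly $\R K_S$, and under $\rho$ it maps to $\R\rho(K_S)=\R K_Y$ by Lemma \ref{-K}.

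Thus $(H^2(Y,\R))^{\iota_Y^*}=\R K_Y$, and the hypothesis of \cite[Th.~1.2]{CFST} is satisfied with $G=\langle\iota_Y\rangle$, which proves that $Y$ is fibre-like. There is no real obstacle here: all the ingredients (the existence and explicit action of $\iota_Y$, the equivariance of $\rho$, and the formula for $\iota_S^*$) have already been assembled, so the argument reduces to citing the criterion of \cite{CFST} and reading off the one-dimensional fixed locus from \eqref{w_0} and \eqref{bertini}.
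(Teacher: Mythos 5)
Your argument is correct and is essentially identical to the paper's own proof: the authors likewise observe, via the equivariance diagram \eqref{bertini} and the formula \eqref{w_0} for $\iota_S^*$, that the $\iota_Y$-invariant part of $H^2(Y,\R)$ is $\R K_Y$, and then invoke \cite[Th.~1.2]{CFST}. Your write-up merely spells out the one-line linear-algebra verification in slightly more detail.
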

 According to the authors' knowledge, this is the first explicit example of higher-dimensional non-toric smooth Fano variety with this property, which is not a product of lower dimensional varieties.
 The
symmetries of numerical cones of Fano varieties with high Picard rank were one of our original motivations for this work.
\subsection{Deformations}
\begin{lemma}\label{deform}
We have $h^0(Y,T_Y)=0$ and $h^1(Y,T_Y)=8$.
\end{lemma}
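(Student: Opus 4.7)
The vanishing $h^0(Y, T_Y) = 0$ is immediate from Theorem \ref{autY}: since $\Aut(Y) \cong \Aut(S)$ is finite, its identity component is trivial and its Lie algebra $H^0(Y, T_Y)$ vanishes. For the higher cohomology, I would invoke the canonical isomorphism $T_Y \cong \Omega_Y^3 \otimes (-K_Y)$, valid on any smooth $4$-fold. Since $-K_Y$ is ample (Proposition \ref{fano}), Kodaira--Akizuki--Nakano vanishing yields $H^i(Y, T_Y) \cong H^i(Y, \Omega_Y^3 \otimes (-K_Y)) = 0$ for all $i \geq 2$. In particular, formal deformations of $Y$ are unobstructed, so the Kuranishi space of $Y$ is smooth of dimension exactly $h^1(Y, T_Y)$.

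To pin down this dimension at $8$, the plan is to match first-order deformations of $Y$ with those of $S$. The local moduli of a degree one del Pezzo surface is smooth of dimension $h^1(S, T_S) = 2\cdot 8 - \dim\Aut(\pr^2) = 8$. Let $\mathcal{S} \to T$ be a versal deformation of $S$, with $\dim T = 8$, and form the relative moduli space $\mathcal{Y} \to T$ of rank $2$ torsion free sheaves on the fibers with $c_1 = -K_{\mathcal{S}/T}$, $c_2 = 2$, semistable with respect to $-K_{\mathcal{S}/T}$. By Corollary \ref{smoothness} applied fiberwise, openness of stability, and unobstructedness of deformations of stable sheaves with $\chi(F) = 1$ (Remark \ref{stable}), this family is smooth with central fiber $Y$. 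It produces a Kodaira--Spencer map $\kappa\colon H^1(S, T_S) \to H^1(Y, T_Y)$, and it remains to show $\kappa$ is an isomorphism.

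Injectivity of $\kappa$ is an infinitesimal version of Theorem \ref{moduli}: given a kernel vector $v$, the induced first-order deformation of $Y$ is trivial; fix a cubic $h_0 \in \Pic(S)$, and observe that the class $H_{Y,h_0}$ of Proposition \ref{H_Y} extends to the family (being integral in $H^2$, which is locally constant in smooth proper families) and defines a deformation of the birational contraction $\eta_{h_0}\colon Y \dashrightarrow \pr^4$. Hence the $8$ image points $p_1,\ldots,p_8 \in \pr^4$, and via Gale duality the $8$ points $q_1,\ldots,q_8 \in \pr^2$ blown up by $S \to \pr^2$, are all first-order trivial, forcing $v = 0$. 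Surjectivity of $\kappa$ comes from running the same reconstruction backwards: a first-order deformation of $Y$ induces, via the deformed $\eta_{h_0}$, a deformation of the configuration $(\pr^4; p_1,\ldots,p_8)$, and hence through Gale duality a deformation of $S$, giving an inverse to $\kappa$. The main technical obstacle is to verify that $\eta_{h_0}$ and its exceptional locus $E_{C_1}\cup\cdots\cup E_{C_8}$ deform across an arbitrary first-order deformation of $Y$; this uses that $H_{Y,h_0}$ is movable and big (Lemma \ref{bigray}) and is locally constant in $H^2(Y,\Z)$, both properties persisting to first order.
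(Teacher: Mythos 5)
Your treatment of $h^0(Y,T_Y)=0$ (via finiteness of $\Aut(Y)$, Th.~\ref{autY}) and of the vanishing $h^i(Y,T_Y)=0$ for $i\geq 2$ (Nakano vanishing on the Fano $4$-fold $Y$) is correct and coincides with ingredients of the paper's argument. For $h^1(Y,T_Y)=8$, however, the paper takes a much more direct route that avoids deformation theory entirely: Lemma~\ref{RR} is a Riemann--Roch identity for smooth Fano $4$-folds,
$h^0(T_Y)-h^1(T_Y)=27-5h^0(-K_Y)+K_Y^4+3b_2(Y)-h^{1,2}(Y)-h^{2,2}(Y)+3h^{1,3}(Y)$,
and substituting the invariants of Prop.~\ref{numinv} ($h^0(-K_Y)=6$, $(-K_Y)^4=13$, $b_2=9$, $b_3=0$ so $h^{1,2}=0$, $b_4=h^{2,2}=45$ so $h^{1,3}=0$) gives $\chi(T_Y)=-8$; together with $h^0(T_Y)=0$ this finishes the proof.

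Your alternative --- identifying $H^1(Y,T_Y)$ with $H^1(S,T_S)$ through a relative moduli space and a Kodaira--Spencer map $\kappa$ --- is an appealing strategy, but as written it has a genuine gap precisely at the step you flag. Surjectivity of $\kappa$ requires that \emph{every} first-order deformation $Y_\epsilon$ of $Y$ arises from a deformation of $S$, and for this it is not enough to deform $\eta_{h_0}$ and its exceptional divisors: you must show that $|H_{Y,h_0}|$ extends to $Y_\epsilon$ with the same number of sections, that the resulting rational map still factors as the inverse of the $36$ flips of Lemma~\ref{sequence} followed by the blow-down of $8$ divisors to $8$ points of $\pr^4$, and then that $Y_\epsilon$ is \emph{recovered} from the deformed point configuration (a relative, first-order version of Th.~\ref{Xintro} plus that factorization). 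Only this last step closes the loop from a deformation of $Y$ back to $\kappa$ of a deformation of $S$. Establishing that the birational structure (the relevant linear systems, base loci, and the Mori chamber decomposition) is locally constant in an arbitrary deformation of $Y$ is not a routine verification and is essentially of the same strength as the statement being proved; without it the argument is a plausible program rather than a proof. If you want to avoid supplying those inputs, the Riemann--Roch computation is both shorter and self-contained.
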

\noindent Thus $Y=M_{S,-K_S}$ varies in an $8$-dimensional family, like $S$.
For the proof, we need the following general formula.
\begin{lemma}\label{RR}
Let $Z$ be a smooth Fano $4$-fold. Then
$$h^0(Z,T_Z)-h^1(Z,T_Z)=27-5h^0(-K_Z)+K_Z^4+3b_2(Z)-h^{1,2}(Z)-h^{2,2}(Z)+3h^{1,3}(Z).$$
\end{lemma}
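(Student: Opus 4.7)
The plan is to compute $\chi(Z,T_Z)$ via Hirzebruch--Riemann--Roch, identify it with $h^0(T_Z)-h^1(T_Z)$ through a vanishing theorem, and then eliminate the resulting Chern numbers using HRR applied to $\ol_Z$, $\ol_Z(-K_Z)$ and $\Omega^1_Z$, together with Kodaira vanishing and elementary Hodge theory on the Fano fourfold $Z$.

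First I would use the identification $T_Z\cong\Omega^3_Z\otimes\ol_Z(-K_Z)$, which follows from $\wedge^{n-1}V\cong V^{\vee}\otimes\det V$ applied to $V=\Omega^1_Z$ with $n=4$. Since $-K_Z$ is ample and $3+q>4$ for every $q\geq 2$, the Akizuki--Nakano vanishing theorem yields
$$
H^q(Z,T_Z)=H^q(Z,\Omega^3_Z\otimes\ol_Z(-K_Z))=0\qquad\text{for every }q\geq 2,
$$
so $\chi(Z,T_Z)=h^0(T_Z)-h^1(T_Z)$ and it suffices to express $\chi(Z,T_Z)$ as in the statement. By HRR, $\chi(Z,T_Z)$ equals the degree-$4$ component of $\operatorname{ch}(T_Z)\cdot\operatorname{td}(T_Z)$, an explicit $\Q$-linear combination of the five Chern numbers $c_1^4,\,c_1^2c_2,\,c_2^2,\,c_1c_3,\,c_4$ of $T_Z$ (with $c_1=-K_Z$, so $c_1^{2k}=K_Z^{2k}$).

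To eliminate these five Chern numbers I would bring in four additional linear relations. The first two come from Kodaira vanishing on the Fano variety $Z$: $\chi(\ol_Z)=1$ and $\chi(\ol_Z(-K_Z))=h^0(-K_Z)$, which via HRR become the Noether-type identity $-c_1^4+4c_1^2c_2+3c_2^2+c_1c_3-c_4=720$ and the identity $h^0(-K_Z)=\tfrac{1}{6}K_Z^4+\tfrac{1}{12}K_Z^2\cdot c_2+1$. The other two are Hodge-theoretic: Kodaira vanishing gives $h^{p,0}(Z)=0$ for $p>0$ and Serre duality then forces $h^{p,4}(Z)=0$ for $p<4$, so that $b_2(Z)=h^{1,1}(Z)$ and
$$
c_4=\chi_{\mathrm{top}}(Z)=2+2b_2(Z)-4h^{1,2}(Z)+2h^{1,3}(Z)+h^{2,2}(Z),\qquad\chi(\Omega^1_Z)=-b_2(Z)+h^{1,2}(Z)-h^{1,3}(Z);
$$
computing $\chi(\Omega^1_Z)$ once again by HRR supplies the last linear relation among the five Chern numbers.

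Substituting these four relations into the HRR expression for $\chi(Z,T_Z)$ is a direct algebraic elimination (the resulting $5\times 5$ linear system is nondegenerate), and produces exactly the right-hand side of the lemma. The main obstacle is the HRR bookkeeping: expanding $\operatorname{ch}_k\cdot\operatorname{td}_{4-k}$ for $k=0,\dotsc,4$ twice (once for $T_Z$ and once for $\Omega^1_Z$) and performing the elimination without arithmetic slips. As a sanity check, for $Z=\pr^4$ one has $K_Z^4=625$, $h^0(-K_Z)=126$, $b_2=h^{2,2}=1$, $h^{1,2}=h^{1,3}=0$, and the formula correctly reproduces $h^0(T_Z)-h^1(T_Z)=\dim\operatorname{PGL}(5)=24$.
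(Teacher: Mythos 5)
Your proposal is correct and follows essentially the same route as the paper: Nakano (Akizuki--Nakano) vanishing to identify $\chi(T_Z)$ with $h^0(T_Z)-h^1(T_Z)$, then Hirzebruch--Riemann--Roch for $T_Z$ combined with the auxiliary relations coming from $\chi(\ol_Z)=1$, $\chi(-K_Z)=h^0(-K_Z)$, $\chi(\Omega^1_Z)$, and the expression of $\chi_{\mathrm{top}}=c_4$ in terms of Hodge numbers. The intermediate identities you state (in particular $K_Z^2\cdot c_2=2(6h^0(-K_Z)-K_Z^4-6)$ and the Hodge-theoretic formulas for $c_4$ and $\chi(\Omega^1_Z)$) agree with those used in the paper, and your $\pr^4$ check confirms the final constant.
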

\begin{proof}
Since $Z$ is Fano, by Nakano vanishing we have $h^i(T_Z)=0$ for $i\geq 2$, so by Riemann-Roch $$h^0(T_Z)-h^1(T_Z)=\chi(T_Z)=\frac{1}{12}\left(2K^4-5K^2\cdot c_2-5K\cdot c_3-2\chi_{top}\right)+4.$$
Riemann-Roch for $\ol_Z(-K_Z)$ gives $K^2\cdot c_2=2(6h^0(-K)-K^4-6)$, and Riemann-Roch for $\Omega^1_Z$ gives $K\cdot c_3=2(2h^{1,2}-4b_2+h^{2,2}-4h^{1,3}-22)$; this
 yields the statement.
\end{proof}
\begin{proof}[Proof of Lemma \ref{deform}]
By Lemma \ref{RR} and  Prop.~\ref{numinv} we have $h^0(T_Y)-h^1(T_Y)=-8$. On the other hand, $\Aut(Y)$ is finite by Th.~\ref{autY}, hence $h^0(T_Y)=0$, and  $h^1(T_Y)=8$.
\end{proof}
\subsection{Other models}
Let us mention two other  interesting projective $4$-folds that are pseudo-isomorphic to $Y$.

The first is   the blow-up $W$ of $(\pr^1)^4$ in $5$ general points. There exists a pseudo-isomorphism $W\dasharrow X$, where $X$ is a blow-up pf $\pr^4$ at $8$ general points, see \cite[Remark at the end of \S 1]{mukaiTshaped}. Thus by Cor.~\ref{SfromX} and Th.~\ref{iso} there exist a del Pezzo surface $S$ of degree $1$, and a chamber $\ma{C}\subset\Pi\subset H^2(S,\R)$, such that $W\cong
M_{S,\ma{C}}$, and $W$ is pseudo-isomorphic to $Y=M_{S,-K_S}$.

For the second, let $G$ be the variety of lines contained in a smooth complete intersection of two quadric hypersurfaces in $\pr^6$. Then $G$ is a smooth Fano $4$-fold with $b_2(G)=8$, and $G$ is pseudo-isomorphic to a blow-up of $\pr^4$ in $7$ general points (see \cite{fanomodel} and references therein). Let 
$\Bl_p G$ be the blow-up of $G$ at
 a general point. As for $W$ above, there exist a del Pezzo surface $S$ of degree $1$, and  a chamber $\ma{C}'\subset\Pi\subset H^2(S,\R)$, such that 
$\Bl_p G\cong M_{S,\ma{C}'}$, and $\Bl_p G$ is pseudo-isomorphic to $Y=M_{S,-K_S}$. There is also a chamber $\ma{C}''\subset\ma{E}\subset H^2(S,\R)$ such that $G\cong M_{S,\ma{C}''}$, however  $\ma{C}''\not\subset\Pi$.
\section{Anticanonical and bianticanonical linear systems}\label{anti}
\subsection{The anticanonical linear system}\label{anticanonical}
\noindent Let $S$ be a degree one del Pezzo surface, and $Y=M_{S,-K_S}$ the associated Fano $4$-fold. In this subsection we show the first part of 
Th.~\ref{system}, namely that
the linear system $|-K_Y|$ has a base locus of positive dimension.

It is enough to prove this statement when 
$Y=M_{S,-K_S}$ is general, {\em i.e.} when
$S$ is a general del Pezzo surface of degree $1$; we will assume this throughout the subsection.
Let us also fix for the whole subsection a   cubic $h\subset S$, the corresponding blow-up $X=X_h$ of $\pr^4$ at $8$ points, and the  birational map
$\xi\colon X\dasharrow Y$ (see \ref{secxi}). We keep the notation as in \ref{notationP^4}. Notice that since $S$ is general, $X$ is a blow-up of $\pr^4$ at $8$ general points.

We analyse the base locus of $|-K_X|$. 
This contains the curves $L_{ij}$ for $1\leq i<j\leq 8$ and $\Gamma_k$ for $k=1,\dotsc,8$, because they have negative intersection with $-K_X$ (see Cor.~\ref{negativecurves}). We will show (Cor.~\ref{conclusion} and Lemma \ref{images}) that $\Bs|-K_X|$ also contains the transform $R$ of a  smooth rational quintic curve $R_4\subset\pr^4$   through $p_1,\dotsc,p_8$.

Let us recall that an elliptic normal quintic in $\pr^4$ is a smooth curve of genus one, degree $5$, not contained in a hyperplane.
\begin{lemma}[\cite{ranestadschreyer},\cite{dolgannali}]\label{scroll}
 Let $p_1,\dotsc,p_8\in\pr^4$ be general points. 
Then there is a pencil  of elliptic normal quintics in $\pr^4$ through $p_1,\dotsc,p_8$, which sweeps out a cubic scroll $W\subset\pr^4$.

Let moreover 
$q_1,\dotsc,q_8\in\pr^2$ be the associated points to  $p_1,\dotsc,p_8\in\pr^4$. Then there is a birational map $\alpha\colon W\to\pr^2$ such that  $\alpha(p_i)=q_i$ for $i=1,\dotsc,8$, $\alpha$ sends the pencil  of elliptic normal quintics to the pencil of plane cubics through $q_1,\dotsc,q_8$, and $\alpha$ is the blow-up of the ninth base point $q_0\in\pr^2$ of the pencil of plane cubics.
\end{lemma}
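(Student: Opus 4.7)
The plan is to construct everything on the $\pr^2$ side. Let $q_1,\dotsc,q_8 \in \pr^2$ be the associated set to $p_1,\dotsc,p_8$; these exist and are in general linear position by Lemma \ref{generallinear}. I will build a candidate cubic scroll $W' \subset \pr^4$ and a pencil of elliptic quintics from $(\pr^2, q_1,\dotsc,q_8)$, and then verify that the resulting ordered $8$-tuple on $W'$ is projectively equivalent to $(p_1,\dotsc,p_8)$.

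Since the $q_i$ are general, $|\ma{I}_{q_1,\dotsc,q_8}(3)|$ is a pencil of plane cubics; by Bézout its base locus consists of $9$ points, the ninth of which I denote $q_0$. Let $\alpha_0 \colon W_0 \to \pr^2$ be the blow-up of $q_0$, so $W_0 \cong \mathbb{F}_1$; write $h := \alpha_0^*\ol_{\pr^2}(1)$ and $E$ for the exceptional divisor. The linear system $|2h-E|$ on $W_0$ is very ample of projective dimension $4$ and embeds $W_0$ as a smooth cubic scroll $W' \subset \pr^4$; equivalently, the rational map $\pr^2 \dasharrow \pr^4$ given by the $5$-dimensional space $V := H^0(\ma{I}_{q_0}(2))$ of conics through $q_0$ factors through $\alpha_0$. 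Set $p_i' := \alpha_0^{-1}(q_i) \in W'$. The pencil of plane cubics through $q_0,q_1,\dotsc,q_8$ lifts to a pencil in $|3h-E|=|-K_{W_0}|$ on $W_0$ whose general member is smooth; inside $W' \subset \pr^4$ each such member has degree $(2h-E)(3h-E)=5$, arithmetic genus $1$, spans $\pr^4$, and passes through $p_1',\dotsc,p_8'$, so it is an elliptic normal quintic. The pencil sweeps out $W'$.

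The key step is to show $(p_1',\dotsc,p_8')$ is projectively equivalent to $(p_1,\dotsc,p_8)$, for which I plan to use the matrix criterion of association (Lemma \ref{generallinear}). Pick a basis $f_1,\dotsc,f_5$ of $V$ and lifts $\tilde q_i \in \C^3$; then the matrix $B$ with $B_{k,i} = f_k(\tilde q_i)$ is a coordinate matrix for $(p_1',\dotsc,p_8')$, and the matrix $A$ with columns $\tilde q_i$ is one for $(q_1,\dotsc,q_8)$. The $(l,k)$-entry of $AB^t$ is $\sum_{i=1}^8 (\ell_l f_k)(\tilde q_i)$, where $\ell_l$ is the $l$-th linear coordinate on $\pr^2$, and $\ell_l f_k$ is a cubic vanishing at $q_0$ because $f_k(q_0)=0$. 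On the other hand, because $q_0$ is the ninth base point of a pencil of cubics through $q_1,\dotsc,q_8$, a dimension count shows that $\mathrm{ev}_{q_0}$ on $H^0(\ol_{\pr^2}(3))$ lies in the span of $\mathrm{ev}_{q_1},\dotsc,\mathrm{ev}_{q_8}$, giving a Cayley--Bacharach relation $\mathrm{ev}_{q_0} = \sum_i \nu_i\,\mathrm{ev}_{q_i}$ with explicit scalars $\nu_i$. Replacing each $\tilde q_i$ by $\nu_i^{1/3}\tilde q_i$ (which multiplies column $i$ of $B$ by $\nu_i^{2/3}$), the $(l,k)$-entry of $AB^t$ becomes $\sum_i \nu_i (\ell_l f_k)(\tilde q_i) = (\ell_l f_k)(q_0) = 0$, so $AB^t = 0$ as required.

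The hard part is the Cayley--Bacharach argument in the previous paragraph: it is what really uses the defining property of $q_0$ as the ninth base point. Everything else is either Hirzebruch-surface bookkeeping (the scroll structure, dimensions, degree, genus) or formal consequences of the construction: the birational map $\alpha \colon W \to \pr^2$ (identifying $W$ with $W'$ via the projective equivalence of $(p_i)$ and $(p_i')$) is simply $\alpha_0$, which contracts the unique $(-1)$-curve on $\mathbb{F}_1 \cong W$ to $q_0$, sends $p_i$ to $q_i$, and matches the pencil of elliptic normal quintics to the pencil of plane cubics whose ninth base point is $q_0$.
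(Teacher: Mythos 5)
Your proposal is correct, but it takes a genuinely different route from the paper. The paper argues top--down from the $\pr^4$ side: it quotes \cite[Prop.~5.2]{ranestadschreyer} for the existence of the pencil of elliptic normal quintics through $p_1,\dotsc,p_8$ sweeping out the scroll $W$, quotes \cite[2.4]{dolgannali} for the fact that $|p_1+\cdots+p_8-\Lambda_{|B}|$ embeds a quintic $B$ of the pencil as a plane cubic sending $p_i\mapsto q_i$ (this is where association enters, as a black box), and then uses intersection theory on $W\cong\mathbb{F}_1$ to identify this system with $(e+f)_{|B}$ and checks that restriction of sections is an isomorphism, so the cubic models of the various $B$'s glue to the blow-down $\alpha$. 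You instead build everything bottom--up from $(\pr^2,q_1,\dotsc,q_8)$: blow up the ninth base point $q_0$, embed by $|2h-E|$ as a cubic scroll, and verify by hand that the images of the $q_i$ are associated to them, via the matrix criterion $AB^t=0$ and the Cayley--Bacharach relation $\mathrm{ev}_{q_0}=\sum_i\nu_i\,\mathrm{ev}_{q_i}$ on cubics. Your computation is right (the $(l,k)$-entry of $AB^t$ is $\sum_i(\ell_lf_k)(\tilde q_i)$, which after rescaling the lifts equals $(\ell_lf_k)(\tilde q_0)=0$ since $f_k(q_0)=0$), and you correctly identify it as the crux. What your route buys is self-containedness --- it replaces both external references by an explicit construction in which the role of $q_0$ is transparent, and it produces $\alpha$ as the blow-up by construction rather than by a gluing argument; what it costs is that you must separately check two small nondegeneracy points you currently gloss over: that every $\nu_i\neq 0$ (so the rescaling $\tilde q_i\mapsto\nu_i^{1/3}\tilde q_i$ is legitimate; this holds because for general points $q_0$ is not a base point of the net of cubics through any seven of the $q_i$), and that $B$ has rank $5$ (the eight image points span $\pr^4$), so that $AB^t=0$ really pins down the associated configuration.
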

\begin{proof}
The first statement is \cite[Prop.~5.2]{ranestadschreyer}. 
Let $B\subset W$ be an elliptic normal quintic through $p_1,\dotsc,p_8$, and $\Lambda\subset \pr^4$ a general hyperplane. By \cite[2.4]{dolgannali}, the complete linear system $|p_1+\cdots+p_8-\Lambda_{|B}|$ on $B$ yields a map $B\to\pr^2$, embedding $B$ as a plane cubic, and sending $p_i$ to $q_i$ for $i=1,\dotsc,8$.

Recall that $W$ is isomorphic to the blow-up of $\pr^2$ at a point. Let $e\subset W$ be the $(-1)$-curve, and $f\subset W$ a fiber of the $\pr^1$-bundle on $W$. Then the blow-up $\alpha\colon W\to\pr^2$ is the map associated to the complete linear system $|e+f|$ on $W$. In $W$ we have $\Lambda_{|W}\sim e+2f$, $B\sim 2e+3f=-K_W$, and $B^2=8$, so that if $B'$ is another quintic of the pencil, $B'_{|B}=p_1+\cdots+p_8$. Thus on $B$ we have $p_1+\cdots+p_8-\Lambda_{|B}\sim(B'-\Lambda)_{|B}\sim (e+f)_{|B}$.
Moreover it is not difficult to see that the restriction map $H^0(W,\ol_W(e+f))\to
H^0(B,\ol_B((e+f)_{|B}))$ is an isomorphism;
 this yields the statement.
\end{proof}
Let $W'\subset X$ be the transform of the cubic scroll $W\subset\pr^4$. We have a diagram:
\begin{equation}\label{W}\begin{gathered}{\footnotesize
\xymatrix{&{W'\subset X}\ar[dl]_{\eta}\ar[dr]\ar[dd]^{\alpha'}&\\
{W\subset\pr^4}\ar[dr]_{\alpha}&&S\ar[dl]^{\sigma}\\
&{\pr^2}&
}}\end{gathered}\end{equation}
where $\eta\colon W'\to W$ is the blow-up of $p_1,\dotsc,p_8$, so the composition $\alpha':=\alpha\circ
\eta\colon W'\to\pr^2$ is the blow-up of $q_0,\dotsc,q_8$. Thus
$W'$ is isomorphic to the blow-up of $S$ in the base point of  $|-K_S|$, and there is an elliptic fibration $\pi\colon W'\to\pr^1$,  where the smooth fibers are the transforms of the elliptic normal quintics through
$p_1,\dotsc,p_8$ in $\pr^4$.
\begin{lemma}\label{locus}
The surface $W'\subset X$ is disjoint from $L_{ij}$ for $1\leq i<j\leq 8$ and from $\Gamma_k$ for $k=1,\dotsc,8$, and $W'$ is contained in the open subset where $\xi\colon X\dasharrow Y$ is an isomorphism. 
\end{lemma}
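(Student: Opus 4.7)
The plan is to reduce both assertions to a pair of set-theoretic statements in $\pr^4$. The second statement---that $W'$ lies in the isomorphism locus of $\xi$---is an immediate consequence of the first via Lemma \ref{sequence}: the birational map $\xi\colon X\dasharrow Y$ is a composition of flips whose flipped loci are precisely the curves $L_{ij}$ and $\Gamma_k$, and hence $\xi$ is an isomorphism on the complement of $\bigcup_{i<j} L_{ij}\cup\bigcup_k\Gamma_k$ in $X$. So the entire content of the lemma is to prove $W'\cap L_{ij}=\emptyset$ and $W'\cap\Gamma_k=\emptyset$ for all $i<j$ and all $k$.

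First I would record the local reduction. Under $\pi\colon X\to\pr^4$, any intersection point of $W'$ with $L_{ij}$ either lies outside every $E_m$---in which case it maps to a point of $\overline{p_ip_j}\cap W\setminus\{p_i,p_j\}$---or lies on $E_i$ or $E_j$, where $L_{ij}\cap E_m$ consists of the single tangent direction of $\overline{p_ip_j}$ at $p_m$ and $W'\cap E_m=\pr(T_{p_m}W)$; the latter point lies on $W'$ iff $\overline{p_ip_j}$ is tangent to $W$ at $p_m$. The same dichotomy applies to $\Gamma_k$ and the rational normal quartic $\gamma_k$. Thus it suffices to verify that $\overline{p_ip_j}\cap W=\{p_i,p_j\}$ and $\gamma_k\cap W=\{p_j:j\neq k\}$, each with multiplicity one at every point, so that neither a third intersection nor a tangency with $W$ occurs.

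For the line case, the key input is the classical fact that a twisted cubic in $\pr^3$ admits no trisecant lines: on the parametrization $[1:t:t^2:t^3]$, the top $3\times 3$ Vandermonde minor $(t_2-t_1)(t_3-t_1)(t_3-t_2)$ is non-zero for distinct parameters, so three distinct points of the curve are never collinear. I would pick a generic hyperplane $H\subset\pr^4$ through $\overline{p_ip_j}$; since $W$ is smooth and its general hyperplane section is a smooth rational cubic, for generic $H$ in the pencil the intersection $H\cap W$ is a twisted cubic in $H\cong\pr^3$. Therefore $\overline{p_ip_j}\cap W=\overline{p_ip_j}\cap(H\cap W)$ is a scheme of length at most $2$, and the presence of $p_i,p_j$ forces it to equal these two transverse points.

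For the quartic $\gamma_k$ I would first rule out $\gamma_k\subset W$ using the map $\alpha\colon W\to\pr^2$ of Lemma \ref{scroll}: an irreducible curve of degree $4$ in $\pr^4$ lying on $W\cong\mathbb{F}_1$ would project either to a conic in $\pr^2$ through the seven points $\{q_j\}_{j\neq k}$, or to a plane cubic with a double point at $q_0$ through these seven points, and both possibilities fail for general $q_1,\dotsc,q_8$ by an elementary dimension count. Given $\gamma_k\not\subset W$, one can bound the scheme-theoretic length of $\gamma_k\cap W$ by restricting to $\gamma_k\cong\pr^1$ the three quadrics that cut out the scroll scheme-theoretically (the $2\times 2$ minors of a $2\times 3$ matrix of linear forms defining $W$): each becomes a section of $\ol_{\pr^1}(8)$ vanishing on the seven forced common points with a single residual zero, and for $S$ general these three residual zeros are pairwise distinct, so $\gamma_k\cap W$ is reduced and equals the seven points transversally. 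The main obstacle I anticipate is making this last genericity claim fully rigorous for the specific point configurations arising from a general degree-one del Pezzo $S$: one must control the joint variation of the quadrics cutting out $W$ and the parametrization of $\gamma_k$ as the $q_i$'s move, and verify that the locus of degenerate behaviour (coincident residual zeros, or tangency with $W$) is a proper closed subset of the relevant moduli. A conceptually cleaner proof might exploit the modular description of $W'$ inside $X\cong M_{S,\ma{B}_h}$ furnished by Theorem \ref{X}, but I do not see such an approach immediately.
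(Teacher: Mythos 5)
Your reduction is the same as the paper's: by Lemma \ref{sequence} the indeterminacy locus of $\xi$ is exactly $\bigcup L_{ij}\cup\bigcup\Gamma_k$, so everything comes down to showing that $W$ meets each line $\overline{p_ip_j}$ in the reduced scheme $\{p_i,p_j\}$ and each quartic $\gamma_k$ in the reduced scheme $\{p_j\}_{j\neq k}$. Your treatment of the lines is acceptable (though the Vandermonde computation only excludes three \emph{distinct} collinear points; to get the scheme-theoretic bound of length $2$ you should instead invoke the fact that $W$, hence any twisted-cubic hyperplane section of it, is cut out by quadrics, and also say why $\overline{p_ip_j}\not\subset W$ — e.g.\ because the lines of $W$ are the rulings and the directrix, and via $\alpha$ neither contains two of the $p_i$ for general points). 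The paper does this step slightly differently, intersecting $W$ with the plane through three of the points and using B\'ezout: the $0$-cycle has degree $3$ and contains $p_i+p_j+p_k$, hence equals it.

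The genuine gap is the one you flag yourself: for $\gamma_k$ you reduce to a genericity claim about the residual zeros of the three quadrics restricted to $\gamma_k\cong\pr^1$, and you do not prove it. Showing that the degenerate locus is a \emph{proper} closed subset of the parameter space requires either an explicit non-degenerate example or some further argument, and neither is supplied; moreover you would also need that each quadric vanishes to order exactly one at each $p_j$ (otherwise there is no single residual zero and the count changes), which is again the transversality you are trying to prove. The paper closes this with a short B\'ezout argument that avoids any extra genericity: let $V\subset\pr^4$ be the cone over $\gamma_k$ with vertex $p_8$ (say $k=1$). Projection of $\gamma_1$ from a point of itself is a twisted cubic, so $\deg V=3$, and the intersection $0$-cycle $V\cdot W$ has degree $9$; since $V$ has multiplicity $3$ at its vertex and $W$ passes through all of $p_2,\dotsc,p_8$, the cycle contains $p_2+\cdots+p_7+3p_8$ and therefore equals it. As $\gamma_1\subset V$, this forces $W\cap\gamma_1=\{p_2,\dotsc,p_8\}$ set-theoretically with transverse intersection at $p_2,\dotsc,p_7$; repeating with the cone of vertex $p_7$ handles $p_8$. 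I would recommend replacing your residual-zero argument with this cone computation (or supplying the missing non-degeneracy verification explicitly).
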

\begin{proof}
Consider the rational normal quartic
$\gamma_1\subset\pr^4$  through $p_2,\dotsc,p_8$, so that $\Gamma_1\subset X$ is the transform of $\gamma_1$.
To show that $W'$ is disjoint from $\Gamma_1$, we show that  $W\cap\gamma_1=\{p_2,\dotsc,p_8\}$ and that the intersection is transverse.

Let $V\subset\pr^4$ be the cone over $\gamma_1$ with vertex $p_8$. Then the $0$-cycle given by the intersection of $V$ and $W$ has degree $9$ and contains $p_2+\cdots+p_7+3p_8$, so it is $p_2+\cdots+p_7+3p_8$. Thus  set-theoretically $W\cap V=\{p_2,\dotsc,p_8\}$, and the intersection is transverse at $p_2,\dotsc,p_7$.

This shows that set-theoretically $W\cap\gamma_1=\{p_2,\dotsc,p_8\}$, and that the intersection is transverse at $p_2,\dotsc,p_7$. By considering the cone over $\gamma_1$ with vertex $p_7$, we see that the intersection is transverse also at $p_8$. 
Thus $W'\cap\Gamma_1=\emptyset$, and similarly $W'\cap\Gamma_k=\emptyset$ for $k=1,\dotsc,8$.

To show that $W'\cap L_{ij}=\emptyset$ for every $1\leq i<j\leq 8$, one proceeds in a similarly way, by considering in $\pr^4$ the intersection of $W$ with a plane through $3$ points among $p_1,\dotsc,p_8$, and showing that $W$ intersects the line $\overline{p_ip_j}$ only in  $p_i,p_j$, and that the intersection is transverse.

The last statement follows from Lemma \ref{sequence}.
\end{proof}
\begin{lemma}\label{restriction}
We have $(-K_X)_{|W'}=\ol_{W'}(R+2F)$ and $R=\Bs|(-K_X)_{|W'}|$,
where $F\subset W'$ is a fiber of the elliptic fibration, and $R\subset W'$ is a $(-1)$-curve and a section of the elliptic fibration. 
\end{lemma}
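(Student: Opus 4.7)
The plan is to prove both assertions by explicit divisor-class computations on the rational elliptic surface $W'$, followed by a cohomological argument for the base locus.

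First I would fix notation using Lemma~\ref{scroll} and diagram~\eqref{W}: the map $\eta\colon W'\to W$ is the blow-up of $W\cong\mathbb{F}_1$ at $p_1,\dotsc,p_8$. Let $e_W,f_W$ denote the $(-1)$-section and a ruling of $W$, and $e_1,\dotsc,e_8\subset W'$ the exceptional curves of $\eta$. Since $H_{|W}=e_W+2f_W$ and the elliptic pencil on $W$ lies in $|-K_W|=|2e_W+3f_W|$, restricting $-K_X=5H-3\sum_iE_i$ to $W'$ gives
\[
(-K_X)_{|W'} \;=\; 5\eta^*e_W+10\eta^*f_W-3\sum_{i=1}^8 e_i,
\]
and $F=\eta^*(-K_W)-\sum_ie_i=-K_{W'}$.

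Next I would identify $R$. A short genus--degree calculation on $\mathbb{F}_1$ shows that a smooth rational curve of degree $5$ on $W$ must lie in the class $e_W+4f_W$; since $h^0(\mathbb{F}_1,e_W+4f_W)=9$ and the eight points $p_i$ impose independent conditions for the general configuration considered here, there is a unique smooth rational quintic $R_4\subset W$ through them. Its strict transform on $W'$ is $R$, of class $\eta^*e_W+4\eta^*f_W-\sum_ie_i$; standard intersection calculations give $R^2=-1$ and $R\cdot F=1$, so $R$ is a $(-1)$-curve and a section of the elliptic fibration. Summing the classes then yields $R+2F=5\eta^*e_W+10\eta^*f_W-3\sum_ie_i=(-K_X)_{|W'}$, which establishes the first identity.

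For the base-locus statement I would argue cohomologically. The divisor $D:=R+3F=(R+2F)-K_{W'}$ has self-intersection $5$ and strictly positive intersection with every irreducible curve on $W'$: intersection $1$ with any fiber, intersection $2$ with $R$ itself, and intersection at least $3$ with any other $(-1)$-curve, which is necessarily a section of $\pi$ and hence has $F$-intersection $1$ by adjunction. Under the genericity assumption on $S$ the surface $W'$ carries no $(-2)$-curves, so $D$ is big and nef; Kawamata--Viehweg then gives $h^i(W',R+2F)=0$ for $i\geq 1$, and Riemann--Roch yields $h^0(W',R+2F)=3$. On the other hand, writing $F=\pi^*\ol_{\pr^1}(1)$ for the elliptic fibration $\pi\colon W'\to\pr^1$, Leray gives $h^0(W',2F)=h^0(\pr^1,\ol_{\pr^1}(2))=3$. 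Multiplication by a nonzero section of $\ol_{W'}(R)$ embeds $H^0(2F)\hookrightarrow H^0(R+2F)$ as subspaces of equal dimension, hence as an isomorphism. Consequently $|R+2F|=R+|2F|$, and since $|2F|=\pi^*|\ol_{\pr^1}(2)|$ is base-point-free, $\Bs|(-K_X)_{|W'}|=R$.

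The main obstacle is the nefness check for $R+3F$, which rests on the absence of $(-2)$-curves on $W'$; this is the reason to work under the standing genericity assumption on $S$. For non-generic configurations reducible fibers of $\pi$ could appear and one would need to track the intersections of $R$ with their components, or else replace the Kawamata--Viehweg input by a direct analysis of the connecting map $H^0(\ol_R(1))\to H^1(2F)$ in the long exact sequence coming from $0\to\ol_{W'}(2F)\to\ol_{W'}(R+2F)\to\ol_R(1)\to 0$.
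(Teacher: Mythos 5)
Your proof is correct, but it follows a genuinely different route from the paper's. The paper argues abstractly on the linear system: it computes $((-K_X)_{|W'})^2=3$ and $(-K_X)\cdot F=1$, proves that $(-K_X)_{|W'}$ is \emph{ample} on $W'$ via Nakai's criterion --- which requires the ambient inputs Cor.~\ref{negativecurves} (classification of the $K_X$-non-positive curves of $X$) and Lemma \ref{locus} ($W'$ avoids them) --- so that Kodaira vanishing and Riemann--Roch give $h^0=3$; it then decomposes an arbitrary member $D$ as $R+F_1+F_2$ using $D\cdot F=1$ and the integrality of the fibers, and extracts $R$ as the common $(-1)$-section, deferring its identification with the quintic to Lemma \ref{images}. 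You instead work intrinsically on $W'\cong\Bl_{p_1,\dotsc,p_8}\mathbb{F}_1$: you produce $R$ up front as the strict transform of the unique quintic in $|e_W+4f_W|$ through the eight points, verify the class identity $R+2F=(-K_X)_{|W'}$ and $R^2=-1$, $R\cdot F=1$ by direct computation in $\Pic(W')$, and then get the base locus from $h^0(R+2F)=h^0(2F)=3$ and the equality $|R+2F|=R+|2F|$. What this buys is independence from the geometry of $X$ (no need for Cor.~\ref{negativecurves} or Lemma \ref{locus} in this step) and it proves the content of Lemma \ref{images} along the way; the cost is that you must justify the existence, uniqueness and smoothness of the quintic by a dimension count and a general-position argument, whereas the paper's argument manufactures $R$ out of the linear system without assuming it exists. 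Two small remarks: your worry about $(-2)$-curves is not actually needed for the nefness of $R+3F$, since $F$ is a nef fiber class, $R$ is irreducible with $(R+3F)\cdot R=2>0$, and nef and big is all that Kawamata--Viehweg requires; and your final step tacitly uses $h^0(\ol_{W'}(R))=1$ (rigidity of the $(-1)$-curve) to conclude that the image of $H^0(2F)$ is the whole of $H^0(R+2F)$ forces every member to contain $R$ --- worth stating, but immediate.
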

\begin{proof}
We have $-K_X=5H-3\sum_{i=1}^8E_i$, and $E_{i|W'}$ is a $(-1)$-curve in $W'$ for $i=1,\dotsc,8$. Thus $((-K_X)_{|W'})^2=25(H_{|W'})^2+9\sum_i(E_{i|W'})^2=75-72=3$.

Let $F$ be a smooth fiber of the elliptic fibration $\pi\colon W'\to\pr^1$. Since $F$ is the transform of an elliptic normal quintic in $\pr^4$ through $p_1,\dotsc,p_8$, we have
$$-K_X\cdot F=\Bigl(5H-3\sum_{i=1}^8E_i\Bigr)\cdot F=25-24=1.$$
Since $-K_{W'}\sim F$, by Riemann-Roch we get $\chi(W',(-K_X)_{|W'})=3$.

Notice that $(-K_X)_{|W'}$ has positive intersection with every curve in $W'$. Indeed,
by 
Lemma \ref{negativecurves}, there are finitely many irreducible curves in $X$ having non-positive intersection with $-K_X$, and  by Lemma  \ref{locus} these curves are disjoint from $W'$. 

By Nakai's criterion,
 $(-K_X)_{|W'}$ is ample on $W'$, and $-K_{W'}$ is nef, so by Kodaira vanishing we have $h^i(W',(-K_X)_{|W'})=h^i(W',K_{W'}-K_{W'}+(-K_X)_{|W'})=0$ for $i=1,2$, and  $h^0(W',(-K_X)_{|W'})=3$. In particular the linear system $|(-K_X)_{|W'}|$ is non-empty. 

We have $F\in|\pi^*\ol_{\pr^1}(1)|$, and every fiber of $\pi$ is integral. Thus for every irreducible curve $C\subset W'$ we have $F\cdot C\geq 0$, and $F\cdot C=0$ if and only if $C\sim F$ and $C$ is a fiber of the elliptic fibration.

Let $D\in |(-K_X)_{|W'}|$. Then $1=-K_X\cdot F=D\cdot F$ (where the first intersection is in $X$, and the second in $W'$), thus we must have  $D=R+\sum_im_iF_i$ where $R$ is an irreducible curve such that  $R\cdot F=1$, and $F_i$ are fibers of the elliptic fibration. In particular $(-K_X)_{|W'}\sim R+mF$, where $m=\sum_im_i$. 

Since $R$ is a section of $\pi$, we have $R\cong\pr^1$;  moreover $-K_{W'}\cdot R=F\cdot R=1$, hence $R$ is a $(-1)$-curve.
Since $((-K_X)_{|W'})^2=3$, we get $m=2$, hence $D=R+F_1+F_2$ (with possibly $F_1=F_2$).

 Now if  $D'\in |(-K_X)_{|W'}|$ is another divisor, we have $D'=R'+F_1'+F_2'$ as above. Since all fibers of the elliptic fibration are linearly equivalent, we get $R\sim R'$ and hence  $R=R'$, because $R$ is a $(-1)$-curve. Thus $R=\Bs|(-K_X)_{|W'}|$.
\end{proof}
\begin{corollary}\label{conclusion}
The base locus of $|-K_X|$ contains the smooth rational curve $R$, and the base locus of $|-K_Y|$ contains the
 smooth rational curve $\xi(R)$. 
\end{corollary}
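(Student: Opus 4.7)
\medskip
\noindent\textbf{Proof plan.}
The strategy is to deduce both statements from Lemma \ref{restriction} together with Lemma \ref{locus}; no new geometric input is needed.

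For the first assertion, the plan is to argue by restriction to $W'$. Given any divisor $D\in|-K_X|$, either $W'\subset D$, in which case $R\subset W'\subset D$ and we are done, or the restriction $D_{|W'}$ is a well-defined effective divisor on $W'$ lying in the linear system $|(-K_X)_{|W'}|$. In the latter case, Lemma \ref{restriction} gives $R=\Bs|(-K_X)_{|W'}|$, so $R\subset D_{|W'}\subset D$. Since this holds for every $D\in|-K_X|$, we conclude $R\subset \Bs|-K_X|$.

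For the second assertion, the point is that $R$ lies in the locus where the birational map $\xi\colon X\dasharrow Y$ behaves as an isomorphism. Indeed, Lemma \ref{locus} shows that $W'$ (and hence $R\subset W'$) is contained in the open subset $U\subset X$ on which $\xi$ restricts to an isomorphism onto an open $V\subset Y$. Consequently $\xi(R)\subset V$ is a well-defined irreducible curve, smooth and rational since $R$ is (by Lemma \ref{restriction}, $R$ is a $(-1)$-curve in $W'$).

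Finally, since $\xi$ is a pseudo-isomorphism (Lemma \ref{sequence}), strict transform induces a bijection $|-K_Y|\to|-K_X|$ that commutes with restriction to the open subsets $V$ and $U$. Given any $D'\in|-K_Y|$, let $D\in|-K_X|$ be its strict transform; by the first part, $R\subset D$, hence $R\cap U\subset D\cap U$, and applying the isomorphism $\xi\colon U\to V$ yields $\xi(R)\subset D'\cap V\subset D'$. Since $D'$ was arbitrary, $\xi(R)\subset\Bs|-K_Y|$. There is essentially no obstacle here beyond bookkeeping: both claims are direct consequences of the preceding lemmas.
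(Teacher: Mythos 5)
Your proof is correct and follows essentially the same route as the paper: the first claim is the standard observation that $\Bs|-K_X|\cap W'$ contains $\Bs|(-K_X)_{|W'}|=R$ (Lemma \ref{restriction}), and the second follows because $R$ lies in the locus where $\xi$ is an isomorphism (Lemma \ref{locus}) and the pseudo-isomorphism identifies $|-K_X|$ with $|-K_Y|$. You merely spell out the details that the paper leaves implicit; no gap.
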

\begin{proof}
It follows from Lemma \ref{restriction} that $\Bs|-K_X|\supseteq\Bs|(-K_X)_{|W'}|=R$. Moreover, by Lemma \ref{locus}, $R$ is contained in the open subset where the pseudo-isomorphism $\xi\colon X\dasharrow Y$ is an isomorphism, thus $\xi(R)$ is contained in the base locus of $|-K_Y|$.
\end{proof}
We describe the images of the curve $R\subset X$ in $\pr^4$ and in $\pr^2$ in the following lemma, whose proof is not difficult and is left to the reader.
\begin{lemma}\label{images}
Let $R_4\subset \pr^4$ and $R_2\subset \pr^2$ be the images of $R$ under $\eta\colon W'\subset X\to W\subset \pr^4$ and $\alpha'\colon W'\to\pr^2$ respectively (see \eqref{W}).

Then $R_4$ is a smooth rational quintic curve through $p_1,\dotsc,p_8$, and $R_2$  is a rational plane quartic  containing $q_1,\dotsc,q_8$ and having a triple point in $q_0$.
\end{lemma}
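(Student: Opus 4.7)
The plan is to compute the class of $R$ explicitly in $\Pic(W')$ using the description of $W'$ as the blow-up of $\pr^2$ at the nine points $q_0,q_1,\dots,q_8$, and then read off both $R_4$ and $R_2$ from intersection numbers. Let $\ell,E_0,E_1,\dots,E_8$ be the standard basis of $\Pic(W')$, where $\ell=(\alpha')^*(\text{line})$ and $E_0$ is the strict transform on $W'$ of the $(-1)$-curve $e\subset W$ contracted by $\alpha$.

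First I would identify $\eta^*H\in\Pic(W')$: since on $W$ the hyperplane section satisfies $H_{|W}\sim e+2f$ (Lemma \ref{scroll} and its proof) and $f\sim \tilde\ell-e$ on $W=\Bl_{q_0}\pr^2$, pulling back to $W'$ gives $\eta^*H=2\ell-E_0$. Combined with $-K_{W'}\sim F\sim 3\ell-E_0-\cdots-E_8$ (elliptic fibration from the pencil of plane cubics through $q_0,\dots,q_8$) and Lemma \ref{restriction}, which gives $R\sim (-K_X)_{|W'}-2F$, one computes
\[
(-K_X)_{|W'}=5\eta^*H-3\sum_{i=1}^{8}E_i=10\ell-5E_0-3\sum_{i=1}^8E_i,
\]
hence
\[
R\sim 4\ell-3E_0-E_1-\cdots-E_8\quad\text{in}\ \Pic(W').
\]

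From this formula the assertions about $R_2=\alpha'(R)$ follow immediately: $R$ is not contracted by $\alpha'$ (it has positive intersection with $\ell$), and the coefficients in the class of $R$ read off $\deg R_2=R\cdot\ell=4$, multiplicity $3$ at $q_0$, and multiplicity $1$ at each $q_i$ for $i=1,\dots,8$. Since $R\cong\pr^1$ dominates $R_2$, the latter is rational. The statements about $R_4=\eta(R)$ also come from intersection: $\deg R_4=R\cdot\eta^*H=R\cdot(2\ell-E_0)=8-3=5$, and $R_4$ passes through each $p_i$ because $R\cdot E_i=1$ for $i=1,\dots,8$.

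The only step requiring a separate geometric argument — and the only mildly delicate point — is the smoothness of $R_4$. For this I would argue that $\eta|_R\colon R\to R_4$ is an isomorphism. Injectivity is automatic: $\eta$ is an isomorphism away from $E_1\cup\cdots\cup E_8$, and $R$ meets each $E_i$ (for $i\ge 1$) in a single point since $R\cdot E_i=1$ and both curves are smooth. Immersivity at each such intersection point follows because $R\cdot E_i=1$ means $R$ meets $E_i$ transversally, so the tangent vector of $R$ at $R\cap E_i$ does not lie in the kernel of $d\eta$, which is exactly the tangent space of the fibre of $\eta$. Hence $\eta|_R$ is a closed immersion of $\pr^1$ and $R_4$ is smooth rational of degree $5$, as claimed.
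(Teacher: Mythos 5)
Your proof is correct, and in fact the paper explicitly leaves this lemma's proof to the reader, so there is no argument in the text to compare against; your computation of the class $R\sim 4\ell-3E_0-E_1-\cdots-E_8$ in $\Pic(W')$ (which is consistent with $R^2=-1$, $R\cdot F=1$, and $R\cdot(-K_X)_{|W'}=1$ from Lemma \ref{restriction}) is surely the intended route, and all the claims about $R_2$ and $R_4$ follow from it exactly as you say. Your extra care about the smoothness of $R_4$ — checking that $\eta|_R$ is an injective immersion because $R$ meets each $E_i$ transversally in a single point — is the one genuinely non-formal step, and you handle it correctly.
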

\begin{remark}[communicated to the authors by Daniele Faenzi and John Christian Ottem]\label{macaulay}
Faenzi and Ottem have computed
with Macaulay2 the dimension and the degree of the base locus  of the linear system of quintics in $\pr^4$ having multiplicity at least $3$ at $8$ general points; it turns out that this base locus has dimension $1$ and degree $65$. On the other hand the base locus contains the $28$ lines $\overline{p_ip_j}$, the $8$ quartics $\gamma_i$, and the quintic $R_4$, whose degrees sum to $65$. This shows that the base locus of $|-K_Y|$ is given by the smooth rational curve   $\xi(R)$, possibly union some zero-dimensional components.
\end{remark}
\begin{remark}
 The quartic $R_2\subset\pr^2$ is classically known and described in \cite[p.\ 252]{coble} and \cite[(6)]{moody}; it has the following geometrical description. Consider the pencil of plane cubics through $q_0,\dotsc,q_8$, and
let $C_\lambda$ be an element of the pencil. Consider the (projective) tangent line $T_{q_0}C_{\lambda}$  of  $C_\lambda$ at $q_0$, and let $p_\lambda$ be the third point of intersection among $C_\lambda$ and $T_{q_0}C_{\lambda}$. Then $R_2$ is the locus of the points $p_\lambda$ when $\lambda$ varies. 

The point $p_\lambda$ is related to the Bertini involution $\iota_{\pr^2}$ defined by the pencil and $q_0$, because if $x\in C_\lambda$ is a general point, then $\iota_{\pr^2}(x)$ is the third point of intersection of the line $\overline{p_\lambda x}$ with $C_\lambda$.

It is not difficult to see that, if $F_1$ and $F_2$ are the equations of two cubics of the pencil, and $L_i:=\sum_{j=0}^2\frac{\partial F_i}{\partial x_i}(q_0)x_j$ is the equation of the tangent line  at $q_0$ of the cubic defined by $F_i$, for $i=1,2$, then $R_2$ has equation
$F_1L_2-F_2L_1=0$.
\end{remark}
\begin{remark}
The curves $R$ and $F$ (notation as in Lemma \ref{restriction})  satisfy $-K_X\cdot R=-K_X\cdot F=1$ and $E_i\cdot R=E_i\cdot F=1$ for every $i=1,\dotsc,8$, so $R\equiv F$ in $X$, and since $W'$ is contained in the domain of $\xi$, we also have $\xi(R)\equiv \xi(F)$ in $Y$. 
Under the map $\zeta\colon\N(X)\to H^2(S,\R)$, we have $\zeta(R)=-K_S$ (see Prop.~\ref{explicit}).
\end{remark}
\begin{remark}
It is shown in \cite[Th.~3.1]{DP} that a nef divisor in $X$ is always base point free, and an ample divisor in $X$ is always very ample. It is interesting to note that these properties are not preserved from $X$ to $Y$.
\end{remark}
\subsection{The bianticanonical linear system}\label{bianticanonical}
\noindent Let $S$ be a degree one del Pezzo surface, and $Y=M_{S,-K_S}$ the associated Fano $4$-fold. In this subsection  we show the second part of 
Th.~\ref{system}, namely that
the linear system $|-2K_Y|$ is base point free.

Let us consider a fixed divisor $E_C\subset Y$, where 
 $C\subset S$ is a conic. Then  $E_C+\iota_Y^*E_C\in|-2K_Y|$ (see Du Val \cite[p.~201]{duval}).  Indeed we have $E_C=\frac{1}{2}\rho(C)$ by Lemma \ref{E_C=}, and $\iota_S^*C\sim-4K_S-C$ (see \eqref{w_0}), so using \eqref{bertini} we get
$$\iota_Y^*E_C=\iota_Y^*\Bigl(\frac{1}{2}\rho(C)\Bigr)=
\rho\Bigl(\frac{1}{2}\iota_S^*C\Bigr)=
\rho\Bigl(-2K_S-\frac{1}{2}C\Bigr)=-2K_Y-E_C.$$
We are going to use the divisors  $E_C+\iota_Y^*E_C$ to prove 
the statement. First we need the following intermediate result.
\begin{lemma}\label{partial}
Let $h\subset S$ be a   cubic; notation as in \ref{notationP^2}. Then
$\Bs|-2K_Y|\subseteq\bigcup_{i=1}^8 (P_{e_i}\cup P_{\iota_S^*e_i})$.
\end{lemma}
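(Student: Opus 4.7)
As already observed preceding the lemma, for every conic $C\subset S$ the divisor $E_C+E_{\iota_S^*C}$ lies in $|-2K_Y|$, so
$$\Bs|-2K_Y|\subseteq\bigcap_{i=1}^8\bigl(E_{C_i}\cup E_{\iota_S^*C_i}\bigr),$$
where $C_i=h-e_i$ (notation as in \ref{notationP^2}). My plan is to show this intersection is contained in $\bigcup_{i=1}^8(P_{e_i}\cup P_{\iota_S^*e_i})$, in two steps.

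First I will work in $X=X_h$ via the identification $X\cong M_{\ma{B}_h}$ of Th.~\ref{X}: under it $E_{C_i}$ corresponds to the exceptional divisor $E_i$ of $X\to\pr^4$, and by Prop.~\ref{explicit} $E_{\iota_S^*C_i}$ corresponds to the fixed divisor $D_i\subset X$ of class $10H-7E_i-6\sum_{j\neq i}E_j$. A direct computation of intersection numbers gives $D_s\cdot L_{ab}<0$ and $D_s\cdot\Gamma_k<0$ for every $s$, $1\le a<b\le 8$ and $1\le k\le 8$, so $L_{ab}\cup\Gamma_k\subset\bigcap_s D_s$. Combining this with the pairwise disjointness of the $E_i$'s and with $D_i|_{E_s}=6\xi$ on $E_s\cong\pr^3$ (for $i\neq s$), I expect to conclude that $\bigcap_{i=1}^8(E_i\cup D_i)\cap V=\emptyset$, where $V\subset X$ is the open locus on which $\xi_h\colon X\dasharrow Y$ is an isomorphism; transferring back to $Y$ then yields $\Bs|-2K_Y|\subseteq\bigcup_k P_{e_k}\cup\bigcup_{j<k}P_{\ell_{jk}}$.

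The second step rules out the $P_{\ell_{jk}}$'s. For $y\in P_{\ell_{jk}}$, formula \eqref{El} yields $E_{C_m}\cap P_{\ell_{jk}}=\emptyset$ for $m\notin\{j,k\}$ (since $C_m\cdot\ell_{jk}=1$), while $E_{\iota_S^*C_m}\cap P_{\ell_{jk}}=Q_m$ is a plane conic in $P_{\ell_{jk}}\cong\pr^2$ (since $\iota_S^*C_m\cdot\ell_{jk}=3$). Hence $y\in\bigcap_i(E_{C_i}\cup E_{\iota_S^*C_i})\cap P_{\ell_{jk}}$ forces $y\in\bigcap_{m\notin\{j,k\}}Q_m$. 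Writing $p_l:=P_{\ell_{jk}}\cap P_{\iota_S^*e_l}$ for $l\notin\{j,k\}$ (a single point each, by Lemma~\ref{excplanes}), the incidence $P_{\iota_S^*e_l}\subset E_{\iota_S^*C_m}\iff m\neq l$ (from $\iota_S^*C_m\cdot\iota_S^*e_l=C_m\cdot e_l=\delta_{ml}$) gives $p_l\in Q_m\iff m\neq l$, so no $p_l$ is common to all six $Q_m$'s; an iterative B\'ezout count on the six plane conics (using that any two meet in the four points $p_l$ with $l\notin\{j,k,m,m'\}$) then gives $\bigcap_m Q_m=\emptyset$. This yields $\Bs|-2K_Y|\subseteq\bigcup_k P_{e_k}\subseteq\bigcup_{i=1}^8(P_{e_i}\cup P_{\iota_S^*e_i})$.

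The main obstacle is the first step: rigorously establishing $\bigcap_{i=1}^8(E_i\cup D_i)\cap V=\emptyset$ requires controlling the residual intersections $\bigcap_s D_s$ and $E_s\cap\bigcap_{i\neq s}D_i$ in $X$ and verifying they are supported set-theoretically on $L_{ab}\cup\Gamma_k$ (respectively on the exceptional fibers of $X\to\pr^4$), an explicit computation via the divisor classes and the restriction $D_i|_{E_s}=6\xi$.
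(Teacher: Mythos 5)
Your starting point is the paper's: both proofs reduce to showing $\bigcap_{i=1}^8\bigl(E_{C_i}\cup E_{\iota_S^*C_i}\bigr)\subseteq\bigcup_{i=1}^8(P_{e_i}\cup P_{\iota_S^*e_i})$. But your Step 1 aims at a claim that is actually false: $\bigcap_{i=1}^8(E_i\cup D_i)\cap V\neq\emptyset$. Indeed, fix $s$ and take the term of the intersection with $I=\{s\}$, $J=\{1,\dotsc,8\}\smallsetminus\{s\}$. Since $E_{\iota_S^*C_j}\cdot\Gamma_{\iota_S^*e_s}=\iota_S^*C_j\cdot\iota_S^*e_s-1=C_j\cdot e_s-1=-1$ for $j\neq s$ (by \eqref{El}), the plane $P_{\iota_S^*e_s}$ is contained in $E_{\iota_S^*C_j}$ for every $j\neq s$; and $E_{C_s}\cdot\Gamma_{\iota_S^*e_s}=C_s\cdot\iota_S^*e_s-1=2$, so $E_{C_s}\cap P_{\iota_S^*e_s}$ is a (nonempty) conic in $P_{\iota_S^*e_s}\cong\pr^2$. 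Hence $\bigcap_i(E_{C_i}\cup E_{\iota_S^*C_i})$ contains a curve in $P_{\iota_S^*e_s}$ for each $s$. Transferred to $X$, this says that $E_s\cap\bigcap_{i\neq s}D_i$ contains the curve $E_s\cap\w{P}_{\iota_S^*e_s}$, where $\w{P}_{\iota_S^*e_s}$ is the transform of the degree-$15$ surface $V_{h,\iota_S^*e_s}$ of Th.~\ref{specialsurfaces}$(e)$ (which passes through $p_s$); this curve lies generically in $V$, so it is not supported on the exceptional fibers nor on the flipping curves. This is precisely why the statement of the lemma must include the surfaces $P_{\iota_S^*e_i}$ on the right-hand side: with only the sections $E_C+\iota_Y^*E_C$ one cannot cut the base locus down to $\bigcup_k P_{e_k}\cup\bigcup P_{\ell_{jk}}$, and your conclusion of Step~1 (hence the whole plan) does not go through. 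Your intersection numbers $D_s\cdot L_{ab}<0$, $D_s\cdot\Gamma_k<0$ and the class $D_i\sim 10H-7E_i-6\sum_{j\neq i}E_j$ are correct, but they only identify some components of $\bigcap_s D_s$, not all components of $\bigcap_i(E_i\cup D_i)$.

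The paper avoids this pitfall by never trying to compute residual intersections of the degree-$10$ hypersurfaces directly. It writes $\bigcap_i(E_{C_i}\cup E_{\iota_S^*C_i})$ as the union over all partitions $\{1,\dotsc,8\}=I\sqcup J$ of $\bigcap_{i\in I}E_{C_i}\cap\bigcap_{j\in J}E_{\iota_S^*C_j}$, and observes that whenever $|I|\geq 3$ one has the exact identity $\bigcap_{i\in I}E_{C_i}=\bigcup_{j\in J}P_{e_j}$: in $X$ the $E_i$ are pairwise disjoint, and the only flipping curves of $\xi_h$ meeting every $E_i$ with $i\in I$ are the $\Gamma_j$ with $j\in J$, whose flipped planes are the $P_{e_j}$. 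The symmetric identity for the cubic $\iota_S^*h$ handles the partitions with $|J|\geq 3$, so every term lands in $\bigcup_i(P_{e_i}\cup P_{\iota_S^*e_i})$. If you want to keep your computational framework, you would need to replace the false claim of Step~1 by the weaker (and correct) statement that $\bigcap_i(E_i\cup D_i)\cap V$ is contained in $\bigcup_s\w{P}_{\iota_S^*e_s}$, which essentially forces you back onto the paper's combinatorial argument.
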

\begin{proof}
Since $E_{C_i}+E_{\iota_S^*C_i}\in|-2K_Y|$ for $i=1,\dotsc,8$, we have
\begin{equation}\label{bl}
\Bs|-2K_Y|\subseteq\bigcap_{i=1}^8(E_{C_i}\cup E_{\iota_S^*C_i})=
\bigcup_{\footnotesize\{1,\dotsc,8\}=I\sqcup J}\Bigl(\bigcap_{i\in I}E_{C_i}\cap\bigcap_{j\in J}E_{\iota_S^*C_j}\Bigr).\end{equation}

Let us consider $X=X_h$ and the birational map $\xi\colon X\dasharrow Y$.
Recall from Lemma \ref{sequence} that  $\xi$ flips the curves $L_{ij}$ for $1\leq i<j\leq 8$ and $\Gamma_i$ for $i=1,\dotsc,8$ (notation as in \ref{notationP^4}); moreover $E_{C_i}\subset Y$ is the transform of the exceptional divisor $E_i\subset X$, by Th.~\ref{X}.

In $X$
the divisors $E_1,\dotsc,E_8$ are pairwise disjoint.
Fix a partition $\{1,\dotsc,8\}=I\sqcup J$ with the cardinality of $I$ at least $3$.
  Then the flipping curves in $X$ which intersect every divisor $E_i$ with $i\in I$ are $\Gamma_j$ for $j\in J$, thus
after the flips we have 
$$\bigcap_{i\in I}E_{C_i}=\bigcup_{j\in J}P_{e_{j}}\subset Y.$$
Similarly, by considering the   cubic $\iota_S^*h$ instead of $h$, we get 
$$\bigcap_{i\in I}E_{\iota_S^*C_i}=\bigcup_{j\in J}P_{\iota_S^*e_{j}}\subset Y.$$

Thus for every partition $\{1,\dotsc,8\}=I\sqcup J$ we have
$$\bigcap_{i\in I}E_{C_i}\cap\bigcap_{j\in J}E_{\iota_S^*C_j}\subseteq\bigcup_{i=1}^8 (P_{e_i}\cup P_{\iota_S^*e_i}).$$
which together with \eqref{bl} yields the statement.
\end{proof}
We are ready to show that $|-2K_Y|$ is base point free.
First of all recall that if $\ell,\ell'$ are $(-1)$-curves in $S$, we have $\iota_S^*\ell'\sim-2K_S-\ell'$ (see \eqref{w_0}) and hence $\ell\cdot \iota_S^*\ell'=2-\ell\cdot\ell'$.

It follows from Lemma \ref{partial} that $\Bs|-2K_Y|$ is contained in the union in $Y$ of the loci $P_{\ell}$, where $\ell$ is a $(-1)$-curve in $S$. We fix a $(-1)$-curve $\ell$, and we show that $P_{\ell}\cap  \Bs|-2K_Y|=\emptyset$; this gives the statement.

Let us choose a   cubic $h$ such that $\ell=2h-e_1-\cdots-e_5$. We have $$\ell\cdot e_i=\ell\cdot \iota_S^*e_i=1\ \text{ for }i=1,\dotsc,5,\ \text{ and } \ \ell\cdot e_j=0,\ \, \ell\cdot \iota_S^*e_j=2\ \text{ for }j=6,7,8.$$ Therefore, by Lemma \ref{excplanes}, we have 
$P_\ell\cap P_{e_i}=\emptyset$ for every $i=1,\dotsc, 8$, $P_\ell\cap P_{\iota_S^*e_i}=\emptyset$ for $i=1,\dotsc, 5$, and $P_\ell\cap P_{\iota_S^*e_i}$ is a point $y_i$ for $i=6,7,8$. By Lemma \ref{partial}, we get $P_{\ell}\cap  \Bs|-2K_Y|\subseteq\{y_6,y_7,y_8\}$. 

Notice also that $\iota_S^*e_6\cdot \iota_S^*e_7=e_6\cdot e_7=0$, hence $P_{\iota_S^*e_6}\cap P_{\iota_S^*e_7}=\emptyset$, in particular $y_6\neq y_7$. Similarly one sees that the points $y_6,y_7,y_8$ are distinct.

Now let us consider the $(-1)$-curves $\ell$ and $\iota_S^*e_6$. Since $\ell\cdot \iota_S^*e_6=2$, there exists a different   cubic $h'$ of $S$ such that
$\iota_S^*e_6=e_1'$ and $\ell\sim 3h'-2e_1'-e_2'-\cdots-e_7'$ (see Rem.~\ref{positions}$(b)$). 
We have
$$\ell\cdot e_i'=\ell\cdot \iota_S^*e_i'=1\ \text{ for }i=2,\dotsc,7,\quad \ell\cdot \iota_S^*e_1'=\ell\cdot e_8'=0,\ \text{ and }\
 \ell\cdot e_1'=\ell\cdot \iota_S^*e_8'=2.$$ Thus, by Lemma \ref{excplanes}, $P_{\ell}$ is disjoint from $P_{e_2'},\dotsc,P_{e_8'},P_{\iota_S^*e_1'},\dotsc,P_{\iota_S^*e_7'}$, and intersects 
$P_{e_1'}=P_{\iota_S^*e_6}$ in $y_6$ and $P_{\iota_S^*e_8'}$ in a point $y'$. Again using Lemma \ref{partial}, we conclude that $P_{\ell}\cap  \Bs|-2K_Y|\subseteq\{y_6,y'\}$.

Finally, let us notice that
$K_S+\ell\sim e_8'-e_1'$, hence $e_8'\sim K_S+\ell+\iota_S^*e_6=\ell-K_S-e_6$.
Therefore
$\iota_S^*e_8'\cdot \iota_S^*e_7=e_8'\cdot e_7=(\ell-K_S-e_6)\cdot e_7=1$, and by Lemma \ref{excplanes} we have  $P_{\iota_S^*e_8'}\cap P_{\iota_S^*e_7}=\emptyset$, in particular $y'\neq y_7$. Similarly we see that $y'\neq y_8$, and we conclude that $y_7,y_8\not\in \Bs|-2K_Y|$. Now  repeating the argument by replacing $\iota_S^*e_6$ with $\iota_S^*e_7$, we conclude that $y_6\not\in\Bs|-2K_Y|$ and hence that $P_{\ell}\cap  \Bs|-2K_Y|=\emptyset$. 
\subsection{Open question}
Describe the fixed locus of $\iota_Y$, the quotient $Y/\iota_Y$, and the action of $\iota_Y$ on $|-K_Y|$ and $|-2K_Y|$.  
\section{Geometry of the blow-up $X$ of $\pr^4$ in $8$ points}\label{sezX}
\noindent Let $X$ be the blow-up of $\pr^4$ at $8$ general points $p_1,\dotsc,p_8$. 
In this section we  apply our previous results to study the geometry of $X$. 
\subsection{Cones of divisors and fixed divisors}\label{secfixeddiv}
By Cor.~\ref{SfromX}, there are a degree one del Pezzo surface $S$ and a   cubic $h$ on $S$ such that $X\cong M_{S,\ma{B}_h}$. The determinant map $\rho\colon H^2(S,\R)\to H^2(X,\R)$ is, in this case, a completely explicit linear isomorphism (see Prop.~\ref{explicit}), which allows to describe the relevant cones of divisors in $H^2(X,\R)$, after Th.~\ref{iso}. In particular, it is possible to write explicitly equations for $\Mov(X)$  in terms of  the coefficients of a divisor $dH-\sum_im_iE_i$; one gets one equation for each $(-1)$-curve and conic on $S$, corresponding to the generators of $\Pi^{\vee}$ (see \ref{secPi}). The same can be done, in principle, for $\Eff(X)$; however the generators of $\ma{E}^{\vee}$ given by   cubics (see \eqref{dualE}) give a very large number of equations.

Concerning the generators of the effective cone, it follows from Th.~\ref{iso} and Lemma \ref{E_C=} that they are given by the fixed divisors  $E_C=\frac{1}{2}\rho(C)$, where $C$ is a conic in $S$. Using Prop.~\ref{explicit},
one computes that if $C\sim dh-\sum_{i}m_ie_i$, then the corresponding fixed divisor $E_C$ has class:
\begin{equation}\label{classE_C}
E_C\sim \frac{1}{2}\bigl(\sum_im_i-d\bigr)\bigl(H-\sum_iE_i\bigr)+\sum_im_iE_i.
\end{equation}
This proves Prop.~\ref{fixeddiv}; see also \cite{lespark} for related results.

Let us give the first examples of fixed divisors in $X$. If 
 $d=1$, one gets the $E_i$'s; otherwise, if  $d\geq 2$, $E_C\subset X$ is the transform of a hypersurface $D_C\subset\pr^4$ of degree $\frac{1}{2}(\sum_im_i-d)$.
For $d=2$, $D_C$ is a hyperplane through $4$ blown-up points, and for $d=3$ a quadric cone through $7$ blown-up points, with vertex a line through two of the points. 

When $d=4$, there are two types of  conics:
$C_1\sim 4h-e-2e_i$, with $i\in\{1,\dotsc,8\}$, and
$C_2\sim 4h-\sum_{i\in I}e_i-e+e_k$, with $I\subset\{1,\dotsc,8\}$, $|I|=3$, and $k\not\in I$. We have $E_{C_1}\sim 3H-2\sum_{j\neq i}E_j$, so that
$D_{C_1}$ is the  secant variety of the rational normal quartic $\gamma_i$. To describe $D_{C_2}$, let  $\pi_{p_k}\colon \pr^4\dasharrow \pr^3$ be the projection from $p_k$, and set $p_i':=\pi_{p_k}(p_i)$ for $i\neq k$. Then
 $D_{C_2}$ is the cone, with vertex $p_k$, over a Cayley nodal cubic surface in $\pr^3$, containing the $7$  points $p_i'$ for $i\neq k$, and having the $4$ nodes in $p'_j$ for $j\in \{1,\dotsc,8\}\smallsetminus(I\cup\{k\})$.

More generally, whenever the conic $C$ is such that $m_i=0$ for some $i\in\{1,\dotsc,8\}$, $D_C$ is a cone with vertex $p_i$, indeed it follows from \eqref{classE_C} that $D_C$ has in $p_i$ a singular point of multiplicity equal to its degree. 

\smallskip

Let $\ell$ be a $(-1)$-curve, and 
set $D_\ell:=\frac{1}{2}\rho(-K_S+\ell)\in H^2(X,\R)$; by Rem.~\ref{-KP^3}, $D_\ell$ is the class of an integral divisor in $X$, and $h^0(X,D_\ell)=3$.
\begin{lemma}\label{integralgenerators}
The semigroup $\Eff(X)_{\text{sg}}:=\{L\in H^2(X,\Z)\,|\,h^0(X,L)>0\}$ is generated by the $2401$ classes
 $-K_X$, $E_C$, and $D_\ell$, where $C$ is a conic and $\ell$ is  a $(-1)$-curve.
\end{lemma}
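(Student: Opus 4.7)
The proof essentially applies a result from \cite{CoxCT} on generators of the Cox ring of blow-ups of projective space at general points; here is the approach I would take.

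First, I would verify that each of the $2401$ proposed classes is integral and effective. The $E_C$ are the fixed divisors given by Prop.~\ref{fixeddiv}; the anticanonical class $-K_X$ has $h^0(X,-K_X)=6$ by Prop.~\ref{numinv} (transported via the pseudo-isomorphism $\xi\colon X\dasharrow Y$); and each $D_\ell$ has $h^0(X,D_\ell)=3$ by Rem.~\ref{-KP^3}. Hence non-negative integer combinations of these classes lie in $\Eff(X)_{\text{sg}}$, which gives one inclusion.

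For the reverse inclusion, I would transport the question to the del Pezzo surface $S$ via the determinant map $\rho\colon\Pic(S)\to\Pic(X)$. By Prop.~\ref{explicit} and Rem.~\ref{integral}, the image $\rho(\Pic(S))$ is an index-$2$ sublattice of $\Pic(X)$, the obstruction being the parity of $K_S\cdot L$. Given $D\in\Eff(X)_{\text{sg}}$, the class $2D$ therefore lies in $\rho(\Pic(S))$, so $2D=\rho(L)$ for a unique $L\in\Pic(S)$; by Th.~\ref{iso}(b) we have $L\in\ma{E}\cap\Pic(S)\subseteq\Eff(S)_{\text{sg}}$, and integrality of $D$ forces $K_S\cdot L$ to be even. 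Under $\tilde{\rho}=\tfrac{1}{2}\rho$, the three families of generators correspond to classes on $S$ with even canonical intersection, namely $C$ for $E_C$ (by Lemma \ref{E_C=}), $-K_S+\ell$ for $D_\ell$ (by definition), and $-2K_S$ for $-K_X$ (by Lemma \ref{-K}). The problem thus reduces to the assertion that every $L\in\ma{E}\cap\Pic(S)$ with $K_S\cdot L$ even admits a non-negative integer decomposition as a combination of $\{-2K_S,\,-K_S+\ell,\,C\}$ where $\ell$ ranges over $(-1)$-curves and $C$ over conics of $S$.

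To establish this, I would start from the Batyrev--Popov decomposition $L=a(-K_S)+\sum_i b_i\ell_i$ with $a,b_i\geq 0$ (Rem.~\ref{easy} and references therein), in which $a+\sum_i b_i$ is forced to be even by the parity condition. Pairing each available $-K_S$ with some $\ell_i$ produces a summand of the form $(-K_S+\ell_i)$, and the leftover sums of $(-1)$-curves would be recombined using the relations of Rem.~\ref{positions}: in particular $\ell+\iota_S^*\ell\sim-2K_S$ (when $\ell\cdot\iota_S^*\ell=3$), and $\ell+\ell'\sim C$ for suitable pairs with $\ell\cdot\ell'=1$. The main obstacle I expect is precisely this integer-level recombination: although Th.~\ref{iso}(b) immediately yields a non-negative \emph{rational} combination of conics, producing genuinely non-negative \emph{integer} coefficients in the listed generators requires the full force of the result from \cite{CoxCT}. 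The $W(E_8)$-equivariance of the entire setup (Lemma \ref{isometry}) reduces the verification to a finite analysis on orbit representatives.
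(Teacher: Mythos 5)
Your setup is sound: the verification that the $2401$ classes are integral and effective, and the dictionary identifying them under $\tilde{\rho}=\frac{1}{2}\rho$ with $-2K_S$, $C$, and $-K_S+\ell$ on the surface side, match what the paper does. But the decisive step is missing. The paper's proof rests on a precise statement from \cite[Th.~2.7]{CoxCT}: the semigroup $\Eff(X)_{\text{sg}}$ is generated by its elements $L$ with $-K_X\cdot L=3$ in Dolgachev's pairing. Once you have that, there is nothing left to ``recombine'': you transport such an $L$ to $L'=\tilde{\rho}^{-1}(L)\in\ma{E}\cap H^2(S,\Z)$, note that $L'$ is nef with $-K_S\cdot L'=\frac{2}{3}(-K_X\cdot L)=2$ (via the isometry of Lemma \ref{isometry}), and Rem.~\ref{easy} immediately classifies such classes as $-2K_S$, $C$, or $-K_S+\ell$. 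Your proposal never identifies this degree bound on the generators; you only invoke ``the full force of the result from \cite{CoxCT}'' as a black box at the very end, which is precisely the content you would need to make explicit.

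The substitute argument you sketch in its place does not close the gap. Starting from a Batyrev--Popov expression $L=a(-K_S)+\sum_i b_i\ell_i$ and trying to pair summands into conics, $-K_S+\ell$'s, and copies of $-2K_S$ is a genuinely delicate integer programming problem: the individual summands $\ell_i$ are not nef, the pairings $\ell+\ell'$ only give a conic when $\ell\cdot\ell'=1$ and only give $-K_S+e$ when $\ell\cdot\ell'=2$, and there is no a priori reason a given decomposition can be reorganized this way (nor is the Batyrev--Popov decomposition unique). You acknowledge this obstacle yourself, which means the reverse inclusion is not actually proved. To repair the argument, replace the recombination attempt by the statement that $\Eff(X)_{\text{sg}}$ is generated in anticanonical degree $3$, and then apply Rem.~\ref{easy} exactly as above; the $W(E_8)$-equivariance you mention is then not needed, since Rem.~\ref{easy} already handles all cases uniformly.
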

\begin{proof}
By \cite[Th.~2.7]{CoxCT} the semigroup  $\Eff(X)_{\text{sg}}$ is generated by its elements $L$ such that $-K_X\cdot L=3$ with respect to Dolgachev's pairing in $H^2(X,\Z)$, see \ref{secisometry}. Let $L$ be such an element; in particular $L\in\Eff(X)$.

Recall the isomorphism $\tilde{\rho}=\frac{1}{2}\rho\colon H^2(S,\R)\to H^2(X,\R)$ defined in Lemma \ref{isometry}, and consider $L':=\tilde{\rho}^{-1}(L)\in H^2(S,\R)$. We have
$L'\in H^2(S,\Z)$ by
Rem.~\ref{integral},  and $L'\in
\ma{E}$ by Th.~\ref{iso}$(b)$, so that $L'$ is nef (see \ref{secE}). 
Finally, using Lemma \ref{isometry}, it is not difficult to see that $-K_S\cdot L'=\frac{2}{3}(-K_X\cdot L)=2$. 

Therefore, by Rem.~\ref{easy}, 
$L'$ is one of the classes $-2K_S,C,-K_S+\ell$, where  $C$ is a conic and $\ell$ is  a $(-1)$-curve, and hence $L$ is one of the classes $-K_X=\tilde{\rho}(-2K_S)$, $E_C=\tilde{\rho}(C)$, and $D_\ell=\tilde{\rho}(-K_S+\ell)$. Conversely, all these classes are effective and $-K_X\cdot (-K_X)=-K_X\cdot E_C=-K_X\cdot D_\ell=3$, so they are all generators for  $\Eff(X)_{\text{sg}}$.
\end{proof}
\subsection{Special surfaces}\label{secspecialsurfaces}
Let $S$ be a del Pezzo surface of degree one, and $Y=M_{S,-K_S}$ the associated Fano $4$-fold. Consider a   cubic
$h\subset S$ and the map $\eta_h\colon Y\dasharrow \pr^4$ (see \ref{eta}); we have a factorization:
{\footnotesize$$
\xymatrix{Y\ar@{-->}[r]_{\xi_h^{-1}}
\ar@{-->}@/^1pc/[rr]^{\eta_h}&{X_h}\ar[r]&{\pr^4}
}
$$}and the indeterminacy locus of  $\eta_h$ is the
 union of the surfaces $P_\ell$ for the $(-1)$-curves $\ell\subset S$ such that $h\cdot\ell\leq 1$.
\begin{notation}
For every $(-1)$-curve $\ell$ with $h\cdot\ell\geq 2$, we set $V_{h,\ell}:=\overline{\eta_h(P_\ell)}\subset\pr^4$. 

We denote by
$\w{P}_\ell\subset X_h$ the transform of $P_\ell\subset Y$ under $\xi_h\colon X_h\dasharrow Y$, so that $V_{h,\ell}\subset\pr^4$ is the image of $\w{P}_\ell\subset X_h$ under $X_h\to\pr^4$.

We denote by $\w{\Gamma}_\ell\subset \w{P}_\ell\subset X_h$ the transform of a general line $\Gamma_\ell\subset P_\ell\subset Y$.
\end{notation}
Let $p_1,\dotsc,p_8\in\pr^4$ be the images of the exceptional divisors of $\eta_h\colon Y\dasharrow \pr^4$.
Together with the curves $\overline{p_ip_j}$ for $1\leq i<j\leq 8$ and $\gamma_i$ for $i=1,\dotsc,8$ (notation as in \ref{notationP^4}), and with the images in $\pr^4$ of the fixed divisors in $X_h$ described in \eqref{classE_C},
the surfaces $V_{h,\ell}\subset \pr^4$ appear naturally in the base loci of linear systems in $\pr^4$ with assigned multiplicities at $p_1,\dotsc,p_8$. We describe the degree and singularities of the special surfaces $V_{h,\ell}$ in Th.~\ref{specialsurfaces} below.

Let us first recall that an isolated surface singularity is of type $\frac{1}{3}(1,1)$ if it is analytically isomorphic to the vertex of the cone over a   cubic;
a normal surface singularity is  of type $\frac{1}{3}(1,1)$ if and only if its
minimal resolution has exceptional divisor a smooth rational curve $E$ with $E^2=-3$.
\begin{thm}\label{specialsurfaces}
Let $h$ be a   cubic and $\ell$ a $(-1)$-curve with $h\cdot\ell\geq 2$. 
\begin{enumerate}[$(a)$]
\item
If $h\cdot\ell=2$, we have $\ell\sim 2h-\sum_{j\not\in I}e_j$
with
$I\subset\{1,\dotsc,8\}$, $|I|=3$. Then
 $V_{h,\ell}$ is the plane through $p_i$ for $i\in I$.
\item
If $h\cdot\ell=3$, we have $\ell\sim 3h-e-e_i+e_j$ with
 $i,j\in\{1,\dotsc,8\}$, $i\neq j$. Then
 $V_{h,\ell}$ is the  cone over $\gamma_i$ with vertex $p_j$.
\item
If $h\cdot\ell=4$, we have $\ell\sim 4h-e-\sum_{i\in I}e_i$ with
$I\subset\{1,\dotsc,8\}$, $|I|=3$.
 Then
 $V_{h,\ell}$ is a normal surface of degree $6$ containing $p_1,\dotsc,p_8$, $\Sing(V_{h,\ell})= \{p_j\}_{j\not\in I}$, and  $V_{h,\ell}$ has a singularity
 of type $\frac{1}{3}(1,1)$ in $p_{j}$
for every $j\not\in I$.
\setcounter{long}{\value{enumi}}
\end{enumerate}
 Suppose that $S$ is general. 
\begin{enumerate}[$(a)$]
\setcounter{enumi}{\value{long}}
\item
If
  $h\cdot\ell=5$, we have $\ell\sim 5h-2e+e_i+e_j$ with
$i,j\in\{1,\dotsc,8\}$, $i<j$.
 Then
 $V_{h,\ell}$ is a surface of degree $10$ with $\Sing(V_{h,\ell})=\{p_k\}_{k\neq i,j}\cup\overline{p_ip_j}$, 
$V_{h,\ell}$ has a singularity
 of type $\frac{1}{3}(1,1)$ in  $p_{k}$
for every $k\neq i,j$,
and $V_{h,\ell}$ has multiplicity $3$ at the general point of the line $\overline{p_ip_j}$.
\item
If $h\cdot\ell=6$, we have $\ell\sim 6h-2e-e_i$ with
$i\in\{1,\dotsc,8\}$.
 Then
 $V_{h,\ell}$ is a surface of degree $15$ with 
$\Sing(V_{h,\ell})=\{p_i\}\cup\gamma_i$,
$V_{h,\ell}$ has a singularity
 of type $\frac{1}{3}(1,1)$ in 
 $p_{i}$, and $V_{h,\ell}$ has multiplicity $3$ at the general point of $\gamma_i$.
\end{enumerate}
\end{thm}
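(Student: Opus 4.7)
The approach is to work in $X_h$ via the pseudo-isomorphism $\xi_h\colon X_h\dasharrow Y$ (Lemma \ref{sequence}), describe $\w P_\ell$ as a blow-up of $\pr^2$, and then push forward to $\pr^4$ via $X_h\to\pr^4$. The list of $(-1)$-curves $\ell$ with $h\cdot\ell\in\{2,\dotsc,6\}$ given in the statement is the standard enumeration by $h$-degree (see \cite[Prop.~8.2.19]{dolgachevbook}).

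For each case, I would first compute $\zeta_Y(\Gamma_\ell)=2\ell+K_S$ (Cor.~\ref{facet_ell}) and then translate this to the class of $\w\Gamma_\ell$ in $\N(X_h)$ using Rem.~\ref{cod1curves} and Prop.~\ref{explicit}; the coefficients in the basis $[h],[e_1],\dotsc,[e_8]$ give the degree $d$ in $\pr^4$ and the multiplicities $m_k$ at the $p_k$'s of the image of a general line of $P_\ell\cong\pr^2$. The intersection numbers $\ell\cdot e_k$ and $\ell\cdot\ell_{rs}$, together with Lemma \ref{excplanes} and Rem.~\ref{positions}$(c)$, enumerate the transverse intersection points of $P_\ell$ with the flipping surfaces $P_{e_k}, P_{\ell_{rs}}$ in $Y$: a single point whenever $\ell\cdot\ell'=2$, and three points in the (unique, assuming $S$ general) case $\ell\cdot\iota_S^*\ell=3$. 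By the flip description in \ref{flip}, each such point contributes an exceptional $\pr^1$ to $\w P_\ell$ which, under $X_h\to\pr^4$, maps either onto a line $\overline{p_rp_s}$ (from a $P_{\ell_{rs}}$-intersection) or onto a quartic $\gamma_k$ (from a $P_{e_k}$-intersection). Thus $\w P_\ell=\Bl_N\pr^2$ and $\mu\colon\w P_\ell\to V_{h,\ell}$ is the restriction of the blow-up $X_h\to\pr^4$; since $\w P_\ell$ is not contained in any $E_k$, $\mu$ is birational, so $\deg V_{h,\ell}=d^2-\sum m_t^2$. A direct computation in each case yields the predicted degrees $1,3,6,10,15$, with the crucial observation for (b) that since $p_j\in\gamma_i$ the cone from $p_j$ over $\gamma_i$ is degenerate---its generic hyperplane section corresponds to $\pi_{p_j}(\gamma_i)$, a twisted cubic---and hence has degree $3$.

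The triple components in (d), (e) arise precisely when $\ell\cdot\iota_S^*\ell=3$, producing three blown-up points whose exceptional $\pr^1$'s all map to the same $L_{ij}\to\overline{p_ip_j}$ (case (d)) or the same $\Gamma_i\to\gamma_i$ (case (e)), forcing that curve to appear with multiplicity $3$ in the image. For the $\frac{1}{3}(1,1)$ singularity at each claimed $p_j$, one studies $\mu^{-1}(p_j)=E_j\cap\w P_\ell$: this is the strict transform on $\w P_\ell$ of the plane curve $E_{C_j}\cap P_\ell\subset P_\ell\cong\pr^2$, whose degree is $C_j\cdot\ell-1$ by \eqref{El}. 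The goal is to show that this strict transform is a smooth rational $(-3)$-curve, so that its contraction by $\mu$ produces precisely a $\frac{1}{3}(1,1)$ singularity, and no other singular points arise. This self-intersection computation is the main obstacle and reduces to checking precisely which of the blown-up points on $P_\ell$ lie on $E_{C_j}\cap P_\ell$, equivalently which transverse intersection points $P_\ell\cap P_{\ell'}$ in $Y$ also belong to the divisor $E_{C_j}$. This incidence analysis is the genuine geometric content of the theorem and has to be carried out case by case in (c), (d), (e).
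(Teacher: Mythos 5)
Your proposal follows essentially the same route as the paper's proof: compute the class of $\w\Gamma_\ell$ in $\N(X_h)$ by pushing $2\ell+K_S$ through $\zeta$ (this is exactly Lemma \ref{Gamma}), realize $\w P_\ell$ as the blow-up of $P_\ell\cong\pr^2$ at its intersections with the flipped surfaces, compute $\deg V_{h,\ell}=(H_{|\w P_\ell})^2$, read off the triple curves from the three-point intersections $P_\ell\cap P_{\iota_S^*\ell}$, and detect the $\frac{1}{3}(1,1)$ points by showing that the transform of $(E_j)_{|P_\ell}$ is a smooth rational $(-3)$-curve. Three points deserve attention. First, there is a logical ordering issue: Lemma \ref{excplanes}, which you invoke to enumerate the blown-up points, is itself proved in the paper using parts $(a)$ and $(b)$ of this very theorem, so $(a)$ and $(b)$ must be established first by direct arguments from the class of $\w\Gamma_\ell$ alone; your projection argument for $(b)$ essentially does this, but to identify $V_{h,\ell}$ with the cone (rather than merely matching degrees) you need that the projection of the image of a general line from $p_j$ is a cubic through the six projected points, hence equals the unique twisted cubic $\pi_{p_j}(\gamma_i)$. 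Second, your degree formula $d^2-\sum m_t^2$ is correct only if the $m_t$ are the multiplicities of the linear system $H_{|P_\ell}$ at the blown-up points of $\w P_\ell\to P_\ell$ (equivalently the $H$-degrees, $1$ or $4$, of the images of the exceptional curves), not, as your wording suggests, the multiplicities at the $p_k$ of the image of a general line; with the latter reading the formula gives $41$ instead of $6$ in case $(c)$. Third, the incidence analysis you defer is where the paper does the real work: e.g.\ in case $(c)$ one shows that $(E_j)_{|P_\ell}$ is a line through exactly two of the thirteen blown-up points when $j\in I$ (so its transform is a $(-1)$-curve and $p_j$ is smooth) and a smooth conic through exactly seven of them when $j\notin I$ (so its transform is a $(-3)$-curve), using that the $E_j$ are pairwise disjoint in $X_h$ and which flipped surfaces each $E_j$ contains; one must also check that the scheme-theoretic fiber of $p_j$ under $\w P_\ell\to V_{h,\ell}$ is reduced, so that $V_{h,\ell}$ is normal at $p_j$ and the singularity is exactly of type $\frac{1}{3}(1,1)$.
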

To prove Th.~\ref{specialsurfaces}, we first determine the numerical class of $\w{\Gamma}_\ell\subset X_h$ in Lemma \ref{Gamma}. We use this Lemma to show  Th.~\ref{specialsurfaces} $(a)$ and $(b)$, and this is used to prove Lemma \ref{excplanes} on the relative positions of the surfaces $P_\ell$ in $Y$. Finally we use Lemma \ref{excplanes} to prove the rest of Th.~\ref{specialsurfaces}.
\begin{lemma}\label{Gamma}
Let $h$ be a   cubic, and $\ell\sim dh-\sum_im_ie_i$ a $(-1)$-curve with $d\geq 2$. Then
$$\w{\Gamma}_\ell\sim \Bigl(6d-5-\sum_im_i\Bigr)h-\sum_i(d-m_i-1)e_i\quad
\text{in }\,\N(X_h).$$
\end{lemma}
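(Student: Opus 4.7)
The plan is to first compute $\zeta_{\ma{B}_h}(\widetilde{\Gamma}_\ell)\in H^2(S,\R)$, and then extract the coefficients of $\widetilde{\Gamma}_\ell$ in the basis $\{h,e_1,\dotsc,e_8\}$ of $\N(X_h)$ by intersecting with the divisor classes $\rho(h)$ and $\rho(e_i)$ made explicit in Prop.~\ref{explicit}.

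The key identity to establish is $\zeta_{\ma{B}_h}(\widetilde{\Gamma}_\ell)=2\ell+K_S$. First I would observe that, by Rem.~\ref{fanochamber}, $\ma{N}$ is the closure of a single chamber $\ma{N}^{\circ}\subset\Pi$, and by the description of $\ma{N}^{\vee}$ in \ref{sec_N} every wall $(2\ell+K_S)^{\perp}$ supports a facet of $\ma{N}$, with $\ma{N}^{\circ}\subset(2\ell+K_S)^{>0}$. Since $Y=M_{-K_S}=M_{\ma{N}^{\circ}}$ by Rem.~\ref{fanochamber} and Lemma \ref{specialL}, Cor.~\ref{facet_ell} applied to $\ma{C}=\ma{N}^{\circ}$ yields $\zeta_{-K_S}(\Gamma_\ell)=2\ell+K_S$ for a line $\Gamma_\ell\subset P_\ell\subset Y$. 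Since $\ma{B}_h,\ma{N}\subset\Pi$, the map $\xi_h\colon X_h\dasharrow Y$ is a pseudo-isomorphism by Cor.~\ref{birational}, and a general line in $P_\ell\subset Y$ lies in the open locus where $\xi_h^{-1}$ is an isomorphism; hence $\widetilde{\Gamma}_\ell$ is its strict transform and Rem.~\ref{cod1curves} gives $\zeta_{\ma{B}_h}(\widetilde{\Gamma}_\ell)=\zeta_{-K_S}(\Gamma_\ell)=2\ell+K_S$.

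Once $\zeta_{\ma{B}_h}(\widetilde{\Gamma}_\ell)$ is known, the rest is elementary. Writing $\widetilde{\Gamma}_\ell=a\,h+\sum_i b_i\,e_i$ in $\N(X_h)$, the intersection table of $\N(X_h)$ with $\Pic(X_h)$ gives $H\cdot\widetilde{\Gamma}_\ell=a$ and $E_i\cdot\widetilde{\Gamma}_\ell=-b_i$. Using Prop.~\ref{explicit}, so that $\rho(h)=\sum_j E_j-H$ and $\rho(e_i)=-2E_i+\sum_j E_j-H$, together with $\widetilde{\Gamma}_\ell\cdot\rho(L)=L\cdot\zeta(\widetilde{\Gamma}_\ell)=L\cdot(2\ell+K_S)$ for every $L\in\Pic(S)$, the choices $L=h$ and $L=e_i$ produce the two relations $-a-\sum_j b_j=2d-3$ and $2b_i-a-\sum_j b_j=2m_i-1$, from which $b_i=m_i+1-d=-(d-m_i-1)$ and then $a=3-2d-\sum_j b_j=6d-5-\sum_j m_j$, as claimed. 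The only subtlety worth flagging is the correct interpretation of $\widetilde{\Gamma}_\ell$: choosing $\Gamma_\ell$ to be a \emph{general} line in $P_\ell\subset Y$ guarantees that it avoids the codimension $\geq 2$ indeterminacy locus of $\xi_h^{-1}$, which is precisely the hypothesis that makes Rem.~\ref{cod1curves} applicable in the key step above.
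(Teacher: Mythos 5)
Your proof is correct and follows essentially the same route as the paper: both arguments reduce the statement to the identity $\zeta_{\ma{B}_h}(\w{\Gamma}_\ell)=2\ell+K_S$, obtained from Cor.~\ref{facet_ell} and Rem.~\ref{cod1curves}, and then invert the isomorphism $\zeta_{\ma{B}_h}$ using Prop.~\ref{explicit} (the paper verifies the claimed class by applying $\zeta_{\ma{B}_h}$ to it, you solve the linear system by pairing with $\rho(h)$ and $\rho(e_i)$ -- the same computation in a different order). The one place where your reasoning is not sufficient as written is the claim that a general line in $P_\ell$ avoids the indeterminacy locus of $\xi_h^{-1}$ ``because it has codimension $\geq 2$'': a codimension-two subset of $Y$ could still meet the surface $P_\ell$ along a curve, or even contain it, in which case every line in $P_\ell\cong\pr^2$ would meet it. This is precisely where the hypothesis $d=h\cdot\ell\geq 2$ -- which you never invoke -- enters: it guarantees that $P_\ell$ is not one of the flipped surfaces $P_{e_i}$, $P_{\ell_{jk}}$, hence is not contained in the indeterminacy locus, and one then argues (as the paper does, citing \cite[Rem.~2.9]{blowup}) that $P_\ell$ meets that locus in at most finitely many points, so that a general line in $P_\ell$ avoids it.
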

\begin{proof}
Since $d=h\cdot\ell\geq 2$, $P_\ell\subset Y$ is not contained in  the indeterminacy locus of $\xi_h^{-1}\colon Y\dasharrow X_h$. Therefore 
 $P_\ell$ can intersect the indeterminacy locus of $\xi_h^{-1}$ at most in finitely many points (see for instance \cite[Rem.~2.9]{blowup}), and $\Gamma_\ell$ is contained in the open subset of $X_h$ where $\xi_h^{-1}$ is an isomorphism. By Rem.~\ref{cod1curves} and
 Cor.~\ref{facet_ell} we have
\begin{align*}
\zeta_{\ma{B}_h}(\w{\Gamma}_\ell)&=\zeta_{-K_S}(\Gamma_\ell)=2\ell+K_S\sim
(2d-3)h-\sum_i(2m_i-1)e_i\\
&=\zeta_{\ma{B}_h}\Bigl(\bigl(6d-5-\sum_im_i\bigr)h-\sum_i(d-m_i-1)e_i\Big),
\end{align*}
where the last equality follows from
 Prop.~\ref{explicit}. Since $\zeta_{\ma{B}_h}$ is an isomorphism by Th.~\ref{iso}$(a)$, we get the statement.
\end{proof}
\begin{proof}[Proof of Th.~\ref{specialsurfaces} $(a)$ and $(b)$]
If $\ell\sim 2h-\sum_{j\not\in I}e_j$, it follows from Lemma \ref{Gamma} that 
$\w{\Gamma}_{\ell}\subset X_h$ is the transform of a conic in $\pr^4$ passing through  $p_i$ for $i\in I$, which gives $(a)$.

For $(b)$, set for simplicity $i=1$ and $j=8$; then Lemma \ref{Gamma} yields
$\w{\Gamma}_{\ell}\sim 5h-e_2-\cdots-e_7-2e_8$.  Let $B\subset \pr^4$ be the image of $\w{\Gamma}_{\ell}\subset X_h$, and
let $\pi_{p_8}\colon\pr^4\dasharrow\pr^3$ be the projection from $p_8$. Then both $\gamma_1$ and $B$  have image, under $\pi_{p_8}$, the rational normal cubic through $\pi_{p_8}(p_2),\dotsc,\pi_{p_8}(p_7)$ in $\pr^3$, and the cone over $\gamma_1$ with vertex $p_8$ is (the closure of) the inverse image of this cubic.
Thus $B$ is contained in the cone, and $V_{h,\ell}$ coincides with the cone.
\end{proof}
\begin{proof}[Proof of Lemma \ref{excplanes}]
Suppose that $\ell\cdot\ell'=0$. Then there exists a   cubic $h$ such that $h\cdot \ell=h\cdot\ell'=0$ (see Rem.~\ref{positions}$(a)$). 
The line bundle $L_0=-3K_S+h$ is ample on $S$, lies on the boundary of the cone $\ma{N}$, and is contained in both the walls $(2\ell+K_S)^{\perp}$ and  $(2\ell'+K_S)^{\perp}$. By Lemma \ref{specialL}$(d)$, the surfaces $P_{\ell}$ and $P_{\ell'}$ are disjoint in $Y=M_{\ma{N}}$.

Suppose that $\ell\cdot\ell'=1$. Then $\ell+\ell'$ is linearly equivalent to a conic $C$. Similarly as before, $L_1=-2K_S+C$ is ample on $S$, lies on the boundary of  $\ma{N}$, and is contained in the walls $(2\ell+K_S)^{\perp}$ and  $(2\ell'+K_S)^{\perp}$, so $P_\ell\cap P_{\ell'}=\emptyset$ by  Lemma \ref{specialL}$(d)$.

Assume that $\ell\cdot\ell'=2$.
Then there exists  a   cubic $h'$ such that $\ell=\ell'_{12}\sim h'-e'_1-e'_2$ and $\ell'\sim 2h'-e'_4-\cdots-e'_8$ (see Rem.~\ref{positions}$(b)$); let us consider the birational map $\xi_{h'}^{-1}\colon Y\dasharrow X_{h'}$. Then $P_\ell=P_{\ell'_{12}}\subset Y$ is contained in the indeterminacy locus of $\xi^{-1}_{h'}$ (by Lemma \ref{sequence}), while
$P_{\ell'}\subset Y$ is the transform of the plane $\Lambda=V_{h',\ell'}$ through the points\footnote{Note that the points $p_i$ here depend on $h'$, as they are the images of the exceptional divisors of $\eta_{h'}\colon Y\dasharrow \pr^4$. For simplicity we still denote them by $p_1,\dotsc,p_8$, and similarly in the sequel of the proof.}
 $p_1,p_2,p_3\in \pr^4$ (by Th.~\ref{specialsurfaces}$(a)$). Moreover it follows from the explicit factorization of $\xi_{h'}$ given in Lemma \ref{sequence} that the induced birational map $\Lambda\dasharrow P_{\ell'}$ is a Cremona map centered in $p_1,p_2,p_3$. The corresponding $3$ points in $P_{\ell'}$ are the intersection points with $P_{\ell'_{12}}$,  $P_{\ell'_{13}}$, $P_{\ell'_{23}}$, and the intersection is transverse.

Finally suppose that $S$ is general and that $\ell\cdot\ell'=3$.
Recall from Rem.~\ref{positions}$(c)$  that $\ell'\sim-2K_S-\ell$. Let us choose a   cubic $h''$ such that $\ell\sim 3h''-e''-e''_1+e''_2$, and hence $\ell'\sim 3h''-e''-e''_2+e''_1$.

By 
Th.~\ref{specialsurfaces}$(b)$, the surfaces $V_{h'',\ell}$ and $V_{h'',\ell'}$ in $\pr^4$
 are, respectively: the cone over $\gamma_1$ with vertex $p_2$, and the cone over $\gamma_2$ with vertex $p_1$. 
For a general choice of $p_1,\dotsc,p_8$, $V_{h'',\ell}$ and $V_{h'',\ell'}$ are general cones over two general   cubics  contained in a hyperplane $H\subset\pr^4$, and they intersect transversally at $9$ points, including $p_3,\dotsc,p_8$.
Thus the transforms $\w{P}_\ell$ and $\w{P}_{\ell'}$ of $V_{h'',\ell}$ and $V_{h'',\ell'}$ respectively in $X_{h''}$  intersect transversally in $3$ points $x_1,x_2,x_3$. 

It is not difficult to check that  $\w{P}_\ell$ intersects the indeterminacy locus of  $\xi_{h''}\colon X_{h''}\dasharrow Y$ in $L_{23},\dotsc,L_{28},\Gamma_1$, and similarly 
$\w{P}_{\ell'}$ intersects the indeterminacy locus of  $\xi_{h''}$ in $L_{13},\dotsc,L_{18},\Gamma_2$. The curves $L_{ij}$ and $\Gamma_a$ are pairwise disjoint, so we conclude that $x_1,x_2,x_3\in X_{h''}$ are contained in the open subset where $\xi_{h''}\colon X_{h''}\dasharrow Y$ is an isomorphism. Hence $P_\ell$ and $P_{\ell'}$ intersect transversally in $3$ points $\xi_{h''}(x_1),\xi_{h''}(x_2),\xi_{h''}(x_3)$. 
\end{proof}
\begin{proof}[Proof of Th.~\ref{specialsurfaces} $(c)$, $(d)$, and $(e)$]
We show $(c)$;
set for simplicity $I=\{1,2,3\}$.
By Lemma \ref{excplanes}, $P_\ell$ meets the indeterminacy locus of $\eta_h\colon Y\dasharrow\pr^4$ in $13$ isolated points:
$$x_i:=P_{e_i}\cap P_\ell\ \text{ for }i=1,2,3\quad\text{and}\quad
y_{ab}:=P_{\ell_{ab}}\cap P_\ell\ \text{ for }4\leq a<b\leq 8$$
(recall that the components of  the indeterminacy locus of $\eta_h$ are pairwise disjoint in $Y$, see \ref{eta}).
By the description of the map $\xi_h$ as a sequence of smooth blow-ups in Lemma \ref{sequence}, we have a diagram:
{\footnotesize$$\xymatrix{&{\w{P}_\ell\subset X_h}\ar[dl]\ar[dr]&\\
{V_{h,\ell}\subset\pr^4}&&{P_\ell\subset Y}
}$$}where $\w{P}_\ell\to P_\ell$ is  the blow-up of  $\pr^2$ in the points $x_i$ and $y_{ab}$, with  exceptional curves $\Gamma_i$ and  $L_{ab}$,
for  $i=1,2,3$ and $4\leq a<b\leq 8$. The second morphism $\w{P}_{\ell}\to V_{h,\ell}$ is the restriction of $X_h\to\pr^4$, thus it is induced by $H_{|\w{P}_\ell}$ and contracts the curve $(E_i)_{|\w{P}_\ell}$ to $p_i$ for $i=1,\dotsc,8$. In particular, we see that $V_{h,\ell}\smallsetminus\{p_1,\dotsc,p_8\}$ is smooth.

Recall that  $\w{\Gamma}_\ell\subset \w{P}_\ell$ is the transform of a general line in $P_\ell\cong\pr^2$, and 
$H\cdot\w{\Gamma}_\ell=8$ by Lemma \ref{Gamma}. Since $\Gamma_i$ and $L_{ab}$ are the transforms respectively of a quartic and a line in $\pr^4$, in $X_h$ we have
$H\cdot\Gamma_i=4$ and $H\cdot L_{ab}=1$, and in $\w{P}_\ell$ we have
$$H_{|\w{P}_{\ell}}\sim 8\w{\Gamma}_\ell-4\sum_{i=1}^3\Gamma_i-\sum_{4\leq a<b\leq 8}L_{ab}.$$
Hence the degree of $V_{h,\ell}$  in $\pr^4$ is $(H_{|\w{P}_{\ell}})^2=6$.

Let $i\in\{1,\dotsc,8\}$. In $X_h$ the divisor $E_i$ intersects $\Gamma_j$ for $j\neq i$ and $L_{ib}$ for $b\neq i$, while it is disjoint from $\Gamma_i$ and from $L_{ab}$ for $a,b\neq i$. Thus in $Y$ its transform, that we still denote by $E_i$, contains $P_{e_j}$ for $j\neq i$ and $P_{\ell_{ib}}$ for $b\neq i$, while it is disjoint from $P_{e_i}$ and from $P_{\ell_{ab}}$ for $a,b\neq i$.
By \eqref{El} we also have, in $Y$:
$$E_i\cdot\Gamma_\ell=C_i\cdot\ell-1=\begin{cases}
1\quad\text{for }i=1,2,3\\
2\quad\text{for }i=4,\dotsc,8,
\end{cases}$$
so $(E_i)_{|P_\ell}$ is a line in $P_\ell\cong\pr^2$ for $i=1,2,3$, and a conic for 
$i=4,\dotsc,8$.

Since $E_1$ contains $P_{e_2}$ and $P_{e_3}$, it contains both $x_2$ and $x_3$; on the other hand these points are distinct, thus  $(E_1)_{|P_\ell}=\overline{x_2x_3}$, and $E_1$ does not contain other points of $P_\ell$ blown-up in $\w{P}_\ell$.
Similarly for $E_2$ and $E_3$. 

Since $E_4$ contains $P_{e_1},P_{e_2},P_{e_3}$, and $P_{\ell_{4b}}$ for $b=5,\dotsc,8$, $(E_4)_{|P_\ell}$ is a conic containing the $7$ points $x_1,x_2,x_3,y_{45},y_{46},y_{47},y_{48}$. Notice that 
the divisors $E_1,\dotsc,E_8$ are pairwise disjoint in $X_h$, so $(E_4)_{|P_\ell}$ and 
 $(E_i)_{|P_\ell}$ for $i=1,2,3$ can intersect only in the points $x_1,x_2,x_3$; this implies that  $(E_4)_{|P_\ell}$ is a smooth conic, and similarly for $(E_i)_{|P_\ell}$
when $i=5,\dotsc,8$.

Since for $i=1,2,3$ $(E_i)_{|P_\ell}$ is a line containing two points blown-up in $\w{P}_\ell\to P_\ell$, its transform  $(E_i)_{|\w{P}_\ell}$ is a $(-1)$-curve in the surface
$\w{P}_\ell$. This shows that around $p_i$, the map  $\w{P}_\ell\to  V_{h,\ell}$
factors as the contraction of $(E_i)_{|\w{P}_\ell}$ to a smooth point, followed by the normalization of $V_{h,\ell}$ at $p_i$. On the other hand,
the scheme-theoretical fiber of $p_i$ under the map $\w{P}_\ell\to  V_{h,\ell}$ is  $(E_i)_{|\w{P}_\ell}$, hence it is reduced; this shows that $p_i\in V_{h,\ell}$ is normal and hence smooth.

For $i=4,\dotsc,8$ $(E_i)_{|P_\ell}$ is a smooth conic containing $7$ points blown-up in $\w{P}_\ell\to P_\ell$. Thus its transform  $(E_i)_{|\w{P}_\ell}$ is a smooth rational curve  with self-intersection $4-7=-3$
in the surface
$\w{P}_\ell\cong\Bl_{13\text{pts}}\pr^2$. Similarly as before, this yields the statement on $p_i$ for $i=4,\dotsc,8$.

\smallskip

We prove $(d)$; set for simplicity $i=7$ and $j=8$.
By Lemmas \ref{sequence} and \ref{excplanes}, $P_\ell$ meets the indeterminacy locus of $\eta_h\colon Y\dasharrow\pr^4$ in $21$ isolated points:
$$x_i:=P_{e_i}\cap P_\ell\ \text{ for }i=1,\dotsc,6,\quad
y_{ab}:=P_{\ell_{ab}}\cap P_\ell\ \text{ for }a\leq 6, b\geq 7,\quad \{z^1,z^2,z^3\}:=P_{\ell_{78}}\cap P_\ell$$
(again, the components of  the indeterminacy locus of $\eta_h$ are pairwise disjoint in $Y$).
By the  description of the map $\xi_h$ in Lemma \ref{sequence}, we have a diagram:
{\footnotesize$$\xymatrix{{\w{P}_\ell\subset X_h}\ar[d]&{\widehat{P}_\ell\subset\widehat{X}_h}\ar[d]\ar[l]\\
{V_{h,\ell}\subset\pr^4}&{P_\ell\subset Y}
}$$}where $\widehat{P}_\ell\to P_\ell$ is  the blow-up of  $\pr^2$ in the $21$ points 
$x_i,y_{ab},z^j$, with  exceptional curves $\widehat{\Gamma}_i$,  $\widehat{L}_{ab}$,
and $\widehat{L}^j_{78}$ respectively, and   $\widehat{P}_\ell\to \w{P}_\ell$ 
is an isomorphism outside the curves $\widehat{L}^j_{78}$, while it glues the three curves $\widehat{L}^1_{78},\widehat{L}^2_{78},\widehat{L}^3_{78}$ onto $L_{78}\subset X_h$. Finally,
the morphism $\w{P}_{\ell}\to V_{h,\ell}$ is the restriction of $X_h\to\pr^4$ and contracts the curve $(E_i)_{|\w{P}_\ell}$ to $p_i$ for $i=1,\dotsc,8$. In particular, we see that $V_{h,\ell}\smallsetminus(\{p_1,\dotsc,p_6\}\cup\overline{p_7p_8})$ is smooth, and that  $V_{h,\ell}$ is singular along the line $\overline{p_7p_8}$, with a point of multiplicity $3$ at the general point of the line.

Let $\widehat{H}\in\Pic(\widehat{P}_\ell)$ be the pull-back of $H_{|\w{P}_\ell}$, and 
$\widehat{\Gamma}_\ell\subset \widehat{P}_\ell$ the transform of a general line in $P_\ell\cong\pr^2$. In $X_h$ we have $H\cdot L_{ab}=1$ for every $a<b$, $H\cdot\Gamma_i=4$, and 
$H\cdot\w{\Gamma}_\ell=11$ by Lemma \ref{Gamma}. 
Using the projection formula, in $\widehat{P}_\ell$ we get
$$\widehat{H}\sim 11\widehat{\Gamma}_\ell-4\sum_{i=1}^6\widehat{\Gamma}_i
-\sum_{a\leq6,b\geq 7}\widehat{L}_{ab}-\sum_{j=1}^3\widehat{L}^j_{78}$$
and hence the degree of $V_{h,\ell}$  in $\pr^4$ is $\widehat{H}^2=10$.

Let $i\in\{1,\dotsc,6\}$. Similarly to the proof of case $(c)$, we see that in $Y$ $(E_i)_{|P_\ell}$ is a smooth conic containing the $7$ points
$x_1,\dotsc,\check{x}_i\dotsc,x_6,y_{i7},y_{i8}$ and no other point blown-up in $\widehat{P}_\ell$.  Thus the transform of $(E_i)_{|P_\ell}$
in  $\widehat{P}_\ell$ is a smooth rational curve  with self-intersection $4-7=-3$, which yields 
the statement on $p_i$.

The proof of $(e)$ is very similar.
\end{proof}
\subsection{The Bertini involution in $X$  and in $\pr^4$}\label{secbertiniX}
\begin{proposition}\label{bertiniX}
Let $X$ be a blow-up of $\pr^4$ at $8$ general points. 
Then $X$ has a unique non-trivial pseudo-automorphism $\iota_X\colon X\dasharrow X$. 
It can be factored as follows:
{\footnotesize$$\xymatrix{X\ar@{-->}@/^1pc/[rrr]^{\iota_X}\ar@{-->}[r]_{\xi}&Y&{\widehat{X}_2}\ar[l]\ar[r]&X
},$$}
where
\begin{enumerate}[$\bullet$]
\item $\xi\colon X\dasharrow Y$ is described in Lemma~\ref{sequence};
\item $\widehat{X}_2\to Y$ is the blow-up of $36$ pairwise disjoint smooth rational surfaces in $Y$, given by the transforms in $Y$ of  $8$ surfaces 
of degree $10$ and $28$ surfaces of degree $15$ in $\pr^4$, all containing $p_1,\dotsc,p_8$;
 every exceptional divisor is isomorphic to $\pr^1\times\pr^2$ with normal bundle $\ol(-1,-1)$;
\item $\widehat{X}_2\to X$ contracts the exceptional divisors to pairwise disjoint smooth rational curves.
\end{enumerate}
\end{proposition}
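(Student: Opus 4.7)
The proof has two parts: establishing uniqueness of the nontrivial pseudo-automorphism $\iota_X$, and then giving the explicit factorization.

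For the first part, I will combine Cor.~\ref{SfromX} with Th.~\ref{psautM}. By Cor.~\ref{SfromX}, we may write $X\cong M_{S,\mathcal{B}_h}$ for a (necessarily general) degree one del Pezzo surface $S$ and a cubic $h\in\Pic(S)$. Since $\mathcal{B}_h\subset\Pi$, Th.~\ref{psautM} identifies the group of pseudo-automorphisms of $X$ with $\Aut(S)$, which for general $S$ is $\{\Id_S,\iota_S\}$ (see \ref{bertiniS}). Therefore $X$ admits exactly one nontrivial pseudo-automorphism, $\iota_X:=\psi(\iota_S)$, given on moduli by $[F]\mapsto[\iota_S^{*}F]$.

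For the second part, the starting point is the identity $\iota_X=\xi^{-1}\circ\iota_Y\circ\xi$, where $\xi=\xi_h$ and $\iota_Y\colon Y\to Y$ is the (genuine) Bertini automorphism of $Y$, genuine because the chamber $\mathcal{N}$ is $\iota_S^{*}$-invariant. This exhibits $\iota_X$ as the composition of $\xi\colon X\dasharrow Y$ and the pseudo-isomorphism $g:=\xi^{-1}\circ\iota_Y\colon Y\dasharrow X$; it remains to resolve $g$ as a roof $Y\leftarrow\widehat{X}_2\to X$. By Lemma \ref{sequence}, $\xi^{-1}$ is resolved by blowing up the 36 pairwise disjoint surfaces $P_{\ell_{ij}}$ ($1\le i<j\le 8$) and $P_{e_k}$ ($k=1,\dots,8$) in $Y$; since $\iota_Y$ is an automorphism, the indeterminacy locus of $g$ in $Y$ is the $\iota_Y$-translate $\bigcup_{i<j} P_{\iota_S^{*}\ell_{ij}}\cup\bigcup_k P_{\iota_S^{*}e_k}$. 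By formula \eqref{w_0}, $\iota_S^{*}\ell_{ij}\sim 5h-2e+e_i+e_j$ (with $h$-degree $5$) and $\iota_S^{*}e_k\sim 6h-2e-e_k$ (with $h$-degree $6$).

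The resolution $\widehat{X}_2\to Y$ is then the blow-up of these 36 transformed surfaces, provided they remain pairwise disjoint in $Y$; this follows from Lemma \ref{excplanes}, and it is the only place where the generality of $S$ is essential, handling the cases in which $\iota_S^{*}\ell\cdot\iota_S^{*}\ell'=3$. Running the resulting 36 flips in the opposite direction, each a $(\pr^{2},\pr^{1})$-flip as described in \ref{flip} with exceptional divisor $\pr^{1}\times\pr^{2}$ of normal bundle $\mathcal{O}(-1,-1)$, yields the blow-down $\widehat{X}_2\to X$ onto 36 disjoint smooth rational curves (this preservation of the flip shape under the automorphism $\iota_Y$ is automatic). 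Finally, Th.~\ref{specialsurfaces}(d) and (e) identifies the images of these 36 surfaces in $\pr^{4}$ under $\eta_h$ as the degree $10$ and degree $15$ surfaces described in the proposition, all passing through $p_{1},\dots,p_{8}$. The only real obstacle is the disjointness assertion in $Y$ for the 36 surfaces $P_{\iota_S^{*}\ell}$; the remainder is Mukai-type wall-crossing bookkeeping.
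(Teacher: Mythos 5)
Your proof is correct and follows essentially the same route as the paper: uniqueness comes from Cor.~\ref{SfromX} together with Th.~\ref{psautM}, and the factorization runs through the $36$ pairwise disjoint surfaces $P_{\iota_S^*e_k}$ and $P_{\iota_S^*\ell_{ij}}$ in $Y$, whose images in $\pr^4$ are identified by Th.~\ref{specialsurfaces}$(d)$ and $(e)$ (the paper presents the second half as the wall-crossing from $\ma{N}$ to $\ma{B}_{\iota_S^*h}$ followed by the isomorphism induced by $\iota_S$, which is exactly your $\xi^{-1}\circ\iota_Y$, since $\iota_S^*\ma{B}_h=\ma{B}_{\iota_S^*h}$). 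One minor slip: all pairwise intersection numbers among your $36$ classes are $\leq 1$, so only the first case of Lemma \ref{excplanes} is needed and the disjointness does not require $S$ general; generality enters instead through Th.~\ref{specialsurfaces}$(d)$ and $(e)$.
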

\begin{proof}
Let $(S,h)$ be as in Cor.~\ref{SfromX}, so that $X\cong M_{\ma{B}_h}$. It follows from Th.~\ref{psautM} that $X$ has a unique non-trivial pseudo-automorphism $\iota_X$.
Under the isomorphism with $M_{\ma{B}_h}$, $\iota_X$ is the map induced by the Bertini involution $\iota_S$ of $S$ as follows:
$$
 M_{\ma{B}_h}\dasharrow M_{\ma{B}_h},\quad 
[F]\mapsto [\iota_S^*F].$$
This is nothing but the natural birational map $M_{\ma{B}_h}\dasharrow M_{\ma{B}_{\iota_S^*h}}$ (given by $[F]\mapsto[F]$, see Cor.~\ref{birational}) composed with the natural isomorphism $M_{\ma{B}_{\iota_S^*h}}\cong M_{\ma{B}_h}$ induced by $\iota_S$ (note that $\iota_S^*\ma{B}_h=\ma{B}_{\iota_S^*h}$). 
In particular, we can factor $\iota_X$ as a sequence of flips by varying the polarization from $\ma{B}_h$ to $\ma{B}_{\iota_S^*h}$ along the plane spanned by $h$ and $-K_S$ (see Fig.~\ref{figura}), and similarly for ${\pr^4}$:
{\footnotesize$$\xymatrix{
{X=X_h}\ar@{-->}[r]_(0.6){\xi_h}\ar@/^1pc/@{-->}[rr]^{\iota_X}&Y\ar@{-->}[r]_(0.35){\xi_{\iota_S^*h}^{-1}}&{X_{\iota_S^*h}\cong X}
}\qquad\qquad\qquad
\xymatrix{
{\pr^4}\ar@{-->}[r]_{\eta_h^{-1}}\ar@/^1pc/@{-->}[rr]^{\iota_{\pr^4}}
&Y
\ar@{-->}[r]_{\eta_{\iota_S^*h}}&{\pr^4}
}$$}

The factorization of $\xi_h$ is described in Lemma \ref{sequence}, so let us consider the second part $Y\dasharrow X$.
To go from the chamber $\ma{N}$ to the chamber $\ma{F}_{\iota_S^*h}$, we have to cross the $8$ walls $(2\ell_i+K_S)^{\perp}$, where $\ell_i\sim 6h-2e-e_i$, for $i=1,\dotsc,8$
(see Fig.~\ref{figura} and Lemma \ref{L_t}). Thus the map $Y\dasharrow M_{\ma{F}_{\iota_S^*h}}$ is the composition of $8$ flips, each replacing $P_{\ell_i}\cong\pr^2$ with a smooth rational curve. Moreover $P_{\ell_i}\subset Y$ is the transform of the surface $V_{h,\ell_i}\subset\pr^4$, described in  Th.~\ref{specialsurfaces}$(e)$.

Secondly, to go from the chamber $\ma{F}_{\iota_S^*h}$ to the chamber $\ma{B}_{\iota_S^*h}$, we have to cross the $28$ walls $(2\ell'_{ij}+K_S)^{\perp}$, where $\ell'_{ij}\sim 5h-2e+e_i+e_j$, for $1\leq i<j\leq 8$
(see Fig.~\ref{figura} and Lemma \ref{L_t}). Thus the map $M_{\ma{F}_{\iota_S^*h}}\dasharrow X$ is the composition of $28$ flips, each replacing $P_{\ell'_{ij}}\cong\pr^2$ with a smooth rational curve. Moreover $P_{\ell'_{ij}}\subset Y$ is the transform of the surface $V_{h,\ell'_{ij}}\subset\pr^4$, described in  Th.~\ref{specialsurfaces}$(d)$.
\end{proof}
\begin{corollary}\label{bertiniP^4}
Let $p_1,\dotsc,p_8\in\pr^4$ be general points, and let $V\subset|\ol_{\pr^4}(49)|$ be the linear system of hypersurfaces having multiplicity at least $30$ at $p_1,\dotsc,p_8$. Then $\dim V=4$, and $V$ defines a birational involution $\iota_{\pr^4}\colon\pr^4\dasharrow\pr^4$. The base locus of
$V$ has dimension $2$, and it is the union of $36$ irreducible rational surfaces, $28$ of degree $10$, and $8$ of degree $15$. The birational map $\iota_{\pr^4}$ contracts $8$ irreducible rational hypersurfaces of degree $10$.
\end{corollary}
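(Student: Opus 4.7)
The plan is to deduce the statement from Proposition~\ref{bertiniX} and the Du Val formula $\iota_X^*H = 49H-30\sum_i E_i$ recorded in the Introduction (which can also be verified using Prop.~\ref{equivariant} and Prop.~\ref{explicit}: one computes $\rho^{-1}(H) = 3h-\tfrac12\sum_j e_j$, then $\iota_S^*(3h-\tfrac12\sum_j e_j) = 27h-\tfrac{19}{2}\sum_j e_j$, and applying $\rho$ gives $49H-30\sum_i E_i$). Under $\pi^*$, the linear system $V$ corresponds to $|49H-30\sum_i E_i| = |\iota_X^*H|$ on $X$. Since $\iota_X$ is a pseudo-automorphism, pull-back yields an isomorphism $H^0(X,H)\xrightarrow{\sim}H^0(X,\iota_X^*H)$ (both sides equal $H^0(U,H|_U)$ for the open $U\subset X$ where $\iota_X$ is an isomorphism, which has complement of codimension $\geq 2$), whence $\dim V = 4$. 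The map $\iota_{\pr^4} = \pi\circ\iota_X\circ\pi^{-1}$ is then defined by $V$ and is an involution because $\iota_X$ is.

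For the $8$ contracted hypersurfaces: since $\iota_X$ is a pseudo-isomorphism, a hypersurface $D\subset\pr^4$ is contracted by $\iota_{\pr^4}$ if and only if its strict transform in $X$ is mapped by $\iota_X$ onto some exceptional divisor $E_k$ in the target copy of $X$, and there are thus exactly $8$ such $D$'s. Applying Prop.~\ref{equivariant} and Prop.~\ref{explicit},
\[
\iota_X^*E_i \;=\; \tfrac12\rho\bigl(\iota_S^*(h-e_i)\bigr) \;=\; \tfrac12\rho\bigl(11h-4\sum_j e_j+e_i\bigr) \;=\; 10H-7E_i-6\sum_{j\ne i}E_j,
\]
so each such $D$ has degree $10$ in $\pr^4$. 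Irreducibility and rationality follow from $\iota_X$ restricting to a birational map between the strict transform of $D$ and $E_k\cong\pr^3$.

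For the base locus: $\Bs V = \pi(\Bs|\iota_X^*H|)$, and because $|\iota_X^*H|$ is movable (the generic element has multiplicity exactly $30$ at each $p_i$), this latter equals the indeterminacy locus of the rational map $\pi\circ\iota_X : X\dashrightarrow\pr^4$. By the factorization of $\iota_X$ through $Y$ (Prop.~\ref{bertiniX} and Lemma~\ref{sequence}), the indeterminacy of $\iota_X$ on the source copy of $X$ is supported on the $36$ strict transforms $\widetilde P_{\ell_i}$ (for $\ell_i\sim 6h-2e-e_i$, $i=1,\ldots,8$) and $\widetilde P_{\ell'_{jk}}$ (for $\ell'_{jk}\sim 5h-2e+e_j+e_k$, $1\le j<k\le 8$); these surfaces contain the $36$ indeterminacy curves of $\xi_h$, namely $L_{jk}\subset\widetilde P_{\ell'_{jk}}$ and $\Gamma_k\subset\widetilde P_{\ell_k}$, by Theorem~\ref{specialsurfaces}(d) and (e). At a generic point of $\widetilde P_\ell$, the resolution-theoretic image of $\iota_X$ is the curve $Z_\ell$ in the target copy of $X$ arising from the flip of $P_\ell\subset Y$; identifying the target with $X_{\iota_S^*h}$ via Theorem~\ref{X} and using $\iota_S^*\ell_{jk}^{(h)} = \ell'_{jk}$ together with $\iota_S^*e_k = \ell_k$, one sees that $\pi(Z_\ell)$ is either a line (when $\ell = \ell'_{jk}$) or a rational normal quartic (when $\ell = \ell_k$). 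Since every hyperplane of $\pr^4$ meets a curve by dimension count, every section of $|\iota_X^*H|$ vanishes on $\widetilde P_\ell$; outside these $36$ surfaces $\iota_X$ is a local isomorphism and no further base-locus components arise. Pushing down via $\pi$, $\Bs V$ is the union of the $36$ irreducible rational surfaces $V_{h,\ell_i}$ (of degree $15$ by Theorem~\ref{specialsurfaces}(e)) and $V_{h,\ell'_{jk}}$ (of degree $10$ by Theorem~\ref{specialsurfaces}(d)).

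The main technical obstacle is the identification of the codimension-$2$ indeterminacy of $\iota_X$: beyond the $36$ curves $L_{jk},\Gamma_k$ coming from Lemma~\ref{sequence} as the indeterminacy of the first stage $\xi_h$, one must recognise that the $36$ surfaces $\widetilde P_\ell$ also lie in the indeterminacy, because their images in $Y$ are precisely the flipping loci of the second stage $Y\dashrightarrow X$, and one then has to exploit the fibre-dimension dichotomy between these two strata to pinpoint which contribute to the base locus.
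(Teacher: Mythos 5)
Your proposal is correct and follows essentially the same route as the paper, whose proof simply declares most of the Corollary to be a direct consequence of Prop.~\ref{bertiniX} and then computes the classes of the contracted divisors $E_{\iota_S^*C_i}$; you supply the details the paper leaves implicit (the computation $\dim V=4$ via invariance of $h^0$ under the pseudo-automorphism, the identification of $\Bs V$ with the $36$ surfaces $V_{h,\ell}$ coming from the flipped loci of the second half of the factorization through $Y$, and the pull-backs $\iota_X^*H$ and $\iota_X^*E_i$). Your class $10H-7E_i-6\sum_{j\neq i}E_j$ for the contracted divisors is the correct one (the paper's displayed $10H-6\sum_j E_j+E_i$ has a sign slip in the coefficient of $E_i$), and in any case both give degree $10$.
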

\begin{proof}
Most of the statement is a direct consequence of Prop.~\ref{bertiniX}. 
The divisors contracted by $\iota_{\pr^4}$ are the transforms of $E_{\iota_S^*C_i}\subset X$, for $i=1,\dotsc,8$. We have $C_i\sim h-e_i$, $\iota_S^*C_i\sim-4K_S-C_i\sim 11h-4e+e_i$ (see \eqref{w_0}), and thus 
$E_{\iota_S^*C_i}\sim 10H-6\sum_jE_j+E_i$ after Prop.~\ref{fixeddiv}.
\end{proof}

\providecommand{\noop}[1]{}
\providecommand{\bysame}{\leavevmode\hbox to3em{\hrulefill}\thinspace}
\providecommand{\MR}{\relax\ifhmode\unskip\space\fi MR }
\providecommand{\MRhref}[2]{%
  \href{http://www.ams.org/mathscinet-getitem?mr=#1}{#2}
}
\providecommand{\href}[2]{#2}

\end{document}